\theoremstyle{plain}
\newtheorem{theorem}{Theorem}[section]
\newtheorem{proposition}[theorem]{Proposition}
\newtheorem{lemma}[theorem]{Lemma}
\newtheorem{corollary}[theorem]{Corollary}
\theoremstyle{definition}
\newtheorem{definition}[theorem]{Definition}
\newtheorem{remark}[theorem]{Remark}
\newtheorem{example}[theorem]{Example}
\newcommand{\OB}[1]{ }
\newcommand{\TK}[1]{ }
\newcommand{\AP}[1]{ }
\newcommand{\ds}{\displaystyle}
\newcommand{\al}{\alpha}
\newcommand{\be}{\beta}
\newcommand{\bal}{\boldsymbol{\alpha}}
\newcommand{\bbe}{\boldsymbol{\beta}}
\newcommand{\Om}{\Omega}
\newcommand{\mA}{\mathcal{A}}
\newcommand{\mB}{\mathcal{B}}
\newcommand{\mC}{\mathcal{C}}
\newcommand{\mF}{\mathcal{F}}
\newcommand{\mI}{\mathcal{I}}
\newcommand{\mK}{\mathcal{K}}
\newcommand{\mP}{\mathcal{P}}
\newcommand{\mQ}{\mathcal{Q}}
\newcommand{\mS}{\mathcal{S}}
\newcommand{\mT}{\mathcal{T}}
\newcommand{\mZ}{\mathcal{Z}}
\DeclareMathOperator\lr{lr}
\DeclareMathOperator\rl{rl}
\DeclareMathOperator{\codim}{codim}
\newcommand{\hT}{\widehat{\T}}
\newcommand{\one}{\mathbf{1}}
\newcommand{\wti}{\widetilde}
\newcommand{\m}{\mathbf{m}}
\DeclareMathOperator{\cork}{cork}
\DeclareMathOperator{\nul}{null}
\DeclareMathOperator{\hypbox}{Box}
\newcommand{\weight}{\textrm{weight}}
\def\Z{\mathbb{Z}}
\def\R{\mathbb{R}}
\def\Ext{\mathrm{Ext}}
\def\Int{\mathrm{Int}}
\def\tExt{\widetilde{\mathrm{Ext}}}
\def\tInt{\widetilde{\mathrm{Int}}}
\def\ext{\mathrm{ext}}
\def\int{\mathrm{int}}
\def\mir{T}
\def\oi{\mathrm{oi}}
\def\oe{\mathrm{oe}}
\def\ie{\mathrm{ie}}
\def\a{\mathbf{a}}
\def\b{\mathbf{b}}
\def\c{\mathbf{c}}
\def\e{\mathbf{e}}
\def\x{\mathbf{x}}
\def\scA{\mathscr{A}}
\def\scP{\mathscr{P}}
\def\T{\mathscr{T}}
\def\mR{\mathscr{R}}
\def\QCF{\mathscr{Q}}
\def\V{\mathbf{V}}
\def\conv{\mathrm{conv}}
\def\wt{\mathit{wt}}
\def\level{\mathrm{level}}
\def\Sup{S^\textrm{up}}
\def\tSup{\tilde S^\textrm{up}}
\def\Slow{S^\textrm{low}}
\def\tSlow{\tilde S^\textrm{low}}
\def\Cup{C^\textrm{up}}
\def\Clow{C^\textrm{low}}
\title{Universal Tutte polynomial}
\author{Olivier Bernardi, Tam\'as K\'alm\'an, Alexander Postnikov}
\thanks{OB is partially supported by NSF grant DMS-1800681.}
\date{\today}
\address{Department of Mathematics, Brandeis University, Waltham, MA 02453, USA}
\email{bernardi@brandeis.edu}
\address{Department of Mathematics, Tokyo Institute of Technology, 
Tokyo 152-8551, Japan}
\email{kalman@math.titech.ac.jp}
\address{Department of Mathematics, MIT, Cambridge, MA 02139, USA}
\email{apost@math.mit.edu}
\begin{document}

\begin{abstract}
The Tutte polynomial is a well-studied invariant of graphs and matroids. We first extend the Tutte polynomial from graphs to hypergraphs, and more generally from matroids to polymatroids, as a two-variable polynomial. 
Our definition is related to previous works of Cameron and Fink and of K\'alm\'an and Postnikov.
We then define the 
universal Tutte polynomial 
$\T_n$, which is a polynomial of degree $n$ in $2+(2^n-1)$ variables that specializes to the Tutte polynomials of all polymatroids (hence all matroids) on a ground set with $n$ elements.
The universal polynomial $\T_n$ admits three kinds of symmetries: translation invariance, $S_n$-invariance, and duality.
\end{abstract}

\maketitle




\section{Introduction}

The Tutte polynomial $T_M(x,y)$ is an important invariant of a matroid $M$. For a graphical matroid $M_G$ associated to a graph $G$, the Tutte polynomial $T_G(x,y):=T_{M_G}(x,y)$ specializes to many classical graph invariants, such as the chromatic polynomial, the flow polynomial, the reliability polynomial etc. 
The Tutte polynomial and its various evaluations were studied in the context of statistical physics (partition functions of the Ising and Potts models), knot theory (Jones and Kauffman polynomials), and many other areas of mathematics and physics. There is a vast literature on the Tutte polynomial; see for instance~\cite[Chapter~10]{Bollobas} or~\cite{EM} for an introduction and references.

This paper contains two main contributions to the theory of the Tutte polynomial. The first is an extension of the Tutte polynomial, from the class of matroids to that of polymatroids. Our notion is related to previous works of K\'alm\'an and Postnikov \cite{Pos,Kal,KP} and of Cameron and Fink \cite{CF}. 
The second contribution is the definition of the \emph{universal Tutte polynomial} $\T_n$, which is a polynomial in $x$, $y$, and $2^n-1$ additional variables $z_I$ indexed by the non-empty subsets $I$ of $[n]$. This polynomial specializes to the Tutte polynomials of all polymatroids (hence all matroids) on a ground set with $n$ elements. The polynomial $\T_n$ admits three kinds of symmetries: translation-invariance, $S_n$-invariance, and duality.

\OB{Added:\\}
Let us mention here that our definition of the Tutte polynomial of a polymatroid does not appear to be directly related to other, known extensions of the Tutte polynomial to the polymatroid (or hypergraph) setting, such as the one defined by Helgason \cite{Hel} (see also \cite{Sta3,Whi}). In fact, it is an open question whether there exists a relation between our Tutte polynomial and the chromatic polynomial of a hypergraph as defined in \cite{Ber}, or the `basic invariant' defined in \cite{AA,AKT}). See Section \ref{sec:rel-other-invariants} for a more thorough discussion.
We also note that our universal Tutte polynomial $\T_n$ is not directly related to the ``multivariate'' Tutte polynomials 
of \cite{BR,Zas,ET} or \cite{Sok}, which all satisfy linear deletion-contraction recurrences. Rather, $\T_n$ is a two-variable refinement of the ``Ehrhart-type'' polynomial of \cite{Pos}, counting lattice points in generalized permutahedra. It gives a polynomial parametrization of the Tutte polynomials of all the polymatroids on the ground set $[n]$ in terms of their rank functions.

\smallskip

\noindent \textbf{A Tutte polynomial for polymatroids.} 
Recall that a matroid over a ground set $[n]$ can be defined in terms of its set of bases, which is a subset of $\{0,1\}^n$ satisfying the \emph{exchange axiom}. Similarly, an integer polymatroid can be defined in terms of its set of bases, which is a finite subset of $\Z^n$ satisfying the appropriate version of the exchange axiom; see Section~\ref{sec:polymatroids} for a precise definition\footnote{In this paper we take a \emph{polymatroid} to mean what is usually called the set of bases of an integer polymatroid. That is, our polymatroids are subsets of $\Z^n$, whose elements are called \emph{bases}.}.
Polymatroids are an abstraction of hypergraphs in the same manner as matroids are an abstraction of graphs.
Indeed, for any graph $G$ the spanning trees of $G$ are the bases of a matroid, and similarly the set of \emph{spanning hypertrees} of any hypergraph is the set of bases of a polymatroid (see Section~\ref{sec:hypergraphs} for definitions and~\cite{tuttebook} for a proof). 
Hence any polymatroid invariant automatically gives a hypergraph invariant.

It is natural to attempt to generalize the Tutte polynomial from matroids to polymatroids using the notions of \emph{internal and external activity}. Recall that the original definition of the Tutte polynomial of a matroid $M$~\cite{crapo,Tut} uses the notion of internal activity $\int(B)$ and external activity $\ext(B)$ of a basis $B$ of the matroid $M$, with respect to some total ordering of the ground set. These definitions are recalled in Section~\ref{sec:matroids}. The Tutte polynomial is then given by the sum
$$
T_M(x,y):=\sum_{B\,:\,\text{basis of } M} x^{\int(B)}\, y^{\ext(B)}.
$$
A remarkable feature of this definition is that, although the activities of each basis depend on the ordering of the ground set, the sum $T_M(x,y)$ is invariant under reordering. 


Motivated by the study of the Homfly polynomial, which is an important invariant in knot theory, K\'alm\'an~\cite{Kal} extended the notions of activity to the setting of polymatroids and used them to define two invariants called interior and exterior polynomial.
Precisely, for a polymatroid $P$, the \emph{interior polynomial} 
and \emph{exterior polynomial} of $P$ are equivalent (via a change of variables) to 
the sums 
$$ 
J_P(x) := \sum_{\a\,:\,\text{basis of }P} x^{\int(\a)}
\quad\text{and}\quad
E_P(y) := \sum_{\a\,:\,\text{basis of }P} y^{\ext(\a)},
$$
respectively, where 
$\int(\a)$ and $\ext(\a)$ are the \emph{internal} and \emph{external activities}, respectively, of $\a$ (see Section~\ref{sec:polymat-Tutte} for the definitions). 

It was shown in~\cite{Kal} that, although the activities $\int(\a)$ and $\ext(\a)$ depend on the natural ordering of the ground set, 
the interior and exterior polynomials (in analogy to $T_M$ above) are independent of this ordering. 
 We refer to this property as \emph{$S_n$-invariance}, where $S_n$ is the symmetric group acting on the ground set $[n]$. 
However, somewhat disappointingly, 
the two-variable polynomial 
\begin{equation}
\label{eq:naive}
\sum_{\a\,:\,\text{basis of }P} x^{\int(\a)}\, y^{\ext(\a)}
\end{equation}
is not $S_n$-invariant for a general polymatroid, even though this is the case for matroids.

A second proof of $S_n$-invariance for the polynomials $J_P$ and $E_P$ is given in~\cite{KP} using a ``point-counting'' perspective. 
Let us denote by $\mP=\conv(P)$ the convex hull of $P$, which is classically known as the \emph{base polytope} of the polymatroid $P$. It is shown in~\cite{KP} that $J_P$ and $E_P$ can be obtained via a certain binomial transformation from the
Ehrhart-style functions (in fact, polynomials)
$$
f(s):=|(\mP + s \nabla)\cap \Z^n|
\quad\text{and}\quad
g(t):=|(\mP + t \Delta)\cap \Z^n|,
$$
respectively, that count numbers of lattice points in Minkowski sums of $\mP$ with scaled simplices. 
Here $\Delta=\conv(\e_1,\dots,\e_n)\subset \R^n$ is the standard coordinate $(n-1)$-simplex 
and $\nabla=-\Delta$.
The obvious $S_n$-invariance of the polynomials $f$ and $g$ implies
the $S_n$-invariance of the 
polynomials $J_P$ and $E_P$.

Cameron and Fink~\cite{CF} combined 
$f$ and $g$ and defined a two-variable invariant $\QCF'_P(x,y)$ of a polymatroid $P$
via a binomial transformation of the two-variable Ehrhart-style polynomial 
$$
\QCF_P(s,t) = |(\mP+s\nabla + t\Delta)\cap \Z^n|. 
$$ 
The $S_n$-invariance of $\QCF_P(s,t)$ and $\QCF'_P(x,y)$ 
is clear from the definitions. Moreover, it is shown in~\cite{CF} that the invariant $\QCF'_P$ is equivalent (up to a change of variables) to the 
Tutte polynomial $T_P$ in the case when $P$ is a matroid. 

In the present paper, we take a different route and define
the \emph{polymatroid Tutte polynomial} $\T_P(x,y)$ in terms
of internal and external activities, as the 
following sum over elements (bases) of a polymatroid $P$:
\begin{equation}
\label{eq:polytutte}
\T_P(x,y) := \sum_{\a\,:\,\text{basis of } P} x^{\oi(\a)} y^{\oe(\a)} (x+y-1)^{\ie(\a)},
\end{equation}
where $\oi(\a)$ is the number of ground set elements (to which we will also refer as \emph{indices}) that are ``only internally'' 
active, but not externally active, with respect to a basis $\a\in P$;
the quantity $\oe(\a)$ is the number of indices which are ``only externally''
active, but not internally active with respect to $\a$;
and $\ie(\a)$ is the number of indices which are both 
``internally and externally'' active. That is, what would have been $(xy)^{\ie(\a)}$ in~\eqref{eq:naive} is replaced with $(x+y-1)^{\ie(\a)}$. In Section~\ref{sec:order-invariance} we show that this suffices to achieve the desired $S_n$-invariance. 

We stress that the notion of activity in~\eqref{eq:polytutte} is the same as in~\cite{Kal} and above.
In Section~\ref{sec:relation-classical-Tutte}
we show that when the polymatroid $P$ is a matroid, then 
the polynomial $\T_P(x,y)$ is related to the classical Tutte polynomial $T_P(x,y)$ by a simple change of variables.
It is also clear from the definition that the polymatroid Tutte polynomial satisfies the 
duality relation
$$\T_{-P}(x,y)=\T_P(y,x).$$

\OB{Added:}
We establish several properties of $\T_P$ such as interpretation for various specializations, and information on coefficients (see Propositions \ref{prop:TP_properties} and \ref{prop:Brylawski}). In particular we give a formula for the top coefficients of $\T_P$ and show that it implies and extend the so-called \emph{Brylawski identities} \cite{Bry} (as well as Gordon's identity \cite{Gor}) for the classical Tutte polynomial of matroids.

As we show in Section~\ref{sec:Cameron-Fink}, our polymatroid Tutte polynomial $\T_P(x,y)$ is essentially
the same as Cameron--Fink's polynomial $\QCF'_P(x,y)$.
However, we believe that the construction of $\T_P(x,y)$ is 
more straightforward, since it does not require 
computing Ehrhart-style polynomials and applying binomial transformations.\\

We give a second expression for the polymatroid Tutte polynomial $\T_P(x,y)$ in Section~\ref{sec:order-invariance}. This 
is the polymatroid analogue of the classical definition of the Tutte polynomial as the generating function of subsets of the ground set counted according to their corank and nullity. Namely, for a polymatroid $P\subset\Z^n$, we show that $\T_P(x,y)$ is equivalent (up to a change of variables) to the weighted sum 
$$
\wti \T_P(u,v):=\sum_{\c\in \Z^n} \wt_P(\c).
$$
Here the weight $\wt_P(\c)$ of an arbitrary lattice point $\c\in \Z^n$ is defined in terms of 
the Manhattan distance $d_1(P,\c)$ between the polymatroid $P$ and 
$\c$, and a splitting of this Manhattan distance into two summands
$d_1(P,\c)= d_1^{>}(P,\c) + d_1^{<}(P,\c)$. Then we have
$\wt_P(\c):=u^{d_1^{>}(P,\c)} \, v^{d_1^{<}(P,\c)}$.

This second expression of $\T_P(x,y)$, through $\wti \T_P$, has the advantage of being explicitly $S_n$-invariant. 
Since $\T_P(x,y)$ specializes to the interior and exterior polynomials, our result gives an alternative proof of the $S_n$-invariance of those as well.

The proof of the equivalence between the two expressions of $\T_P$ relies on a decomposition of the lattice $\Z^n$ into cones indexed by the bases of $P$. This cone decomposition 
is the polymatroid analogue of another classical result about 
matroids. Namely, it generalizes the 
decomposition due to Crapo~\cite{crapo} of the Boolean lattice $(2^{[n]},\subseteq)$ into intervals indexed by the bases of a matroid.

We also give two additional formulas for $\T_P(x,y)$ in terms of \emph{shadows} of polymatroids; 
see Section~\ref{sec:polymatroid_shadows}.
For matroids, these recover known expressions due to Gordon and Traldi~\cite{GT} for the Tutte polynomial as sums over either independent sets or spanning sets of the matroid.


\medskip

\noindent \textbf{A universal Tutte polynomial.} 
The second main contribution of the present paper is to define the \emph{universal Tutte polynomial} $\T_n$ for every positive integer $n$. 
This is a polynomial in the variables $x$, $y$, and the $2^n-1$
variables $z_I$ indexed by the nonempty subsets $I$ of $[n]$. It parametrizes the Tutte polynomials of all polymatroids defined on the ground set $[n]$. 
More precisely, to a polymatroid $P$ one can associate its \emph{rank function} $f$ (which is the function $f\colon2^{[n]}\to\Z$ defined by $f(I)=\max\{\,\sum_{i\in I} a_i\mid(a_1,\ldots,a_n)\in P\,\}$), and the Tutte polynomial $\T_P(x,y)$ is obtained from $\T_n$ by setting $z_I=f(I)$ for all nonempty sets $I\subseteq [n]$.
As we recall in Section~\ref{sec:submodular-function}, the rank functions of polymatroids are exactly the \emph{submodular functions} with values in $\Z$.

The universal Tutte polynomial $\T_n$ has total degree $n$, and $(n-1)!\T_n$ has integer coefficients. It admits three kinds of symmetries: translation-invariance, $S_n$-invariance and duality. 



We give an explicit formula for $\T_n$ in Section~\ref{sec:formula_universal_Tutte}. The expression involves the \emph{draconian sequences} introduced in~\cite{Pos} for the enumeration of lattice points of generalized permutohedra. In fact, the proof of the formula is based on the fact that the base polytope $\mP=\conv(P)$ of a polymatroid $P$ (and hence each of its faces) is a generalized permutohedron. 

As we explain in Section~\ref{sec:Tutte-hypergraph}, the explicit formula for $\T_n$ has a nice combinatorial interpretation when applied in the context of (a class of) hypergraphical polymatroids. 
This allows us to
compute the polymatroid Tutte polynomials of graphical zonotopes in Section~\ref{sec:zonotopes}. 
Namely, 
we show that when $P$ is the polymatroid whose bases are the lattice points of the zonotope associated to a graph $G$, then the polymatroid Tutte polynomial $\T_P(x,y)$ is a specialization of the classical Tutte polynomial $T_G(x,y)$ associated to $G$. 

\medskip

\noindent {\bf Outline of the paper.}
In Section~\ref{sec:matroids} we recall some classical definitions about matroids and the Tutte polynomial. In Section~\ref{sec:polymatroids} we recall some background definitions about polymatroids. In Section~\ref{sec:polymat-Tutte} we define, for each polymatroid $P$, the polymatroid Tutte polynomial $\T_P(x,y)$, and state some of its properties. Our definition is a polymatroid analogue of the definition of the classical Tutte polynomial in terms of activities. In Section~\ref{sec:relation-classical-Tutte} we spell out the relation, for a matroid, 
between its polymatroid Tutte polynomial and its classical Tutte polynomial. 
In Section~\ref{sec:universal-Tutte} we define, for each natural number $n$, the universal Tutte polynomial $\T_n$ which is a polynomial parametrizing the Tutte polynomials of every polymatroid in $\Z^n$. In Section~\ref{sec:formula_universal_Tutte} we give an explicit expression for 
$\T_n$. 
In Section~\ref{sec:TP_properties} we gather the proofs of some basic facts about activities and some properties of the polymatroid Tutte polynomial such as Brylawski's identities. 

Up to this point in the article, the focus is on the notions of internal and external activity. The next five sections deal with
some decompositions of the lattice $\Z^n$, and an $S_n$-invariant definition of the polymatroid Tutte polynomial. 
In Section~\ref{sec:cone-decomposition} we define, for a polymatroid in $
\Z^n$, a canonical decomposition of the lattice $\Z^n$ into cones. In Section~\ref{sec:order-invariance} we use the cone decomposition of $\Z^n$ to give 
the polymatroid analogue of the corank-nullity definition of the classical Tutte polynomial.
It comes in the form of 
another characterization of the polymatroid Tutte polynomial as a sum over elements of $\Z^n$. 
In Section~\ref{sec:polymatroid_shadows} we give two other descriptions of the polymatroid Tutte polynomial $\T_P$ which are halfway between the expression in Section~\ref{sec:polymat-Tutte} and the expression in Section~\ref{sec:order-invariance}. These additional characterizations are based on some refined cone decompositions of $\Z^n$ which we study in Section~\ref{sec:shadows}.
In Section~\ref{sec:Cameron-Fink} we explain the relation between the polymatroid Tutte polynomial and the polymatroid invariant defined by Cameron and Fink~\cite{CF}.

For the rest of the paper we turn our attention to polymatroids associated to hypergraphs.
In Section~\ref{sec:hypergraphs} we recall some background on hypergraphs and the associated polymatroids. 
In Section~\ref{sec:Tutte-hypergraph} we use the explicit formula of $\T_n$ to give another combinatorial formula for the Tutte polynomial of a large class of hypergraphical polymatroids. This class does not contain graphical matroids but it does contain the class of graphical zonotopes. In Section~\ref{sec:zonotopes} we compute the polymatroid Tutte polynomials of graphical zonotopes and show a relation with the Tutte polynomials of the associated graphs. In Section~\ref{sec:boxed} we consider a certain truncation of the polymatroid Tutte polynomial and study its properties in the context of hypergraphs. 

We conclude with some open questions in Section~\ref{sec:conclusion}.

\medskip

\noindent {\bf Notation.} Throughout the paper we use the following notation.
For a positive integer $n$, we let $[n]=\{1,\dots,n\}$.
The set of all subsets of $[n]$ is denoted by $2^{[n]}$.
The set of permutations of $[n]$ is denoted by $S_n$.
We use the symbol $\biguplus$ to indicate the disjoint union of sets.
We denote by $\e_1,\ldots,\e_n$ the elements of the canonical basis of $\R^n$.
Lastly, for a point $\x\in \R^n$, we denote by $x_i$ the $i$'th coordinate of $\x$, so that $\x=\sum_{i=1}^n x_i\,\e_i$.


\section{Background: Tutte polynomials of matroids}
\label{sec:matroids}
In this section we recall the notion of a matroid, and the definition of the Tutte polynomial in terms of internal and external activities.
We fix a positive integer $n$ throughout the section.
\begin{definition}
A \emph{matroid} $M$ on the \emph{ground set} $[n]$ is a non-empty subset of $2^{[n]}$ (whose elements are called \emph{bases}) satisfying the following condition:
\smallskip

\noindent {\underline{Basis Exchange Axiom}.}
For any two bases $A, B\in M$ and any $i\in A\setminus B$, there exists
$j\in B\setminus A$ such that both $(A\setminus\{i\})\cup\{j\}$ and 
$(B\setminus\{j\})\cup\{i\}$ are bases of $M$.
\end{definition}

The Basis Exchange Axiom implies that the bases of $M$ all have the same cardinality. This value is called the \emph{rank} of $M$.



\begin{remark}
In the above definition we identify the matroid with its set of bases.
Note also that the Basis Exchange Axiom is given in its ``strong version;''
this is known to be equivalent to the ``weaker version'' which only states
that for all $A,B\in M$ and all $i\in A\setminus B$, there exists $j\in
B\setminus A$ such that $(A\setminus \{i\})\cup \{j\}\in M$.
\end{remark}

Recall that any graph $G$ has an associated matroid $M_G$. Namely, for a connected graph $G$ with edge set $E=\{e_1,\ldots,e_n\}$ indexed by $[n]$, the bases of the matroid $M_G$ correspond to the spanning trees of $G$. Precisely, $A\subseteq [n]$ is a basis of $M_G$ if and only if $\{\,e_i\mid i\in A\,\}$ is the edge set of a spanning tree of $G$. For a non-connected graph $G$, the matroid $M_G$ is defined similarly but considering the maximal spanning forests of $G$ (consisting of one spanning tree in each connected component of $G$) instead of the spanning trees. A matroid obtained in this manner is called \emph{graphical}.

\begin{definition}\label{def:activity-matroid}
Let $M$ be a matroid on the ground set $[n]$, and let $A\in M$ be a basis of $M$.
An element $i\in A$ is called
\emph{internally active} if there is no $j<i$ such that
$(A\setminus\{i\})\cup\{j\}$ is a basis of $M$.
Let $\Int_M(A)\subseteq A$ denote the set of internally active elements with respect to the basis $A$.

An element $i\in[n]\setminus A$ is called \emph{externally active} if there is no $j<i$ such that
$(A\cup\{i\})\setminus\{j\}$ is a basis of $M$.
Let $\Ext_M(A)\subseteq [n]\setminus A$ denote the set of externally active elements with respect to the basis $A$.
\end{definition}

\begin{remark}\label{rk:fundamental-cycle}
The definitions of activity are sometimes presented in terms of the \emph{fundamental cycle} or \emph{cocycle} of an element $i\in[n]$, with respect to a basis $A$. Namely, an element $i\notin A$ (resp., $i\in A$) is called active if $i$ is the smallest element in the unique cycle of $A\cup\{i\}$ (resp.\ unique cocycle of $([n]\setminus A)\cup \{i\})$. It is easy to see that this is equivalent to Definition~\ref{def:activity-matroid}.
\end{remark}

\begin{definition}
\label{def:classical_Tutte}
The \emph{Tutte polynomial} $T_M(x,y)$ of the matroid $M$ is the following sum over all bases $A$ of $M$:
$$
T_M(x,y) = \sum_{A\in M} x^{|\Int_M(A)|}\, y^{|\Ext_M(A)|}.
$$
\end{definition}

\begin{example}
\label{ex:graph}
\begin{figure}[h]
\centering
\begin{tikzpicture}[scale=.20]

\draw [line width=5,opacity=.3] (8,0) -- (8,6);
\draw [line width=5,opacity=.3] (2,3) to [out=-90,in=150] (3.8,-.4) to [out=-30,in=-150] (8,0);
\draw [line width=5,opacity=.3] (8,6) to [out=30,in=140] (15.5,8) to [out=-40,in=110] (17,6);
\draw [line width=5,opacity=.3] (8,0) to [out=-30,in=-140] (15.5,-2) to [out=40,in=-110] (17,0);

\draw [thick] (8,0) -- (8,6);
\draw [thick] (2,3) to [out=90,in=-150] (3.8,6.4) to [out=30,in=150] (8,6);
\draw [thick] (2,3) to [out=-90,in=150] (3.8,-.4) to [out=-30,in=-150] (8,0);
\draw [thick] (8,6) to [out=30,in=140] (15.5,8) to [out=-40,in=110] (17,6);
\draw [thick] (8,0) to [out=-30,in=-140] (15.5,-2) to [out=40,in=-110] (17,0);
\draw [thick] (17,6) to [out=-70,in=90] (17.7,3) to [out=-90,in=70] (17,0);

\draw [fill] (2,3) circle [radius=.4];
\draw [fill] (8,0) circle [radius=.4];
\draw [fill] (8,6) circle [radius=.4];
\draw [fill] (17,0) circle [radius=.4];
\draw [fill] (17,6) circle [radius=.4];

\node at (2,6) {\small $1$};
\node at (2,-1) {\small $2$};
\node at (7,3) {\small $3$};
\node at (13.5,7.5) {\small $4$};
\node at (13.5,-1.5) {\small $5$};
\node at (18.6,3) {\small $6$};

\end{tikzpicture}
\caption{A graph with a spanning tree.}
\label{fig:graph}
\end{figure}
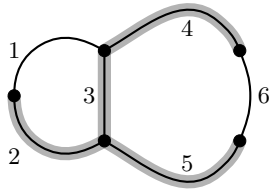

Consider the graph $G$ of Figure~\ref{fig:graph} and identify the set of its edges with the set $[6]$ as shown. The bases of the graphical matroid $M=M_G$ associated to $G$ correspond to the $11$ spanning trees of $G$. With respect to the spanning tree $A$ indicated in the figure, the internally active edges are $4$ and $5$, and the unique externally active edge is $1$. 
Thus, 
$A$ contributes $x^2y$ to the Tutte polynomial $T_M(x,y)$. After computing and adding the other $10$ contributions, one finds
\[T_M(x,y)=\left(
\begin{array}{rrrrr}
y^2&&&&\\
+y&+2xy&+x^2y&&\\
&+x&+2x^2&+2x^3&+x^4
\end{array}\right).\]
\end{example}

Clearly, the notions of internally and externally active elements depend on the ordering of the ground set $[n]$ of the matroid $M$.
However, as we now recall, the Tutte polynomial $T_M(x,y)$ is invariant under reordering the ground set. 
More precisely, let us consider the reordering induced by a permutation $w\in S_n$. The permutation $w$ acts on the power set $2^{[n]}$ as well, 
and gives a new matroid $w(M)$. Tutte 
established the following classical result for graphs, which was later extended to matroids by Crapo. 

\begin{theorem}[\cite{Tut,crapo}] 
For any matroid $M\subseteq 2^{[n]}$ and any permutation 
$w\in S_n$, we have
$$
T_M(x,y) = T_{w(M)}(x,y).
$$
\end{theorem}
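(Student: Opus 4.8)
The plan is to prove the $S_n$-invariance of $T_M(x,y)$ by induction on $n$, using a deletion--contraction recursion; this is the standard route, and I will set it up carefully so that the activities interact correctly with the recursion. First I would recall the operations of deletion and contraction: if $e \in [n]$ is not a coloop of $M$ (i.e. $e$ belongs to some non-basis, equivalently $e$ is not in every basis), then $M \setminus e$ is the matroid on $[n]\setminus\{e\}$ whose bases are the bases of $M$ not containing $e$; and if $e$ is not a loop (i.e. $e$ lies in some basis), then $M / e$ is the matroid on $[n]\setminus\{e\}$ whose bases are $\{B \setminus \{e\} : B \ni e,\ B \in M\}$. The key combinatorial lemma is the deletion--contraction identity: for any $e$ that is neither a loop nor a coloop,
\[
T_M(x,y) = T_{M\setminus e}(x,y) + T_{M/e}(x,y),
\]
while if $e$ is a coloop then $T_M = x\,T_{M\setminus e}$ and if $e$ is a loop then $T_M = y\,T_{M/e}$.

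To establish this identity from the activities definition, I would choose $e$ to be the \emph{largest} element of the ground set, say $e = n$; this choice makes the bookkeeping of activities cleanest, because $n$ can never be the ``witness'' $j < i$ that deactivates another element. The argument then proceeds by partitioning the bases of $M$ into those containing $n$ and those not containing $n$, and checking in each case that the internal/external activity of every \emph{other} element $i \neq n$, computed in $M$, agrees with its activity computed in $M/e$ (respectively $M \setminus e$), and separately tracking whether $n$ itself is active. When $n$ is neither a loop nor coloop: a basis $B \not\ni n$ of $M$ has $n$ externally active iff $B \cup \{n\}$ contains no cycle with a smaller element, but since $n$ is the largest index this cycle always has a smaller element, so $n$ is never externally active here, and the activities of the remaining elements match those in $M \setminus n$; dually, a basis $B \ni n$ never has $n$ internally active, and the remaining activities match $M/n$. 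This yields exactly $T_{M\setminus e} + T_{M/e}$. The loop and coloop cases are direct: a loop is externally active with respect to every basis and contributes a global factor of $y$; a coloop is internally active with respect to every basis and contributes a global factor of $x$.

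With the deletion--contraction recursion in hand, $S_n$-invariance follows by induction on $n$. The base case $n = 0$ (or $n=1$) is trivial. For the inductive step, since $S_n$ is generated by adjacent transpositions $s_i = (i, i+1)$, it suffices to show $T_M = T_{s_i(M)}$ for each $i$. One deletes and contracts at an element $e \notin \{i, i+1\}$ whenever such an element exists that is neither a loop nor a coloop: deletion and contraction commute with the relabeling, the ground sets drop to size $n-1$, and the inductive hypothesis applies to $M \setminus e$ and $M/e$. The remaining cases — when every element outside $\{i,i+1\}$ is a loop or a coloop, or when $n \le 2$ — reduce the matroid to a direct sum of loops, coloops, and a small matroid on $\{i,i+1\}$; here one checks invariance directly, using that $T$ of a direct sum is the product and that on a two-element ground set the only matroids are combinations of loops, coloops, the two-element circuit, and the uniform matroid $U_{1,2}$, all of which have manifestly symmetric Tutte polynomials.

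The main obstacle is proving the deletion--contraction identity \emph{from the activities definition} rather than taking it as given: one must verify that the internal and external activities of the surviving ground-set elements are genuinely unchanged when passing to $M \setminus e$ or $M / e$ with $e = \max([n])$. This requires the observation about fundamental cycles and cocycles (Remark~\ref{rk:fundamental-cycle}): deleting $e$ does not affect the fundamental cycle of any $i \ne e$ with respect to a basis avoiding $e$ (since that cycle never used $e$), and contracting $e$ does not affect the fundamental cocycle of any $i \ne e$ with respect to a basis containing $e$; the subtle point is that, because $e$ is the largest index, it could never have been the minimal element of anyone's fundamental cycle or cocycle, so its removal changes no activity status. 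Once this is checked carefully, the rest of the proof is routine bookkeeping.
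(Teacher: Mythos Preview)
The paper does not prove this theorem; it is stated as a classical result of Tutte and Crapo and cited without proof. The paper's later development does furnish an independent route: Theorem~\ref{th:order_independence} establishes $S_n$-invariance for the polymatroid Tutte polynomial via the manifestly invariant corank--nullity expression of Theorem~\ref{th:tilde_Tutte}, and together with Theorem~\ref{thm:oldandnew} this specializes to the matroid case. That route is essentially the corank--nullity formula~\eqref{eq:corank-nullity-mat}, which you do not use.

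Your deletion--contraction approach is standard and mostly sound, but there is a gap in the inductive step. You prove the deletion--contraction identity from the activities definition only for $e = n$, the \emph{largest} element --- correctly noting that this choice is what makes the activity bookkeeping work. In the induction, however, you then apply the identity at ``an element $e \notin \{i, i+1\}$'' for an arbitrary adjacent transposition $s_i$. When $i = n-1$ this forces $e \le n-2$, and you have not established the identity for such $e$; establishing it for general $e$ from the activities definition would itself require the $S_n$-invariance you are proving. Your ``remaining cases'' clause does not cover this situation: for a generic matroid with $i = n-1$ there will be ordinary (non-loop, non-coloop) elements in $[n-2]$, and you have no recursion available at them.

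The gap is reparable within your framework. For $s_{n-1}$, apply your recursion at $n$ to both $T_M$ and $T_{s_{n-1}(M)}$; observe that $s_{n-1}(M)\setminus n$ and $s_{n-1}(M)/n$ are, up to the order-preserving relabeling $n\mapsto n-1$, exactly $M\setminus(n-1)$ and $M/(n-1)$; then recurse once more at the new maximum element on each side and use that deletion and contraction at distinct elements commute. Alternatively, and more cleanly, iterate your recursion at $n, n-1, \ldots, 1$ to derive~\eqref{eq:corank-nullity-mat} directly; that expression is manifestly $S_n$-invariant, and this is essentially the route the paper takes in the polymatroid setting.
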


Lastly, we recall the other (equivalent) characterization of the Tutte polynomial of a matroid $M\subseteq 2^{[n]}$:
\begin{equation}\label{eq:corank-nullity-mat}
T_M(x,y) = \sum_{S\subseteq [n]}(x-1)^{\cork(S)}(y-1)^{\nul(S)},
\end{equation}
where the \emph{corank} $\cork(S)$ of a subset $S$ is the minimal number of elements one needs to add to $S$ in order to obtain a set $S^+$ containing a basis of $M$, and the \emph{nullity} $\nul(S)$ is the minimal number of elements one needs to delete from $S$ in order to obtain a set $S^-$ contained in a basis of $M$.



\section{Background: polymatroids}
\label{sec:polymatroids}

Polymatroids, introduced by Edmonds~\cite{Edmonds}, are a generalization of matroids.
In this paper we will restrict our attention to so-called \emph{integer polymatroids}, and simply call them \emph{polymatroids}.
There are several alternative ways to define and view polymatroids: 
as $M$-convex sets $P\subset \Z^n$, as submodular functions
$f\colon2^{[n]}\to\Z$, and as the class of polytopes $\mP\subset \R^n$ called 
(integer) generalized permutohedra. We now review these three perspectives in order, and then we introduce polymatroid duality and the generalized Ehrhart polynomial.


\subsection{M-convexity}

Fix a positive integer $n$, and recall that $\e_1,\dots,\e_n$ denotes the canonical basis of $\R^n$.

\begin{definition}
\label{def:Mconvex_polymatroids}
An \emph{M-convex set} is any subset $P\subset \Z^n$
that satisfies:
\smallskip 

\noindent {\underline{Exchange Axiom}.}
For any $\a=(a_1,\ldots, a_n),\b=(b_1,\ldots, b_n)\in P$, and any $i\in [n]$ such that $a_i>b_i$, there exists $j\in[n]$ such that $a_j<b_j$ and both 
$\a-\e_i+\e_j$ and $\b+\e_i-\e_j$ belong to $P$.

\smallskip

A 
\emph{polymatroid} 
is a finite nonempty M-convex set.
The elements of such a set are called the \emph{bases} of the polymatroid.
\end{definition}

The Exchange Axiom implies that any M-convex set belongs to an
affine hyperplane $\{\,x_1+\cdots+x_n=c\,\}$, for some constant
$c\in\Z$. The constant $c$ is called the \emph{level} of $P$ and is denoted by $\level(P)$.
It is also clear that adding the same vector to each element of an M-convex set yields another M-convex set.


\begin{remark}
Everywhere in this paper the term ``polymatroid'' stands for a subset of $\Z^n$ (something that may be called ``integer polymatroid'' elsewhere).
On the other hand, our polymatroids are slightly more general than Edmonds' 
original notion~\cite{Edmonds} in that Edmonds' polymatroids 
belong to the positive orthant $\R_{\geq 0}^n$. We do not need this
condition, so we omitted it in order to simplify the notation.
Since our constructions are all translation-invariant, the reader who prefers polymatroids to belong to the positive orthant may assume so without loss of generality.
\end{remark}

\begin{remark}\label{rk:mat-are-polymat}
(Matroids as polymatroids)
Let $M\subseteq 2^{[n]}$ be a matroid (viewed as the collection of its bases as in Section~\ref{sec:matroids}). By identifying each subset $B\subseteq [n]$ with the vector 
$\b=\sum_{i\in B} \e_i \in \{0,1\}^n$, it
is clear that any matroid $M$ yields a polymatroid $P(M) :=
\{\,\b \mid B\in M\,\}$. This shows that matroids with
ground set $[n]$ are in bijection with polymatroids contained in
$\{0,1\}^n$. 
\end{remark}


\subsection{Submodular functions}\label{sec:submodular-function}


We use~\cite{Edmonds} and~\cite[chapter 44]{sch} as general references for results mentioned in this subsection.

\begin{definition}
A function $f\colon2^{[n]}\to\R$ is \emph{submodular} if 
$f(\varnothing)=0$ and, 
for any subsets $I,J\subseteq [n]$, we have
\begin{equation}
\label{eq:submod}
f(I)+f(J)\geq f(I\cup J)+f(I\cap J).
\end{equation}
\end{definition}

Polymatroids $P\subset \Z^n$ bijective with integer submodular functions $f\colon2^{[n]}\to \Z$.
Indeed, for a polymatroid $P\subset \Z^n$, the \emph{rank function} $f = f_P$ of $P$, 
defined by 
$$
f(I)=\max_{\a\in P}\sum_{i\in I} a_i
$$
for all $I\subseteq [n]$, is a submodular function. Conversely, for an integer submodular function 
$f\colon2^{[n]}\to \Z$, the associated polymatroid $P= P_f$ is given by 
\[
P_f:=\left\{\x\in \Z^n\,\biggm|\,\sum_{i\in I} x_i \leq f(I) \text{ for any subset }I\subseteq [n], \text{ and }
\sum_{i\in[n]} x_i = f([n])\right\}.
\]

The same scheme applies to infinite M-convex sets, which correspond to submodular functions that may take $\infty$ as a value.
Note that if we translate the M-convex set $P$
by a vector $\mathbf{v}\in\Z^n$, then its rank function $f=f_P$ shifts by the 
function $I\mapsto\sum_{i\in I}v_i$ (which satisfies~\eqref{eq:submod} with equality). 




\begin{remark}[Rank functions of matroids]
The rank function $f\colon2^{[n]}\to \Z$ of a matroid $M\subseteq 2^{[n]}$ 
is usually defined to associate, to a subset $I\subseteq[n]$, the maximal cardinality of an intersection of $I$ with a basis of $M$. This 
is equal to the rank function of the associated polymatroid $P(M)\subset\Z^n$. 
In fact, from the point of view of rank
functions, matroids correspond to polymatroids such that the rank function $f$
satisfies $0\leq f(I)\leq f(I\cup\{i\})\leq
f(I)+1$ for any $I\subseteq [n]$ and any $i\in[n]$.
\end{remark}

\subsection{Generalized permutohedra}\label{sec:permutohedra}

We mention two definitions below; for the proof of their equivalence, and several other alternative characterizations 
of generalized permutohedra, see \cite[Theorem~15.3]{PRW}.



\begin{definition}~\cite[Section 6]{Pos} 
\label{def:gen_perm}
A \emph{generalized permutohedron} is a convex polytope $\mP$ in $\R^n$ 
such that every edge of $\mP$ is parallel to a vector of the form $\e_i
- \e_j$ for some $i,j\in[n]$ (where, as usual, $\e_1,\dots,\e_n$ denotes the canonical basis of $\R^n$).

Equivalently, a generalized permutohedron 
is a polytope in $\R^n$ 
whose normal fan 
is a coarsening of the Coxeter fan, that is, the normal fan of the 
\emph{standard permutohedron}
$\Pi_n:=\conv\{\,(w(1),\dots,w(n))\mid w\in S_n\,\}\subset \R^n$.
\end{definition}

To any matroid $M\subseteq 2^{[n]}$ one can associate the \emph{matroid polytope} (also known as \emph{matroid base polytope})
$$
\mP(M) := \conv\left\{\sum_{i\in B} \e_i\,\biggm|\,B\in M\right\}\subset \R^n. 
$$
In~\cite{GGMS}, Gelfand, Goresky, MacPherson and Serganova give the following 
characterization of matroid polytopes.
\begin{theorem}~\cite{GGMS} 
\label{th:GGMS}
A polytope $\mP\subset\R^n$, whose vertices all belong to the set $\{0,1\}^n$, 
is a matroid polytope $\mP(M)$ of some matroid $M$ if and only if every edge of $\mP$ is parallel to a vector of the form $\e_i - \e_j$.
\end{theorem}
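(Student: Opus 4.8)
I will prove the two implications separately, in each case reading off information from the vertex--edge structure of $0/1$-polytopes. Throughout, identify $B\subseteq[n]$ with its indicator vector $\mathbf 1_B:=\sum_{i\in B}\e_i$. A preliminary observation (verified by reading off coordinates) is that a $0/1$-point cannot lie in the convex hull of other $0/1$-points; hence the vertices of $\mathcal P(M)$ are exactly the points $\mathbf 1_B$, $B\in M$, and more generally a $0/1$-polytope is recovered as the convex hull of its (necessarily $0/1$) vertices.

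\textbf{($\Rightarrow$)} Suppose $\mathcal P=\mathcal P(M)$ and let $[\mathbf 1_A,\mathbf 1_B]$ be an edge, $A\neq B$. Since bases of a matroid are equicardinal, $|A\setminus B|=|B\setminus A|=:k\geq 1$. Assume $k\geq 2$ and pick $i\in A\setminus B$. The (strong) Basis Exchange Axiom gives $j\in B\setminus A$ with $A':=(A\setminus\{i\})\cup\{j\}\in M$ and $B':=(B\setminus\{j\})\cup\{i\}\in M$, so that $\mathbf 1_{A'}+\mathbf 1_{B'}=\mathbf 1_A+\mathbf 1_B$; moreover $A'\neq A$ trivially and $A'\neq B$ since $|A'\triangle A|=2$ while $|A\triangle B|=2k\geq 4$. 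Thus the midpoint of $[\mathbf 1_A,\mathbf 1_B]$ equals $\tfrac12(\mathbf 1_{A'}+\mathbf 1_{B'})$ with $\mathbf 1_{A'},\mathbf 1_{B'}\in\mathcal P(M)$; since a face of a polytope that contains the midpoint of a segment with endpoints in the polytope must contain the whole segment, we get $\mathbf 1_{A'}\in[\mathbf 1_A,\mathbf 1_B]$, contradicting the fact that the only lattice points of that segment are $\mathbf 1_A$ and $\mathbf 1_B$. Hence $k=1$, i.e.\ $B=(A\setminus\{i\})\cup\{j\}$ and $\mathbf 1_B-\mathbf 1_A=\e_j-\e_i$.

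\textbf{($\Leftarrow$)} Suppose $\mathcal P$ has all vertices in $\{0,1\}^n$ (so $\mathcal P\subseteq[0,1]^n$) and every edge parallel to some $\e_i-\e_j$. Put $M:=\{B\subseteq[n]\mid\mathbf 1_B\text{ is a vertex of }\mathcal P\}$, so that $\mathcal P=\mathcal P(M)$; it remains to check that $M$ satisfies basis exchange, and by the remark in Section~\ref{sec:matroids} it suffices to verify the weak form. Given $A,B\in M$ and $i\in A\setminus B$, pass to
$$
F:=\mathcal P\cap\bigcap_{k\in A\cap B}\{x_k=1\}\cap\bigcap_{k\notin A\cup B}\{x_k=0\},
$$
which is a face of $\mathcal P$ (an intersection of faces, since $0\le x_k\le 1$ on $\mathcal P$) containing $\mathbf 1_A$ and $\mathbf 1_B$, and whose vertices are exactly the $\mathbf 1_C$ with $C\in M$ and $A\cap B\subseteq C\subseteq A\cup B$. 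On $F$ the linear functional $x\mapsto x_i$ attains its minimum value $0$ at $\mathbf 1_B$ but not at $\mathbf 1_A$, so some edge of $F$ --- hence an edge of $\mathcal P$, thus parallel to some $\e_p-\e_q$ --- issues from $\mathbf 1_A$ along which $x_i$ strictly decreases. Inspecting $0/1$-coordinates, that edge must run from $\mathbf 1_A$ to $\mathbf 1_{A'}$ with $A'=(A\setminus\{i\})\cup\{p\}$ and $p\notin A$; and $A'\subseteq A\cup B$ because $\mathbf 1_{A'}$ is a vertex of $F$, so $p\in B\setminus A$ and $(A\setminus\{i\})\cup\{p\}\in M$, which is the weak exchange property. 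Hence $M$ is a matroid with $\mathcal P(M)=\mathcal P$.

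\textbf{Main obstacle.} The $0/1$-coordinate bookkeeping and the standard polytope fact that a linear functional not minimized at a vertex strictly decreases along some edge there are routine. The genuine difficulty is in the converse direction: walking ``downhill in $x_i$'' inside $\mathcal P$ only yields an exchange index $p\notin A$, whereas the axiom requires $p\in B$. Confining the walk to the face $F$ --- which forces every intermediate vertex $C$ to satisfy $A\cap B\subseteq C\subseteq A\cup B$ --- is precisely what produces $p\in B$; checking that $F$ is a face, identifying its vertices, and noting that edges of $F$ are edges of $\mathcal P$ is the crux. (Alternatively, once the correspondence between integer generalized permutohedra and polymatroids is available, the converse reduces to observing that a $\{0,1\}$-generalized permutohedron has rank function $f$ with $0\le f(I)\le f(I\cup\{i\})\le f(I)+1$, cf.\ the remark on rank functions of matroids in Section~\ref{sec:polymatroids}; but that correspondence is of comparable depth, so I prefer the self-contained argument above.)
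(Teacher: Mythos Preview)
The paper does not actually prove this theorem; it is stated with the citation \cite{GGMS} and used as a known result. So there is no in-paper proof to compare your attempt against.

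Your argument is correct and self-contained. In the forward direction you use the strong Basis Exchange Axiom (which is precisely the form adopted in Section~\ref{sec:matroids}) to produce a nontrivial convex relation $\tfrac12(\mathbf 1_A+\mathbf 1_B)=\tfrac12(\mathbf 1_{A'}+\mathbf 1_{B'})$ and conclude via the face property; the lattice-point observation that forces $\mathbf 1_{A'}$ to be an endpoint is justified since $\mathbf 1_B-\mathbf 1_A$ has entries in $\{-1,0,1\}$ and is therefore primitive. In the reverse direction the key idea---restricting to the face $F$ obtained by freezing the coordinates in $A\cap B$ to $1$ and those outside $A\cup B$ to $0$---is exactly what is needed to turn ``some $p\notin A$'' into ``some $p\in B\setminus A$''. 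The one step you compress (deducing $q=i$ from the fact that $x_i$ strictly decreases along the edge) is easily unpacked: if the edge has direction $\e_p-\e_q$ with $q\in A$, $p\notin A$, then $x_i$ changes along it only if $i\in\{p,q\}$, and $i\in A$ forces $i=q$. All the polytope facts you invoke (faces of faces are faces, the simplex-type decrease along an edge from a non-minimizing vertex, vertices of a $0/1$-polytope being exactly its $0/1$ points) are standard.

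Your closing remark that one could alternatively go through the polymatroid/submodular-function dictionary of Section~\ref{sec:polymatroids} is also accurate, and indeed closer in spirit to how the result is usually packaged in the generalized-permutohedron literature; your direct argument has the virtue of being elementary and not relying on that correspondence.
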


Clearly, Definition~\ref{def:gen_perm} of generalized permutohedra is an
extension of this characterization of matroid polytopes. Thus every matroid
polytope is a generalized permutohedron. Moreover, matroid polytopes are
exactly the generalized permutohedra whose vertices belong to the set
$\{0,1\}^n$.

Generalized permutohedra $\mP\subset \R^n$ can also be described by inequalities, namely
$$
\mP = \mP_f :=\left\{\x\in \R^n \,\biggm|\, \sum_{i\in I} x_i \leq f(I)\text{ for all }I \subseteq [n],
\text{ and } \sum_{i\in[n]}x_i = f([n])\right\},
$$
where $f\colon2^{[n]}\to \R$ is a submodular function.
This gives a one-to-one correspondence between generalized permutohedra 
and $\R$-valued submodular functions. 

A generalized permutohedron $\mP$ is \emph{integer} if all its vertices
lie in $\Z^n$, or, equivalently, if the corresponding
submodular function $f$ is integer-valued.
Integer generalized permutohedra $\mP\subset \R^n$ 
are in a one-to-one correspondence with polymatroids $P\subset\Z^n$. The correspondence is simply given by 
$$
P=\mP\cap \Z^n\quad\text{and, conversely,}\quad \mP = \conv(P).
$$

\begin{remark}[Polymatroids and Minkowski sums]\label{rk:polymat-Minkowski}
Recall that the \emph{Minkowski sum} of two subsets $\mA,\mB$ of $\R^n$ is defined as
$$\mA+\mB=\{\,\a+\b\mid \a\in \mathcal \mA,\, \b\in \mathcal \mB\,\}.$$
It is easy to see that the class of generalized permutohedra in $\R^n$ is closed under taking Minkowski sums.
Indeed, for two submodular functions $f,g\colon2^{[n]}\to\Z$,
the function $f+g$ is clearly submodular, and hence
the Minkowski sum $\mP_f+\mP_g$ of the associated 
generalized permutohedra is the 
generalized permutohedron $\mP_{f+g}$.
Although it is less trivial, the identity also holds at the level of polymatroids\OB{Maybe we should add a reference here to justify this fact}: the Minkowski sum $P_f + P_g$ of the polymatroids associated to $f$ and $g$ is the polymatroid $P_{f+g}$. Hence, the class of polymatroids in $\Z^n$ is closed under Minkowski sums.
\end{remark}

\begin{figure}[h]
\centering
\begin{tikzpicture}[scale=.35]

\draw [->] (0,-1) -- (0,7.2);
\draw [->] (0,-1) -- (-7,-4.5);
\draw [->] (0,-1) -- (7.2,-2.2);
\node at (-7.6,-4.8) {\small $x_1$};
\node at (8.1,-2.35) {\small $x_2$};
\node at (0,8) {\small $x_3$};
\draw [help lines] (0,6) -- (-6,-4) -- (6,-2) -- cycle;
\draw [fill=lightgray,lightgray] (0,-3) circle [radius=.3];
\draw [fill=lightgray,lightgray] (3,-2.5) circle [radius=.3];
\draw [fill=lightgray,lightgray] (-1.5,-1) circle [radius=.3];
\draw [fill=lightgray,lightgray] (1.5,-.5) circle [radius=.3];
\draw [fill=lightgray,lightgray] (4.5,0) circle [radius=.3];
\draw [fill=lightgray,lightgray] (-3,1) circle [radius=.3];
\draw [fill=white,white] (0,1.5) circle [radius=.4];
\draw [fill=lightgray,lightgray] (0,1.5) circle [radius=.3];
\draw [fill=lightgray,lightgray] (3,2) circle [radius=.3];
\draw [fill=lightgray,lightgray] (-1.5,3.5) circle [radius=.3];
\draw [fill=lightgray,lightgray] (1.5,4) circle [radius=.3];
\draw [fill=lightgray,lightgray] (0,6) circle [radius=.3];

\node [right] at (.3,6) {\small $(0,0,4)$};

\end{tikzpicture}
\caption{A polymatroid 
(with bases indicated by large dots) in $\Z^3$.}
\label{fig:polymatroid}
\end{figure}
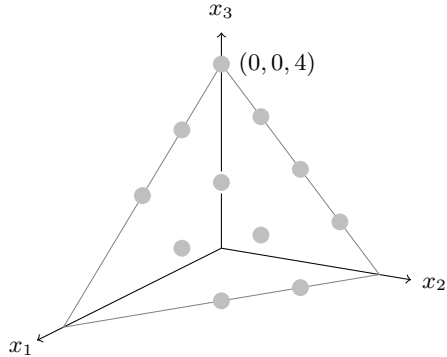

\begin{example}
\label{ex:polymatroid}
Figure~\ref{fig:polymatroid} shows a polymatroid $P\subset\Z^3$ which has 11 bases. The rank function $f\colon2^{[3]}\to\Z$ of $P$
is given by $f(123)= f(12)=f(13)=f(23)=4$, $f(1)=2$, $f(2)=3$, $f(3)=4$, and $f(\varnothing)=0$. 
(Here $123$ is an abbreviation for $\{1,2,3\}$, etc.)
The polymatroid $P_H$ lies in the plane $x_1+x_2+x_3 = 4$, and is given  by the inequalities $0\leq x_1\leq 2$, $0\leq x_2\leq 3$, $0\leq x_3\leq 4$. 
Note that the three non-negativity constraints are equivalent to upper bounds, by values of the rank function given above,
on the three possible sums of two coordinates.

The convex hull $\mP =\conv(P)\subset\R^3$ is a generalized permutohedron.
Indeed, every edge of $\mP$ is parallel to a vector of the form $\e_i-\e_j$.
Notice that 
$\mP$ can be expressed as a Minkowski sum 
of coordinate simplices $\Delta_I:=\conv\{\,\e_i \mid i\in I\,\}$
(which are themselves generalized permutohedra) as follows:
$$
\mP = \Delta_{13} + 2 \Delta_{23} + \Delta_{123}.
$$
\end{example}

\subsection{Polymatroid duality}
\label{sec:duality}

Given a matroid $M$ with ground set $[n]$, the \emph{dual matroid} is 
$$M^*=\{\,[n]\setminus A\mid A\in M\,\}.$$
In other words, the bases of $M^*$ are the complements of the bases of $M$. 
It is clear from the definitions that for any polymatroid $P\subset\Z^n$, the set 
$$-P:=\{\,(-x_1,\ldots,-x_n)\mid(x_1,\ldots,x_n)\in P\,\}$$ 
is a polymatroid. 
We call $-P$ the \emph{dual} of $P$.
Note that there is a slight discrepancy between the notions of duality for matroids and polymatroids. Indeed, the polymatroid $P(M^*)$ is obtained from the polymatroid $-P(M)$ by translating it by the vector $(1,1,\ldots,1)$. 
We will ignore this issue because, 
as we will see below, there is very little difference for us between a polymatroid and its translates.

\subsection{Generalized Ehrhart polynomial}\label{sec:gen-Ehrhart-poly}
It turns out that the number of bases $|P|$ of a polymatroid $P=P_f$ or,
equivalently, the number of integer lattice points $|\mP\cap
\Z^n|$ of an integer generalized permutohedron $\mP=\mP_f$, is a
polynomial function of the values $(f(I))_{I\subseteq[n]}$ of its rank function $f$. This fact
follows from a general result of Brion~\cite{Brion} about lattice points of 
polytopes, as explained in~\cite[Sections~11 and 19]{Pos}. 

\begin{proposition}[\cite{Pos}] \label{prop:generalized-Ehrhart}
Let $n$ be a positive integer, and let $(z_I)_{\varnothing \neq I\subseteq [n]}$ be a set of variables indexed by the $2^{n}-1$ non-empty subsets of $[n]$.
\begin{compactenum}
\item 
There exists a unique polynomial $E_n((z_I))$ such that for any 
 integer-valued submodular function $f\colon2^{[n]}\to\Z$, the number of bases in the polymatroid $P_f=\mP_f\cap \Z^n$ of rank function $f$ is equal to the evaluation $E_n((f(I)))$. 
\item \label{ketto}
The polynomial $E_n$ has degree $n-1$, and $(n-1)!\,E_n$ has integer coefficients.
\end{compactenum}
\end{proposition}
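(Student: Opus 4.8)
The plan is to realize each face of a generalized permutohedron (and in particular the base polytope $\mP_f$ itself) as a lattice polytope whose lattice-point count is governed by Brion's formula, and then to argue polynomiality and the degree/integrality bound directly from the tangent-cone decomposition. First I would recall from \cite[Sections~11 and 19]{Pos} that for a simple generalized permutohedron, or after a generic perturbation of $f$, the polytope $\mP_f$ is simple, so Brion's theorem writes
$$
\sum_{\x\in\mP_f\cap\Z^n} \mathbf{t}^{\x} \;=\; \sum_{v\text{ vertex of }\mP_f} \frac{\mathbf{t}^{v}}{\prod_{k}(1-\mathbf{t}^{w_{v,k}})},
$$
where the $w_{v,k}$ are the primitive edge vectors of $\mP_f$ at $v$, each of the form $\e_i-\e_j$. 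Specializing $\mathbf{t}\to\mathbf{1}$ (via the standard substitution $t_i=q^{\lambda_i}$ for generic $\lambda$ and taking $q\to 1$, à la Brion--Vergne) yields $|\mP_f\cap\Z^n|$ as a sum over vertices of $1$ divided by a product of $n-1$ linear forms in the $\lambda_i$, evaluated on the numerator expansion $\mathbf{t}^{v}\mapsto e^{\langle\lambda,v\rangle}$; extracting the constant term in $\lambda$ gives a finite sum of terms, each of which is a polynomial of degree $n-1$ in the coordinates of the vertices $v$. Since each vertex $v$ of $\mP_f$ is the solution of a triangular linear system in the values $(f(I))$ — concretely, $v_i = f(S_i)-f(S_{i-1})$ along the chain $\varnothing=S_0\subset S_1\subset\cdots\subset S_n=[n]$ determined by the normal cone — each coordinate is an integer linear combination of the $f(I)$'s, so the whole count is a polynomial of degree $\le n-1$ in the $z_I:=f(I)$.

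The next step is uniqueness and the fact that the polynomial does not depend on the choices (perturbation, enumeration of cones): since integer submodular functions form a full-dimensional set in the lattice $\Z^{2^n-1}$ (e.g.\ all functions $I\mapsto c|I|$ for $c$ large are submodular, and one can perturb within the submodular cone), a polynomial identity holding on all of them holds as a polynomial identity; this forces uniqueness of $E_n$ and also shows the perturbation-dependent formula descends to a single polynomial. For the degree bound in part~\eqref{ketto}, the Brion expansion above already exhibits $E_n$ as a sum of rational functions whose Taylor expansion starts in degree $-(n-1)$; the constant term of the Laurent expansion is thus a polynomial of degree exactly $n-1$ in the vertex coordinates, hence of degree $\le n-1$ in the $z_I$, with equality witnessed by a single simplex (e.g.\ the standard simplex $\Delta$, whose dilate has $\binom{m+n-1}{n-1}$ lattice points, a degree-$(n-1)$ polynomial in $m$).

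For the integrality statement that $(n-1)!\,E_n$ has integer coefficients, I would use the Ehrhart-theoretic fact that for a fixed submodular $f$ the function $m\mapsto |\mP_{mf}\cap\Z^n| = E_n((mf(I)))$ is the Ehrhart polynomial of the lattice polytope $\mP_f$, hence lies in $\tfrac{1}{(n-1)!}\Z[m]$ because $\mP_f$ has dimension $\le n-1$ (it sits in the hyperplane $\sum x_i=f([n])$) and Ehrhart polynomials of lattice polytopes of dimension $d$ have leading coefficient the normalized volume and, more to the point, satisfy $d!\cdot\mathrm{ehr}\in\Z[m]$. Running this over enough independent directions $f$ in the submodular cone — equivalently, testing the multivariate polynomial $(n-1)!\,E_n$ on a lattice basis of directions — pins down that all coefficients of $(n-1)!\,E_n$ are integers. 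I expect the main obstacle to be the bookkeeping in the non-simple case: when $\mP_f$ is not simple, Brion's formula must be applied to a simplicial refinement of the normal fan (Postnikov's ``draconian'' combinatorics), and one must check that the resulting formula still assembles into a single polynomial in the $z_I$ valid on the whole submodular cone. This is handled by the perturbation argument — perturb $f$ within the submodular cone to make $\mP_f$ simple, apply the simple-case formula, and then invoke continuity/polynomiality to remove the perturbation — but making this rigorous (and compatible with the integrality claim, where one cannot perturb outside $\Z$) is the delicate point and is precisely where the cited results of \cite{Pos,Brion} do the real work.
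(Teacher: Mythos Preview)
The paper does not give its own proof of this proposition; it is cited from \cite{Pos}, so there is no ``paper's proof'' to compare against beyond the pointer to Brion's formula and the explicit draconian-sequence expression in \cite[Theorem~11.3]{Pos}. Your outline via Brion is the right framework and is essentially the argument in \cite{Pos}: vertices of $\mP_f$ are integer-linear in the $f(I)$, and the tangent-cone expansion produces a degree-$(n-1)$ polynomial in those coordinates. The existence and degree claims, and uniqueness by Zariski density of integer submodular functions, are fine (though your witness $I\mapsto c|I|$ only sweeps out a line; the full-dimensionality is better seen via the functions $f_\m$ of Remark~\ref{rk:zI-to-tI-meaning}).

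There is, however, a genuine gap in your integrality argument. From the Ehrhart fact you get that for every integer submodular $f$, each homogeneous component $(n-1)!\,E_n^{(k)}$ takes an integer value at $(f(I))$. But an integer-valued \emph{homogeneous} polynomial on $\Z^m$ need not have integer coefficients: for instance $P(z_1,z_2)=\tfrac{1}{2}\,z_1 z_2(z_1-z_2)$ is homogeneous of degree~$3$, integer-valued on all of $\Z^2$ (one of $z_1$, $z_2$, $z_1-z_2$ is always even), yet has half-integer coefficients. So ``testing on a lattice basis of directions'' cannot pin down integrality of the coefficients. In \cite{Pos} the integrality of $(n-1)!\,E_n$ is read off directly from the explicit formula in terms of draconian sequences (this is exactly the expression recalled in Definition~\ref{def:dragon-poly} and equation~\eqref{eq:Dn-to-En} of the present paper), where each summand is a product of binomial coefficients $\binom{t_I}{g(I)}$ and the only denominators come from a single $(n-1)!$. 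If you want to avoid that explicit formula, you would need to extract the integrality directly from the Brion expansion by analyzing the constant term at each vertex, not from the univariate Ehrhart specialization.
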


The polynomial $E_n$ is called the \emph{generalized Ehrhart polynomial} of permutohedra.
An explicit formula for $E_n$ is given in~\cite[Theorem~11.3]{Pos}. This formula, which is related to \emph{hypertrees} (or, equivalently, \emph{draconian sequences}) will be recalled in Section~\ref{sec:formula_universal_Tutte}.
We will in fact use the following easy corollary about the number of (relative) interior points of polymatroids.

\begin{corollary}\label{cor:Dragon-exists}
Let $P\subset \Z^n$ be a polymatroid, and let $f$ be its rank function. Let $\mP=\conv(P)$ be the associated permutahedron, of dimension $\dim(\mP)$, and let $\mP^\circ$ be its \emph{relative interior} (the interior of $P$ in the affine subspace it spans). 
The number of interior lattice points
$|\mP^\circ\cap \Z^n|$ is given by the evaluation $(-1)^{\dim(\mP)}E_n((-f(I)))$ of the generalized Ehrhart polynomial. 

In particular, for the class of polymatroids of fixed dimension $d$, the number $|\mP^\circ\cap \Z^n|$ of interior points is a polynomial function of the rank values $(f(I))_{I\subseteq [n]}$ with coefficients in $\frac{1}{(n-1)!}\,\Z$. 
\end{corollary}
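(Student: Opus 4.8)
The plan is to deduce Corollary~\ref{cor:Dragon-exists} from Proposition~\ref{prop:generalized-Ehrhart} by combining Ehrhart(--Macdonald) reciprocity with the dimension-stratification of the family of integer generalized permutohedra. First I would recall that for a lattice polytope $\mP$ of dimension $d$ sitting in an affine lattice subspace, the Ehrhart function $k\mapsto |k\mP\cap\Z^n|$ agrees with a polynomial $L_\mP(k)$ of degree $d$, and Macdonald reciprocity gives $|k\mP^\circ\cap\Z^n| = (-1)^d L_\mP(-k)$; in particular at $k=1$ we get $|\mP^\circ\cap\Z^n| = (-1)^d L_\mP(-1)$. The key point is to identify $L_\mP(-1)$ with the evaluation $E_n((-f(I)))$. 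For this I would use that $k\mP_f = \mP_{kf}$ (scaling a generalized permutohedron scales its submodular defining function), so $L_{\mP_f}(k) = |\mP_{kf}\cap\Z^n| = E_n((kf(I)))$ for all positive integers $k$, hence as polynomials in $k$ we have $L_{\mP_f}(k) = E_n((kf(I)))$ identically; evaluating at $k=-1$ yields $L_{\mP_f}(-1) = E_n((-f(I)))$, and combined with reciprocity this is exactly the claimed formula $|\mP^\circ\cap\Z^n| = (-1)^{\dim(\mP)}E_n((-f(I)))$.

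For the second assertion (polynomiality in the rank values on the stratum of polymatroids of fixed dimension $d$, with coefficients in $\frac1{(n-1)!}\Z$), I would argue as follows. The function $g := E_n((-f(I)))$ is, by Proposition~\ref{prop:generalized-Ehrhart}\eqref{ketto}, a polynomial in the variables $(f(I))$ of degree $\le n-1$ with coefficients in $\frac1{(n-1)!}\Z$, since it is obtained from $E_n$ by the substitution $z_I\mapsto -z_I$, which only changes signs of coefficients. On the stratum where $\dim(\mP_f)=d$, the first part of the corollary gives $|\mP_f^\circ\cap\Z^n| = (-1)^d g$, which is then this same polynomial up to the global sign $(-1)^d$; so it is a polynomial in $(f(I))_{I\subseteq[n]}$ with coefficients in $\frac1{(n-1)!}\Z$, as claimed.

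The main obstacle I anticipate is the careful handling of the Ehrhart-reciprocity step in the ``wrong'' coordinate setting: $\mP_f$ lives in the affine hyperplane $\sum x_i = f([n])$, which is a translate of a lattice subspace of $\Z^n$, so one must check that the relevant lattice is $\Z^n\cap\{\sum x_i = 0\}$-translate and that $\mP_f$ is a genuine lattice polytope with respect to that induced lattice (this holds because the vertices of $\mP_f$ are in $\Z^n$). One should also be slightly careful that the relation $k\mP_f=\mP_{kf}$ is only being used for nonnegative integers $k$ to pin down the polynomial identity before extending to $k=-1$. A secondary, more bookkeeping-type point is that Proposition~\ref{prop:generalized-Ehrhart} asserts $E_n$ computes $|\mP_f\cap\Z^n|$ only for submodular $f$, whereas $kf$ is automatically submodular for $k\ge 0$ whenever $f$ is, so no issue arises there; but one cannot plug in $-f$ directly into the counting interpretation (it need not be submodular), which is precisely why the argument must route through the Ehrhart polynomial $L_{\mP_f}$ rather than through $E_n$ evaluated at $-f$ naively. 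Once these points are settled, the rest is immediate from Proposition~\ref{prop:generalized-Ehrhart}.
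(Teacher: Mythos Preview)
Your proposal is correct and follows essentially the same argument as the paper: identify the Ehrhart polynomial of $\mP_f$ as $k\mapsto E_n((kf(I)))$ via $k\mP_f=\mP_{kf}$, then apply Ehrhart--Macdonald reciprocity at $k=-1$, and read off the coefficient claim from Proposition~\ref{prop:generalized-Ehrhart}\eqref{ketto}. Your extra care about the affine-lattice setting and the non-submodularity of $-f$ is well placed but not needed beyond what you already sketched.
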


\begin{proof}
From Proposition~\ref{prop:generalized-Ehrhart} we see that the classical Ehrhart polynomial $\mathfrak{i}_{\mP}(t)$ of $\mP$ (defined by $\mathfrak{i}_{\mP}(k)=|k\mP\cup \Z^n|$ for all $k\in\Z_{>0}$) is given by 
$$\mathfrak{i}_{\mP}(t)=E_n((t\,f(I))).$$
Hence the Ehrhart--Macdonald reciprocity theorem \cite{Mac}
gives 
$$\ds |\mP^\circ\cap \Z^n|=(-1)^{\dim(\mP)}\mathfrak{i}_{\mP}(-1)=(-1)^{\dim(\mP)}E_n((-f(I))).$$
The other assertion is immediate from part \eqref{ketto} of Proposition~\ref{prop:generalized-Ehrhart}.
\end{proof}



\section{Polymatroid Tutte polynomial}
\label{sec:polymat-Tutte}

In this section we define the main object of this paper, the polymatroid Tutte polynomial, and start
discussing its properties. We begin by defining activities for bases of a polymatroid.

\begin{definition}\cite[section~5.2]{Kal}. \label{def:activity-polymatroid}
Let $P\subset\Z^n$ be a polymatroid (or more generally an M-convex set, not necessarily finite). 
For $\a\in P$, an index $i\in [n]$ is called
\emph{internally active} if there is no $j<i$ such that $\a-\e_i+\e_j\in P$.
Let $\Int(\a)=\Int_P(\a)\subseteq [n]$ denote the set of internally active indices with respect to $\a$.

For $\a\in P$, an index $i\in [n]$ is called
\emph{externally active} if there is no $j<i$ such that $\a+\e_i-\e_j\in P$.
Let $\Ext(\a)=\Ext_P(\a)\subseteq [n]$ denote the set of externally active indices with respect to $\a$.
\end{definition}

Note that the smallest index $i=1$ is always simultaneously internally active and externally active for every element $\a$ of $P$. In Figure~\ref{fig:polymatroid_activities} we indicated the activities of the bases of the polymatroid of Example~\ref{ex:polymatroid}.


We mention the following analogue of the characterization of activities, 
in terms of ``minimality in the fundamental cycle or cocycle'' that we mentioned in Remark~\ref{rk:fundamental-cycle}.

\begin{lemma}\label{lem:activity-fundamentalcocycle}
Let $P\subset \Z^n$ be a polymatroid, with rank function $f$. Let $\a\in P$ be a basis, and let $i\in[n]$.
\begin{compactenum} 
\item The index $i$ is externally active for $\a$ if and only if there exists a subset $I\subseteq [n]$ such that $i=\min(I)$ and $\sum_{i\in I}a_i=f(I)$.
\item The index $i$ is internally active for $\a$ if and only if there exists a subset $I\subseteq [n]$ such that $i=\min(I)$ and $\sum_{i\in [n]\setminus I}a_i=f([n]\setminus I)$.
\end{compactenum}
\end{lemma}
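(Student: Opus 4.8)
The plan is to prove both statements by translating the activity conditions through the two descriptions of a polymatroid — as an $M$-convex set and via its rank function $f$ — and using the fact that the tight constraints $\sum_{i\in I} a_i = f(I)$ form a lattice (closed under union and intersection) for any basis $\a$. The two statements are dual to each other under $P \mapsto -P$ (which swaps internal and external activity and replaces $f$ by $I \mapsto f([n]) - f([n]\setminus I)$ up to a translation), so it suffices to prove, say, the external case in full and then deduce the internal case by duality; I would state this reduction explicitly and then concentrate on part (1).

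For part (1), first I would recall that for a basis $\a \in P$ and any $I \subseteq [n]$ one has $\sum_{i\in I} a_i \le f(I)$, and that the collection $\mathcal{T}_\a := \{\, I \subseteq [n] \mid \sum_{i\in I} a_i = f(I)\,\}$ of \emph{tight sets} is closed under union and intersection: this is the standard consequence of submodularity together with the inequality $\sum_{i\in I}a_i \le f(I)$, since $f(I) + f(J) \ge f(I\cup J) + f(I\cap J) \ge \sum_{i\in I\cup J} a_i + \sum_{i\in I\cap J}a_i = \sum_{i\in I}a_i + \sum_{i\in J}a_i$, forcing equality throughout when $I,J$ are both tight. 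I would also record the key local criterion linking $\mathcal{T}_\a$ to moves: for $j \notin \{i\}$, the vector $\a + \e_i - \e_j$ fails to lie in $P$ precisely when there is a tight set $I$ with $i \in I$ and $j \notin I$ (since adding $\e_i - \e_j$ violates exactly the constraint for such an $I$, and no constraint can be violated otherwise). The cleanest way to see this uses the inequality description $P = \{\,\x \in \Z^n \mid \sum_{i\in I}x_i \le f(I)\ \forall I,\ \sum_i x_i = f([n])\,\}$ from Section~\ref{sec:submodular-function}, and I would cite that.

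With this in hand, the argument is short. Suppose $i$ is externally active for $\a$: then for every $j < i$, $\a + \e_i - \e_j \notin P$, so by the local criterion there is a tight set $I_j$ with $i \in I_j$, $j \notin I_j$. Taking $I := \bigcap_{j<i} I_j$ (interpreting the empty intersection as $[n]$ when $i=1$), which is tight by intersection-closedness, we get $i \in I$ and $I \cap \{1,\dots,i-1\} = \varnothing$, hence $\min(I) = i$. Conversely, if such a tight $I$ with $\min(I) = i$ exists, then for any $j < i$ we have $j \notin I$ while $i \in I$, so $\a + \e_i - \e_j$ violates the constraint indexed by $I$ and thus lies outside $P$; hence no $j < i$ gives a valid move and $i$ is externally active. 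Then part (2) follows by applying part (1) to $-P$, whose rank function is $I \mapsto f([n]) - f([n]\setminus I)$ (up to the translation by $(1,\dots,1)$ noted in Section~\ref{sec:duality}, which shifts $f$ by a modular function and does not affect which constraints are tight), so that a tight set of $-P$ containing $i$ with minimum $i$ corresponds exactly to a set $[n]\setminus I$ with $\sum_{i\in[n]\setminus I} a_i = f([n]\setminus I)$ and $\min(I)=i$.

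The main obstacle is setting up the local criterion "$\a + \e_i - \e_j \notin P \iff$ some tight $I$ separates $i$ from $j$" cleanly; everything else is then formal. One subtlety to handle carefully is the case $i = 1$, where the "no $j < i$" condition is vacuous and the corresponding tight set is $I = [n]$ itself (which is always tight since $\level(P) = f([n])$) — consistent with the observation after Definition~\ref{def:activity-polymatroid} that $i=1$ is always both internally and externally active. I would also make sure the move $\a + \e_i - \e_j$ is compared against $P$ itself rather than the convex hull, but since $P = \mathcal{P}_f \cap \Z^n$ and $\a + \e_i - \e_j$ is integral and on the correct hyperplane, membership in $P$ is equivalent to satisfying all the inequalities $\sum_{k\in J}x_k \le f(J)$, which is exactly what the argument uses.
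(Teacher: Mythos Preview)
Your proposal is correct and follows essentially the same approach as the paper: both proofs use the lattice (union/intersection-closed) structure of the tight sets $\{I : \sum_{i\in I}a_i = f(I)\}$, the local criterion that $\a+\e_i-\e_j\notin P$ iff some tight set contains $i$ but not $j$, and duality to deduce part (2) from part (1). The only cosmetic difference is that you intersect one separating tight set $I_j$ for each $j<i$, whereas the paper argues by contrapositive using the minimal tight set containing $i$; also, the rank function of $-P$ is $I\mapsto f([n]\setminus I)-f([n])$ (not $f([n])-f([n]\setminus I)$), and no translation is needed here, though this slip does not affect your conclusion about which sets are tight.
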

The proof of Lemma~\ref{lem:activity-fundamentalcocycle} is delayed to Section~\ref{sec:TP_properties}. We point out that in the context of matroids, the minimal subset $I$ containing $i$ and such that $\sum_{i\in I}a_i=f(I)$ would be the fundamental cycle of the element $i$ with respect to the basis $\a$ (assuming $i$ is an external element).


We can now define the main character of this story.

\begin{definition}
\label{def:polymatroid_Tutte_poly}
Let $P\subset\Z^n$ be a polymatroid. 
The \emph{polymatroid Tutte polynomial} $\T_P(x,y)$ is defined as
$$
\T_P(x,y):= \sum_{\a\in P} x^{\oi(\a)}\,
y^{\oe(\a)}\,
(x+y-1)^{\ie(\a)},
$$
where
$\oi(\a):=|\Int(\a)\setminus\Ext(\a)|$,
$\oe(\a):=|\Ext(\a)\setminus\Int(\a)|$,
and $\ie(\a):= |\Int(\a)\cap\Ext(\a)|$.
\end{definition}

In 
words, the first exponent $\oi(\a)$ in the formula of $\T_P(x,y)$ is the number of
indices $i\in[n]$ which are ``only internally'' active, but not externally
active. The second exponent $\oe(\a)$ is the number of indices which are ``only
externally'' active, but not internally active. The third exponent $\ie(\a)$ is
the number of indices which are both ``internally and externally'' active.
Because $1$ is such an index for any $\a\in P$, the polymatroid Tutte polynomial
is always divisible by $x+y-1$.

This polymatroid invariant is a natural generalization of the classical Tutte polynomial of matroids. More precisely, as we show in Section~\ref{sec:relation-classical-Tutte}, for any matroid $M$ the polynomials $T_M(x,y)$ and $\T_{P(M)}(x,y)$ are equivalent up to a change of variables.

\begin{example}
\label{ex:polynomial}
Let us compute the Tutte polynomial of the polymatroid $P\subset\Z^3$ from Example~\ref{ex:polymatroid}, which is represented in Figures~\ref{fig:polymatroid} and~\ref{fig:polymatroid_activities}.
The first index $1\in[3]$ is automatically active, both
internally and externally, for any basis of $P$. To decide whether $2$ is
internally (resp., externally) active for a basis $\a\in P$, we check whether
the point $\a-(\e_2-\e_1)$ (resp., $\a+(\e_2-\e_1)$) is a basis of $P$ or not.
From Figure~\ref{fig:polymatroid_activities} we see that there are five bases $\a\in P$ 
such that $\a+(\e_1-\e_2)\not\in P$, so that 2 is internally active for those bases.
Similarly, the index 2 is externally active for five bases. 
Regarding the internal (resp., external) activity of the index $3$ for a basis $\a\in P$, 
the criterion is whether $\a-(\e_3-\e_1)\not\in P$ and $\a-(\e_3-\e_2)\not\in P$ both hold 
(resp., $\a+(\e_3-\e_1), \, \a+(\e_3-\e_2)\not\in P$).
The index $3$ is internally active for two bases and externally active for one base.

At the right of Figure~\ref{fig:polymatroid_activities} we listed the contributions of each basis to the polymatroid Tutte polynomial. More precisely, we wrote the part of each contribution which comes from the indices $2$ and $3$, in that order, and omitted the factor $(x+y-1)$ corresponding to the index 1. 
Summing all the contributions gives
\[\T_{P}(x,y)=(x+y-1)\left(
\begin{array}{rrr}
y^2&&\\
+2y&+2xy&\\
+2&+3x&+x^2
\end{array}
\right).\]
\begin{figure}[h]
\centering
\begin{tikzpicture}[scale=.3]

\begin{scope}[shift={(-14,0)}]
\node at (14,10) {\small internally};
\node [above right] at (10,-3.6) {\small $2$ is};
\node [above right] at (10,-4.6) {\small active};
\node [left] at (25,-3.5) {\small $3$ is active};

\draw [fill=lightgray,lightgray] (15.5,7.5) circle [radius=.3];
\draw [fill=lightgray,lightgray] (14,4.75) circle [radius=.3];
\draw [fill=lightgray,lightgray] (17,4.75) circle [radius=.3];
\draw [fill=lightgray,lightgray] (12.5,2) circle [radius=.3];
\draw [fill=lightgray,lightgray] (15.5,2) circle [radius=.3];
\draw [fill=lightgray,lightgray] (18.5,2) circle [radius=.3];
\draw [fill=lightgray,lightgray] (14,-.75) circle [radius=.3];
\draw [fill=lightgray,lightgray] (17,-.75) circle [radius=.3];
\draw [fill=lightgray,lightgray] (20,-.75) circle [radius=.3];
\draw [fill=lightgray,lightgray] (15.5,-3.5) circle [radius=.3];
\draw [fill=lightgray,lightgray] (18.5,-3.5) circle [radius=.3];

\draw (17.6,-4.6) -- (14,2) -- (17.6,8.6) -- (10,8.6) -- (10,-4.6) -- cycle;
\draw (14.75,-4.25) rectangle (25,-2.75);
\end{scope}


\begin{scope}[shift={(2,0)}]
\node at (16,10) {\small externally};
\node [below left] at (22,8.6) {\small $2$ is active};
\node [right] at (8,7.5) {\small $3$ is active};

\draw [fill=lightgray,lightgray] (15.5,7.5) circle [radius=.3];
\draw [fill=lightgray,lightgray] (14,4.75) circle [radius=.3];
\draw [fill=lightgray,lightgray] (17,4.75) circle [radius=.3];
\draw [fill=lightgray,lightgray] (12.5,2) circle [radius=.3];
\draw [fill=lightgray,lightgray] (15.5,2) circle [radius=.3];
\draw [fill=lightgray,lightgray] (18.5,2) circle [radius=.3];
\draw [fill=lightgray,lightgray] (14,-.75) circle [radius=.3];
\draw [fill=lightgray,lightgray] (17,-.75) circle [radius=.3];
\draw [fill=lightgray,lightgray] (20,-.75) circle [radius=.3];
\draw [fill=lightgray,lightgray] (15.5,-3.5) circle [radius=.3];
\draw [fill=lightgray,lightgray] (18.5,-3.5) circle [radius=.3];

\draw (16.4,-4.6) -- (18.5,-.75) -- (13.4,8.6) -- (22,8.6) -- (22,-4.6) -- cycle;
\draw (8,6.75) rectangle (16.25,8.25);
\end{scope}

\begin{scope}[shift={(16,0)}]
\node at (15.5,7.5) {\small $(x+y-1)\cdot y$};
\node at (14,4.75) {\small $x\cdot1$};
\node at (17,4.75) {\small $y\cdot1$};
\node at (12.5,2) {\small $x\cdot1$};
\node at (15.5,2) {\small $1\cdot1$};
\node at (18.5,2) {\small $y\cdot1$};
\node at (14,-.75) {\small $x\cdot1$};
\node at (17,-.75) {\small $1\cdot1$};
\node at (20,-.75) {\small $y\cdot1$};
\node at (15.5,-3.5) {\small $x\cdot x$};
\node at (18.5,-3.5) {\small $y\cdot x$};
\end{scope}

\end{tikzpicture}
\caption{
Internal and external activities of the elements of the polymatroid $P$ from Figure~\ref{fig:polymatroid}. On the left and in the middle we indicate the elements of $P$ for which the indices $2$ and $3$ are internally/externally active.
On the right we show the contribution of each element to the polynomial $\T_P(x,y)/(x+y-1)$. 
}
\label{fig:polymatroid_activities}
\end{figure}
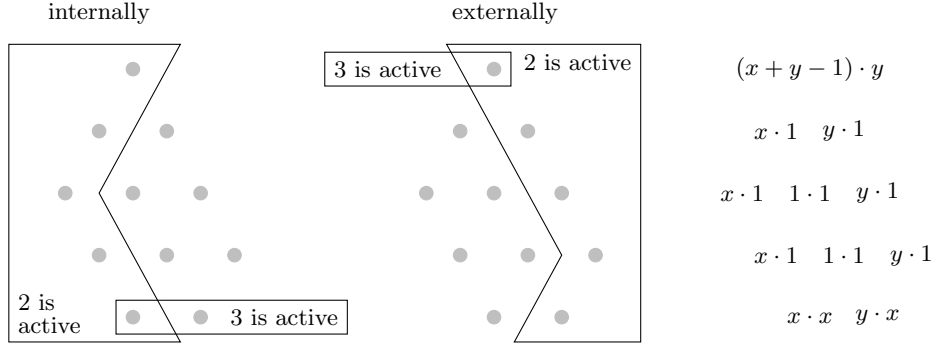
\end{example}

It is immediately clear from the definitions that the polymatroid Tutte polynomial is invariant under affine translations of polymatroids.

\begin{lemma}
\label{lem:shifts}
For any polymatroid $P\subset\Z^n$ and 
any $\c\in \Z^n$, we have 
$$
\T_P(x,y) = \T_{\c+P}(x,y).
$$
\end{lemma}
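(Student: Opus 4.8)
The plan is to show that the defining sum for $\T_P(x,y)$ is unchanged when $P$ is replaced by $\c + P$, by exhibiting a bijection between the bases of $P$ and those of $\c+P$ that preserves every term of the sum. The natural bijection is of course $\a \mapsto \a + \c$; what needs to be checked is that this map preserves the three activity statistics $\oi$, $\oe$, and $\ie$, and it suffices to check that it preserves the sets $\Int(\a)$ and $\Ext(\a)$ individually, since $\oi(\a)$, $\oe(\a)$, $\ie(\a)$ are determined by these two sets.

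The key observation is that Definition~\ref{def:activity-polymatroid} characterizes internal and external activity purely in terms of membership in $P$ of points of the form $\a \pm \e_i \mp \e_j$. First I would note that $\a \in P$ if and only if $\a + \c \in \c + P$, and more importantly that for any indices $i, j \in [n]$,
$$
\a - \e_i + \e_j \in P \iff (\a + \c) - \e_i + \e_j \in \c + P,
$$
and likewise
$$
\a + \e_i - \e_j \in P \iff (\a + \c) + \e_i - \e_j \in \c + P,
$$
simply because translation by $\c$ commutes with adding $\e_j - \e_i$. Consequently, for $\a \in P$ and $i \in [n]$, the index $i$ is internally active for $\a$ with respect to $P$ if and only if it is internally active for $\a + \c$ with respect to $\c + P$, and similarly for external activity. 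Hence $\Int_{\c+P}(\a+\c) = \Int_P(\a)$ and $\Ext_{\c+P}(\a+\c) = \Ext_P(\a)$.

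With this in hand the conclusion is immediate: the map $\a \mapsto \a + \c$ is a bijection from $P$ to $\c + P$, and for each $\a \in P$ the corresponding term satisfies $\oi(\a + \c) = \oi(\a)$, $\oe(\a+\c) = \oe(\a)$, and $\ie(\a+\c) = \ie(\a)$, so $x^{\oi(\a+\c)} y^{\oe(\a+\c)} (x+y-1)^{\ie(\a+\c)} = x^{\oi(\a)} y^{\oe(\a)} (x+y-1)^{\ie(\a)}$. Summing over all bases of $\c + P$ (equivalently, over all $\a + \c$ with $\a \in P$) gives $\T_{\c+P}(x,y) = \T_P(x,y)$. There is no real obstacle here — the statement is essentially a formal consequence of the translation-equivariance built into the definition of activity — so the only thing to be careful about is phrasing the equivalence of the membership conditions cleanly; one could alternatively invoke Lemma~\ref{lem:activity-fundamentalcocycle} together with the remark that translating $P$ by $\c$ shifts the rank function by the modular function $I \mapsto \sum_{i\in I} c_i$, but the direct argument via the exchange-type conditions is shorter and more transparent.
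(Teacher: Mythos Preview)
Your proof is correct and follows exactly the reasoning the paper intends: the paper states this lemma without proof, simply remarking that it is ``immediately clear from the definitions,'' and what you have written is precisely the unpacking of that remark via the translation bijection $\a\mapsto\a+\c$ and the observation that the defining membership conditions for activity are translation-equivariant.
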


Another easy property is the relation between the Tutte polynomial of a polymatroid $P$ and its dual $-P$.

\begin{lemma}
\label{lem:duality}
For any polymatroid $P$ we have
$$
\T_P(x,y) = \T_{-P}(y,x).
$$ 
\end{lemma}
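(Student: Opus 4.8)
The plan is to show that the involution $\a\mapsto-\a$ gives a bijection between the bases of $P$ and the bases of $-P$ under which the triple of activity statistics $(\oi,\oe,\ie)$ gets transposed in its first two coordinates. First I would unwind the definitions: if $\a\in P$ then $-\a\in-P$ (this is essentially the content of the definition of $-P$ in Section~\ref{sec:duality}, and $-P$ is a polymatroid). Now observe that for $i,j\in[n]$, the point $\a-\e_i+\e_j$ lies in $P$ if and only if $(-\a)+\e_i-\e_j=-(\a-\e_i+\e_j)$ lies in $-P$. Comparing with Definition~\ref{def:activity-polymatroid}, the condition ``there is no $j<i$ with $\a-\e_i+\e_j\in P$'' (i.e. $i$ is internally active for $\a$ in $P$) is literally the same as ``there is no $j<i$ with $(-\a)+\e_i-\e_j\in-P$'' (i.e. $i$ is externally active for $-\a$ in $-P$). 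Hence $\Int_P(\a)=\Ext_{-P}(-\a)$, and symmetrically $\Ext_P(\a)=\Int_{-P}(-\a)$.

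From these two set equalities the statistics transform as follows: $\oi_P(\a)=|\Int_P(\a)\setminus\Ext_P(\a)|=|\Ext_{-P}(-\a)\setminus\Int_{-P}(-\a)|=\oe_{-P}(-\a)$, similarly $\oe_P(\a)=\oi_{-P}(-\a)$, and $\ie_P(\a)=|\Int_P(\a)\cap\Ext_P(\a)|=|\Ext_{-P}(-\a)\cap\Int_{-P}(-\a)|=\ie_{-P}(-\a)$. Therefore the contribution of $\a$ to $\T_P(x,y)$, namely $x^{\oi_P(\a)}y^{\oe_P(\a)}(x+y-1)^{\ie_P(\a)}$, equals $x^{\oe_{-P}(-\a)}y^{\oi_{-P}(-\a)}(x+y-1)^{\ie_{-P}(-\a)}$, which is exactly the contribution of $-\a$ to $\T_{-P}(y,x)$ (swapping the roles of $x$ and $y$ turns $x^{\oi}y^{\oe}$ into $y^{\oi}x^{\oe}$, and leaves the symmetric factor $(x+y-1)^{\ie}$ unchanged). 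Summing over all $\a\in P$, which via $\a\mapsto-\a$ is the same as summing over all bases of $-P$, yields $\T_P(x,y)=\T_{-P}(y,x)$.

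There is essentially no obstacle here: the only thing to be careful about is the direction of the inequality $j<i$ and the sign conventions, so that one correctly pairs ``internal in $P$'' with ``external in $-P$'' rather than with itself. I would write the argument at the level of the key identity $\Int_P(\a)=\Ext_{-P}(-\a)$ and then let the bookkeeping of the three exponents follow in one line.
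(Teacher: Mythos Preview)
The proposal is correct and takes essentially the same approach as the paper: the paper's proof consists of the two observations $i\in\Int_P(\a)\Leftrightarrow i\in\Ext_{-P}(-\a)$ and $i\in\Ext_P(\a)\Leftrightarrow i\in\Int_{-P}(-\a)$, leaving the remaining bookkeeping implicit, while you have simply spelled out that bookkeeping in full.
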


\begin{proof}
Clearly, $i\in \Int_P(\a)$ if and only if $i\in \Ext_{-P}(-\a)$.
Likewise, $i\in \Ext_P(\a)$ if and only if $i\in \Int_{-P}(-\a)$.
\end{proof}

Lemma~\ref{lem:duality} is reminiscent of the relation between the Tutte
polynomial of a matroid $M$ and its dual $M^*$. 
Indeed, Lemma~\ref{lem:duality}
is an extension of this known duality relation (via the relation between the
Tutte polynomials of matroids and polymatroids discussed in Section~\ref{sec:relation-classical-Tutte}).

A far less obvious property of $\T_P$ is its \emph{$S_n$-invariance}, that is to say, its invariance with respect to permutations of the coordinates of $\Z^n$. 
Here we consider the natural action of the symmetric group $S_n$ on $\Z^n$ by permutations of coordinates. 
That is, for a permutation $w\in S_n$ and a point $\a=(a_1,\ldots,a_n) \in \Z^n$ we define $w(\a)=(a_{w(1)},\ldots,a_{w(n)})$, and for $P\subset \Z^n$ we define
$$w(P):=\{\,w(\a)\mid\a\in P\,\}.$$
We will prove the following theorem in Section~\ref{sec:order-invariance}.
 
\begin{theorem}
\label{th:order_independence}
Let $P\subset\Z^n$ be a polymatroid.
Then, for any permutation $w\in S_n$,
$$
\T_P(x,y) = \T_{w(P)}(x,y).
$$
In other words, the Tutte polynomial $\T_P(x,y)$ does not depend on 
the choice of ordering of the coordinates.
\end{theorem}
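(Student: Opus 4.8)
The plan is to prove $S_n$-invariance not directly from the activity-based Definition~\ref{def:polymatroid_Tutte_poly}, but indirectly via the manifestly symmetric expression $\wti\T_P(u,v)=\sum_{\c\in\Z^n}u^{d_1^{>}(P,\c)}v^{d_1^{<}(P,\c)}$ announced in the introduction. Since $\wti\T_P$ is defined purely in terms of the Manhattan distance from $\c$ to $P$ and a canonical splitting of that distance, and since the Manhattan distance, the set $\Z^n$, and the splitting into ``excess above'' and ``deficit below'' coordinates are all $S_n$-equivariant, the quantity $\wti\T_P(u,v)$ satisfies $\wti\T_P=\wti\T_{w(P)}$ for every $w\in S_n$ essentially by inspection. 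So the real content of Theorem~\ref{th:order_independence} is the \emph{equivalence} (via an explicit change of variables) between $\T_P(x,y)$ and $\wti\T_P(u,v)$. That equivalence, in turn, is to be established through the cone decomposition of $\Z^n$ indexed by the bases of $P$ (the polymatroid analogue of Crapo's interval decomposition of the Boolean lattice), constructed in Section~\ref{sec:cone-decomposition}.

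Concretely, the key steps, in order, are: (1) Recall/establish the canonical decomposition $\Z^n=\biguplus_{\a\in P}\mC(\a)$ into cones $\mC(\a)$ indexed by bases, where $\a$ is the unique ``closest'' basis to each $\c\in\mC(\a)$ under the appropriate lexicographic tie-breaking rule, and where membership in $\mC(\a)$ is controlled by the internal and external activities of $\a$. (2) Show that on each cone $\mC(\a)$, the generating function $\sum_{\c\in\mC(\a)}u^{d_1^{>}(P,\c)}v^{d_1^{<}(P,\c)}$ factors as a product over the coordinates $i\in[n]$, with the factor depending only on whether $i$ is internally active, externally active, both, or neither for $\a$; summing the geometric series in each direction yields a factor like $\frac{1}{1-u}$, $\frac{1}{1-v}$, or (for the doubly-active directions, which can be perturbed both up and down) something that after the change of variables $u=1/x$, $v=1/y$ (or similar) collapses to the $x+y-1$ factor. (3) Multiply out and match against Definition~\ref{def:polymatroid_Tutte_poly}: the directions that are only-internally-active contribute $x$, only-externally-active contribute $y$, and both-active contribute $x+y-1$, recovering $\T_P(x,y)=\sum_\a x^{\oi(\a)}y^{\oe(\a)}(x+y-1)^{\ie(\a)}$ exactly. (4) Conclude: since $\wti\T_P$ is $S_n$-invariant and equals $\T_P$ after an invertible substitution, $\T_P$ is $S_n$-invariant.

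The main obstacle is step (1)–(2): producing the correct cone decomposition and verifying that the lattice-point count over each cone factors cleanly across coordinates with exactly the right local contributions. The subtlety is that the cones $\mC(\a)$ are genuinely $n$-dimensional (unlike Crapo's intervals, which are bounded), so one must be careful that the sums $\sum_{\c\in\mC(\a)}\cdots$ converge as formal power series / rational functions and that the tie-breaking used to make the $\mC(\a)$ disjoint is precisely the one that assigns the doubly-active directions their ``$x+y-1$'' weight rather than ``$xy$'' — this is exactly the phenomenon, flagged after equation~\eqref{eq:naive}, that makes the naive two-variable polynomial fail to be $S_n$-invariant while $\T_P$ succeeds. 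A secondary technical point is checking that the Manhattan-distance splitting $d_1=d_1^{>}+d_1^{<}$ interacts correctly with the M-convexity of $P$ (i.e.\ that projecting $\c$ onto $P$ coordinate-by-coordinate along the cone directions is well-defined and additive), for which one invokes the Exchange Axiom and the submodularity of the rank function $f=f_P$, together with Lemma~\ref{lem:activity-fundamentalcocycle} relating activities to tight inequalities $\sum_{i\in I}a_i=f(I)$.
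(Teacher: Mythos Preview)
Your proposal is correct and follows essentially the same route as the paper: define the manifestly $S_n$-invariant power series $\wti\T_P(u,v)=\sum_{\c\in\Z^n}u^{d_1^{>}(P,\c)}v^{d_1^{<}(P,\c)}$, use the cone decomposition $\Z^n=\biguplus_{\a\in P}C_P(\a)$ of Theorem~\ref{th:decomposition_into_cones} to factor the inner sum coordinate-by-coordinate, and read off the identity $\wti\T_P(u,v)=\T_P\bigl(\tfrac{1}{1-u},\tfrac{1}{1-v}\bigr)$ (Theorem~\ref{th:tilde_Tutte}), from which $S_n$-invariance of $\T_P$ is immediate. Two small calibrations: the precise substitution is $x=\tfrac{1}{1-u}$, $y=\tfrac{1}{1-v}$ (so that the doubly-active factor $1+\tfrac{u}{1-u}+\tfrac{v}{1-v}=x+y-1$), and the disjointness of the cones $C_P(\a)$ is proved directly from the Exchange Axiom rather than via any lexicographic tie-breaking---no appeal to Lemma~\ref{lem:activity-fundamentalcocycle} is needed, only Theorem~\ref{th:decomposition_into_cones}(2) together with Lemma~\ref{lem:d_geq_leq} to ensure that $d_1^{>}(P,\c)=d_1^{>}(\a,\c)$ and $d_1^{<}(P,\c)=d_1^{<}(\a,\c)$ on each cone.
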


Theorem~\ref{th:order_independence} 
is a subtle claim. It 
does not imply that the double statistics
$\a\mapsto(|\Int(\a)|, |\Ext(\a)|)$, on elements $\a$ of $P$, 
are equidistributed with each other for different orderings of the coordinates. The following example illustrates this point.

\begin{example}
For the polymatroid represented in Figure~\ref{fig:polymatroid_activities}, the ``naive'' polynomial $N_P=\sum_{\a\in P}x^{|\Int_P(\a)|}y^{|\Ext_P(\a)|}$ is not $S_n$-invariant. Indeed $N_P=xy(x^2+xy^2+xy+3x+3y+2)$, while for $w=132$ one gets $N_{w(P)}=xy(x^2+2xy+y^2+3x+2y+2)$.
\end{example}

However, as we now explain, these pathologies do not occur for polymatroids which are ``generic enough''.
We say that a polymatroid is of \emph{generic type}, if the generalized permutahedron $\mP=\conv(P)$ has normal fan equal to the braid arrangement (as is the case for the classical permutahedron). 

\begin{corollary}\label{cor:order-independence-statistics}
If $P\subseteq\Z^n$ is a polymatroid of generic type, then the double statistics
$\a\mapsto(|\Int(\a)|, |\Ext(\a)|)$, on elements $\a$ of $P$, is $S_n$-invariant.
\end{corollary}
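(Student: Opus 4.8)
The plan is to show that for a polymatroid $P$ of generic type, the double statistics $\a \mapsto (|\Int(\a)|, |\Ext(\a)|)$ determine the pair $(\oi(\a), \oe(\a), \ie(\a))$ up to a reshuffling that is trivial in the generic case, and that $S_n$-invariance then follows from Theorem~\ref{th:order_independence} by extracting coefficients. The key structural observation I would establish first is that when $\mP = \conv(P)$ has normal fan equal to the braid arrangement, each vertex $\a$ of $\mP$ lies in a full-dimensional Weyl chamber, which forces the activity sets $\Int(\a)$ and $\Ext(\a)$ to each be an \emph{interval} $\{1, 2, \dots, k\}$ of initial indices. Indeed, generic type means the edges of $\mP$ at $\a$ point in all $n-1$ independent directions $\e_{\sigma(i)} - \e_{\sigma(i+1)}$ for the chamber permutation $\sigma$ governing $\a$; tracking which directions increase versus decrease a coordinate, one sees that the minimal $j$ with $\a - \e_i + \e_j \in P$ (resp. $\a + \e_i - \e_j \in P$) exists for all but an initial segment of indices $i$. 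The upshot: for generic-type $P$ we have $\Int(\a) = \{1, \dots, p\}$ and $\Ext(\a) = \{1, \dots, q\}$ for some $p = |\Int(\a)|$, $q = |\Ext(\a)|$, hence automatically $\Int(\a) \cap \Ext(\a) = \{1, \dots, \min(p,q)\}$, so
\[
\ie(\a) = \min(|\Int(\a)|, |\Ext(\a)|), \quad \oi(\a) = (|\Int(\a)| - \ie(\a)), \quad \oe(\a) = (|\Ext(\a)| - \ie(\a)).
\]
In particular the triple $(\oi(\a), \oe(\a), \ie(\a))$ is a fixed, permutation-independent function of $(|\Int(\a)|, |\Ext(\a)|)$.

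Next I would run the coefficient-extraction argument. Write $i = |\Int(\a)|$ and $e = |\Ext(\a)|$, and for $(i,e)$ ranging over $\{1,\dots,n\}^2$ let $m_{i,e}$ be the number of bases $\a \in P$ with $(|\Int(\a)|, |\Ext(\a)|) = (i,e)$. By the reduction above, each such basis contributes exactly $x^{i - \min(i,e)} y^{e - \min(i,e)} (x+y-1)^{\min(i,e)}$ to $\T_P(x,y)$, so
\[
\T_P(x,y) = \sum_{i,e} m_{i,e}\, x^{i - \min(i,e)}\, y^{e - \min(i,e)}\, (x+y-1)^{\min(i,e)}.
\]
The same identity holds for $w(P)$ with $m_{i,e}$ replaced by $m^w_{i,e}$; note $w(P)$ is still of generic type, since $\conv(w(P)) = w(\mP)$ has normal fan the $w$-image of the braid arrangement, which is the braid arrangement again. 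Theorem~\ref{th:order_independence} gives $\T_P = \T_{w(P)}$, so it remains to argue that the map $(m_{i,e}) \mapsto \T_P$ is injective over the relevant index range. The monomials $x^a y^b (x+y-1)^c$ with $a,b,c \geq 0$ are linearly independent (e.g. the leading term $x^{a+c} y^b + \dots$ in a suitable monomial order separates them, or one notes $(x+y-1)^c = \sum_j \binom{c}{j}(x+y)^{c-j}(-1)^j$ and passes to the basis $x^a y^b (x+y)^c$ which is manifestly triangular against monomials), and the map $(i,e) \mapsto (a,b,c) = (i - \min(i,e),\, e - \min(i,e),\, \min(i,e))$ is injective — from $(a,b,c)$ one recovers $(i,e) = (a+c, b+c)$. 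Hence $m_{i,e} = m^w_{i,e}$ for all $i,e$, which is exactly the asserted $S_n$-invariance of the double statistics.

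The main obstacle I expect is the first step: rigorously proving that generic type forces $\Int(\a)$ and $\Ext(\a)$ to be initial intervals. One clean route is via Lemma~\ref{lem:activity-fundamentalcocycle}: $i$ is externally active for $\a$ iff $i = \min(I)$ for some tight set $I$ (i.e. $\sum_{k \in I} a_k = f(I)$). For generic type, the tight sets at $\a$ form a chain $\varnothing \subsetneq I_1 \subsetneq \dots \subsetneq I_{n} = [n]$ (the vertex $\a$ being cut out by a maximal chain of facet-defining inequalities, one per step of the chamber), and the set of minima $\{\min(I_j)\}$ is then forced to be $\{1, \dots, q\}$: the smallest element of $[n]$ appears as $\min(I_1)$, and inductively the new minimum at each step either stays or decreases, but a counting/ordering argument on the chain shows the multiset of minima realizes exactly an initial segment. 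A symmetric argument handles internal activity via the complementary chain. Making the "initial segment" claim airtight — as opposed to merely "some set of size $q$" — is the delicate part; one may need to invoke that for a chamber permutation $\sigma$, the $i$-th tight set in the chain is $\{\sigma(1), \dots, \sigma(n-i+1)\}$ or its complement-type analogue, and then directly compute the minima. Everything downstream (the monomial independence, the coefficient bookkeeping) is routine once this structural fact is in hand.
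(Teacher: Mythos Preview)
Your structural claim --- that for a polymatroid of generic type the sets $\Int(\a)$ and $\Ext(\a)$ are always initial intervals $\{1,\dots,p\}$ and $\{1,\dots,q\}$ --- is false, and this is the load-bearing step of your argument. Take $n=4$ and the vertex $\a_\sigma$ of a generic generalized permutohedron corresponding (via the greedy construction) to the permutation $\sigma=(3,1,4,2)$. The tight sets at $\a_\sigma$ form the chain $\varnothing\subset\{3\}\subset\{1,3\}\subset\{1,3,4\}\subset[4]$, so by Lemma~\ref{lem:activity-fundamentalcocycle} the externally active indices are the minima of these sets, namely $\Ext(\a_\sigma)=\{1,3\}$, which is not an initial interval. (One computes similarly $\Int(\a_\sigma)=\{1,2\}$.) Your heuristic about edge directions at a vertex does not pin down activities in the way you suggest, and the ``airtight'' chain argument you sketch at the end would in fact produce the set of left-to-right minima of the word $\sigma(1)\sigma(2)\cdots\sigma(n)$, which need not be an initial segment.

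The correct structural fact is weaker and simpler: for $P$ of generic type one has $\Int(\a)\cap\Ext(\a)=\{1\}$ for every basis $\a$ (not just vertices). This is what the paper proves, via Lemma~\ref{lem:activity-fundamentalcocycle}: if some $m>1$ lay in $\Int(\a)\cap\Ext(\a)$, there would exist tight sets $I,J\in\mI(\a)$ with $m=\min(I)=\min([n]\setminus J)$; then $I\cup J,I\cap J\in\mI(\a)$ forces $f(I)+f(J)=f(I\cup J)+f(I\cap J)$ with $I,J$ incomparable (since $m\in I\setminus J$ and $1\in J\setminus I$), contradicting genericity of the normal fan. Once $\ie(\a)=1$ identically, your coefficient-extraction step becomes trivial: $\T_P(x,y)=(x+y-1)\sum_{\a\in P}x^{|\Int(\a)|-1}y^{|\Ext(\a)|-1}$, and the $S_n$-invariance of $\T_P$ (Theorem~\ref{th:order_independence}) immediately yields the invariance of the joint distribution of $(|\Int(\a)|,|\Ext(\a)|)$. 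So your overall plan (reduce to $\T_P$ and extract coefficients) is sound, but the reduction hinges on a claim that fails; replacing it with $\Int\cap\Ext=\{1\}$ repairs the proof and in fact shortens it.
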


We delay the proof of Corollary~\ref{cor:order-independence-statistics} to Section~\ref{sec:TP_properties}. Interestingly, the $S_n$-invariance of the double statistics also occurs for matroids, which are far from generic.
A more immediate consequence of Theorem~\ref{th:order_independence} is that the 
single statistics $\a\mapsto |\Int(\a)|$ and $\a\mapsto |\Ext(\a)|$ 
on $P$ are both $S_n$-invariant:


\begin{corollary} 
Let $P\subset\Z^n$ be a polymatroid. 
The polynomials 
$$
\T_P(x,1)=J_P(x)=\sum_{\a\in P} x^{|\Int(\a)|}
\quad\text{and}\quad
\T_P(1,y)=E_P(y)=\sum_{\a\in P} y^{|\Ext(\a)|}
$$
are $S_n$-invariant. That is to say, $\T_{w(P)}(x,1)=\T_P(x,1)$ and $\T_{w(P)}(1,y)=\T_P(1,y)$ for any $w\in S_n$.
\end{corollary}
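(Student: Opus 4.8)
The plan is to obtain this corollary as an immediate specialization of Theorem~\ref{th:order_independence}, once we verify that the substitutions $y=1$ and $x=1$ turn the defining sum for $\T_P(x,y)$ into $J_P(x)$ and $E_P(y)$, respectively.

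First I would carry out the specialization $y=1$ in Definition~\ref{def:polymatroid_Tutte_poly}. In the term indexed by a basis $\a\in P$, the factor $y^{\oe(\a)}$ becomes $1$ and the factor $(x+y-1)^{\ie(\a)}$ becomes $x^{\ie(\a)}$, so the term equals $x^{\oi(\a)+\ie(\a)}$. Since $\Int(\a)$ is the disjoint union of $\Int(\a)\setminus\Ext(\a)$ and $\Int(\a)\cap\Ext(\a)$, we have $\oi(\a)+\ie(\a)=|\Int(\a)|$, and summing over $\a$ gives $\T_P(x,1)=\sum_{\a\in P}x^{|\Int(\a)|}=J_P(x)$. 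The substitution $x=1$ is symmetric: now $x^{\oi(\a)}=1$ and $(x+y-1)^{\ie(\a)}=y^{\ie(\a)}$, so the $\a$-term is $y^{\oe(\a)+\ie(\a)}=y^{|\Ext(\a)|}$, whence $\T_P(1,y)=E_P(y)$. (This also re-proves the claim, asserted after Definition~\ref{def:polymatroid_Tutte_poly}, that $\T_P$ specializes to the interior and exterior polynomials.)

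Next I would invoke Theorem~\ref{th:order_independence}: for every $w\in S_n$ we have the equality of two-variable polynomials $\T_P(x,y)=\T_{w(P)}(x,y)$. Evaluating this identity at $y=1$ and at $x=1$, and using the previous paragraph applied to both $P$ and $w(P)$, yields $J_P(x)=J_{w(P)}(x)$ and $E_P(y)=E_{w(P)}(y)$, which is exactly the asserted $S_n$-invariance.

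There is essentially no obstacle in this argument: all the difficulty is packed into Theorem~\ref{th:order_independence}, whose proof (via the cone decomposition of $\Z^n$) is deferred to Section~\ref{sec:order-invariance}. The only point requiring a moment's care is that $\T_P$ and $\T_{w(P)}$ agree as polynomials in $x$ and $y$ — not merely at special evaluations — so that substituting a constant for one of the variables is legitimate; this is precisely how Theorem~\ref{th:order_independence} is phrased. As a byproduct the argument gives a new proof of the $S_n$-invariance of K\'alm\'an's interior and exterior polynomials, independent of the arguments in \cite{Kal,KP}.
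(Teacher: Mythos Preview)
Your proof is correct and matches the paper's approach: the corollary is presented there as an immediate consequence of Theorem~\ref{th:order_independence}, obtained exactly by the specializations $y=1$ and $x=1$ that you spell out. The paper does not write out the verification that $\oi(\a)+\ie(\a)=|\Int(\a)|$ and $\oe(\a)+\ie(\a)=|\Ext(\a)|$, but your justification of these identities is precisely the intended one.
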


The above corollary was originally proved in~\cite{Kal}, where 
$I_P(x)= x^n\, \T_P(x^{-1},1)$ and 
$X_P(y)=y^n\, \T_P(1,y^{-1})$
are called the \emph{interior} and \emph{exterior} polynomials of $P$, respectively.

We now list some additional properties of the polymatroid Tutte polynomial. For polymatroids $P\in \Z^n$ and $Q\in \Z^m$, we define their \emph{direct sum} as
$$P\oplus Q:=\{\,(a_1,a_2,\ldots,a_n,b_1,b_2,\ldots,b_m)\mid\a\in P,\,\b\in Q\,\}\subset \Z^{n+m},$$ 
and their \emph{aggregated sum} as
$$P\boxplus Q:= \{\,(a_1+b_1,a_2,\ldots,a_n,b_2,\ldots,b_m)\mid\a\in P,\,\b\in Q\,\}\subset \Z^{n+m-1}.$$
These are easily seen to be polymatroids. 

\begin{proposition} 
\label{prop:TP_properties}
Let $n$ be a positive integer, let $P\subset\Z^n$ be a polymatroid, and let $\mP=\conv(P)$ be the associated generalized permutahedron.
\begin{compactitem}
\item[(a)] The polymatroid Tutte polynomial $\T_P(x,y)$ has degree $n$. 
\item[(b)] 
The coefficients of $\T_P(x,y+1)$ and $\T_P(x+1,y)$ are non-negative integers, but $\T_P(x,y)$ always has both positive and negative coefficients. 
\item[(c)] The polynomial $\T_P(x,y)$ is divisible by $(x+y-1)^{\codim(P)}$, where $\codim(P)=n-\dim(\mP)$ is the codimension of $\mP$. 
\item[(d)] The top homogeneous component of $\T_P(x,y)$ equals $(x+y)^n$. Equivalently, for all $k\in [n]$, the coefficient of $x^k y^{n-k}$ in $\T_P(x,y)$ is ${n\choose k}$.
\item[(e)] For any polymatroid $Q$, we have
$$\T_{P\oplus Q}(x,y)=\T_{P}(x,y)\T_{Q}(x,y)\quad\text{and}\quad \T_{P\boxplus Q}(x,y)=\frac{\T_{P}(x,y)\T_{Q}(x,y)}{x+y-1}.$$
\item[(f)] Let $i\in[n]$. For $\a\in\Z^n$ we put $\widehat{\a}^i=(a_1,\ldots,a_{i-1},a_{i+1},\ldots,a_n)\in\Z^{n-1}$. 
Let $f$ be the rank function of $P$, and let $r_i=f(\{i\})+f([n]\setminus\{i\})-f([n])$. 
If $r_i=0$, then 
\begin{equation}\label{eq:deletion-coloop}
\T_P(x,y)=(x+y-1)\T_{\widehat{P}^{i}}(x,y),
\end{equation} 
where $\widehat{P}^{i}:=\{\,\widehat{\a}^i\mid\a\in P\,\}$.
If $r_i=1$, then 
\begin{equation}\label{eq:deletion-contraction}
\T_P(x,y)=x\T_{P\setminus i}(x,y)+y\T_{P/ i}(x,y), 
\end{equation}
where $P\setminus i:=\{\,\widehat{\a}^{i}\mid\a\in P,~ a_i=f(\{i\}-1)\,\}$ and $P/i:=\{\,\widehat{\a}^{i}\mid\a\in P,~ a_i=f(\{i\})\,\}$.
\item[(g)] $\T_P(1,1)=|P|=|\mP\cap \Z^n|$ and $\T_P(0,0)=(-1)^{\codim(P)}|\mP^\circ\cap \Z^n|$, where $\mP^\circ$ is the \emph{relative interior} of $\mP$ (its interior in the affine subspace it spans).
\item[(h)] 
Consider the \emph{Minkowski differences} of $\mP$ with the standard simplex $\Delta=\conv(\e_i,~i\in[n])$ and the reversed simplex $\nabla=\conv(-\e_i,~i\in[n])$: 
$$\mP-\Delta:=\{\,\a\in\R^n \mid\a+\Delta\in \mP\,\}~\text{ and }~\mP-\nabla:=\{\,\a\in\R^n\mid\a+\nabla\in \mP\,\}.$$
The numbers of lattice points in these polytopes are given by
$$\frac{\T_P(x,y)}{(x+y-1)}\bigg|_{x=1,y=0}=|(\mP-\Delta)\cap \Z^n|\text{ and }\frac{\T_P(x,y)}{(x+y-1)}\bigg|_{x=0,y=1}=|(\mP-\nabla)\cap \Z^n|,$$
respectively. 
\end{compactitem}
\end{proposition}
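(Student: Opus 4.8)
The plan is to treat the eight items in three groups: the "formal" ones (a), (b), (d) concerning degrees and top coefficients; the "structural" ones (c), (e), (f); and the "geometric" ones (g), (h), which read off lattice-point counts. Items (a) and (b) reduce to (d): since $x+1+y-1=x+y$, each summand of $\T_P(x+1,y)$ equals $(x+1)^{\oi(\a)}y^{\oe(\a)}(x+y)^{\ie(\a)}$ and each summand of $\T_P(x,y+1)$ equals $x^{\oi(\a)}(y+1)^{\oe(\a)}(x+y)^{\ie(\a)}$, both with non-negative integer coefficients; moreover each summand of $\T_P$ has degree $\oi(\a)+\oe(\a)+\ie(\a)=|\Int(\a)\cup\Ext(\a)|\le n$, and $\T_P(1,0)=\T_P(0,1)=0$ because $\ie(\a)\ge1$ for every basis (the index $1$). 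Once (d) gives that the coefficient of $x^n$ in $\T_P$ is $1$, the identity $\T_P(1,0)=0$ forces $\T_P$ to have a negative coefficient, and (a) follows at once.

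Item (d) is the crux. Expanding the top-degree part $x^{\oi(\a)}y^{\oe(\a)}(x+y)^{\ie(\a)}$ of the summands with $\Int(\a)\cup\Ext(\a)=[n]$ and regrouping over subsets $S$ with $\Int(\a)\setminus\Ext(\a)\subseteq S\subseteq\Int(\a)$, one finds that the coefficient of $x^{|S|}y^{n-|S|}$ in the top homogeneous component $[\T_P]_n$ equals $\#\{\a\in P:\ S\subseteq\Int(\a)\text{ and }[n]\setminus S\subseteq\Ext(\a)\}$. Thus $[\T_P]_n=(x+y)^n$ is equivalent to the claim that for every $S\subseteq[n]$ there is a \emph{unique} basis $\a_S\in P$ for which all indices of $S$ are internally active and all indices of $[n]\setminus S$ are externally active. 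I would prove this by constructing $\a_S$ greedily, one coordinate at a time, so as to realize a flag of tight sets (for the externally active indices) and a flag of sets with tight complement (for the internally active indices) determined by $S$, and deduce uniqueness from Lemma~\ref{lem:activity-fundamentalcocycle}; alternatively, $[\T_P]_n$ can simply be extracted from the explicit formula for $\T_n$ in Section~\ref{sec:formula_universal_Tutte}. This is the step I expect to be the main obstacle.

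For (c), call $I$ \emph{frozen} if $\sum_{i\in I}a_i=f(I)$ for every $\a\in P$; submodularity shows that the frozen sets form a sublattice of $2^{[n]}$ closed under complementation, hence the set of all unions of blocks of a partition $B_1,\dots,B_k$ of $[n]$, and comparing the affine hull of $\mP$ with the equations $\sum_{i\in B_j}x_i=f(B_j)$ gives $k=\codim(P)$. Ordering so that $\min B_1<\cdots<\min B_k$: the set $B_j$ is tight at every $\a$ with minimum $\min B_j$ (so $\min B_j\in\Ext(\a)$ by Lemma~\ref{lem:activity-fundamentalcocycle}), and $B_j\cup\cdots\cup B_k$ has minimum $\min B_j$ and frozen, hence tight, complement $B_1\cup\cdots\cup B_{j-1}$ (so $\min B_j\in\Int(\a)$ by Lemma~\ref{lem:activity-fundamentalcocycle}); hence $\ie(\a)\ge k$. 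For (e), a direct reading of the activity definition shows that in $P\oplus Q$ a move $\pm(\e_i-\e_j)$ mixing the two blocks breaks the level of a block, hence never stays in the polymatroid, so each index is active exactly when the corresponding index of $P$ or $Q$ is; thus $\oi,\oe,\ie$ are additive over the blocks and $\T_{P\oplus Q}=\T_P\T_Q$. For $\boxplus$ a similar but slightly more delicate bookkeeping (cross-block moves are occasionally legal but turn out to be irrelevant to activity) gives the same conclusion, except that the common index $1$ is counted once in $\ie_{P\boxplus Q}=\ie_P+\ie_Q-1$, producing the extra factor $(x+y-1)^{-1}$. For (f): if $r_i=0$ the $i$-th coordinate is constant on $P$, so $i$ is always in $\Int(\a)\cap\Ext(\a)$ and the activities of the remaining indices are unaffected by deleting coordinate $i$, giving $\T_P=(x+y-1)\T_{\widehat{P}^i}$; if $r_i=1$, a permutation moving $i$ to position $n$ together with Theorem~\ref{th:order_independence} reduces to $i=n$, where one writes $P=P_0\sqcup P_1$ according to whether $a_n$ is minimal or maximal and uses the exchange axiom (applied to $\a\in P_0$ and a basis with $a_n$ maximal, and dually) to see that $n$ is internally-but-not-externally active on $P_0$ and externally-but-not-internally active on $P_1$, while indices $<n$ keep their activities after deletion; summing the contributions gives $x\T_{P\setminus n}+y\T_{P/n}$.

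Finally, for (g) the evaluation $\T_P(1,1)=\sum_{\a\in P}1=|P|=|\mP\cap\Z^n|$ is immediate, and at $(0,0)$ only the "balanced" bases ($\Int(\a)=\Ext(\a)$) survive, contributing $(-1)^{\ie(\a)}$. When $P$ is connected one checks, using Lemma~\ref{lem:activity-fundamentalcocycle} and submodularity, that the only balanced bases are those with $\Int(\a)=\Ext(\a)=\{1\}$, and that these are exactly the interior lattice points of $\mP$, so $\T_P(0,0)=-|\mP^\circ\cap\Z^n|$; the general case follows by decomposing $P$ into its connected blocks as in (c), using (e) and $\mP^\circ=\prod_j(\mP|_{B_j})^\circ$. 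For (h), dividing by $x+y-1$ and setting $(x,y)=(1,0)$ kills every summand except those with $\oe(\a)=0$ and $\ie(\a)=1$, i.e.\ with $\Ext(\a)=\{1\}$, so the value is $\#\{\a\in P:\Ext(\a)=\{1\}\}$; the map $\a\mapsto\a-\e_1$ is a bijection onto $(\mP-\Delta)\cap\Z^n=\{\b\in\Z^n:\b+\e_i\in P\text{ for all }i\}$, where injectivity and $\Ext(\b+\e_1)=\{1\}$ are immediate and surjectivity follows by taking, for $i\notin\Ext(\a)$, the least $k$ with $\a+\e_i-\e_k\in P$ and applying the exchange axiom to two witnessing bases to force $k=1$; the $(0,1)$ statement is the $-P$-dual. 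In sum, the only genuinely hard step is the bijective claim behind (d); everything else rests on the activity definitions, the exchange axiom, Lemma~\ref{lem:activity-fundamentalcocycle}, Theorem~\ref{th:order_independence}, submodularity, and the direct-sum structure.
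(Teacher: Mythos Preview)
Your outline is largely correct and, in several places, parallels the paper's argument closely (parts (e), (f), (g), (h) are handled in essentially the same way in the paper, though the paper fills in the technical steps you sketch).  There are, however, a few genuine differences worth pointing out.

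For (b), the paper argues that if all coefficients had the same sign then $\sum_{k\ge0}\T_P(x,y)(x+y)^k$ would have infinite degree, which contradicts divisibility by $x+y-1$.  Your observation that $\T_P(1,0)=0$ while $[x^n]\T_P=1$ is a cleaner shortcut.

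For (c), the paper invokes Lemma~\ref{lem:codim=sum} (a codimension-$c$ polymatroid is a direct sum of $c$ polymatroids), together with (e) and $S_n$-invariance, to write $\T_P$ as a product of $c$ factors each divisible by $x+y-1$.  Your frozen-set argument is more self-contained: it directly exhibits $\codim(P)$ indices lying in $\Int(\a)\cap\Ext(\a)$ for every $\a$, hence $\ie(\a)\ge\codim(P)$ term by term, without appealing to $S_n$-invariance.

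For (d), the paper does \emph{not} use the explicit formula of Section~\ref{sec:formula_universal_Tutte}; it uses only Theorem~\ref{th:universal_Tutte}(2) (that $\T_n$ has total degree $n$, so the $x,y$-degree-$n$ part is independent of the $z_I$) together with Proposition~\ref{prop:easy-prop-Tn}(b) ($\T_n(x,y,(0))=(x+y-1)^n$).  Your proposed bijective statement---that for each $S\subseteq[n]$ there is a unique basis $\a_S$ with $S\subseteq\Int(\a_S)$ and $[n]\setminus S\subseteq\Ext(\a_S)$---is true, and in fact follows immediately from the cone decomposition of Theorem~\ref{th:decomposition_into_cones}: any lattice point $\c$ with $c_i\ll0$ for $i\in S$ and $c_i\gg0$ for $i\notin S$ lies in a unique cone $C_P(\a)$, and the sign conditions force exactly $S\subseteq\Int(\a)$ and $[n]\setminus S\subseteq\Ext(\a)$, while uniqueness of $\a$ follows from disjointness of the cones.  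This gives a pleasant alternative to the paper's route, avoiding the existence of $\T_n$ altogether; your greedy sketch points in this direction but does not supply the argument.

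Two places where your sketch is thinner than it should be: in the $\boxplus$ case of (e), cross-block moves $\c\pm(\e_i-\e_j)$ with $i\le n<j$ \emph{are} sometimes legal (they decompose as $\a\mp\e_1\pm\e_i\in P$ and $\b\pm\e_1\mp\e_{j+1-n}\in Q$), and one must check that they never change the activity status of $j$ compared to its status in $Q$---the key being that the cross-block witness $\ell=1$ is always available since $\a\in P$.  In (g), the assertion that balanced bases of a connected polymatroid satisfy $\Int(\a)=\Ext(\a)=\{1\}$ requires a short argument (taking $m=\max(\Int(\a)\cap\Ext(\a))>1$ and producing, via the tight sets $I,J$ with $m=\min I=\min([n]\setminus J)$ and the connectedness hypothesis $I\neq[n]\setminus J$, an index beyond $m$ lying in the symmetric difference); this is exactly what the paper does, and you should not leave it as ``one checks.''
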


We postpone the proof of Proposition~\ref{prop:TP_properties} to Section~\ref{sec:TP_properties}.
We mention that Property (f) is a (partial) generalization to the polymatroid setting of the deletion-contraction identity satisfied by the classical Tutte polynomial; see Section~\ref{subsec:deletion-contraction} for further discussion of this point. We also mention, with respect to Property (h), that it is shown in Section~\ref{sec:Cameron-Fink} that for all positive integers $k$, the number of lattice points in the Minkowski sums $\mP+k\Delta$ and $\mP+k\nabla$ for $k\in \Z\geq 0$ are also captured by $\T_P(x,y)$. 


\section{Classical Tutte polynomial vs polymatroid Tutte polynomial}\label{sec:relation-classical-Tutte}

Recall from Remark~\ref{rk:mat-are-polymat} that any matroid $M$ (viewed as a set of bases) can be identified with a polymatroid
$$
P(M):=\left\{\,\sum_{i\in A} \e_i\biggm| A\in M\,\right\}. 
$$

Thus, for a matroid $M$, we have the classical Tutte polynomial 
$T_M(x,y)$ and the polymatroid Tutte polynomial $\T_{P(M)}(x,y)$.
We claim that these polynomials, although not literally equal, are related by a simple change of variables.

Let us point out a few superficial differences between $T_M(x,y)$ and $\T_{P(M)}(x,y)$. 
\begin{enumerate}
\item
 The polymatroid Tutte polynomial $\T_{P}$ is invariant under affine translations of 
 $P$, whereas $T_M(x,y)$ is not invariant under translation (not even for translations of $P(M)$ which stay within $\{0,1\}^n$).
For instance, for a matroid $M$ with a single basis the classical Tutte polynomial is $T_{M}(x,y) = x^{d}\, y^{n-d}$, where $d$ is the rank. 
By contrast, for a polymatroid $P$ with a single basis the polymatroid Tutte polynomial is $\T_{P}(x,y)=(x+y-1)^n$, independently of the level.
\item
The polymatroid Tutte polynomial $\T_P(x,y)$ is always divisible by $(x+y-1)$. 
In contrast, the classical Tutte polynomial $T_M(x,y)$ is never divisible by $(x+y-1)$. 
To see this, recall that a \emph{factor} of a matroid is a subset $S$ of the ground set $[n]$ so that the rank function $f$ satisfies $f(S)+f([n]\setminus S)=f([n])$. Matroids can be restricted to any subset of their ground set and when we restrict to a pair of factors $S$ and $[n]\setminus S$, then we have $T_{M|_S}\cdot T_{M|_{[n]\setminus S}}=T_M$. Finally when a matroid only has trivial factors, then its matroid Tutte polynomial cannot be divisible by $x+y-1$ because substituting $x=0$, $y=1$ in it yields a positive quantity (bounded below by the so called $\beta$ invariant, see~\cite{crapobeta}).
\end{enumerate}

One reason for the discrepancy between $T_M(x,y)$ and $\T_{P(M)}(x,y)$ is that the definitions of activities for matroids and polymatroids
are not exactly equivalent. The following trivial lemma expresses
the relationship between the two notions.

\begin{lemma}
\label{lem:activity_correction}
Let $M\subset 2^{[n]}$ be a matroid, and let $P=P(M)\subset \Z^n$ 
be the corresponding polymatroid.
Let $A\in M$ be a basis of $M$, and let $\a=\sum_{i\in A}\e_i$ be the 
corresponding point of $P$.
Then the sets $\Int_M(A)$ and $\Ext_M(A)$ 
of internally/externally active elements with respect to the basis $A$ of the matroid $M$
are related to the sets $\Int_P(\a)$ and $\Ext_P(\a)$ 
of internally/externally active indices, with respect to the point $\a$ 
of the polymatroid $P$, as follows:
$$
\Int_P(\a) = \Int_M(A) \cup ([n]\setminus A)
\quad\text{and}\quad
\Ext_P(\a) = \Ext_M(A) \cup A.
$$
\end{lemma}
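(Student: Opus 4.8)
The plan is to unwind both definitions of activity and compare them directly, index by index. Fix a matroid $M\subseteq 2^{[n]}$, a basis $A\in M$, and the corresponding point $\a=\sum_{i\in A}\e_i\in P=P(M)$. The key observation is that since $P\subseteq\{0,1\}^n$, a move $\a-\e_i+\e_j$ stays in $P$ only when $i\in A$ and $j\notin A$, in which case $\a-\e_i+\e_j$ is exactly the indicator vector of $(A\setminus\{i\})\cup\{j\}$; similarly $\a+\e_i-\e_j\in P$ requires $i\notin A$ and $j\in A$, and then corresponds to $(A\cup\{i\})\setminus\{j\}$.

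First I would treat internal activity. If $i\notin A$, then $a_i=0$, so there is no $j<i$ with $a_j<b_j$-type obstruction — more directly, $\a-\e_i$ has a negative coordinate and cannot be repaired to lie in $P$ since every element of $P$ has nonnegative coordinates, hence no $j$ makes $\a-\e_i+\e_j\in P$ and $i$ is vacuously internally active for $\a$. This accounts for the extra term $[n]\setminus A$. If instead $i\in A$, then $\a-\e_i+\e_j\in P$ for some $j<i$ precisely when $(A\setminus\{i\})\cup\{j\}\in M$ with $j<i$; by Definition~\ref{def:activity-matroid} the negation of this is exactly $i\in\Int_M(A)$. Thus $\Int_P(\a)\cap A=\Int_M(A)$, and together with the previous paragraph $\Int_P(\a)=\Int_M(A)\cup([n]\setminus A)$.

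Then I would handle external activity by the symmetric argument. If $i\in A$, then $\a+\e_i$ has a coordinate equal to $2$, and no subtraction $-\e_j$ can bring it back into $P\subseteq\{0,1\}^n$, so $i$ is vacuously externally active for $\a$; this gives the extra term $A$. If $i\notin A$, then $\a+\e_i-\e_j\in P$ for some $j<i$ iff $(A\cup\{i\})\setminus\{j\}\in M$ with $j<i$, whose negation is exactly $i\in\Ext_M(A)$ by Definition~\ref{def:activity-matroid}. Hence $\Ext_P(\a)\cap([n]\setminus A)=\Ext_M(A)$ and $\Ext_P(\a)=\Ext_M(A)\cup A$.

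There is no real obstacle here: the only point requiring any care is the ``vacuous activity'' cases, where one must note that leaving $\{0,1\}^n$ in one coordinate cannot be undone by a single opposite move because all bases of $P(M)$ are $0/1$-vectors; this is immediate. The rest is a direct translation between the matroid basis-exchange language and the M-convex exchange language, using that the only legal exchanges in $P(M)$ are the ones mirroring matroid basis exchanges.
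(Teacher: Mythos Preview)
Your proof is correct and is exactly the direct verification the paper has in mind; in fact the paper calls this lemma ``trivial'' and does not supply a proof at all. The one cosmetic blemish is the stray phrase ``$a_j<b_j$-type obstruction,'' which references a symbol $\b$ that is not in play---but you immediately follow it with the clean argument (``more directly, $\a-\e_i$ has a negative coordinate\dots''), so no harm is done.
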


Since each basis $A$ of $M$ has the same cardinality (the rank $d$ of $M$), the discrepancy noted in Lemma~\ref{lem:activity_correction} would only change the Tutte polynomial by a monomial factor. A more substantial effect comes from the presence of $(x+y-1)^{\ie(\a)}$ in Definition~\ref{def:polymatroid_Tutte_poly}, which replaces what would be $(xy)^{\ie(\a)}$ if we followed Definition~\ref{def:classical_Tutte}. 
Indeed, it is this change that leads to order-independence and some other favorable properties. The exact relation between the classical and polymatroid Tutte polynomials is as follows. 

\begin{theorem}
\label{thm:oldandnew}
Let $M\subset 2^{[n]}$ be a matroid of rank $d$ on the ground set $[n]$, 
and let $P=P(M)\subset\{0,1\}^n$ be the corresponding polymatroid as defined in Remark~\ref{rk:mat-are-polymat}.
Then we have
\begin{equation}\label{eq:classical-to-poly}
\T_P(x,y) = x^{n-d}\, y^{d}\, T_M\left({x+y-1\over y}, {x+y-1\over x}\right).
\end{equation}
\end{theorem}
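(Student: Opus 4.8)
The plan is to prove the identity basis by basis, comparing the contribution of each basis $A$ of $M$ to the classical Tutte polynomial $T_M$ with the contribution of the corresponding point $\a = \sum_{i \in A}\e_i$ of $P = P(M)$ to $\T_P$. By Lemma~\ref{lem:activity_correction}, we have $\Int_P(\a) = \Int_M(A) \cup ([n]\setminus A)$ and $\Ext_P(\a) = \Ext_M(A) \cup A$. First I would compute the three quantities $\oi(\a)$, $\oe(\a)$, $\ie(\a)$ in terms of the matroid activities. Since $\Int_M(A) \subseteq A$ and $\Ext_M(A) \subseteq [n]\setminus A$, the sets $\Int_M(A)$ and $\Ext_M(A)$ are disjoint; moreover $\Int_M(A) \subseteq A \subseteq \Ext_P(\a)$ and $\Ext_M(A) \subseteq [n]\setminus A \subseteq \Int_P(\a)$. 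A short set-theoretic computation then gives
\begin{align*}
\Int_P(\a) \cap \Ext_P(\a) &= \bigl(\Int_M(A)\cup([n]\setminus A)\bigr)\cap\bigl(\Ext_M(A)\cup A\bigr) = \Int_M(A)\cup\Ext_M(A),\\
\Int_P(\a)\setminus\Ext_P(\a) &= ([n]\setminus A)\setminus\Ext_M(A),\\
\Ext_P(\a)\setminus\Int_P(\a) &= A\setminus\Int_M(A).
\end{align*}
Writing $i = |\Int_M(A)|$ and $e = |\Ext_M(A)|$, and recalling $|A| = d$, $|[n]\setminus A| = n-d$, this yields $\ie(\a) = i+e$, $\oi(\a) = (n-d)-e$, and $\oe(\a) = d-i$.

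Next I would substitute these into Definition~\ref{def:polymatroid_Tutte_poly}. The contribution of $\a$ to $\T_P(x,y)$ is
$$
x^{(n-d)-e}\, y^{d-i}\, (x+y-1)^{i+e} = x^{n-d}\, y^{d}\, \left(\frac{x+y-1}{y}\right)^{i}\left(\frac{x+y-1}{x}\right)^{e}.
$$
Summing over all bases $A$ of $M$ and using Definition~\ref{def:classical_Tutte}, the right-hand side becomes $x^{n-d}y^{d}$ times $\sum_{A\in M} \bigl(\tfrac{x+y-1}{y}\bigr)^{|\Int_M(A)|}\bigl(\tfrac{x+y-1}{x}\bigr)^{|\Ext_M(A)|}$, which is exactly $x^{n-d}y^{d}\,T_M\bigl(\tfrac{x+y-1}{y},\tfrac{x+y-1}{x}\bigr)$. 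This establishes~\eqref{eq:classical-to-poly}. One should note that the expression involves rational functions, but the factor $x^{n-d}y^d$ clears all denominators since each $\Int_M(A)$ has size at most $d$ and each $\Ext_M(A)$ has size at most $n-d$; so the identity is genuinely an identity of polynomials.

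I do not expect a serious obstacle here: the proof is essentially bookkeeping once Lemma~\ref{lem:activity_correction} is in hand. The one point requiring mild care is the disjointness and containment relations among $\Int_M(A)$, $\Ext_M(A)$, $A$, and $[n]\setminus A$ used to simplify the intersections and set differences above — these follow immediately from Definition~\ref{def:activity-matroid}, which places internally active elements inside $A$ and externally active elements inside $[n]\setminus A$. A secondary point worth a sentence is to confirm that the substitution $x \mapsto \tfrac{x+y-1}{y}$, $y \mapsto \tfrac{x+y-1}{x}$ in the polynomial $T_M$ produces, after multiplication by $x^{n-d}y^d$, a polynomial rather than merely a rational function; this is where the bound on the size of the activity sets is used. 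Everything else is direct substitution.
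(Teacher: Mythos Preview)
Your proof is correct and follows essentially the same basis-by-basis argument as the paper: both use Lemma~\ref{lem:activity_correction} to compute $\ie(\a)=i+e$, $\oi(\a)=(n-d)-e$, $\oe(\a)=d-i$ and then rewrite each basis's contribution as $x^{n-d}y^d\bigl(\tfrac{x+y-1}{y}\bigr)^i\bigl(\tfrac{x+y-1}{x}\bigr)^e$. Your explicit set-theoretic computation and the remark on denominator clearing are slightly more detailed than the paper's version, but the approach is the same.
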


\begin{proof}
Let $A$ be a basis of $M$. Let
$\iota:=|\Int_M(A)|$ and
$\epsilon:=|\Ext_M(A)|$
be the usual internal and external 
activities of $A$ (in the matroidal sense). 
For the corresponding point $\a$ of the polymatroid
$P=P(M)$, all of these $\iota+\epsilon$ internally or externally active indices 
become both internally and externally active at the same time
(in the polymatroidal sense). In addition, by Lemma~\ref{lem:activity_correction}, there will be another 
$n-d-\epsilon$ internally active indices 
and
$d-\iota$ externally active indices 
with respect to the point $\a\in P$.
Thus if the basis $A\in M$ makes the contribution $x^\iota\,y^\epsilon$ 
to the classical
Tutte polynomial $T_M(x,y)$, then the corresponding point $\a\in P$
makes the contribution 
$$
(x+y-1)^{\iota+\epsilon}\, x^{n-d-\epsilon}\, y^{d-\iota} =
x^{n-d}\,y^{d}\, \left({x+y-1\over y}\right)^\iota \, 
\left({x+y-1\over x}\right)^\epsilon
$$
to the polymatroid Tutte polynomial $\T_P(x,y)$.
This implies the stated relation between the two polynomials.
\end{proof}

\begin{example}
Let $M$ be the graphical matroid of Example~\ref{ex:graph}.
If we compute the polymatroid Tutte polynomial of $P=P(M)$, we obtain 
\[\T_P(x,y)=(x+y-1)\left(
\begin{array}{rrrrrr}
y^5&&&&&\\
-y^4&+5xy^4&&&&\\
&-4xy^3&+10x^2y^3&&&\\
&+xy^2&-11x^2y^2&+10x^3y^2&&\\
&&+5x^2y&-10x^3y&+5x^4y&\\
&&-x^2&+3x^3&-3x^4&+x^5
\end{array}
\right).\]
The polynomials $T_M$ and $\T_P$ are related by the substitution indicated in Theorem~\ref{thm:oldandnew}. Observe that, when viewed in triangular form, the two polynomials produce the same row-sums and column-sums of coefficients, owing to the fact that $xy$ and $x+y-1$ agree when $x=1$ or $y=1$.
\end{example}

We note that the inverse of the conversion formula \eqref{eq:classical-to-poly} for a matroid $M$ on $[n]$ of rank $d$ is
\begin{equation*}
T_M(x,y)=\frac{(x+y-xy)^n}{x^{n-d}y^d}\,\T_{P(M)}\left(\frac x{x+y-xy},\frac y{x+y-xy}\right).
\end{equation*}

\medskip

As we now explain, the Brylawski identities for the Tutte polynomial $T_M$ of a matroid $M$ are a direct consequence of Proposition \ref{prop:TP_properties}(d) about the top coefficients of $\mT_{P(M)}$. Recall that \emph{Brylawski identities} are linear relations between the coefficients of $T_M$. These identities can be written as follows. Consider the power series
$$B(x,y)=\sum_{i,j\geq 0}{i\choose j}x^i y^j = \sum_{i\geq 0}x^i(1+y)^i=\frac{1}{1-x(1+y)}.$$
It was shown by Brylawski \cite{Bry} that for all integers $p,q$ such that $q\geq 0$ and $p< n+q$,
\begin{equation}\label{eq:Brylawski}
[x^p y^q]B(x,y)\,T_M(x,-1/y)=0.
\end{equation}
Equivalently, denoting $t_{i,j}=[x^iy^j]T_M(x,y)$, and using the fact that $t_{i,j}=0$ for $i<0$ this identity can be written as
$$\forall q\geq 0,~\forall p< n+q,~~\sum_{i=0}^p\sum_{j=0}^i{i\choose j}(-1)^j \, t_{p-i,-q+j}=0.$$
For instance, the case $(p,q)=(0,0)$ gives $t_{0,0}=0$ for all $n>0$.
The case $(p,q)=(1,0)$ gives $t_{1,0}+t_{0,0}-t_{0,1}=0$, hence $t_{1,0}=t_{0,1}$, for all $n>1$.
 
Additionally it was proved by Gordon \cite{Gor} that for all  $q\geq 0$, 
\begin{equation}\label{eq:Gordon}
[x^{n+q}y^q]B(x,y)\,T_M(x,-1/y)=(-1)^{n-d}.
\end{equation}
Of course all the relations \eqref{eq:Brylawski} and \eqref{eq:Gordon} are not all independent, and the case $q=0$ actually implies the general case $q\geq 0$. 
We will show in Section \ref{sec:TP_properties} that Proposition \ref{prop:TP_properties}(d) implies the following generalization of \eqref{eq:Brylawski} and \eqref{eq:Gordon} to polymatroids.
\begin{proposition}[Brylawski identities]\label{prop:Brylawski}
Let $P\subset\Z^n$ be a polymatroid of level $d$, and let $\ds T_P(x,y):=\frac{(x+y-xy)^n}{x^{n-d}y^d}\,\T_{P}\left(\frac x{x+y-xy},\frac y{x+y-xy}\right)$. 
Then $T_P(x,y)$ is a Laurent polynomial in $x,y$. Moreover, denoting $\ell$ the minimal $y$-degree of $T_P(x,y)$ one has for all  $q\geq -\ell$:
$[x^p y^q]B(x,y)\,T_P(x,-1/y)=0$  for all $p< n+q$, and $[x^{n+q}y^q]B(x,y)\,T_P(x,-1/y)=(-1)^{n-d}$.
\end{proposition}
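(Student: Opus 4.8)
The plan is to route the whole argument through the single identity
\[
T_P\!\left(x,\frac{-x}{1-x}\right)=(-1)^{\,n-d}\,\frac{x^{\,n}}{(1-x)^{\,n-d}},
\]
which is exactly where Proposition~\ref{prop:TP_properties}(d) gets used. First I would make sense of $T_P$. Writing $\T_P(X,Y)=\sum_{i+j\le n}c_{ij}X^iY^j$ (the bound $i+j\le n$ being Proposition~\ref{prop:TP_properties}(a)), clearing denominators in the defining formula for $T_P$ turns it into the \emph{polynomial} identity
\[
x^{\,n-d}\,y^{\,d}\,T_P(x,y)=\sum_{i+j\le n}c_{ij}\,x^iy^j\,(x+y-xy)^{\,n-i-j},
\]
whose right-hand side is a genuine polynomial, since each exponent $n-i-j$ is nonnegative. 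Hence $T_P$ is a Laurent polynomial; it is nonzero (e.g.\ because $\T_P(1,1)=|P|\ge1$), so the minimal $y$-degree $\ell$ is well defined.

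Next I would prove the boxed identity by substituting $y=\tfrac{-x}{1-x}$ into the polynomial identity just displayed. Writing $x+y-xy=x+(1-x)y$, this quantity vanishes on the curve $y=\tfrac{-x}{1-x}$, so every summand with $i+j<n$ disappears and only the top homogeneous component of $\T_P$ contributes. By Proposition~\ref{prop:TP_properties}(d) that component is $(X+Y)^n$, i.e.\ $c_{ij}=\binom{n}{i}$ whenever $i+j=n$; hence the surviving sum equals, by the binomial theorem,
\[
\sum_{i=0}^{n}\binom{n}{i}\,x^{i}\!\left(\frac{-x}{1-x}\right)^{n-i}=\left(x-\frac{x}{1-x}\right)^{\!n}=\left(\frac{-x^{2}}{1-x}\right)^{\!n},
\]
and dividing both sides by $x^{\,n-d}\bigl(\tfrac{-x}{1-x}\bigr)^{d}$ gives the boxed formula. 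This is the one genuinely non-routine step: the change of variables $X=\tfrac{x}{x+y-xy}$, $Y=\tfrac{y}{x+y-xy}$ degenerates precisely onto the top-degree part of $\T_P$ along $\{x+y-xy=0\}$, so the description of that part furnished by Proposition~\ref{prop:TP_properties}(d) is all the input one needs.

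Finally I would extract coefficients. Write $T_P(x,-1/y)=\sum_b(-1)^b\tau_b(x)\,y^{-b}$ with $\tau_b(x)=[y^b]\,T_P(x,y)$, a finite sum over the $y$-degrees $b\ge\ell$ of $T_P$. Since $B(x,y)=\sum_{c\ge0}x^c(1+y)^c$, one has $[y^{q+b}]B(x,y)=\sum_{c\ge0}\binom{c}{q+b}x^c=\dfrac{x^{q+b}}{(1-x)^{\,q+b+1}}$ as soon as $q+b\ge0$, and the hypothesis $q\ge-\ell$ is exactly what makes $q+b\ge0$ for every $b$ that occurs (for $q<-\ell$ the terms with $b<-q$ would spoil this). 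Summing over $b$ and using the boxed identity,
\[
[y^q]\bigl(B(x,y)\,T_P(x,-1/y)\bigr)=\frac{x^q}{(1-x)^{\,q+1}}\sum_b(-1)^b\tau_b(x)\!\left(\frac{x}{1-x}\right)^{\!b}=\frac{x^q}{(1-x)^{\,q+1}}\,T_P\!\left(x,\frac{-x}{1-x}\right)=(-1)^{n-d}\,\frac{x^{\,n+q}}{(1-x)^{\,n-d+q+1}}.
\]
Taking $[x^p]$ of the right-hand side yields $0$ when $p<n+q$ and $(-1)^{n-d}$ when $p=n+q$ — using that every integer power of $\tfrac{1}{1-x}$ has no negative powers of $x$ and constant term $1$ — which is precisely the asserted Brylawski/Gordon identities, and specializes to \eqref{eq:Brylawski} and \eqref{eq:Gordon} when $P$ is a matroid.
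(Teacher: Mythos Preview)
Your proof is correct. Both your argument and the paper's hinge on the same input, Proposition~\ref{prop:TP_properties}(d), but the executions differ. The paper first derives the recurrence $b_{p,q}-b_{p-1,q-1}-b_{p-1,q}=[x^py^q]T_P(x,-1/y)$ from $(1-x(1+y))B(x,y)=1$, uses it to show that the values $b_{k+q,q}$ are constant in $q\ge-\ell$ along each diagonal, and then reduces to the single row $q=d$, where an explicit binomial computation (again using $\tau_{n-j,j}=\binom{n}{j}$) finishes the job. You instead package Proposition~\ref{prop:TP_properties}(d) as the closed-form specialization
\[
T_P\!\left(x,\tfrac{-x}{1-x}\right)=(-1)^{n-d}\dfrac{x^n}{(1-x)^{n-d}},
\]
obtained by noting that $x+y-xy$ vanishes on this curve so only the top component of $\T_P$ survives, and then compute $[y^q]\bigl(B(x,y)T_P(x,-1/y)\bigr)$ for all $q\ge -\ell$ in one stroke via the generating-function expansion $[y^m]B(x,y)=x^m/(1-x)^{m+1}$. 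Your route is shorter and makes transparent why the answer is uniform in $q$; the paper's recurrence, on the other hand, explains structurally why the identities for different $q$ are equivalent before any computation is done.
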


\section{Universal Tutte polynomial}
\label{sec:universal-Tutte}

In this section we define, for any positive integer $n$, the universal Tutte polynomial $\T_n$. This is 
a polynomial (of degree $n$) in the variables $x,y$ and $\{z_I\}$, 
where the index $I$ runs over all the nonempty subsets of $[n]$. 
The polynomial $\T_n$ contains the polymatroid Tutte polynomials of every polymatroid $P\subset\Z^n$, in the sense that $\T_P(x,y)$ is obtained from $\T_n$ by specializing the variables $z_I$.
More precisely, we have the following.


\begin{theorem} Let $n$ be a positive integer.
\label{th:universal_Tutte}
\begin{compactenum}
\item
There exists a unique 
polynomial $\T_n(x,y,(z_I))$ in $x,y$ and the $2^n-1$ variables $z_I$ indexed by the nonempty subsets $I$ of $[n]$, such that,
for any submodular function $f\colon 2^{[n]}\to\Z$, the specialization
of $\T_n$ at $z_I=f(I)$, for all $I\subseteq[n]$, is 
the polymatroid Tutte polynomial of the polymatroid $P_f$:
$$
\T_{P_f}(x,y)=\T_n(x,y,(z_I))\big|_{z_I=f(I),~\forall I\subseteq [n]}.
$$
\item 
The polynomial $\T_n$ has total degree $n$, and 
$(n-1)!\,\T_n$ has integer coefficients.
\end{compactenum}
\end{theorem}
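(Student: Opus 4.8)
The strategy is to reduce the statement about the two-variable polynomial $\T_n$ to the known one-variable result, namely the generalized Ehrhart polynomial $E_n$ of Proposition~\ref{prop:generalized-Ehrhart}. The natural approach is to use the ``corank-nullity''-type or ``point-counting'' expression for $\T_P(x,y)$ that the paper announces in Section~\ref{sec:order-invariance} (the function $\wti\T_P$), since that expression writes $\T_P$ as a weighted lattice-point count, and lattice-point counts of polymatroids (and their dilates and Minkowski sums) are controlled by $E_n$. Concretely, one knows from Property~(h) of Proposition~\ref{prop:TP_properties} (and the stronger statement mentioned just after it, proved in Section~\ref{sec:Cameron-Fink}) that $|(\mP+k\Delta)\cap\Z^n|$ and $|(\mP+k\nabla)\cap\Z^n|$, and more generally $|(\mP+s\nabla+t\Delta)\cap\Z^n|$, are polynomial in the rank values $f(I)$ — because $\mP+s\nabla+t\Delta = \mP_{g}$ for the submodular function $g(I)=f(I)+s\cdot|I|+t\cdot\mathbf{1}[I=[n]]\cdot\text{(appropriate term)}$, so these counts are literally evaluations of $E_n$ at shifted arguments, hence polynomials in $(f(I))$ and in $s,t$. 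Since $\T_P(x,y)$ is obtained from the Cameron--Fink two-variable Ehrhart polynomial $\QCF_P(s,t)$ by a fixed (invertible) binomial change of variables, $\T_P(x,y)$ is a polynomial in $x,y$ whose coefficients are themselves polynomials in $(f(I))$; these coefficient-polynomials are exactly what one assembles into $\T_n$.

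\medskip

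\textbf{Existence.} First I would fix, once and for all, the chain of identities established (or announced) earlier: $\T_P(x,y)$ equals, up to the stated change of variables, the Cameron--Fink invariant $\QCF_P'(x,y)$, which is the binomial transform of $\QCF_P(s,t)=|(\mP+s\nabla+t\Delta)\cap\Z^n|$. Then observe that $\mP+s\nabla+t\Delta$ is the generalized permutohedron attached to the submodular function $f_{s,t}$ with $f_{s,t}(I) = f(I) + s\,(n-|I|)^{?}$ — more precisely, $s\nabla$ and $t\Delta$ correspond to the submodular functions $I\mapsto s\cdot\min(1,|[n]\setminus I|)$-type expressions coming from $\Delta_{[n]}$ and its negative, so $f_{s,t}(I)$ is an explicit affine-linear function of $s$ and $t$ with coefficients depending only on $I$ and $n$ (and $f_{s,t}([n])$ is affine in $s,t$ as well). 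By Proposition~\ref{prop:generalized-Ehrhart}(1), for all nonnegative integers $s,t$ we have $\QCF_P(s,t)=E_n((f_{s,t}(I)))$, and since $E_n$ is a polynomial and $(f_{s,t}(I))$ are affine in $s,t,(f(I))$, it follows that $\QCF_P(s,t)$ agrees, for all $s,t\in\Z_{\ge 0}$, with a fixed polynomial $\mathcal{E}_n(s,t,(z_I))$ evaluated at $z_I=f(I)$. Applying the (fixed, invertible) binomial transform and the change of variables of Theorem~\ref{thm:oldandnew}-type to pass from $(s,t)$ to $(x,y)$, one gets a polynomial $\T_n(x,y,(z_I))$ with $\T_{P_f}(x,y)=\T_n(x,y,(f(I)))$ for every integer submodular $f$. (One must check the change of variables really produces a polynomial in $x,y$, not merely a rational function; this follows because $\T_P$ is literally a polynomial by Definition~\ref{def:polymatroid_Tutte_poly}, so no denominators survive.)

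\medskip

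\textbf{Uniqueness.} Uniqueness is the routine ``enough evaluation points'' argument: two polynomials in the variables $(z_I)$ that agree on all integer points of the submodular cone must agree, because the integer submodular functions are Zariski-dense — the submodular cone is full-dimensional in $\R^{2^n-1}$ (it contains, e.g., all nonnegative combinations of the rank functions of the simplices $\Delta_I$, which span), and a polynomial vanishing on a full-dimensional set of lattice points vanishes identically. Doing this coefficient-by-coefficient in $x,y$ gives uniqueness of $\T_n$.

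\medskip

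\textbf{Degree and denominators.} For the degree bound, I would argue that $\T_P(x,y)$ has degree exactly $n$ by Proposition~\ref{prop:TP_properties}(a), and that this degree is independent of $P$; combined with the fact that the map $f\mapsto\T_{P_f}$ is given by a single polynomial in $(z_I)$, the total degree of $\T_n$ in $(x,y,(z_I))$ jointly needs a separate argument — here I expect to need the explicit formula for $\T_n$ of Section~\ref{sec:formula_universal_Tutte} (in terms of draconian sequences), or else to track the degree through the binomial-transform construction: each coefficient of $x^iy^j$ in $\T_n$ is, by the construction above, an evaluation of (a derivative/finite difference of) $E_n$, hence a polynomial of degree $\le n - i - j$ in $(z_I)$, because $E_n$ has degree $n-1$ and the binomial transform shifts degree appropriately; this yields joint total degree $\le n$. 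The clearing-denominators claim that $(n-1)!\,\T_n\in\Z[x,y,(z_I)]$ likewise descends from Proposition~\ref{prop:generalized-Ehrhart}(2): $(n-1)!\,E_n$ has integer coefficients, the binomial transform and the change of variables $(s,t)\leftrightarrow(x,y)$ have integer matrices, so multiplying $\T_n$ by $(n-1)!$ clears all denominators.

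\medskip

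\textbf{Main obstacle.} The genuinely delicate point is not existence (which is a soft density/polynomiality argument) but making the degree statement tight and clean: one must verify that the change of variables relating $\QCF_P(s,t)$ (degree $n-1$, a lattice-point count) to $\T_P(x,y)$ (degree $n$) raises the degree by exactly one in the controlled way needed, and that in the $(z_I)$-direction no extra degree is introduced. I expect the cleanest route is to defer the sharp degree and integrality claims to the explicit draconian-sequence formula of Section~\ref{sec:formula_universal_Tutte}, where both are manifest, and to use the construction above only to establish existence and uniqueness.
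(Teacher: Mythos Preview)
Your approach is correct but takes a genuinely different route from the paper's own proof.

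The paper proves existence directly from the activity definition. It first shows that for polymatroids of a fixed combinatorial type (fixed normal fan), $\T_P$ is polynomial in $(f(I))$: one sums over the faces $\mF$ of $\mP$, observes that the activities are constant on each open face, and counts interior lattice points via Corollary~\ref{cor:Dragon-exists} together with the fact that the rank function of a face is a linear function of the rank function of $\mP$. The candidate $\T_n$ is then the polynomial attached to the \emph{generic} combinatorial type, and the bulk of the proof is a delicate degeneration argument: one sets $\mP(t)=\mP+t[\e_i,\e_j]$, classifies the faces of $\mP(t)$ into three types according to how they limit as $t\to 0$, and checks case by case that the aggregate activity contribution along each degenerating family is a polynomial in $t$ with the correct value at $t=0$. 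This is where the specific form $(x+y-1)^{\ie(\a)}$ (rather than $(xy)^{\ie(\a)}$) is essential: in Case~2 of the analysis, a factor $(x+y-1)$ splits as $x+y+(t-1)$ across the segment $L_\a(t)$, which specializes correctly at $t=0$ precisely because of this choice.

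Your route instead imports the Cameron--Fink identity $\T_P=(x+y-1)\,\QCF_P'$ (Theorem~\ref{th:CF_Tutte}), proved later in Section~\ref{sec:Cameron-Fink} using the cone decomposition but not using Theorem~\ref{th:universal_Tutte}, so there is no circularity. Since $\mP+s\nabla+t\Delta$ has rank function $I\mapsto f(I)+t-s\cdot\one_{I=[n]}$, which is affine in $s,t,(f(I))$ with integer coefficients, the count $\QCF_P(s,t)=E_n((f_{s,t}(I)))$ is manifestly a polynomial of total degree $\le n-1$ in $(s,t,(z_I))$ with coefficients in $\frac{1}{(n-1)!}\Z$; the passage $\binom{s}{i}\binom{t}{j}\mapsto (x-1)^i(y-1)^j$ and multiplication by $(x+y-1)$ are integer-coefficient polynomial operations, giving existence, the degree bound $\le n$, and the integrality of $(n-1)!\,\T_n$ all at once. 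Your uniqueness argument via Zariski density of integer submodular points is essentially the paper's, rephrased. Two minor cleanups: your worry about rational functions is unnecessary, since the transform is already polynomial; and your vague description of the rank functions of $s\nabla$ and $t\Delta$ should be replaced by the explicit formula above.

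What each approach buys: the paper's argument is self-contained at this point in the exposition and yields structural information about how activities behave under degeneration (this is what singles out the factor $x+y-1$). Your approach is shorter and more conceptual, but it front-loads the dependence on Section~\ref{sec:Cameron-Fink} and hence on the cone decomposition of Section~\ref{sec:cone-decomposition}.
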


\begin{definition}
The \emph{universal Tutte polynomial} $\T_n(x,y,(z_I))$ is the 
polynomial in the variables $x,y$, and $(z_I)_{\varnothing \neq I\subseteq [n]}$,
whose existence and uniqueness are provided by part (1) of Theorem~\ref{th:universal_Tutte}.
\end{definition}



An explicit expression of $\T_n$ will be given in Section~\ref{sec:formula_universal_Tutte} using results from~\cite{Pos}.

\begin{example}
In the case $n=1$, all polymatroids are singletons, and their Tutte polynomial is $x+y-1$.
Hence the universal Tutte polynomial is $\T_1=x+y-1$. (It does not depend on its third variable $z_{\{1\}}$.) 

To illustrate Theorem~\ref{th:universal_Tutte} and the challenges in its proof, we now examine the case $n=2$.
For conciseness, we will write the nonempty subsets of $[2]=\{1,2\}$ as $1$, $2$, and $12$.

\begin{figure}[h]
\centering
\begin{tikzpicture}[scale=.3]

\draw (-2,10) -- (17,10);
\draw (15,-2) -- (15,11);
\draw (19,-1) -- (7,11);

\node [below] at (1,10) {\small $a_2=z_2$};
\node [left] at (15,-1) {\small $a_1=z_1$};
\node [above right] at (18,0) {\small $a_1+a_2=z_{12}$};

\draw [gray,fill=gray] (8,10) circle [radius=.3];
\draw [gray,fill=gray] (9,9) circle [radius=.3];
\draw [ultra thick,fill] (11,7) circle [radius=.1];
\draw [ultra thick,fill] (11.5,6.5) circle [radius=.1];
\draw [ultra thick,fill] (12,6) circle [radius=.1];
\draw [gray,fill=gray] (14,4) circle [radius=.3];
\draw [gray,fill=gray] (15,3) circle [radius=.3];

\end{tikzpicture}
\caption{An arbitrary polymatroid in $\Z^2$.}
\label{fig:n=2}
\end{figure}
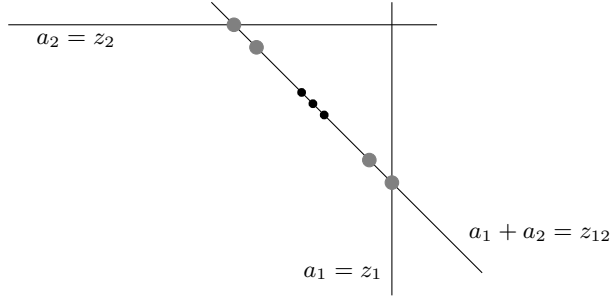

The three constraints for the points $\a=(a_1,a_2)$ of a polymatroid $P\subset \Z^2$ are indicated in Figure~\ref{fig:n=2}. 
(Submodularity in this case means $z_1+z_2\ge z_{12}$.) As usual the index $1$ is internally and externally active for every point $\a\in P$. 
The index 2 is internally active only for the bottom point and it is externally active only for the top point. 
Considering the case $z_1+z_2>z_{12}$ leads to the following tentative formula for the universal Tutte polynomial $\T_2$:
\begin{equation}
\label{eq:T2}
\begin{aligned}
\T_2&=(x+y-1)\,x+(z_1+z_2-z_{12}-1)(x+y-1)+(x+y-1)\,y\\
&=(x+y-1)(x+y-1+z_1+z_2-z_{12}).
\end{aligned}
\end{equation}
Now, when $z_1+z_2=z_{12}$, the polymatroid $P$ degenerates to a point and both indices are internally and externally active.
The important observation is that the expression~\eqref{eq:T2} for $\T_2$ specializes to the correct value $\T_P(x,y)=(x+y-1)^2$ for this case, despite the fact that the logic we used to write the formula does not apply.
\end{example}

\begin{example}
\label{ex:T3}
The third universal Tutte polynomial is
\begin{multline*}
\T_3=(x+y-1)(x^2+2xy+y^2\\
+(z_1+z_2+z_3-z_{123}-2)\,x+(z_{12}+z_{13}+z_{23}-2z_{123}-2)\,y\\
+\frac12(z_{123}^2-z_{12}^2-z_{13}^2-z_{23}^2-z_1^2-z_2^2-z_3^2-2z_{123}(z_1+z_2+z_3)\\
+2(z_1z_{12}+z_1z_{13}+z_2z_{12}+z_2z_{23}+z_3z_{13}+z_3z_{23})+3z_{123}-z_{12}-z_{13}-z_{23}-z_1-z_2-z_3+2)).
\end{multline*}
At $z_1=2$, $z_2=3$, $z_3=z_{12}=z_{13}=z_{23}=z_{123}=4$, it does specialize to the polymatroid Tutte polynomial that we computed in Example~\ref{ex:polynomial}.
\end{example}



In the proof of Theorem~\ref{th:universal_Tutte}, and in the next section, it will be fruitful to consider an alternative to the set of variables $(z_I)_{\varnothing \neq I\subseteq [n]}$. 
Namely, we consider the indeterminates $(t_I)_{\varnothing \neq I\subseteq [n]}$, and the following change of variables:
\begin{equation}
\label{eq:from_t_to_z}
z_I=\sum_{\varnothing\neq J\subseteq [n]\text{ with }J\cap I\ne\varnothing}t_J.
\end{equation}
By applying the inclusion-exclusion formula between $t_I$ and $y_I=M-z_{[n]\setminus I}=\sum_{J\subseteq I}t_J$, where $M=\sum_{\varnothing\neq J\subseteq [n]}t_J$, we obtain the inverse
\begin{equation}\label{eq:inclusion-exclusion}
t_I=\sum_{\varnothing\neq J\subseteq [n]\text{ with }J\cup I=[n]}(-1)^{|I\cap J|+1}z_J.
\end{equation}
In this paper, we use the variables $(z_I)$ to parametrize the rank function of a generalized permutohedron $\mP$ (and thus the positions of its facets). As we now explain, the variables $(t_I)$ should be thought of as parametrizing the coefficients in the decomposition of $\mP$ into the Minkowski sum of simplices.

\begin{remark}\label{rk:zI-to-tI-meaning}
For a non-empty subset $I\subseteq [n]$, we denote by $\Delta_I=\conv(\e_i\mid i\in I)$ the coordinate simplex associated to $I$. For a (non-zero) tuple $\m=(m_{I})_{\varnothing \neq I\subseteq [n]}$ of non-negative integers, we consider the Minkowski sum
\begin{equation*}
\mQ_\m:=\sum_{\varnothing \neq I\subseteq [n]}m_{I}\,\Delta_I.
\end{equation*}
Since each simplex $\Delta_I$ is a generalized permutohedron, Remark~\ref{rk:polymat-Minkowski} lets us conclude that $\mQ_\m$ is a generalized permutohedron, and that $Q_\m:=\mQ_\m\cap \Z^n$ is a polymatroid. By computing the rank functions of the simplices $\Delta_I$, we deduce that the rank function of $Q_\m$ is the function $f_\m\colon2^{[n]}\to \Z$ given by 
\begin{eqnarray}\label{eq:rank-Minkowski-simplex}
f_\m(I)=\sum_{\varnothing \neq J\subseteq[n]~:~J\cap I\ne\varnothing}m_J,
\end{eqnarray}
for all $I\subseteq [n]$. In other words, $Q_\m=P_{f_{\m}}$. Comparing~\eqref{eq:from_t_to_z} to~\eqref{eq:rank-Minkowski-simplex} justifies our interpretation of the variables $t_I$ as parametrizing the coefficients in the decomposition of generalized permutahedra into the Minkowski sum of simplices (at least when evaluating the variables $t_I$ at non-negative integers). 
\OB{I added the following reference to \cite{DK1,DK2}. Not sure if this is the best place nor if we should add more. I am not clear about the uniqueness of the expression of $P$ as a Minkowski sum and difference of simplex.}
We mention that it was proved in \cite{DK1,DK2} that any polymatroid can actually be written as a Minkowski sum and difference of simplices of the form  $\Delta_I$.
\end{remark}

Before embarking on the proof of Theorem~\ref{th:universal_Tutte} let us state a classical fact (see, e.g., \cite[Theorem 3.15]{fuji}), of which we include a proof for the reader's convenience.
\begin{lemma}\label{lem:codim=sum}
If a polymatroid $P\subset \Z^n$ has codimension $c$, then it is a direct sum of $c$ polymatroids. That is to say, there exists a permutation $w\in S_n$ and some polymatroids $P_1,\ldots,P_c$ such that $w(P)=P_1\oplus \cdots\oplus P_c$.
\end{lemma}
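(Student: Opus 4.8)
The plan is to induct on the codimension $c$. If $c=0$ then $\mP=\conv(P)$ spans the full hyperplane $\{x_1+\cdots+x_n=\level(P)\}$, and we may take $w$ to be the identity and $P_1=P$ (a single polymatroid), so the base case is trivial. For the inductive step, suppose $c\geq 1$. The key is to find a \emph{factor}: a subset $S$ with $\varnothing\neq S\subsetneq[n]$ such that the rank function $f$ of $P$ satisfies $f(S)+f([n]\setminus S)=f([n])$. Once we have such an $S$, one checks (this is the content of the standard submodular-function theory referenced in Section~\ref{sec:submodular-function}, e.g.\ \cite[chapter 44]{sch}) that $P$ decomposes, after the obvious reindexing sending $S$ to an initial segment of $[n]$, as $P|_S\oplus P|_{[n]\setminus S}$, where $P|_S$ and $P|_{[n]\setminus S}$ are the polymatroids on ground sets $S$ and $[n]\setminus S$ with rank functions $I\mapsto f(I)$ and $I\mapsto f([n])-f([n]\setminus I)$ respectively; concretely, $\x\mapsto((x_i)_{i\in S},(x_i)_{i\notin S})$ is a bijection $P\to P|_S\times P|_{[n]\setminus S}$, which amounts to saying that each $\x\in P$ is determined by, and freely combines, its restrictions to $S$ and to its complement. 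Since $\dim\conv(P)=\dim\conv(P|_S)+\dim\conv(P|_{[n]\setminus S})$ while the ambient dimensions add as $(|S|-1)+(|[n]\setminus S|-1)=n-2$, the codimensions add: $\codim(P|_S)+\codim(P|_{[n]\setminus S})=\codim(P)-1=c-1$. Applying the inductive hypothesis to each summand and concatenating the resulting permutations and direct-sum decompositions finishes the argument.

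So the crux is: \emph{a polymatroid of codimension $\geq 1$ admits a nontrivial factor.} Here is how I would extract one. Since $\codim(P)=n-\dim(\mP)\geq 1$, the polytope $\mP$ is not full-dimensional in its hyperplane, so there is a nonzero linear functional $\ell$, not proportional to $(1,\ldots,1)$, that is constant on $\mP$; equivalently the rank function $f$ satisfies some equality $\sum_{i\in S}f(\{i\})$-type relation. More usefully, recall from the submodular-function picture that $\mP=\mP_f$ is cut out by the inequalities $\sum_{i\in I}x_i\leq f(I)$, and a supporting hyperplane that contains all of $\mP$ corresponds to a set $I$ (or a complement) for which the inequality is everywhere tight, i.e.\ $\sum_{i\in I}x_i=f(I)$ for all $\x\in P$ \emph{and} $\sum_{i\in I}x_i=f([n])-f([n]\setminus I)$ holds as well. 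Concretely: because $\mP$ is not full-dimensional, there is $I$ with $\varnothing\neq I\subsetneq[n]$ such that $\sum_{i\in I}x_i$ is constant on $P$; then that constant is simultaneously $\max_{\x\in P}\sum_{i\in I}x_i=f(I)$ and $\level(P)-\max_{\x\in P}\sum_{i\notin I}x_i=f([n])-f([n]\setminus I)$, which gives exactly $f(I)+f([n]\setminus I)=f([n])$. Thus $I$ is a factor and it is nontrivial.

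The one point requiring a little care is why $\codim(P)\geq 1$ forces some proper nonempty $I$ with $\sum_{i\in I}x_i$ constant on $P$. This is where I expect the main (though still routine) work to lie. The reason is that $\mP$ lies in the hyperplane $H=\{\sum x_i=\level(P)\}$ but has dimension $<n-1=\dim H$; hence $\mP$ is contained in a proper affine subspace of $H$, so there is a linear functional $\phi$ on $\R^n$, not proportional to $(1,\ldots,1)$, with $\phi$ constant on $\mP$. Writing $\phi(\x)=\sum c_i x_i$ and using translation-invariance (Lemma~\ref{lem:shifts}) plus the $S_n$-action freely, one would like to arrange $\phi$ to be a $0/1$-functional $\sum_{i\in I}x_i$; this is not automatic for a single $\phi$, but it \emph{is} true that the lineality of the normal fan of a generalized permutohedron is spanned by $(1,\ldots,1)$ together with indicator vectors $\mathbf{1}_S$ of its factors --- a standard fact about generalized permutohedra / the braid fan. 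I would either cite this directly or argue it by hand: among all functionals constant on $\mP$, pick one, look at which facet inequalities $\sum_{i\in I}x_i\leq f(I)$ are tight on all of $\mP$, and observe that since $\mP$ has nonempty relative interior in its affine span, the tight inequalities are precisely those whose defining hyperplanes contain that span, and their normals $\mathbf{1}_I$ together with $(1,\ldots,1)$ span the orthogonal complement of the span --- which is nonzero exactly when $\codim(P)\geq 1$, forcing at least one $\mathbf{1}_I$ with $I\neq[n]$ to appear, hence a factor. With that in hand the induction goes through cleanly.
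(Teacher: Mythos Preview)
Your approach matches the paper's: induct on $c$, find a nontrivial factor $I$ (a proper nonempty subset with $f(I)+f([n]\setminus I)=f([n])$), split $P$ as a direct sum over $I$ and its complement, and recurse. The paper is terser --- it simply asserts the existence of such an $I$ when $c>1$ and then uses the Exchange Axiom directly to verify $Q\oplus Q'=P$ --- whereas you spell out the normal-fan/tight-inequality reasoning for the existence step and cite \cite{sch} for the splitting; both routes are fine, and in fact your justification of the existence step is more explicit than the paper's.

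However, your indexing is off by one throughout, because the paper defines $\codim(P)=n-\dim(\mP)$ (codimension in $\R^n$, not in the level hyperplane). Thus $\codim(P)\geq 1$ always; your base case should be $c=1$ (not the vacuous $c=0$), and the inductive step should assume $c\geq 2$. In particular your ``crux'' as stated --- that codimension $\geq 1$ forces a nontrivial factor --- is false (the simplex $\Delta_{[n]}$ has $\codim=1$ and no nontrivial factor); you need $c\geq 2$ so that $\mP$ is genuinely not full-dimensional in its level hyperplane. Correspondingly, with the paper's convention the codimensions of the two summands add to $c$, not $c-1$: one has $\codim(P|_S)+\codim(P|_{[n]\setminus S})=|S|+(n-|S|)-\dim(\mP)=c$, and the induction still goes through because each summand, being a polymatroid, has codimension $\geq 1$ and hence $<c$. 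Once you shift everything by one, the argument is correct.
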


\begin{proof}
Assume $c>1$. Then there exists a subset $\varnothing \neq I\subsetneq [n]$ such that $P\subset \{\,\x\in \R^n\mid\sum_{i\in I}x_i=f(I)\,\}$, where $f$ is the rank function of $P$. 
Up to permutation of coordinates, we can assume that $I=[k]$ for some $0<k<n$. Let $Q=\{\,(a_1,\ldots,a_k)\mid\a\in P\,\}$ and $Q'=\{\,(a_{k+1},\ldots,a_n)\mid\a\in P\,\}$. We claim that $Q\oplus Q'=\{\,(a_1,\ldots,a_n)\mid(a_1,\ldots,a_k)\in Q,~(a_{k+1},\ldots,a_n)\in Q'\,\}$ is equal to $P$. Indeed the inclusion $P\subseteq Q\oplus Q'$ is obvious, and the exchange axiom easily implies that $Q\oplus Q'\subseteq P$. Thus, if $c>1$, then $P$ is the direct sum of two polymatroids. The claimed property follows by induction on~$c$. 
\end{proof}


\begin{proof}[Proof of Theorem~\ref{th:universal_Tutte}]
Let us start by proving the uniqueness of $\T_n$. Suppose that some polynomials $\T_n(x,y,(z_I))$ and $\T_n'(x,y,(z_I))$ both satisfy Property (1). Let $(t_{I})_{\varnothing \neq I\subseteq [n]}$ be a new set of variables, and let $\hT_n(x,y,(t_I))$ and $\hT_n'(x,y,(t_I))$ be the polynomials obtained from $\T_n$ and $\T_n'$, respectively, via 
the substitution~\eqref{eq:from_t_to_z}.
It suffices to show that $\hT_n=\hT_n'$.
For any tuple $\m=(m_{I})_{\varnothing \neq I\subseteq [n]}$ of non-negative integers, the function $f_\m\colon2^{[n]}\to \Z$ given by \eqref{eq:rank-Minkowski-simplex} is the rank function of the polymatroid $Q_\m$. By assumption, $\T_n(x,y,(f_\m(I)))=\T_{Q_\m}(x,y)=\T_n'(x,y,(f_\m(I)))$, hence $\hT_n(x,y,(m_I))=\hT_n'(x,y,(m_I))$ for all tuples $(m_{I})$ of non-negative integers.
This implies $\hT_n=\hT_n'$, and proves the uniqueness of $\T_n$.

We now embark on the proof of the existence of $\T_n$.
Let us consider the set of polymatroids $P\subset \Z^n$ such that the generalized permutohedron $\mP=\conv(P)$ has a given fixed ``combinatorial type'' (that is, a given normal fan). 
As we now explain, the number of lattice points in the relative interior $\mF^\circ$ of any face $\mF$ of $\mP$ is a polynomial in the variables $(z_I)$ (which record the values of the rank function of $\mP$). In fact, the face $\mF$ is a generalized permutahedron, and by Corollary~\ref{cor:Dragon-exists}, the number of lattice points in $\mF^\circ$ is a polynomial function of the rank function of $\mF$. So it is sufficient to show that the values of the rank function $g$ of $\mF$ are polynomial functions of 
the values of the rank function $f$ of $\mP$. We will actually prove that each value $g(I)$ is a linear function of $(f(J))_{J\subseteq [n]}$. 
By induction on the codimension of $\mF$, it suffices to show that this is true for each facet $\mF$ of $\mP$. So let us fix a non-empty subset $K\subsetneq [n]$ and consider the facet $\mF=\mP\cap H_K$, where $H_K$ is the hyperplane $\{\,\x\in\R^n\mid\sum_{i\in K}x_i=f(K)\,\}$. We contend that the rank function $g$ of $\mF$ is given in terms of the rank function $f$ of $\mP$ by 
$$g(I)=f(I\cup K)+f(I\cap K)-f(K).$$
Indeed it is not hard to prove that this function $g$ is submodular, and that $P_g=P_f\cap H_K$ (the inclusion $P_g\subseteq P_f$ is by submodularity of $f$, the inclusion $P_g\subseteq H_K$ is because $g(K)=f(K)$ and $g([n]\setminus K)=f([n])-f(K)$, and the inclusion $P_f\cap H_K\subseteq P_g$ is because any $\a\in P_f\cap H_K$ satisfies $\sum_{i\in I}a_i=\sum_{i\in I\cup K}a_i+\sum_{i\in I\cap K}a_i-\sum_{i\in K}a_i\leq g(I)$).
Hence we have indeed proved that the number of lattice points in $\mF^\circ$ is a polynomial in the $(z_I)$. 

Moreover, for a basis $\a\in P$ the sets $\Int_P(\a)$ and $\Ext_P(\a)$ of internally and externally active indices
depend only on the unique face of the polytope $\mP$ whose relative interior contains the point $\a$. 
This shows that, for the set of polymatroids of a given combinatorial type, the polymatroid Tutte polynomial $\T_P(x,y)$ can be expressed as a certain polynomial in $x$, $y$ and the $(z_I)$ (by writing it as a sum over the faces of $\mP$).

As before, we say that a polymatroid $P$ has \emph{generic type} if the normal fan of $\mP$ is the braid arrangement (i.e., $\mP$ is combinatorially equivalent to the classical permutohedron). Let $\T_n$ be the polynomial corresponding to the Tutte polynomials of polymatroids of generic type.
 We need to show that $\T_n$ specializes to the correct polynomials
for all \emph{degenerate} (i.e., non-generic) polymatroids as well.
Since any degenerate polymatroid can be transformed into a generic one 
by repeatedly taking Minkowski sums with some line
segments $[\e_i,\e_j]$, it is enough to prove the following claim.

Let $P$ be any polymatroid, and let $\mP=\conv(P)$. For $t\in\Z_{\geq 0}$, let $\mP(t):=\mP+t[\e_i,\e_j]$ and $P(t)=\mP(t)\cap\Z^n$. 
The above argument shows that the polymatroid Tutte polynomial $\T_{P(t)}(x,y)$ is a polynomial function of $t$ (and $x,y$) for $t\in\Z_{>0}$.

\smallskip

\noindent \textbf{Claim.} The specialization of this polynomial at $t=0$ is exactly $\T_P(x,y)$.\\ 

\smallskip

We now prove 
this claim. In the limit $t\to 0$, some faces $\mF(t)$ of $\mP(t)$ collapse to a face $\mF$ of $P$ of dimension one less. 
We want nonetheless to establish a correspondence between the faces of $\mP(t)$ and the faces of $\mP$. For this, recall that a face of a polytope $\mQ\subset\R^n$ is the set of points $\mF$ in $\mQ$ maximizing a linear functional $\al\in (\R^n)^*$. From this, it follows that the faces of the Minkowski sum of two polytopes are the Minkowski sums of the appropriate faces of these polytopes. 
Hence the faces of the Minkowski sum $\mP(t)=\mP+t[e_i,e_j]$ can be partitioned according to the face $\mF$ of $\mP$ that they correspond to. There are three possible situations, which are illustrated in Figure~\ref{fig:type-faces-universality}:
\begin{compactitem}
\item[(a)] The face $\mF$ is not the maximizer of any functional $\al$ such that $\al(\e_i)=\al(\e_j)$. In this case, we associate to $\mF$ the face $\mF(t)$ of $\mP(t)$ which is the translation of $\mF$ by either $t\e_i$ (if $\al(\e_i)>\al(\e_j)$) or $t\e_j$ (if $\al(\e_i)<\al(\e_j)$).
\item[(b)] The face $\mF$ is the maximizer of a functional $\al$ such that $\al(\e_i)=\al(\e_j)$, and moreover the affine span of $\mF$ contains (lines parallel to) the vector $\e_i-\e_j$. In this case, we associate to $\mF$ the face $\mF(t)$ of $\mP(t)$ which is the Minkowski sum $\mF+t[\e_i,\e_j]$ (which has dimension $\dim(\mF)$).
\item[(c)] The face $\mF$ is the maximizer of a functional $\al$ such that $\al(\e_i)=\al(\e_j)$, but the affine span of $\mF$ does not contain the vector $\e_i-\e_j$. In this case, we associate to $\mF$ three faces of $\mP(t)$, namely $\mF_1(t)=\mF+t\e_i$, $\mF_2(t)=\mF+t\e_j$, and $\mF_{1,2}(t)=\mF+t[\e_i,\e_j]$ (the last of which has dimension $\dim(\mF)+1$).
\end{compactitem}
We now show that in all three cases the contribution, to the Tutte polynomial, of the bases in the interior of the face (or faces) of $\mP(t)$ is a polynomial in $t$ which specializes correctly at $t=0$.

\begin{figure}[!ht]
\begin{center}\includegraphics[width=\linewidth]{./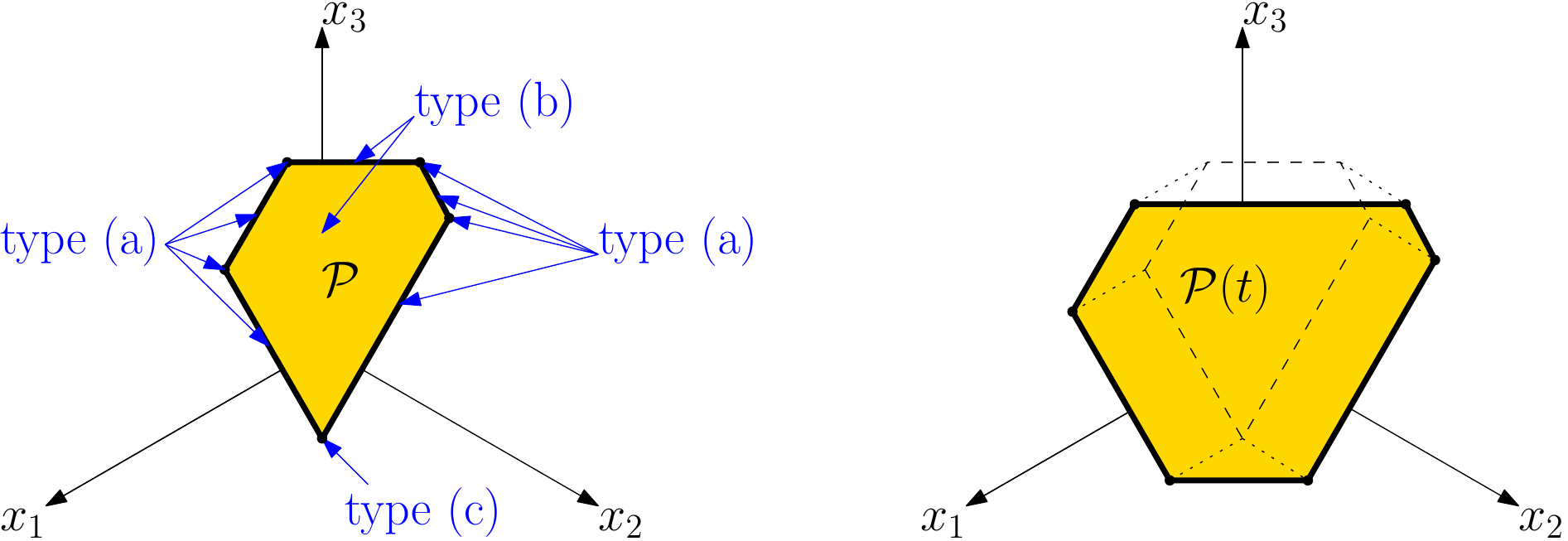}\end{center}
\caption{The generalized permutahedra $\mP$ (left) and $\mP(t)=\mP+t[\e_1,\e_2]$ (right). For each face of $\mP$ we indicate if this face corresponds to the situation described in case (a), (b), or (c).}
\label{fig:type-faces-universality}
\end{figure}

In the cases (a) and (b) the number of lattice points in the interior of $\mF(t)$ is a polynomial in $t$ (valid also at $t=0$), and any interior lattice points $\a$ of $\mF(t)$ have the same activity sets $\Int_{P(t)}(\a)$ and $\Ext_{P(t)}(\a)$ as the interior lattice points of $\mF$. Hence the contribution of those interior lattice points is indeed a polynomial function of $t$ valid at $t=0$.
It remains to examine the case (c). Let $\mF$ be a face of $\mP$ of type (c). Let $F^\circ$, $F^\circ_1(t)$, $F^\circ_2(t)$, and $F^\circ_{1,2}(t)$ be the sets of interior lattice points of $\mF$, $\mF_1(t)$, $\mF_2(t)$, and $\mF_{1,2}(t)$, respectively. Then for all $t\geq 0$ we have the partition 
$$F^\circ_1(t)\cup F^\circ_2(t) \cup F^\circ_{1,2}(t)=\biguplus_{\a\in F^\circ} L_\a(t),$$ 
where $L_\a(t)=(\a+t[\e_i,\e_j])\cap \Z^n=\{\a+t\e_i,\a+(t-1)\e_i+\e_j,\ldots,\a+t\e_j\}$.
Let us now fix $\a\in F^\circ$, and see how the contribution of the bases of $P(t)$ along $L_\a(t)$ depends on $t$.
For a polymatroid $Q\subset \Z^n$ and $\c\in Q$, let us put 
\[A_Q(\c)=\{\,(k,\ell)\in [n]^2\mid\a-\e_k+\e_\ell\in Q\,\}.\] 
Note that for all $k\in[n]$, the pair $(k,k)$ is in $A_Q(\c)$. Note also that 
$\Int_Q(\c)=\{\,k\in[n]\mid\forall \ell<k,~(k,\ell)\notin A_Q(\c)\,\}$, and $\Ext_Q(\c)=\{\,k\in[n]\mid\forall \ell<k,~(\ell,k)\notin A_Q(\c)\,\}$.
We now use three facts.

\medskip

\noindent Fact 1: If $(k,\ell)$ and $(\ell,m)$ belong to $A_Q(\c)$, then so does $(k,m)$. 

\medskip

In order to prove Fact 1, let us consider the rank function $g$ of $Q$. Assuming $\c-\e_k+\e_\ell\in Q$ and $\c-\e_\ell+\e_m\in Q$, it follows that $\c-\frac{\e_k}{2}+\frac{\e_m}{2}\in \conv(Q)$. This, in turn, implies that for all subsets $I\subset [n]$ containing $m$ but not $k$, we have $\sum_{i\in I}c_i< g(I)$. Hence $\b:=\c-\e_k+\e_m$ satisfies $\sum_{i\in I}b_i\leq g(I)$, which shows that $\b$ is in $Q$, as wanted.

\medskip

Let $A=A_P(\a)$. For $t>0$ let $B_1=A_{P(t)}(\a+t\e_i)$, and $B_2=A_{P(t)}(\a+t\e_j)$. We observe that for $s\in[t-1]$, the set $A_{P(t)}(\a+s\e_i+(t-s)\e_j)$ does not depend on $s$ (these points all belong to the interior of $\mF(t)$), and we denote it by $B_{1,2}$. 
Let $C_1=\{\,(k,\ell)\mid(k,i),(j,\ell)\in A\,\}$ and $C_2= \{\,(k,\ell)\mid(k,j),(i,\ell)\in A\,\}$.

\medskip

\noindent Fact 2: We have $B_1=A\cup C_1$, $B_2=A\cup C_2$, and $B_{1,2}=A\cup C_1\cup C_2$.

\medskip

Let us show $B_1=A\cup C_1$. Using Fact 1, we get $A\cup C_1\subseteq B_1$ (since $(i,j)\in B_1$). We now prove the other inclusion. Let $(k,\ell)\in B_1$. In this case, the vector $-\e_k+\e_\ell$ belongs to the cone generated by $\{\,-\e_{k'}+\e_{\ell'}\mid(k',\ell')\in A_1\,\}$, where $A_1=A\cup \{(i,j)\}$. That is to say, $-\e_k+\e_\ell=\sum_{(k',\ell')\in A_1}c_{k',\ell'}\,(-\e_{k'}+\e_{\ell'})$, with all the coefficients $c_{k',\ell'}$ non-negative. We now fix such an expression of $-\e_k+\e_\ell$ so that it minimizes the number of non-zero coefficients. Consider the directed graph $D$ with vertex set $[n]$ and arc set $\{\,(k',\ell')\mid c_{k',\ell'}>0\,\}$. The minimality condition ensures that $D$ has no directed cycles. It is easy to see that there exists a directed path from $k$ to $\ell$ in $D$. If the directed path does not use the arc $(i,j)$, then Fact 1 implies that $(k,\ell)\in A$. If the directed path uses the arc $(i,j)$, then Fact 1 implies that both $(k,i)$ and $(j,\ell)$ belong to $A$, whence $(k,\ell)\in C_1$. Hence $(k,\ell)\in A\cup C_1$, which completes the proof of $B_1=A\cup C_1$. The proof of $B_2=A\cup C_2$ (resp., $B_{1,2}=A\cup C_1\cup C_2$) is similar with $A_1$ replaced with $A_2=A\cup\{(j,i)\}$ (resp., $A_{1,2}=A\cup\{(i,j),(j,i)\}$). This proves Fact 2.

\medskip

Let $i'=\min\{\,\ell\in[n]\mid(i,\ell)\in A\,\}$, $i''=\min\{\,\ell\in[n]\mid(\ell,i)\in A\,\}$, $j'=\min\{\,\ell\in[n]\mid(j,\ell)\in A\,\}$, and $j''=\min\{\,\ell\in[n]\mid(\ell,j)\in A\,\}$.
Let also $K_1'=\{\,k\in[n]\mid i'<k\leq j'\text{ and } (k,j)\in A\,\}$ and $K_2'=\{\,k\in[n]\mid j'<k\leq i'\text{ and } (k,i)\in A\,\}$, as well as $K_1''=\{\,k\in[n]\mid i''<k\leq j''\text{ and } (j,k)\in A\,\}$ and $K_2''=\{\,k\in[n]\mid j''<k\leq i''\text{ and } (i,k)\in A\,\}$.

\medskip

\noindent Fact 3: 
\begin{compactitem} 
\item $\Int(\a+t\e_i)=\Int(\a)\setminus K_2'$ and $\Ext(\a+t\e_i)=\Ext(\a)\setminus K_1''$;
\item $\Int(\a+t\e_j)=\Int(\a)\setminus K_1'$ and $\Ext(\a+t\e_i)=\Ext(\a)\setminus K_2''$;
\item For $\b\in L_\a(t)\setminus \{\a+t\e_i,\a+t\e_j\}$, we have $\Int(\b)=\Int(\a)\setminus (K_1'\cup K_2')$ and $\Ext(\b)=\Ext(\a)\setminus (K_1''\cup K_2'')$.
\end{compactitem}

\medskip

Fact 3 follows directly from Fact 2 and we leave the details to the reader. By way of illustration, let us observe that if $(k,\ell)$ is in $A$ then $(k,i')\in A$ and $(k,j')\in C_1$ (in fact $j'=\min\{\,\ell\mid(k,\ell)\in C_1\,\}$).

\medskip

We can now finish the proof of the Claim. We assume, without loss of generality, that $i'\leq j'$. Note that this implies $K_2'=\varnothing$.
We now consider two cases and show that the aggregate contribution $w(t)$ of $L_\a(t)$ to $\T_{P(t)}$, seen as a polynomial in $t$, specializes correctly at $t=0$ to the contribution $w$ of $\a$ to $\T_P$. 
\begin{compactitem} 
\item[Case 1: $i''\geq j''$.] In this case, $K_2'=\varnothing$ and $K_1''=\varnothing$, hence the contribution of $\a+t\e_i$ to $\T_{P(t)}$ is $w$. Moreover, the contributions of the other elements of $L_\a(t)$ are all equal; let us denote this common value by $w'$. This gives $w(t)=w+t w'$, and $w(0)=w$, as wanted.
\item[Case 2: $i''< j''$.] In this case $K_2'=\varnothing$ and $K_2''=\varnothing$. 
First observe that if $K_1''\cap \Ext_P(\a)=\varnothing$, then we can reason as in Case 1. 
Similarly, if $K_1'\cap \Int_P(\a)=\varnothing$, then the contribution of $\a+t\e_j$ to $\T_{P(t)}$ is $w$, while all the other elements of $L_\a(t)$ contribute the same value $w'$. Hence in this case we get $w(t)=w+t w'$ and $w(0)=w$, as wanted. It remains to consider the case when both $K_1'\cap \Int_P(\a)$ and $K_1''\cap \Ext_P(\a)$ are non-empty. Let $k'\in K_1'\cap \Int_P(\a)$ and let $k''\in K_1''\cap \Ext_P(\a)$. Since $k'\in K_1'$ we have $k'\leq j'$, and moreover $(k',j)\in A$ which implies $j''\leq k'$. Similarly, $k''\in K_1''$ implies $j'\leq k''\leq j''$. Together this gives $j'=j''=k'=k''$. Hence $j'\in\Int(\a)\cap\Ext(\a)$ and $j'$ is the only index which has different activity status for $\a$ and for $\b\in L_\a(t)$. Precisely, the contribution of $\a+t\e_i$ to $\T_{P(t)}$ is $\frac{x}{x+y-1}w$, the contribution of $\a+t\e_j$ is $\frac{y}{x+y-1}w$, and the contribution of any other element of $L_\a(t)$ is $\frac{1}{x+y-1}w$. Thus $w(t)=\frac{x+y+(t-1)}{x+y-1}w$, and $w(0)=w$, as wanted.
\end{compactitem}
This completes the proof of the claim, and hence that of the existence of $\T_n$. \\

It only remains to prove that $(n-1)!\,\T_n$ has integer coefficients and degree~$n$. 
The integrality of the coefficients follows from Corollary~\ref{cor:Dragon-exists}, 
and the linear relation explained above between the rank function of a polymatroid and that of its faces.
Lastly we show that the degree of $\T_n$ is $n$.
Recall that each $d$-dimensional face $\mF$ of a generic permutohedron $\mP$ corresponds to an ordered partition $B$ of $[n]$ into $n-d$ blocks. The correspondence is such that the vector $\e_i-\e_j$ is contained in the affine span of $\mF$ if and only if the indices $i,j$ are in the same block of $B$. Consequently, only minimal elements in the blocks can potentially be internally or externally active with respect to a basis $\a$ of $P$ in the interior of $\mF$ (since $\a+\e_i-\e_j\in \mF$ for all $i,j$ in the same block of $B$). Hence the contribution of each basis of $P$ in the interior of $\mF$ is a polynomial in $x,y$ of degree at most $d$.

Next we show that the number of bases in the interior of $\mF$ is given by a polynomial in the $(z_I)$ of degree at most~$d$. By Lemma~\ref{lem:codim=sum}, the polymatroid $F=\mF\cap \Z^n$ is a direct sum of $n-d$ polymatroids: $w(F)=P_1\oplus \ldots\oplus P_{n-d}$ for some permutation $w$ and polymatroids $P_1,\ldots, P_{n-d}$. The number of bases in the interior of $\mF$ is $\prod_{i=1}^{n-d}|P_i^\circ|$, where $P_i^\circ$ is the set of lattice points in the relative interior of $\mP_i=\conv(P_i)$.
We have shown above that the values of the rank function $g$ of $w(F)$ are linear functions of the variables $(z_I)$ (which record the values of the rank function of $P$). Hence for each $i\in[n-d]$ the values of the rank function $f_i$ of $P_i$ are linear functions of the $(z_I)$. Finally, by Corollary~\ref{cor:Dragon-exists}, $|P_i^\circ|$ is a polynomial, in the values of $f_i$, of degree $\dim(\mP_i)$. Hence the number of bases in the interior of $\mF$ is indeed a polynomial in the $(z_I)$ of degree $\prod_{i=1}^{n-d}\dim(\mP_i)=\dim(\mP)=d$.
So the contribution of the interior points of the face $\mF$ to the polynomial $\T_n$ is a polynomial of degree at most 
$(n-d)+d=n$. The top-dimensional face of the permutohedron contributes a 
polynomial of degree $n$ to $T_n$. So $\T_n$ has degree~$n$.

This completes the proof of Theorem~\ref{th:universal_Tutte}.
\end{proof}

Let us reformulate Lemmas~\ref{lem:shifts} and~\ref{lem:duality}, as well as
Theorem~\ref{th:order_independence},
in terms of the universal Tutte polynomial.
It is instructive to check these claims against Example~\ref{ex:T3}.

\begin{corollary}
The universal Tutte polynomial $\T_n(x,y,(z_I))$ has the following symmetries:
\begin{enumerate}
\item {\bf affine translations:}
$\T_n(x,y,(z_I))= \T_n(x,y,(z_I+\sum_{i\in I} t_i))$, for formal variables $t_1,\dots,t_n$.
\item {\bf duality:} $\T_n(x,y,(z_I)) = \T_n(y,x,(z_{[n]\setminus I}-z_{[n]}))$, where we use the convention $z_\varnothing = 0$.
\item {\bf $S_n$-symmetry:}
$\T_n(x,y,(z_I))= \T_n(x,y,(z_{w(I)}))$, for any $w\in S_n$.
\end{enumerate}
\end{corollary}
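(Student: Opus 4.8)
The plan is to derive each of the three symmetries of $\T_n$ from the corresponding property of the polymatroid Tutte polynomial — Lemma~\ref{lem:shifts} for translations, Lemma~\ref{lem:duality} for duality, and Theorem~\ref{th:order_independence} for $S_n$-invariance — using the uniqueness clause of Theorem~\ref{th:universal_Tutte}. The mechanism is the density principle already exploited in the uniqueness argument: if two polynomials in $x,y$ and $(z_I)$ agree under every substitution $z_I=f(I)$ with $f\colon2^{[n]}\to\Z$ an integer submodular function, then they are equal (the tuples $(f(I))_I$ arising from the polymatroids $Q_\m$ of Remark~\ref{rk:zI-to-tI-meaning} already form a Zariski-dense set). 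So for each symmetry one fixes an integer submodular $f$, exhibits a second integer submodular function $f'$ whose polymatroid $P_{f'}$ is obtained from $P_f$ by the relevant geometric operation, and reads the desired identity off the appropriate lemma after specializing at $z_I=f(I)$.

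For the translation symmetry, fix integers $s_1,\dots,s_n$ and set $f'(I)=f(I)+\sum_{i\in I}s_i$; this is submodular because a modular function was added, and the inequality description gives $P_{f'}=(s_1,\dots,s_n)+P_f$, so Lemma~\ref{lem:shifts} yields $\T_n(x,y,(f'(I)))=\T_{P_{f'}}(x,y)=\T_{P_f}(x,y)=\T_n(x,y,(f(I)))$. By the density principle this proves $\T_n(x,y,(z_I))=\T_n(x,y,(z_I+\sum_{i\in I}s_i))$ for each fixed integer vector $(s_i)$; since a polynomial in $t_1,\dots,t_n$ vanishing at all integer points vanishes identically, the identity holds with the $t_i$ treated as formal variables.

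For duality, set $\tilde f(I)=f([n]\setminus I)-f([n])$, which is consistent with the convention $z_\varnothing=0$ (the value on $[n]$ being $-f([n])=\level(-P_f)$); applying \eqref{eq:submod} to the complements $[n]\setminus I$, $[n]\setminus J$ shows $\tilde f$ is submodular, and rewriting the defining inequalities using the level constraint shows $P_{\tilde f}=-P_f$. Hence, using Theorem~\ref{th:universal_Tutte}(1) with the first two arguments swapped and then Lemma~\ref{lem:duality},
\[
\T_n(y,x,(\tilde f(I)))=\T_{P_{\tilde f}}(y,x)=\T_{-P_f}(y,x)=\T_{P_f}(x,y)=\T_n(x,y,(f(I))),
\]
which is the asserted identity evaluated at $z_I=f(I)$. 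For the $S_n$-symmetry, fix $w\in S_n$ and set $f^w(I)=f(w(I))$, which is submodular since $I\mapsto w(I)$ is an automorphism of the lattice $2^{[n]}$; permuting the coordinates in the inequality description of $P_f$ shows $P_{f^w}=w(P_f)$, so Theorem~\ref{th:order_independence} gives $\T_n(x,y,(f(w(I))))=\T_{w(P_f)}(x,y)=\T_{P_f}(x,y)=\T_n(x,y,(f(I)))$, i.e.\ the third identity at $z_I=f(I)$. In all three cases the density principle upgrades these numerical identities to polynomial identities.

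I do not expect a genuine obstacle: the argument reduces to (i) checking that adding a modular function, the complementation $f\mapsto f([n]\setminus\cdot)-f([n])$, and precomposition with $w\in S_n$ preserve submodularity and correspond under $f\mapsto P_f$ to translating, negating, and permuting the polymatroid; and (ii) invoking the uniqueness/density principle from Theorem~\ref{th:universal_Tutte} together with Lemmas~\ref{lem:shifts}, \ref{lem:duality} and Theorem~\ref{th:order_independence}. The only points needing care are matching the convention $z_\varnothing=0$ in the duality statement and passing from integer values of the $t_i$ to formal variables in the translation statement.
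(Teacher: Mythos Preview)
Your proposal is correct and follows the paper's own approach: the paper simply presents the corollary as a reformulation of Lemmas~\ref{lem:shifts} and~\ref{lem:duality} and Theorem~\ref{th:order_independence} in terms of the universal polynomial, without further details. What you have written is a careful unpacking of that reformulation via the uniqueness/density principle from Theorem~\ref{th:universal_Tutte}, together with the standard verifications that translation, the map $f\mapsto f([n]\setminus\cdot)-f([n])$, and precomposition by $w\in S_n$ send submodular functions to submodular functions and correspond to the expected geometric operations on polymatroids.
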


Let us mention a couple of additional properties of the universal Tutte polynomial
$\T_n$ that easily follow from its definition.

\begin{proposition}\label{prop:easy-prop-Tn}
{\rm (a)} The polynomial $\T_n$ is divisible by $(x+y-1)$.\\
\noindent
{\rm (b)} The specialization of $\T_n(x,y,(z_I))$ obtained by setting all the variables $z_I$ equal to $0$, is $(x+y-1)^n$.
\end{proposition}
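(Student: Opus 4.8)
The plan is to deduce both parts directly from the defining property of $\T_n$ in Theorem~\ref{th:universal_Tutte}(1), using only facts about $\T_P$ that are already established. I would dispatch part (b) first, since it is a one-line specialization, and then handle part (a) by a ``specialize and densify'' argument of the kind already used for uniqueness in Theorem~\ref{th:universal_Tutte}.

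\emph{Part (b).} The constant function $f\colon 2^{[n]}\to\Z$ with $f(I)=0$ for all $I$ is integer-valued and submodular, since both sides of~\eqref{eq:submod} vanish. From the inequality description of $P_f$, taking $I=\{i\}$ for each $i\in[n]$ gives $x_i\le 0$, and combined with $\sum_i x_i=0$ this forces $P_f=\{\mathbf{0}\}$. A polymatroid with a single basis has every index simultaneously internally and externally active, so $\T_{P_f}(x,y)=(x+y-1)^n$, as already recorded in Section~\ref{sec:relation-classical-Tutte}. Specializing $z_I=f(I)=0$ for all $I$ in Theorem~\ref{th:universal_Tutte}(1) then yields $\T_n(x,y,(0))=(x+y-1)^n$, which is (b).

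\emph{Part (a).} I would view $\T_n$ as a polynomial in $x$ with coefficients in $\mathbb{Q}[y,(z_I)]$ and divide by the linear polynomial $x+y-1$, which is monic in $x$, writing $\T_n=(x+y-1)\,Q(x,y,(z_I))+R(y,(z_I))$ with $R$ independent of $x$; it then suffices to prove $R\equiv 0$. For every integer submodular $f$, since $R$ does not involve $x$ we have $R(y,(f(I)))=\T_n(1-y,y,(f(I)))=\T_{P_f}(1-y,y)$, and this vanishes because $\T_{P_f}$ is divisible by $(x+y-1)$ (the index $1$ lies in $\Int(\a)\cap\Ext(\a)$ for every $\a\in P_f$). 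Thus $R$ vanishes at the rank function of every polymatroid in $\Z^n$. Applying the change of variables~\eqref{eq:from_t_to_z} turns $R$ into a polynomial $\widehat R(y,(t_I))$, and for every tuple $\m=(m_I)_{\varnothing\ne I\subseteq[n]}$ of non-negative integers we get $\widehat R(y,(m_I))=R(y,(f_\m(I)))=0$, since $f_\m$ is the rank function of the polymatroid $Q_\m$. A polynomial vanishing at every point of $\Z_{\geq 0}^{\,2^n-1}$ is identically zero, so $\widehat R\equiv 0$, hence $R\equiv 0$ and $(x+y-1)\mid\T_n$.

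This proposition is of the ``easy'' kind, so I do not anticipate a real obstacle. The two points that warrant a little care are the final density step in (a) — a polynomial that vanishes on all of $\Z_{\geq 0}^{\,2^n-1}$ is the zero polynomial, which is exactly the mechanism already used to establish uniqueness in Theorem~\ref{th:universal_Tutte} — and, in (b), the verification that $f\equiv 0$ really is the rank function of the one-point polymatroid $\{\mathbf{0}\}$ and not of some larger set, which is what the non-negativity-from-$f(\{i\})$ computation above secures.
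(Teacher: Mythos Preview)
Your proof is correct and follows essentially the same route as the paper: part (b) is identical, and for part (a) the paper likewise substitutes $x=1-y$, passes to the $(t_I)$ variables, and uses the vanishing of $\T_P(1-y,y)$ at all non-negative integer tuples to conclude $\hT_n(1-y,y,(t_I))=0$. Your explicit division-with-remainder framing is a cosmetic variant of the paper's direct evaluation at $x=1-y$.
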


\begin{proof}
(a) We derive this property from the fact (cf.\ part (c) of Proposition~\ref{prop:TP_properties}) that any polymatroid Tutte polynomial is divisible by $(x+y-1)$. 
Consider the polynomial $\hT_n(1-y,y,(t_I))$ defined as in the proof of Theorem~\ref{th:universal_Tutte}. For any tuple of positive integers $(t_I)$ the evaluation $\hT_n(1-y,y,(t_I))$ is equal to the evaluation of a polymatroid Tutte polynomial at $(x,y)=(1-y,y)$, which is 0. Hence $\hT_n(1-y,y,(t_I))=0$. Thus $T_n(1-y,y,(z_I))=0$, which shows that $T_n(x,y,(z_I))$ is divisible by $(x+y-1)$.

(b) Let $P$ be the polymatroid with unique basis $(0,0,\ldots,0)\in\Z^n$. It has an identically $0$ rank function and satisfies $\T_{P}(x,y)=(x+y-1)^n$. Hence $\T_n(x,y,(0))=T_P(x,y)=(x+y-1)^n$.
\end{proof}



In the next section we will give an explicit formula for the universal Tutte polynomial.
In Sections~\ref{sec:cone-decomposition} and~\ref{sec:order-invariance}, we will prove the $S_n$-invariance of the 
polymatroid Tutte polynomial by giving another construction 
for it which is manifestly $S_n$-invariant.


\section{Formula for the universal Tutte polynomial} \label{sec:formula_universal_Tutte}
In this section, we give an explicit formula for the universal Tutte polynomial.
We first recall a definition from~\cite{Pos} which will play a role in the formula.

\begin{definition} \label{def:dragon-poly}
A function $g\colon2^{[n]}\setminus \{\varnothing\}\to \Z^{\geq 0}$ is \emph{$n$-draconian} if $\sum_{I\subseteq [n]}g(I)=n-1$ and for any $k>0$ and any collection $I_1,\ldots,I_k$ of distinct non-empty subsets of $[n]$, we have
$$\sum_{j=1}^k g(I_j)\leq |\bigcup_{j=1}^k I_j|-1.$$
The $n$'th \emph{dragon polynomial} $D_n$ is the polynomial in the $2^{n}-1$ variables $(t_I)_{\varnothing \neq I\subseteq [n]}$ defined by
$$D_n((t_I))=\sum_{g} {t_{[n]}-1\choose g([n])}\prod_{\varnothing \neq I\subsetneq [n]} {t_{I}\choose g(I)},$$
where the sum is over all $n$-draconian functions, and ${t \choose k}$ is the polynomial $\frac{1}{k!}t(t-1)\cdots(t-k+1)$ (with the usual convention ${t \choose 0}:=1$).
\end{definition}


\begin{example} For $n=1,2,3$ we get
$D_1=1$, $D_2=t_{\{1,2\}}-1$, and 
$$D_3=\frac{1}{2}\,t_{\{1,2,3\}}^2+\left(e_1-\frac{3}{2}\right)\,t_{\{1,2,3\}}+e_2-e_1+1,$$
where $e_1=t_{\{1,2\}}+t_{\{1,3\}}+t_{\{2,3\}}$ and $e_2=t_{\{1,2\}}t_{\{1,3\}}+t_{\{1,2\}}t_{\{2,3\}}+t_{\{1,3\}}t_{\{2,3\}}$.
\end{example}

The dragon polynomial is related to the generalized Ehrhart polynomial of permutohedra $E_n((z_I))$ defined in Section~\ref{sec:gen-Ehrhart-poly}. Namely, 
$D_n((t_I))$ is obtained from $E_n((z_I))$ by: 
\begin{equation}\label{eq:Dn-to-En}
D_n((t_I))=(-1)^{n-1}E_n(-(z_I))\big|_{z_I=\sum_{J\subseteq[n],J\cap I\neq \varnothing}t_J}.
\end{equation}
Hence, according to Corollary~\ref{cor:Dragon-exists}, the dragon polynomial counts interior points of polymatroids. See Lemma~\ref{lem:lattice-count} below for a precise statement. As we will explain in Remark~\ref{rmk:complete_dragon}, the $n$-draconian functions are in bijection with the spanning hypertrees of the hypergraph whose vertex set is $[n]$ and in which each non-empty subset of $[n]$ is a hyperedge.
The numbers of $n$-draconian functions for $n=1,2,3,4,5$ are $1$, $1$, $7$, $188$, and $16626$, respectively.

Let $\mB_n$ be the set of all \emph{ordered set partitions} of $[n]$, that is, tuples $(B_1,\ldots,B_\ell)$ of non-empty disjoint subsets of $[n]$ such that $\biguplus_i B_i=[n]$. A \emph{left-to-right minimum} (resp., \emph{right-to-left minimum}) for $B=(B_1,\ldots,B_\ell)\in\mB_n$ is an integer $m\in[n]$ such that $m=\min\left(\bigcup_{i\leq k} B_i\right)$ (resp., $m=\min\left(\bigcup_{i\geq k} B_i\right)$) for some $k\in [\ell]$. 

\begin{theorem}\label{thm:formulaTn}
Let $n$ be a positive integer. Let $\T_n(x,y,(z_I)_{\varnothing\neq I\subseteq [n]})$ be the universal Tutte polynomial and let $\hT_n(x,y,(t_I)_{\varnothing\neq I\subseteq [n]})$ be the polynomial obtained from $\T_n$ by the substitution 
\eqref{eq:from_t_to_z} of Section~\ref{sec:universal-Tutte}.
Then, 
\begin{equation}\label{eq:formulaTn}
\hT_n(x,y,(t_I))=(x+y-1)\sum_{B\in\mB_n}(-1)^{\ell(B)-1}\,D_n((t^B_I))\,x^{\lr(B)-1}\,y^{\rl(B)-1},
\end{equation}
where 
\begin{compactitem}
\item $\mB_n$ is the set of ordered set partitions of $[n]$ and $D_n$ is the $n$'th dragon polynomial, 
\item 
$\ell(B)$ is the number of blocks of $B\in\mB_n$, and $\lr(B)$ and $\rl(B)$ are the numbers of left-to-right and right-to-left minima of~$B$, respectively; finally,
\item for any $B=(B_1,\ldots,B_\ell)\in \mB_n$ and any non-empty set $I\subseteq [n]$, 
$$t^B_I=\sum_{J\subseteq \bigcup_{i<k}B_i}t_{I\cup J}$$
if $I\subseteq B_k$ for some $k\in[\ell]$, and $t^B_I=0$ if $I$ is contained in none of the subsets $B_k$ for $k\in[\ell]$. 
\end{compactitem}
The polynomial $\T_n(x,y,(z_I))$ is obtained from $\hT_n(x,y,(t_I))$ by the substitution~\eqref{eq:inclusion-exclusion}.
\end{theorem}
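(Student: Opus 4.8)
The strategy is to evaluate the universal Tutte polynomial on the family of polymatroids $Q_\m=\sum_I m_I\Delta_I$ already used to prove uniqueness in Theorem~\ref{th:universal_Tutte}, and to check that formula~\eqref{eq:formulaTn} produces the correct value $\T_{Q_\m}(x,y)$ for all tuples $\m=(m_I)$ of non-negative integers. Since $\hT_n$ is determined by these evaluations (this is exactly the content of the uniqueness argument), establishing the identity $\hT_n(x,y,(m_I))=\T_{Q_\m}(x,y)$ for all such $\m$ proves the formula. So the real work is to compute $\T_{Q_\m}(x,y)$ directly from Definition~\ref{def:polymatroid_Tutte_poly} and recognize the right-hand side of~\eqref{eq:formulaTn} in it.

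First I would decompose $\T_{Q_\m}(x,y)$ as a sum over the faces $\mF$ of the generalized permutohedron $\mathcal Q_\m=\conv(Q_\m)$: for a basis $\a$ lying in the relative interior of $\mF$, the activity sets $\Int(\a),\Ext(\a)$ depend only on $\mF$ (as noted in the proof of Theorem~\ref{th:universal_Tutte}), so
$$\T_{Q_\m}(x,y)=\sum_{\mF}\big|\mF^\circ\cap\Z^n\big|\;x^{\oi(\mF)}y^{\oe(\mF)}(x+y-1)^{\ie(\mF)}.$$
The faces of $\mathcal Q_\m$ are Minkowski sums of faces of the $\Delta_I$'s; combinatorially, I would parametrize them by ordered set partitions $B=(B_1,\dots,B_\ell)$ of $[n]$ (the direction of decrease of the maximizing functional refines $[n]$), which is why $\mB_n$ appears. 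For a fixed $B$, the corresponding face $\mF_B$ is itself (a translate of) a direct sum of smaller polymatroids, one for each block $B_k$, whose rank data is exactly captured by the restricted variables $t^B_I$ defined in the statement: a simplex $\Delta_{I\cup J}$ with $J$ lying in earlier blocks contributes, on the block $B_k\supseteq I$, a simplex $\Delta_I$. Thus $|\mF_B^\circ\cap\Z^n|$ is a product over blocks of interior-point counts, and by Corollary~\ref{cor:Dragon-exists} together with the translation~\eqref{eq:Dn-to-En} from $E_n$ to the dragon polynomial (and its multiplicativity under direct sums, which mirrors Proposition~\ref{prop:TP_properties}(e)), this product equals $(-1)^{\ell(B)-1}D_n((t^B_I))$ up to the sign bookkeeping. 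The sign $(-1)^{\ell(B)-1}$ is the Ehrhart--Macdonald reciprocity sign $(-1)^{\dim}$ collected over the blocks, combined across the partition.

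Next I would identify the activity exponents. For a basis $\a$ in the interior of $\mF_B$, only the minimal element of each block of $B$ can be internally or externally active, since $\a\pm(\e_i-\e_j)\in\mF_B$ whenever $i,j$ lie in a common block. A minimal element $m=\min(B_k)$ is internally active precisely when it is a left-to-right minimum of $B$ and externally active precisely when it is a right-to-left minimum (one checks this against Lemma~\ref{lem:activity-fundamentalcocycle} / Definition~\ref{def:activity-polymatroid}, using that moving mass between blocks is blocked exactly in these configurations). The index $1=\min(B_1)$ is always both, contributing the global factor $(x+y-1)$. Collecting, an element in the interior of $\mF_B$ contributes $x$ for each left-to-right-but-not-right-to-left minimum, $y$ for each right-to-left-but-not-left-to-right minimum, and $(x+y-1)$ for each element that is both; rewriting $x^{\oi}y^{\oe}(x+y-1)^{\ie}$ and factoring out the single guaranteed $(x+y-1)$ for the index $1$ leaves $x^{\lr(B)-1}y^{\rl(B)-1}$, matching~\eqref{eq:formulaTn}. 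Summing over all $B\in\mB_n$ and then over $\m$ yields the identity, and the final sentence of the theorem is just the change of variables~\eqref{eq:inclusion-exclusion}, already established to be the inverse of~\eqref{eq:from_t_to_z}.

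\textbf{Main obstacle.} The delicate point is the bookkeeping that turns the product of interior-point counts over the blocks of $B$ (each a dragon-polynomial evaluation in the appropriate smaller variable set) into the single dragon polynomial $D_n$ evaluated at $(t^B_I)$, with exactly the sign $(-1)^{\ell(B)-1}$. This requires a clean multiplicativity statement for $D_n$ under direct sums of polymatroids — essentially that $D_n$ of a Minkowski sum supported on a block decomposition factors, with the $t^B_I$ absorbing the contributions of simplices $\Delta_{I\cup J}$ that straddle blocks — and a careful check that the signs from Ehrhart--Macdonald reciprocity, applied blockwise and then reassembled, collapse to $(-1)^{\ell(B)-1}$ rather than, say, $(-1)^{\codim}$. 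I would isolate this as a preliminary lemma (a ``restriction/multiplicativity'' property of the dragon polynomial) before assembling the main computation.
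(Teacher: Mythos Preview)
Your overall strategy is exactly the paper's: reduce to evaluating at tuples $\m=(m_I)$ of positive integers, decompose $Q_\m$ by faces of $\mQ_\m$, index faces by ordered set partitions $B\in\mB_n$, identify the interior-point count with a dragon-polynomial value, and read off the activities as left-to-right and right-to-left minima via Lemma~\ref{lem:activity-fundamentalcocycle}. So at the level of architecture you have recovered the paper's proof.

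The one place you diverge --- and it is precisely the point you flag as the ``main obstacle'' --- is the computation of $|\mF_B^\circ\cap\Z^n|$. You propose to split $\mF_B$ as a direct sum over the blocks $B_k$, count interior points blockwise via smaller dragon polynomials, and then invoke a multiplicativity lemma to reassemble these into $D_n((m^B_I))$. That would work, but it is an unnecessary detour and creates the bookkeeping you are worried about. The paper's observation is much simpler: the face $\mF_B$ is \emph{itself} a Minkowski sum of coordinate simplices in $\R^n$, namely
\[
\mF_B=\sum_{\varnothing\neq I\subseteq[n]} m_I\,\Delta_{I\cap B_{k(I)}}=\sum_{\varnothing\neq I\subseteq[n]} m^B_I\,\Delta_I=\mQ_{\m^B},
\]
where $k(I)=\max\{k\mid I\cap B_k\neq\varnothing\}$ and the regrouping is exactly what produces the definition of $m^B_I$. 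Since $\mF_B=\mQ_{\m^B}$ is a generalized permutohedron in $\R^n$ of codimension $\ell(B)$, Lemma~\ref{lem:lattice-count} (which is just Corollary~\ref{cor:Dragon-exists} rephrased via \eqref{eq:Dn-to-En}) gives directly
\[
|\mF_B^\circ\cap\Z^n|=(-1)^{\ell(B)-1}D_n((m^B_I)).
\]
No block-by-block factorization of $D_n$, no separate multiplicativity lemma, and the sign $(-1)^{\ell(B)-1}$ drops out immediately from $\codim(\mF_B)=\ell(B)$. With this one simplification your proof is complete and identical to the paper's.

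One minor imprecision: you write ``the index $1=\min(B_1)$''. The index $1$ need not lie in $B_1$; what is true (and what the paper uses) is that $1$ is the unique index that is simultaneously a left-to-right and a right-to-left minimum of $B$, whichever block it sits in. This is what yields the single factor $(x+y-1)$.
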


\OB{Open question: Could the following remark be extended to allow some negative coefficients? }

\begin{remark}
\label{cor:formulaTn}
Recall from Remark~\ref{rk:zI-to-tI-meaning} the combinatorial interpretation of the variables $(t_I)$. It follows from Theorem~\ref{thm:formulaTn} that, for any (non-zero) tuple $\m=(m_I)_{\varnothing\neq I\subset [n]}$ of non-negative integers, the polymatroid 
\begin{equation}\label{eq:Qm}
Q_\m=\Z^n\cap \sum_{\varnothing\neq I\subset [n]}m_I\Delta_I 
\end{equation}
has Tutte polynomial
\begin{equation}\label{eq:TQm}
\T_{Q_{\m}}(x,y)=(x+y-1)\sum_{B\in\mB_n}(-1)^{\ell(B)-1}\,D_n((m^B_I))\,x^{\lr(B)-1}\,y^{\rl(B)-1},
\end{equation}
where, for all nonempty subsets $I\subseteq [n]$,
\begin{equation}\label{eq:mB}
m^B_I=\sum_{J\subseteq \bigcup_{i<k}B_i}m_{I\cup J}
\end{equation}
if $I\subseteq B_k$ for some $k\in[\ell]$, and $m^B_I=0$ if $I$ is contained in none of the subsets $B_k$ for $k\in[\ell]$. 
\end{remark}

\begin{example} For $n=1,2,3$ we get $\frac{\hT_1}{x+y-1}=1$, $\frac{\hT_2}{(x+y-1)}=(x+y+t_{\{1,2\}}-1)$, and 
\begin{eqnarray*}
\frac{\hT_3}{(x+y-1)}
&=&
{x}^{2}+2\,xy+{y}^{2} + \left(2\,t_{\{1,2,3\}}+e_1 -2\right) x+ \left(t_{\{1,2,3\}} +e_1 -2\right)y\\
&&+\frac{1}{2}{t_{{123}}}^{2}+ \left(e_1
 -\frac{3}{2}\right) t_{\{1,2,3\}}
+e_2-e_1+1,
\end{eqnarray*}
where $e_1=t_{\{1,2\}}+t_{\{1,3\}}+t_{\{2,3\}}$, and $e_2=t_{\{1,2\}}t_{\{1,3\}}+t_{\{1,2\}}t_{\{2,3\}}+t_{\{1,3\}}t_{\{2,3\}}$.
\end{example}

\begin{remark}
Note that the polynomial $\hT_n(x,y,(t_I))$ only depends on $2^n-n+1$ variables: the variables $x,y$ and the variables $t_I$ for $I\subseteq [n]$ of cardinality at least~$2$. Indeed, the dragon polynomial $D_n$ does not depend on $t_I$ for $|I|\leq 1$. The fact that $\hT_n$ is independent of $\{\,t_{\{i\}}\mid i\in[n]\,\}$ simply reflects the invariance of the Tutte polynomial with respect to translations of polymatroids. 
\end{remark}

The rest of this section is dedicated to the proof of Theorem~\ref{thm:formulaTn}. 
We want to prove that the polynomial $\hT_n(x,y,(t_I))$ is given by the formula~\eqref{eq:formulaTn}.
As can be seen from the proof of Theorem~\ref{th:universal_Tutte} (uniqueness of $\T_n$), it suffices to show that for all tuples $\m=(m_{I})_{\varnothing \neq I\subseteq [n]}$ of positive integers, the polymatroid $Q_\m$ defined by~\eqref{eq:Qm} satisfies~\eqref{eq:TQm}.

We now fix a tuple $\m=(m_{I})_{\varnothing \neq I\subseteq [n]}$ of positive integers and proceed to prove~\eqref{eq:TQm}.
We consider the partition of 
$Q_\m$ according to the faces of $\mQ_\m$: 
$$Q_\m=\biguplus_{\mF\,:\,\text{face of }\mQ_\m}F^\circ,$$
where $F^\circ:=\mF^\circ \cap \Z^n$ and $\mF^\circ$ denotes the relative interior of the face $\mF$.
It remains to count the points in $F^\circ$, and to determine their activities.



First, we observe that the polytope $\mQ_\m$ is a generalized permutohedron with the same combinatorial type as the classical permutohedron $\Pi_n$. In other words, the normal fan of $\mQ_\m$ is the braid arrangement, and its faces are indexed by the ordered set partitions of $[n]$. More precisely, for $B=(B_1,\ldots,B_\ell)\in \mB_n$, we denote by $\mF_B$ the face of $\mQ_\m$ maximizing the linear form $\varphi_B\in (\R^n)^*$ defined by setting $\varphi_B(\e_i)=k$ for all $k\in[\ell]$ and all $i\in B_k$. Now, we again use the well-known fact that the faces of a Minkowski sum of polytopes correspond to appropriate Minkowski sums of the faces of these polytopes. In our case we get 
$$\mF_B=\sum_{\varnothing\neq I\subseteq [n]}m_I \, \Delta_{I\cap B_{k(I)}},$$
where $k(I)=\max\{\,k\in[\ell]\mid I\cap B_k\neq\varnothing\,\}$. Reordering the terms gives 
$$\mF_B=\sum_{\varnothing\neq I\subseteq [n]}m^B_I\Delta_{I}=\mQ_{\m^B},$$
where $\m^B=(m^B_I)$ is given by~\eqref{eq:mB}.

We now use the following key Lemma, which is a consequence of~\cite[Thm 11.3]{Pos}. 
More precisely, combining \eqref{eq:Dn-to-En} with Remark~\ref{rk:zI-to-tI-meaning}, we can reformulate Corollary~\ref{cor:Dragon-exists} as follows.


\begin{lemma} \label{lem:lattice-count}
Let $\m'=(m'_I)_{\varnothing \neq I\subseteq [n]}$ be a tuple of non-negative integers. The number of lattice points in the relative interior of $\mQ_{\m'}$ is 
$$ |\mQ_{\m'}^\circ\cap \Z^n|\,=\,(-1)^{\codim(\mQ_{\m'})-1}\, D_n((m'_I)),$$
where $D_n$ is the $n$'th dragon polynomial and $\codim(Q_{\m'})$ is the codimension of $Q_{\m'}$.
\end{lemma}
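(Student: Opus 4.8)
The plan is to deduce Lemma~\ref{lem:lattice-count} by assembling three facts already at our disposal: Corollary~\ref{cor:Dragon-exists} (the count of relative-interior lattice points of a polymatroid as an evaluation of the generalized Ehrhart polynomial $E_n$), the change of variables~\eqref{eq:Dn-to-En} expressing the dragon polynomial $D_n$ through $E_n$, and the identification, recalled in Remark~\ref{rk:zI-to-tI-meaning}, of $Q_{\m'}=\mQ_{\m'}\cap\Z^n$ with the polymatroid whose rank function is $f_{\m'}(I)=\sum_{\varnothing\neq J\subseteq[n],\ J\cap I\neq\varnothing}m'_J$. In effect, the lemma is just Corollary~\ref{cor:Dragon-exists} rewritten in the $(t_I)$-coordinates.

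First I would note that $\mQ_{\m'}=\sum_{\varnothing\neq I\subseteq[n]}m'_I\Delta_I$ is an integer generalized permutohedron, since the class of such polytopes is closed under Minkowski sums (Remark~\ref{rk:polymat-Minkowski}); hence $Q_{\m'}$ is a polymatroid and Corollary~\ref{cor:Dragon-exists} applies to it, giving
$$|\mQ_{\m'}^\circ\cap\Z^n|=(-1)^{\dim(\mQ_{\m'})}\,E_n\!\left((-f_{\m'}(I))\right).$$
Next, substituting $t_I=m'_I$ in~\eqref{eq:Dn-to-En} and using that then $z_I=\sum_{J\cap I\neq\varnothing}m'_J=f_{\m'}(I)$, we obtain $D_n((m'_I))=(-1)^{n-1}E_n((-f_{\m'}(I)))$, i.e.\ $E_n((-f_{\m'}(I)))=(-1)^{n-1}D_n((m'_I))$. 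Combining the two gives
$$|\mQ_{\m'}^\circ\cap\Z^n|=(-1)^{\dim(\mQ_{\m'})+n-1}\,D_n((m'_I)).$$

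It then remains only to reconcile the exponent with the one in the statement. Since any polymatroid lies in a hyperplane $\{\sum_i x_i=c\}$, we have $\dim(\mQ_{\m'})=n-\codim(\mQ_{\m'})$, so $\dim(\mQ_{\m'})+n-1\equiv\codim(\mQ_{\m'})-1\pmod 2$, and the displayed formula becomes $|\mQ_{\m'}^\circ\cap\Z^n|=(-1)^{\codim(\mQ_{\m'})-1}D_n((m'_I))$, exactly as claimed. I do not expect a genuine obstacle here: the only thing requiring attention is the parity bookkeeping, together with making the various sign conventions (for $\codim$, for evaluating $E_n$ at negated arguments, and for the substitution in~\eqref{eq:Dn-to-En}) line up correctly. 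As a sanity check one can run the whole chain on the degenerate tuple $\m'=0$, where $\mQ_{\m'}=\{0\}$ has $\codim=n$ and the asserted identity reads $1=(-1)^{n-1}D_n((0))=(-1)^{n-1}(-1)^{n-1}$.
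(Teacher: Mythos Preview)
Your proof is correct and follows exactly the approach indicated in the paper, which states (immediately before the lemma) that it is obtained by ``combining~\eqref{eq:Dn-to-En} with Remark~\ref{rk:zI-to-tI-meaning}'' to ``reformulate Corollary~\ref{cor:Dragon-exists}.'' You have simply spelled out that reformulation in full, including the parity check; the only minor remark is that $\codim(\mQ_{\m'})=n-\dim(\mQ_{\m'})$ is definitional here (Proposition~\ref{prop:TP_properties}(c)), so you need not invoke the hyperplane containment to justify it.
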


Observe that the codimension of the face $\mF_B=\mQ_{\m^B}$ is $\ell(B)$ (since the set of linear forms that are maximized on $\mF_B$ has dimension $\ell$).
Hence Lemma~\ref{lem:lattice-count} gives $|F_B^\circ|=(-1)^{\ell(B)-1}D_n(m^B_I)$. Next, we determine the activities of the points in $F_B^\circ$. From Lemma~\ref{lem:activity-fundamentalcocycle} we obtain the following.
\begin{lemma} \label{lem:activities}
With the above notation, for all $\a\in F_B^\circ\subset \mQ_\m$, 
the internally active indices of $\a$ are the left-to-right minima of~$B$, while the externally active indices of $\a$ are the right-to-left minima of $B$.
\end{lemma}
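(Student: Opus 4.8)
The plan is to apply the characterization of activities given in Lemma~\ref{lem:activity-fundamentalcocycle}, using the fact that a point $\a\in F_B^\circ$ lies in the relative interior of the face $\mF_B=\mQ_{\m^B}$, which maximizes the linear form $\varphi_B$. Recall that $\varphi_B(\e_i)=k$ for $i\in B_k$, so $\mF_B$ is exactly the set of $\x\in\mQ_\m$ for which $\sum_{i\in I}x_i=f_\m(I)$ holds precisely for those $I$ that are ``upward closed'' with respect to the block order of $B$ — more precisely, for $I$ of the form $I=B_{k}\cup B_{k+1}\cup\dots\cup B_\ell$ together with unions thereof, i.e. exactly the sets $I$ such that $I$ is a union of blocks and $I$ equals $\bigcup_{i\geq k}B_i$ for some $k$ (and their coordinate-restrictions need care). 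The first step is therefore to pin down, for $\a\in F_B^\circ$, exactly which subsets $I\subseteq[n]$ are \emph{tight}, meaning $\sum_{i\in I}a_i=f_\m(I)$. I expect that the tight sets are precisely those $I$ that arise as $I=\bigcup_{i\geq k}B_i$ for some $k\in[\ell+1]$; this follows because $\a$ lies in the relative interior of $\mF_B$ and not in any proper subface, so no additional facet inequality of $\mQ_\m$ is tight, and one must check that being in $\mF_B$ forces exactly these.

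Granting the description of the tight sets, the second step is purely combinatorial. By Lemma~\ref{lem:activity-fundamentalcocycle}(1), $i$ is externally active for $\a$ iff there is a tight set $I$ with $i=\min(I)$. Since the tight sets are exactly the ``suffixes'' $U_k:=B_k\cup B_{k+1}\cup\dots\cup B_\ell$ for $k\in[\ell]$ (plus $\varnothing$, which is irrelevant), the indices $i$ that occur as $\min$ of a tight set are exactly the numbers $\min(U_k)$ for $k\in[\ell]$ — which is precisely the definition of a right-to-left minimum of $B$. So the externally active indices are the right-to-left minima. For internal activity, Lemma~\ref{lem:activity-fundamentalcocycle}(2) says $i$ is internally active iff there is $I$ with $i=\min(I)$ and $[n]\setminus I$ tight, i.e. iff $i=\min([n]\setminus U_k)=\min(B_1\cup\dots\cup B_{k-1})$ for some $k$, which runs over exactly the left-to-right minima of $B$. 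This gives the claimed statement.

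The main obstacle is the first step: verifying that the tight sets for $\a\in F_B^\circ$ are exactly the suffixes $U_k$. One direction is easy — each $U_k=\bigcup_{i\geq k}B_i$ is tight on all of $\mF_B$ because $\varphi_B$ assigns the top values $\{k,k+1,\dots,\ell\}$ precisely to the coordinates in $U_k$, so maximizing $\varphi_B$ forces $\sum_{i\in U_k}x_i$ to its maximum $f_\m(U_k)$. The subtler direction is that \emph{no other} set is tight: if some $I$ not of this form were tight at $\a$, then $\a$ would lie on the facet $\{\sum_{i\in I}x_i=f_\m(I)\}$, hence in a proper subface of $\mF_B$, contradicting $\a\in\mF_B^\circ$. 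Making this rigorous requires knowing which facet inequalities of $\mQ_\m$ are active on the whole of $\mF_B$; here one can either invoke the general theory of generalized permutohedra (the facets of $\mQ_\m$ of the generic type are $\{\sum_{i\in I}x_i=f_\m(I)\}$ for $\varnothing\neq I\subsetneq[n]$, and $\mF_B$ is cut out by the $\ell-1$ hyperplanes corresponding to $U_2,\dots,U_\ell$), or argue directly from $\mF_B=\sum_I m_I\Delta_{I\cap B_{k(I)}}$ that a lattice point in its relative interior has strictly positive coordinate in every direction spanned by $\mF_B$, so $\sum_{i\in I}a_i<f_\m(I)$ unless $I$ is a suffix. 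Once this local-geometry fact is established, the rest is the bookkeeping spelled out above, which I would leave to the reader.
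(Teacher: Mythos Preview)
Your proposal is correct and follows essentially the same route as the paper: both arguments invoke Lemma~\ref{lem:activity-fundamentalcocycle} and reduce the question to identifying, for $\a\in F_B^\circ$, the tight sets as exactly the ``suffix'' sets $U_k=B_k\cup\cdots\cup B_\ell$. The paper justifies this by noting that $\mF_B$ has codimension $\ell$ and hence lies on exactly the hyperplanes $H_{U_\ell},H_{U_{\ell-1}},\dots,H_{U_1}=H_{[n]}$ and no others, which is the same ``no additional facet hyperplane can contain an interior point'' reasoning you outline.
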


\begin{proof}
Recall from Section~\ref{sec:permutohedra} that the polytope $\mQ_\m$ is the intersection of the half-spaces given by the rank function $f_\m$. Precisely, defining for each nonempty subset $I\subseteq [n]$ the sets
$$\ds H_I=\left\{\x\in\R^n\biggm|\sum_{i\in I}x_i=\sum_{J\cap I\ne\varnothing}m_J\right\}\text{ and } \ds H_I^{-}=\left\{\x\in\R^n\biggm|\sum_{i\in I}x_i\leq \sum_{J\cap I\ne\varnothing}m_J\right\},$$ 
we have
$$\mQ_\m=H_{[n]}\cap \,\bigcap_{\varnothing \neq I\subset [n]}H_I^-.$$
Moreover, the face $\mF_B$ is the intersection of $\mQ_\m$ with the hyperplanes $H_{B_\ell}$, $H_{B_{\ell-1}\cup B_\ell}$, $\ldots$, $H_{B_1\cup B_2\cup \cdots \cup B_\ell}$, but is not contained in any other hyperplane $H_I$ (since it has codimension $\ell$). 

By Lemma~\ref{lem:activities}, for any $\a\in Q_\m$, the index $i\in [n]$ is externally (resp., internally) active if and only if there exists a subset $I\subseteq [n]$ such that $i=\min(I)$ and $\a\in H_{I}$ (resp., $a\in H_{[n]\setminus I}$). Hence for $\a\in F_B^\circ$ the externally (resp., internally) active indices are the right-to-left (resp., left-to-right) minima of~$B$.
\end{proof}

To summarize, each element $\a\in F_B^\circ$ contributes $(x+y-1)\,x^{\lr(B)-1}\,y^{\rl(B)-1}$ to the Tutte polynomial of $Q_\m$ (since 1 is the only index which is both internally and externally active), so that their total contribution is 
$$(-1)^{\ell(B)-1}\,D_n((m^B_I))\,(x+y-1)\,x^{\lr(B)-1}\,y^{\rl(B)-1}.$$ 
Summing over all ordered set partitions of $[n]$ gives~\eqref{eq:Qm}, which concludes the proof of Theorem~\ref{thm:formulaTn}.


\section{Proofs of Lemma~\ref{lem:activity-fundamentalcocycle}, Corollary~\ref{cor:order-independence-statistics}, and Propositions~\ref{prop:TP_properties} and \ref{prop:Brylawski}}
\label{sec:TP_properties}

In this section we prove Lemma~\ref{lem:activity-fundamentalcocycle}, Corollary~\ref{cor:order-independence-statistics}, and Propositions~\ref{prop:TP_properties} and \ref{prop:Brylawski}. 
Before we embark on these proofs, let us set some notation and recall an easy fact. 
For a basis $\a$ of a polymatroid $P\subset \Z^n$ with rank function $f$, we denote
\begin{equation}
\mI(\a)\equiv \mI_P(\a)=\left\{\,I\subset [n]\biggm|\sum_{i\in I}a_i=f(I)\,\right\}.
\end{equation}
(Elements of $\mI(\a)$ were called `tight at $\a$' in \cite{Kal}.)
Note that by convention $\varnothing,[n]\in \mI(\a)$. We now observe that for all $I,J\in \mI(\a)$, both $I\cup J$ and $J\cap J$ are also in $\mI(\a)$. This is because $f$ is submodular while 
$\sum_{i\in I}a_i+\sum_{i\in J}a_i=\sum_{i\in I\cup J}a_i+\sum_{i\in I\cap J}a_i$.

\begin{proof}[Proof of Lemma~\ref{lem:activity-fundamentalcocycle}] 
We first prove the statement about external activities. Let $\a\in P$. We want to prove that $\Ext(\a)=\{\,\min(I)\mid I\in\mI(\a)\,\}$.

Suppose first that there exists $I\in\mI(\a)$ such that $i=\min(I)$. 
Then for all $j<i$ the point $\b:=\a-\e_j+\e_i$ is not in $P$ because $\sum_{i\in I}b_i>f(I)$. Hence $i$ is externally active for $\a$. 

Conversely, suppose that there does not exist a subset $I\in\mI(\a)$ such that $i=\min(I)$. Consider the set $\mI'=\{\,I\in \mI(\a)\mid i\in I\,\}$.
There is a smallest element $I_0$ of $\mI'$ for inclusion (indeed if $I,J\in \mI'$, then $I\cap J\in\mI'$).
By our assumption, there exists $j<i$ in $I_0$. Hence $j$ is contained in every subset $I\in\mI'$.
This implies that $\a-\e_j+\e_i\in P$, in particular that $i$ is not externally active.

This proves the statement about external activities. The one about internal activities follows by duality upon observing that $\Int_{P}(\a)=\Ext_{-P}(-\a)$ and that the rank function of $-P$ is $I\mapsto f([n]\setminus I)-f([n])$ (so that $\mI_{-P}(-\a)=\{\,[n]\setminus I\mid I\in \mI_P(\a)\,\}$).
\end{proof} 

\smallskip

\begin{proof}[Proof of Corollary~\ref{cor:order-independence-statistics}]
Let $P\subset \Z^n$ be a polymatroid of generic type, and let $f$ be its rank function. 
We claim that for all $\a\in P$, one has $\Int(\a)\cap\Ext(\a)=\{1\}$. 
Suppose for contradiction that $m>1$ is in $\Int(\a)\cap\Ext(\a)$. By Lemma~\ref{lem:activity-fundamentalcocycle}, there exist $I,J\in \mI(\a)$ such that $m=\min(I)=\min([n]\setminus J)$. 
However, this implies that $f(I)+f(J)=f(I\cup J)+f(I\cap J)$, while neither $I\subseteq J$ (for $m\in I\setminus J$) nor $J\subseteq I$ (as $1\in J\setminus I$). This is impossible for a polymatroid of generic type.
\end{proof}

\smallskip

\begin{proof}[Proof of Proposition~\ref{prop:TP_properties}]
Throughout we denote the rank function of $P$ by $f$.

\noindent (a) It is clear from Definition~\ref{def:polymatroid_Tutte_poly} that $\T_P(x,y)$ is
a polynomial of degree at most~$n$.
The fact that the degree of $\T_P(x,y)$ is exactly $n$ 
can be deduced by picking the lexicographically minimal basis $\a_{\min}$ of $P$ and observing that 
any index $i\in[n]$ is externally active with respect to $\a_{\min}$.

\medskip

\noindent (b) It is clear from the definition that both $\T_P(x,y+1)$ and $\T_P(x+1,y)$ have non-negative integer coefficients. We now show that $\T_P(x,y)$ has both positive and negative coefficients. 
Suppose for contradiction that all the coefficients have the same sign. In this case the formal power series 
$\ds \sum_{k=0}^\infty \T_P(x,y)(x+y)^k$ has infinite degree. 
However this is equal to the formal power series $(1-x-y)^{-1} \T_P(x,y)$, which is a polynomial since $\T_P(x,y)$ is divisible by $(x+y-1)$. A contradiction.

\medskip

\noindent (c) 
Recall that by Lemma~\ref{lem:codim=sum}, a polymatroid $P$ of codimension $c$ is a direct sum of $c$ polymatroids. 
Hence by property (e) (proved below) and the $S_n$-invariance of $T_P$ (proved later), 
$\T_P(x,y)$ is a product of $c$ polymatroid Tutte polynomials. This proves the property since every polymatroid Tutte polynomial is divisible by $(x+y-1)$.


\medskip

\noindent (d) 
Since the universal polynomial $\T_n(x,y,(z_I))$ has total degree $n$, we have 
\begin{multline*}
[x^k y^{n-k}]\T_P(x,y)=[x^k y^{n-k}]\T_n(x,y,(z_I))\\
=[x^k y^{n-k}]\T_n(x,y,(0))=[x^k y^{n-k}](x+y-1)^n,
\end{multline*}
where the last equality comes from Proposition~\ref{prop:easy-prop-Tn}(b). 

\medskip
\noindent (e) Let $Q\subset \Z^n$ be a polymatroid. 
It is clear that for any $\c\in P\oplus Q$ there is a unique pair $(\a,\b)\in P\times Q$ such that $\c=(a_1,\ldots,a_n,b_1,\ldots,b_m)$.
Moreover for $i,j\in[n+m]$, the point $\c+\e_i-\e_j$ is in $P\oplus Q$ if and only if either ($i,j\leq n$ and $\a+\e_i-\e_j\in P$) or ($i,j>n$ and $\b+\e_{i-n}-\e_{j-n}\in Q$).
This implies $\Int_{P\oplus Q}(\c)=\Int_P(\a)\cup (n+\Int_Q(\b))$ and $\Ext_{P\oplus Q}(\c)=\Ext_P(\a)\cup (n+\Ext_Q(\b))$ 
Hence, 
\begin{eqnarray*}
\T_{P\oplus Q}(x,y)&=& \sum_{\c\in P\oplus Q} x^{\oi(\c)}\,y^{\oe(\c)}\,(x+y-1)^{\ie(\c)}\\
&=&\sum_{(\a,\b)\in P\times Q} x^{\oi(\a)+\oi(\b)}\,y^{\oe(\a)+\oe(\b)}\,(x+y-1)^{\ie(\a)+\ie(\b)}\\
&=&\T_{P}(x,y)\,\T_{Q}(x,y).
\end{eqnarray*}

Similarly for any $\c\in P\boxplus Q$ there is a unique pair $(\a,\b)\in P\times Q$ such that $\c=(a_1+b_1,a_2,\ldots,a_n,b_2,\ldots,b_m)$. 
(The decomposition of $c_1$ is determined by the levels of $P$ and $Q$.) 
Moreover for $1\leq i<j\leq n+m-1$, the point $\c+\e_i-\e_j$ is in $P\boxplus Q$ if and only if either ($i,j\leq n$ and $\a+\e_i-\e_j\in P$), 
or ($i,j>n$ and $\b+\e_{i+1-n}-\e_{j+1-n}\in Q$), 
or ($i\leq n<j$ and $\a-\e_1+\e_i\in P$ and $\a+\e_1-\e_{j+1-n}\in Q$).
This implies 
$\Int_{P\boxplus Q}(\c)=\Int_P(\a)\cup \{\,n+i-1\mid i\in\Int_Q(\b)\setminus\{1\}\,\}$. 
Similarly, $\Ext_{P\boxplus Q}(\c)=\Ext_P(\a)\cup \{\,n+i-1\mid i\in\Ext_Q(\b)\setminus\{1\}\,\}$.
Consequently, 
\begin{eqnarray*}
\T_{P\boxplus Q}(x,y)&=&\sum_{(\a,\b)\in P\times Q} x^{\oi(\a)+\oi(\b)}\,y^{\oe(\a)+\oe(\b)}\,(x+y-1)^{\ie(\a)+\ie(\b)-1}\\
&=&\frac{\T_{P}(x,y)\,\T_{Q}(x,y)}{x+y-1}.
\end{eqnarray*}
 
\medskip
\noindent (f) 
It is easy to verify that $\widehat{P}^i$ in the $r_i=0$ case, as well as $P\setminus i$ and $P/i$ in the $r_i=1$ case, are polymatroids. See Section~\ref{subsec:deletion-contraction} for more on the underlying geometry.

By the $S_n$-invariance of $\T_P$ we may assume $i=n$. In the case $r_n=0$ we have $P=\widehat{P}^n\oplus Q$, where $Q$ is the matroid in $\Z$ with unique basis $\a=f(\{n\})$. Hence~\eqref{eq:deletion-coloop} follows from property (e). 

In the case $r_n=1$, any basis $\a\in P$ satisfies $a_n\in \{f(\{n\})-1, f(\{n\})\}$. Hence every basis of $P$ corresponds either to a basis of $P\setminus n$ (if $a_n=f(\{n\})-1$) or to a basis of $P/n$ (if $a_n=f(\{n\})$).
If $a_n=f(\{n\})-1$, then it is clear that $n\in \Int_P(\a)$, and the Exchange axiom easily implies that $n\not\in \Ext_P(\a)$. As to the other indices, the internal or external activity with respect to $\a\in P$ is the same as with respect to $\widehat{\a}^n\in P\setminus n$. 

To summarize, if $a_n=f(\{n\})-1$, then $\Int_P(\a)=\Int_{P\setminus n}(\widehat{\a}^n)\cup\{n\}$ and $\Ext_P(\a)=\Ext_{P\setminus n}(\widehat{\a}^n)$. 
Similarly, if $a_n=f(\{n\})$, then $\Int_P(\a)=\Int_{P/ n}(\widehat{\a}^n)$ and $\Ext_P(\a)=\Ext_{P/ n}(\widehat{\a}^n)\cup\{n\}$. This gives \eqref{eq:deletion-contraction}.

\medskip
\noindent (g) The property $\T_P(1,1)=|P|$ is clear from the definition. We now prove $\T_P(0,0)=(-1)^{\codim(P)}|P^\circ|$, where $P^\circ=\mP^\circ\cap \Z^n$. 
By property (e), it suffices to do so when $\codim(P)=1$, so we assume this condition from now on. 
Also, by definition 
$$\T_P(0,0)=\sum_{\a\in P\,:\,\Int(\a)\triangle \Ext(\a)=\varnothing}(-1)^{\Int(\a)\cap \Ext(\a)},$$
where $A\triangle B:=(A\setminus B)\cup (B\setminus A)$ denotes the symmetric difference of the sets $A$ and $B$. Hence it suffices to show that for all $\a\in P$, we have $\a\in P^\circ$ if and only if $\Int(\a)\triangle \Ext(\a)=\varnothing$; furthermore in this case $\Int(\a)=\Ext(\a)=\{1\}$. 

Let us prove this property. For $I\subseteq [n]$, we consider the hyperplane $H_I=\{\,\x\in\Z^n\mid\sum_{i\in I}x_i=f(I)\,\}$. 
Let us first assume $\a\in P^\circ$. This implies $\a\notin \bigcup_{\varnothing \neq I\subsetneq [n]}H_I$, in particular
for all $i,j\in [n]$, we have $\a+\e_i-\e_j\in P$. Thus $\Int(\a)=\Ext(\a)=\{1\}$, and $\Int(\a)\triangle \Ext(\a)=\varnothing$. 

Conversely, we now assume that $\a\in P\setminus P^\circ$, and want to prove $\Int(\a)\triangle \Ext(\a)\neq\varnothing$. 
By Lemma~\ref{lem:activity-fundamentalcocycle}, we have $\Ext(\a)=\{\,\min(I)\mid\varnothing\neq I\in\mI(\a)\,\}$ and $\Int(\a)=\{\,\min([n]\setminus I)\mid[n]\neq I\in\mI(\a)\,\}$. 
Since $\a\notin P^\circ$, there exists a set $I_0\in \mI(\a)$ which is neither empty nor equal to $[n]$. 
Hence $\max(\Int(\a)\cup \Ext(\a))>1$. Therefore, if $m:=\max(\Int(\a)\cap \Ext(\a))$ is equal to $1$, then $\Int(\a)\triangle \Ext(\a)\neq\varnothing$. 
Let us now assume $m>1$, and consider $I,J\in\mI(\a)$ such that $m=\min(I)=\min([n]\setminus J)$. We have $I\neq [n]\setminus J$ (otherwise $P\subset H_I$ and $\codim(P)>1$), whence we get $I\cap J\neq \varnothing$ or $I\cup J\neq [n]$. Moreover, both $I\cap J$ and $I\cup J$ are in $\mI(\a)$.
If $I\cap J\neq \varnothing$, then $k:=\min(I\cap J)>m$ and thus $k\in \Ext(\a)\triangle \Int(\a)$. 
Similarly, if $I\cup J\neq [n]$, then $k:=\min([n]\setminus(I\cup J))>m$, implying $k\in \Int(\a)\triangle \Ext(\a)$.
In both cases we obtain $\Int(\a)\triangle \Ext(\a)\neq \varnothing$, as wanted. 

\medskip
\noindent (h) It suffices to prove $\ds \frac{\T_P(x,y)}{(x+y-1)}\bigg|_{x=1,y=0}=|(\mP-\Delta)\cap \Z^n|$, as the other identity follows by duality. 
Let $\mA=\{\,\a\in P\mid\Ext(\a)=\{1\}\,\}$ and let $\mB=\{\,\a\in \Z^n\mid\a-\e_1+\Delta\subseteq \mP\,\}$. Clearly, $\ds \frac{\T_P(x,y)}{(x+y-1)}\bigg|_{x=1,y=0}=|\mA|$, and $|(\mP-\Delta)\cap \Z^n|=|\mB|$, so it suffices to prove $\mA=\mB$.

For $i\in \{2,3,\ldots,n\}$, let $\mA_i=\{\,a\in P\mid\exists j<i,~\a+\e_i-\e_j\in P\,\}$ and $\mB_i=\{\,\a\in P\mid\a+\e_i-\e_1\in P\,\}$. It is easy to see that $\mB=\bigcap_{i=2}^n\mB_i\subseteq \bigcap_{i=2}^n\mA_i=\mA$. 

We now prove, by induction on $k$, that for all $k\in \{2,3,\ldots,n\}$, we have $\bigcap_{i=2}^k\mA_i\subseteq \bigcap_{i=2}^k\mB_i$.
The base case $k=2$ is trivial, and we now assume $\bigcap_{i=2}^{k-1}\mA_i\subseteq \bigcap_{i=2}^{k-1}\mB_i$. Let $\a\in \bigcap_{i=2}^{k}\mA_i$. We need to prove that $\b=\a+\e_k-\e_1$ is in $P$. 
Since $\a\in\mA_k$, there exists $j<k$ such that $\a+\e_k-\e_j\in P$. As $\a\in\bigcap_{i=2}^{k-1}\mA_i\subseteq \bigcap_{i=2}^{k-1}\mB_i$, we get $\a+\e_j-\e_1\in P$. Hence, by convexity, $\a+\frac{\e_k-\e_1}{2}\in \mP$. This, in turn, implies that for all subsets $I\subset [n]$ containing $k$ but not $1$, we have $\sum_{i\in I}a_i< f(I)$. Thus we also have $\sum_{i\in I}b_i\leq f(I)$, which implies that $\b$ is in $P$, as wanted.
\end{proof}

\smallskip

\begin{proof}[Proof of Proposition \ref{prop:Brylawski}]
It is clear that $T_P(x,y)$ is a Laurent polynomial since the degree of $\T_P(x,y)$ is $n$. 
Let us denote $b_{p,q}=[x^p y^q]B(x,y)T_P(x,-1/y)$.
Observe that for all integers $p,q$,
\begin{eqnarray*}
b_{p,q}-b_{p-1,q-1}-b_{p-1,q}
&=& [x^p y^q](1-x(1+y))B(x,y)T_P(x,-1/y)\\
&=& [x^p y^q]T_P(x,-1/y).
\end{eqnarray*}
The right-hand side of the above equation is equal to 0 whenever $q>-\ell$, since $\ell$ is the minimum $y$-degree of $T_P(x,y)$.
Thus, if for a given $k\in \Z$ one has $b_{k+q,q}=0$ 
for all $q\geq -\ell$, then $b_{k+1+q,q}=b_{k+q,q-1}$ for all $q>-\ell$.  Hence in this case the coefficients $b_{k+1+q,q}$
for $q\geq -\ell$ are all equal.
Since for $k$ sufficiently small one has $b_{k+q,q}=0$ 
for all $q$, there exists an integer $k_0\in \Z\cup\{+\infty\}$ and a constant $c\neq 0$ such that  
for all $q\geq -\ell$ one has
$b_{p,q}=0$ 
for all  $p< k_0+q$ and $b_{k_0+q,q}=c$. 

It remains to show that $k_0=n$ and $c=(-1)^{n-d}$. Since $\ell\geq -d$, it suffices to show
\begin{equation}\label{eq:Bryld}
\forall p< n,~b_{p+d,d}=0, \textrm{ and } b_{n+d,d}=(-1)^{n-d}.
\end{equation}
Let us denote $\tau_{i,j}=[x^i y^j]\T_P(x,y)$, so that
$$T_P(x,y)=\sum_{i,j\geq 0}\tau_{i,j}\,x^{i+d-n}y^{j-d}(x+y-xy)^{n-i-j}.$$
One gets
$$B(x,y)T_P(x,-1/y)=\sum_{i,j\geq 0}\tau_{i,j}\,(-xy)^{i+d-n}(1-x-xy)^{n-i-j-1}.$$
Hence
$$ 
b_{p+d,d}=(-1)^d\sum_{i,j\geq 0}\tau_{i,j}(-1)^{n-i}[x^{p+n-i}y^{n-i}](1-x-xy)^{n-i-j-1}.
$$
Note that for $i+j<n$, $[x^{p+n-i}y^{n-i}](1-x-xy)^{n-i-j-1}=0$, and that for $i+j>n$, $\tau_{i,j}=0$. 
Moreover, by Proposition \ref{prop:TP_properties}(d), $\tau_{n-j,j}={n\choose j}$. Thus,
$$
b_{p+d,d}
=(-1)^d\sum_{j=0}^n{n\choose j}(-1)^{j}[x^{p+j}y^{j}](1-x-xy)^{-1}.
$$
For $p<0$, $[x^{p+j}y^{j}](1-x-xy)^{-1}=0$, hence $b_{p+d,d}=0$. 
For $p\geq 0$,  $[x^{p+j}y^{j}](1-x-xy)^{-1}={p+j \choose j}$, hence
\begin{eqnarray*}
b_{p+d,d}
&=&(-1)^d\sum_{j=0}^n{n\choose j}(-1)^{j}{p+j \choose j}\\
&=&(-1)^d\sum_{j=0}^n[x^{n-j}](1+x)^n\cdot  [x^j](1+x)^{-p-1}\\
&=&(-1)^{d}[x^n](1+x)^{n-p-1}.
\end{eqnarray*}
This readily gives \eqref{eq:Bryld}.
\end{proof}


\section{Decomposition of the integer lattice into cones}
\label{sec:cone-decomposition}

In this section we describe, for a (not necessarily finite) M-convex set $P\subset \Z^n$, a decomposition of the integer lattice $\Z^n$ into 
a disjoint union of 
cones indexed by the elements of $P$. 
We will use this cone decomposition in the next section to give another characterization of the polymatroid Tutte polynomial, and prove the $S_n$-invariance stated in Theorem~\ref{th:order_independence}.



\begin{definition}
\label{def:C_cone}
Let $P\subset \Z^n$ be an M-convex set.
For $\a\in P$, define the \emph{cone} $C(\a)=C_P(\a)\subset \Z^n$ to consist
of all points $\a+\x\in\Z^n$ such that, for any $i\in[n]$,
\begin{enumerate}
\item
if $i\not\in\Int(\a)$ and $i\not\in\Ext(\a)$, then $x_i=0$;
\item
if $i\in\Int(\a)$ and $i\not\in\Ext(\a)$, then $x_i\leq 0$;
\item
if $i\not\in\Int(\a)$ and $i\in\Ext(\a)$, then $x_i\geq 0$;
\item
if $i\in\Int(\a)$ and $i\in \Ext(\a)$, then $x_i$ can be any integer.
\end{enumerate}
\end{definition}

We now define the Manhattan distance on $\Z^n$ before stating the main result.

\begin{definition}
The \emph{Manhattan distance} $d_1(\a,\b)$ between two points $\a,\b\in\R^n$ is
$$
d_1(\a,\b)=||\a-\b||_1 := \sum_{i=1}^n |a_i - b_i|.
$$
For two subsets $A$ and $B$ of $\Z^n$, define
$$
d_1(A,B) :=\min_{\a\in A,\, \b\in B} d_1(\a,\b).
$$
\end{definition}

\begin{theorem}
\label{th:decomposition_into_cones}
Let $P\subset\Z^n$ be a nonempty $M$-convex set.
\begin{compactenum}
\item The collection of cones $C(\a)=C_P(\a)$, for $\a$ in $P$, gives a disjoint decomposition of $\Z^n$:
$$
\biguplus_{\a\in P} C_P(\a) = \Z^n.
$$
\item If $\a\in P$ and $\c\in C(\a)$,
then $d_1(\a,\c)=d_1(P,\c)$.
In other words, the point $\a$ is one of the points of $P$ 
closest to the point $\c$ with respect to the Manhattan distance.
\end{compactenum}
\end{theorem}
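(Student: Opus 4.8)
The plan is to prove parts (1) and (2) in a way that makes them mutually reinforcing. I would start with part (1) and establish two things separately: that the cones $C_P(\a)$ cover $\Z^n$, and that they are pairwise disjoint. For the covering, given an arbitrary $\c\in\Z^n$ I would pick a point $\a\in P$ minimizing the Manhattan distance $d_1(\a,\c)$, and among all such minimizers I would take the one that is lexicographically extremal in an appropriate sense (e.g., minimizing $\sum_i \max(a_i-c_i,0)$ and then breaking ties by a suitable rule on the coordinates). The claim is then that $\c\in C_P(\a)$. This requires checking, coordinate by coordinate, that the sign constraints in Definition~\ref{def:C_cone} hold. The key device is the Exchange Axiom: if, say, $x_i=c_i-a_i<0$ but $i\notin\Int(\a)$, then there is $j<i$ with $\a-\e_i+\e_j\in P$; one then argues that replacing $\a$ by $\a-\e_i+\e_j$ either strictly decreases $d_1(\cdot,\c)$ (contradicting minimality) or keeps it equal while improving the tie-breaking statistic (contradicting extremality). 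The cases $i\notin\Int(\a)\cup\Ext(\a)$ (forcing $x_i=0$) and $i\notin\Ext(\a)$, $x_i>0$ are handled symmetrically, using external activity and the moves $\a+\e_i-\e_j$.

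For disjointness, I would suppose $\c\in C_P(\a)\cap C_P(\a')$ with $\a\ne\a'$ and derive a contradiction. Pick $i$ with $a_i>a'_i$ (say). From $\c\in C_P(\a)$ and $\c\in C_P(\a')$, the sign constraints on the $i$-th coordinate of $\c-\a$ and $\c-\a'$ constrain the activity status of $i$ with respect to both $\a$ and $\a'$; combined with the Exchange Axiom applied to the pair $\a,\a'$ (which produces $j$ with $a_j<a'_j$ and $\a-\e_i+\e_j,\ \a'+\e_i-\e_j\in P$), this should force a violation of one of the defining inequalities of one of the two cones. I expect to iterate or use a minimal-counterexample argument on $d_1(\a,\a')$ here. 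This disjointness argument, together with the covering argument, also yields part (2): in the covering argument the point $\a$ with $\c\in C_P(\a)$ was chosen to minimize $d_1(\cdot,\c)$, so $d_1(\a,\c)=d_1(P,\c)$ automatically; one just has to note that disjointness guarantees this holds for \emph{every} $\c\in C_P(\a)$, not merely the chosen representative. Alternatively, once (1) is known, part (2) follows directly: any $\c$ lies in a unique $C_P(\a)$, and if some $\a'\in P$ were strictly closer, then $\c\in C_P(\a')$ as well, contradicting uniqueness.

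The main obstacle I anticipate is the bookkeeping in the covering step — specifically, choosing the right tie-breaking rule among distance-minimizers so that the sign constraints can all be verified simultaneously without circularity, and making sure the Exchange Axiom moves genuinely improve the chosen statistic rather than merely permuting among optimal points. A clean way to organize this is probably to define, for fixed $\c$, the function $\a\mapsto(d_1(\a,\c),\ \a)$ with $d_1$ as primary key and a fixed linear functional on $\Z^n$ as secondary key, show the minimizer is unique, and then verify membership in its cone; the Exchange Axiom applied in the two directions (decreasing $a_i$ when $c_i<a_i$, increasing $a_i$ when $c_i>a_i$) then handles the four cases of Definition~\ref{def:C_cone} uniformly. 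I would also need the elementary fact, likely already implicit in the earlier sections, that if $\a\in P$ and $\a\pm(\e_i-\e_j)\in P$ then all intermediate lattice points on the segment lie in $P$ — but for $M$-convex sets the relevant moves are by single $\pm(\e_i-\e_j)$ so this is not an issue.
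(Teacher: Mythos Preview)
Your overall strategy is sound and close in spirit to the paper's, but the obstacle you flag in the covering step is real and neither of your proposed resolutions works. The secondary statistic $\sum_i \max(a_i-c_i,0)=d_1^{>}(\a,\c)$ is actually constant on the set of $d_1$-minimizers (this is Lemma~\ref{lem:d_geq_leq}, whose proof is independent of the present theorem), so it breaks no ties. A fixed linear functional $\sum_i w_i a_i$ cannot work either: to rule out an $\Int$-violation at $i$ you use the move $\a\to\a-\e_i+\e_j$ with $j<i$, which improves the functional only if $w_j<w_i$; to rule out an $\Ext$-violation at $i$ you use $\a\to\a+\e_i-\e_j$, which improves it only if $w_j>w_i$. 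The idea that makes everything work is to exploit the \emph{linear order on $[n]$} by processing indices from $n$ downward. The paper implements this iteratively rather than via a minimizer: start from an arbitrary $\a'\in P$, call $i$ ``bad'' if the cone constraint at $i$ fails, take the \emph{maximal} bad index, apply an available exchange $\a'\mapsto\a'':=\a'\pm(\e_i-\e_j)$ with $j<i$, and check (using the Exchange Axiom on $\a'$ and the witness that $k$ is bad for $\a''$) that no bad index above $i$ is created; then part~(2) falls out because $d_1(\cdot,\c)$ is nonincreasing along the iteration and $\a'$ was arbitrary. If you prefer your direct framework, the tie-break that does work is ``among $d_1$-minimizers, lexicographically minimize $(|a_n-c_n|,\ldots,|a_1-c_1|)$'': any exchange fixing a violation at $i$ leaves coordinates above $i$ unchanged and strictly decreases $|a_i-c_i|$. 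The same top-down trick also collapses your disjointness argument to one line with no iteration on $d_1(\a,\a')$: take the \emph{maximal} $i$ with $a_i\ne a'_i$, say $a_i<a'_i$; the exchange index $j$ is then forced to satisfy $j<i$, so $i\notin\Ext(\a)$ and $i\notin\Int(\a')$, giving $c_i\le a_i<a'_i\le c_i$.

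Finally, your ``alternatively'' argument for part~(2) is incorrect: $d_1(\a',\c)<d_1(\a,\c)$ does not imply $\c\in C_P(\a')$, since cone membership is an activity condition, not a proximity condition. Your primary argument for~(2) is fine once the covering step is repaired as above.
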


\begin{remark}
Theorem~\ref{th:decomposition_into_cones} is the analogue, for M-convex sets, of a classical result about the Tutte polynomial of matroids due to Crapo~\cite{crapo}. Indeed, for a matroid $M$ defined on the ground set $[n]$, one can consider the restriction of the cone-decomposition given by Theorem~\ref{th:decomposition_into_cones} to the subset $\{0,1\}^n\subset \Z^n$. This translates into a partition of the Boolean lattice $(2^{[n]},\subseteq)$ into intervals indexed by the bases of $M$, which is exactly the partition exhibited in~\cite{crapo}.
\end{remark}

\begin{proof}
Let us first show that the cones $C(\a)$ and $C(\b)$, for two different
elements $\a, \b \in P$, are disjoint. Suppose that this is not true,
and there is $\c\in C(\a)\cap C(\b)$.
Let $i$ be the maximal index such that $a_i\ne b_i$. 
Without loss of generality, assume that $a_i<b_i$.
According to the Exchange Axiom, there exists $j$ such that $a_j>b_j$
and $\a+\e_i-\e_j\in P$ and $\b-\e_i+\e_j\in P$.
Because of our choice of $i$, we have $j<i$.
Thus $i\not\in \Ext(\a)$ and $i\not\in\Int(\b)$. Hence $c_i\leq a_i$ and $c_i\geq b_i$. 
We reach a contradiction since $a_i<b_i$.

Next, let us show that, for any $\c\in \Z^n$, there exists $\a\in P$ such that
$\c\in C(\a)$.
Pick any element $\a'\in P$. 
We say that an index $i$ is \emph{bad} 
for $\a'$ (with respect to $\c$) 
if we have either ($a'_i<c_i$ and $i\not\in\Ext(\a')$),
or ($a'_i>c_i$ and $i\not\in\Int(\a')$).

If there is no bad index for $\a'$, then $\c\in C(\a')$ and we are done.
Otherwise, let~$i$ be the maximal bad index. If $a'_i<c_i$, then $i\not\in\Ext(\a')$, hence there exists $j<i$ such that $\a'':=\a'+\e_i-\e_j\in P$. If $a'_i>c_i$, then $i\not\in\Int(\a')$, hence there exists $j<i$ such that $\a'':=\a'-\e_i+\e_j\in P$. 
 
We now show that $\a''$ has no bad index greater than $i$ (with respect to~$\c$). 
For $k>i$ we know that $a_k''=a_k'$ and want to show that $k$ is not a bad index for $\a''$. It suffices to show that if $k\notin \Ext(\a'')$ then 
$k\notin \Ext(\a')$, and if $k\notin \Int(\a'')$ then $k\notin \Int(\a')$. Suppose that $k\notin \Ext(\a'')$. Then there is $\ell<k$ such that $\a''+\e_k-\e_\ell\in P$. Applying the Exchange Axiom to the pair $\a'$ and $\a''+\e_k-\e_\ell$, we deduce that there is $r < k\,$ ($r\in\{\ell,i,j\}$) such that $\a'+\e_k-\e_r\in P$. Thus $k\notin \Ext(\a')$. Similarly, if $k\notin \Int(\a'')$, then there exists $r<k$ such that $\a'-\e_k+\e_r\in P$, hence $k\notin \Int(\a')$.

Thus either $\a''$ has no bad index at all, or the maximal bad index for $\a''$ 
is less than $i$, or the maximal bad index for $\a''$ is $i$ and 
$|a_i''-c_i| < |a_i'-c_i|$.

If $\a''$ has a bad index, then we can apply the same operation 
to the element $\a''$ and find a new element $\a'''\in P$, etc. 
The above argument shows that, after finitely many steps, we will 
find an element $\a\in P$ with no bad index.
This means exactly that $\c\in C(\a)$.


Finally, observe that, for the sequence of points $\a',\a'', \a''', \dots\in P$
that we constructed above we have
$d_1(\a',\c)\geq d_1(\a'',\c) \geq d_1(\a''',\c)\geq \cdots$.
Indeed, $|a'_i-c_i| = |a''_i-c_i| - 1$
and $|a'_j-c_j| \leq |a''_j-c_j| + 1$, and all other coordinates
of~$\a'$ and $\a''$ are the same.
Since $\a'$ can be any point of $P$ and the sequence $\a',\a'',\a''',\dots$
converges, after finitely many steps, to $\a\in P$ such that $\c\in C(\a)$,
we deduce that $d_1(\a,\c)\leq d_1(\a',\c)$ for any $\a'\in P$,
which proves part (2) of the theorem.
\end{proof}

\begin{example}
In Figure~\ref{fig:cones} we show the cone decomposition for a polymatroid $P\subset \Z^3$. The polymatroid $P$ is the same as the one represented in Figures~\ref{fig:polymatroid} and~\ref{fig:polymatroid_activities}. Note that every cone of the decomposition contains the line $\Z\, \e_1$ as a Minkowski summand (because the index $1$ is always both internally and externally active). Hence one may project the cone decomposition onto the space spanned by the coordinate vectors $\e_2,\e_3$ without loss of information (one could equivalently consider the induced decomposition of the hyperplane $\{\,\x\in\Z^n\mid\sum x_i=\level(P)\,\}$ containing $P$). This is represented on the right of Figure~\ref{fig:cones}. 

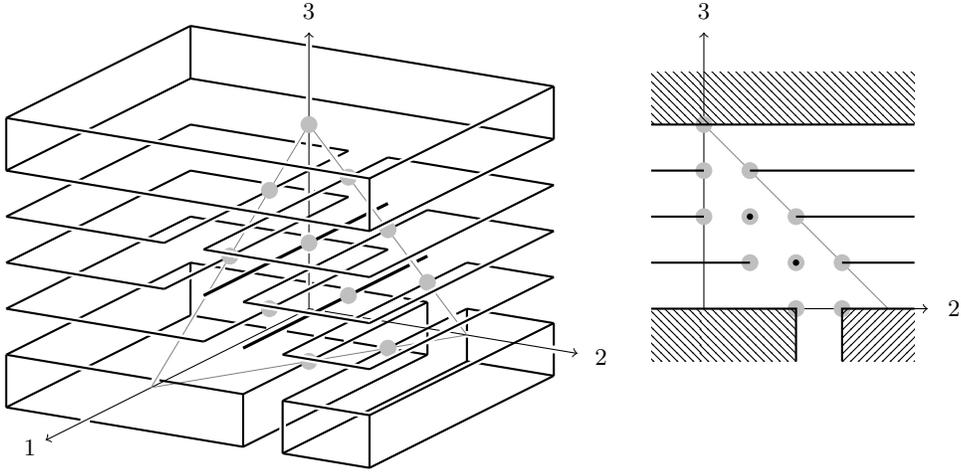
\begin{figure}[h]
\centering
\begin{tikzpicture}[scale=.35]

\draw (0,0) -- (0,2.1);
\draw [->] (0,2.9) -- (0,10.5);
\draw [->] (0,0) -- (-10,-5);
\draw (0,0) -- (.18,-.03);
\draw (.6,-.1) -- (1.68,-.28);
\draw [->] (2.04,-.34) -- (10.2,-1.7);
\node at (-10.6,-5.3) {\small $1$};
\node at (11.1,-1.85) {\small $2$};
\node at (0,11.3) {\small $3$};
\draw [help lines] (6,-1) -- (0,7) -- (-6,-3) -- cycle;

\draw [thick] (-2.5,-3.25) -- (4.5,.25);
\draw [thick] (-2.5,-3.25) -- (-2.5,-5.25);
\draw [thick] (-4.5,1.75) -- (-3.36,1.56);
\draw [thick] (-3.18,1.53) -- (-.12,1.02);
\draw [thick] (.12,.98) -- (1.2,.8);
\draw [thick] (2.1,.65) -- (3.18,.47);
\draw [thick] (3.6,.4) -- (4.5,.25);
\draw [thick] (4.5,-1.75) -- (4.5,-1.35);
\draw [thick] (4.5,-1.15) -- (4.5,-.9);
\draw [thick] (4.5,-.6) -- (4.5,.25);
\draw [thick] (4.5,-1.75) -- (-2.5,-5.25);
\draw [thick] (-11.5,-3.75) -- (-8.68,-4.22);
\draw [thick] (-8.32,-4.28) -- (-2.5,-5.25);
\draw [thick] (-11.5,-3.75) -- (-11.5,-1.75);
\draw [thick] (-4.5,1.75) -- (-11.5,-1.75);
\draw [thick] (-4.5,1.75) -- (-4.5,-.25);
\draw [thick] (-11.5,-3.75) -- (-4.5,-.25);
\draw [thick] (4.5,-1.75) -- (3.42,-1.57);
\draw [thick] (2.58,-1.43) -- (1.68,-1.28);
\draw [thick] (1.32,-1.22) -- (.6,-1.1);
\draw [thick] (.18,-1.03) -- (-.84,-.86);
\draw [thick] (-1.62,-.73) -- (-4.26,-.29);
\draw [line width=2.5,white] (-7,-2.5) -- (-9.7,-2.05);
\draw [thick] (-2.5,-3.25) -- (-5.68,-2.72);
\draw [thick] (-5.98,-2.67) -- (-11.5,-1.75);

\draw [thick] (-1,-3.5) -- (5.3,-.35);
\draw [thick] (5.6,-.2) -- (6,0);
\draw [line width=2.5,white] (-1,-4) -- (-1,-5);
\draw [thick] (-1,-3.5) -- (-1,-5.5);
\draw [thick] (6,-2) -- (6,-1.1);
\draw [thick] (6,-.85) -- (6,0);
\draw [thick] (9.3,-.55) -- (6,0);
\draw [thick] (2.3,-6.05) -- (9.3,-2.55);
\draw [thick] (-1,-5.5) -- (2.3,-6.05);
\draw [thick] (6,-2) -- (-1,-5.5);
\draw [thick] (6,-2) -- (9.3,-2.55);
\draw [thick] (9.3,-.55) -- (9.3,-1.4);
\draw [thick] (9.3,-1.7) -- (9.3,-2.55);
\draw [line width=2.5,white] (-.4,-3.6) -- (2.3,-4.05);
\draw [thick] (-1,-3.5) -- (2.3,-4.05);
\draw [thick] (2.3,-6.05) -- (2.3,-4.05);
\draw [line width=2.5,white] (7.3,-1.55) -- (5.3,-2.55);
\draw [thick] (9.3,-.55) -- (2.3,-4.05);

\draw [line width=2.5,white] (-3,-.75) -- (1,1.25);
\draw [thick] (-4,-1.25) -- (3,2.25);
\draw [line width=2.5,white] (-4.3,-1.2) -- (-10,-.25);
\draw [thick] (-4,-1.25) -- (-11.5,0);
\draw [thick] (-4.5,3.5) -- (-11.5,0);
\draw [thick] (-4.5,3.5) -- (-2.4,3.15);
\draw [thick] (-2.22,3.12) -- (-.3,2.8);
\draw [thick] (.3,2.7) -- (3,2.25);

\draw [very thick] (-2.5,-1.5) -- (3.82,1.66);
\draw [very thick] (4.1,1.8) -- (4.5,2);

\draw [fill=lightgray,lightgray] (0,-2) circle [radius=.3];
\draw [fill=lightgray,lightgray] (3,-1.5) circle [radius=.3];

\draw [thick] (-1,-1.75) -- (6,1.75);
\draw [line width=2.5,white] (-.7,-1.8) -- (2,-2.25);
\draw [thick] (-1,-1.75) -- (2.3,-2.3);
\draw [thick] (6,1.75) -- (9.3,1.2);
\draw [line width=2.5,white] (2.5,-2.2) -- (8.5,.8);
\draw [thick] (2.3,-2.3) -- (9.3,1.2);

\draw [line width=2.5,white] (-5,1) -- (-.6,3.2);
\draw [thick] (-5.5,.75) -- (1.5,4.25);
\draw [line width=2.5,white] (-5.8,.8) -- (-10.3,1.55);
\draw [thick] (-5.5,.75) -- (-11.5,1.75);
\draw [thick] (-4.5,5.25) -- (-11.5,1.75);
\draw [thick] (-4.5,5.25) -- (-1.8,4.8);
\draw [thick] (-1.08,4.68) -- (-.18,4.53);
\draw [thick] (.18,4.47) -- (1.5,4.25);

\draw [line width=2.5,white] (-4.4,.3) -- (1,3);
\draw [very thick] (-4,.5) -- (2.3,3.65);
\draw [very thick] (2.58,3.79) -- (3,4);

\draw [fill=lightgray,lightgray] (-1.5,0) circle [radius=.3];
\draw [fill=lightgray,lightgray] (4.5,1) circle [radius=.3];

\draw [line width=2.5,white] (-1.5,.75) -- (2.5,2.75);
\draw [thick] (-2.5,.25) -- (4.5,3.75);
\draw [thick] (9.3,2.95) -- (4.5,3.75);
\draw [line width=2.5,white] (-2.2,.2) -- (2.3,-.55);
\draw [thick] (-2.5,.25) -- (2.3,-.55);
\draw [line width=2.5,white] (8.3,2.45) -- (2.3,-.55);
\draw [thick] (9.3,2.95) -- (2.3,-.55);

\draw [line width=2.5,white] (-4.5,3) -- (-.5,5);
\draw [thick] (-5.5,2.5) -- (.8,5.65);
\draw [thick] (1.1,5.8) -- (1.5,6);
\draw [line width=2.5,white] (-5.8,2.55) -- (-9.1,3.1);
\draw [thick] (-5.5,2.5) -- (-11.5,3.5);
\draw [thick] (-4.5,7) -- (-11.5,3.5);
\draw [thick] (-4.5,7) -- (-.54,6.34);
\draw [thick] (-.3,6.3) -- (-.12,6.27);
\draw [thick] (.12,6.23) -- (.48,6.17);
\draw [thick] (.78,6.12) -- (1.5,6);

\draw [fill=lightgray,lightgray] (-3,2) circle [radius=.3];

\draw [line width=2.5,white] (-3,2.75) -- (1,4.75);
\draw [thick] (-4,2.25) -- (3,5.75);
\draw [thick] (9.3,4.7) -- (3,5.75);
\draw [line width=2.5,white] (-3.7,2.2) -- (2,1.25);
\draw [thick] (-4,2.25) -- (2.3,1.2); 
\draw [line width=2.5,white] (7.3,3.7) -- (3.3,1.7);
\draw [thick] (9.3,4.7) -- (2.3,1.2);

\draw [fill=lightgray,lightgray] (3,3) circle [radius=.3];
\draw [fill=lightgray,lightgray] (1.5,5) circle [radius=.3];

\draw [line width=2.5,white] (2,3) -- (-10.9,5.15);
\draw [thick] (2.3,2.95) -- (-11.5,5.25);
\draw [thick] (-4.5,8.75) -- (-11.5,5.25);
\draw [thick] (-11.5,7.25) -- (-11.5,5.25);
\draw [thick] (-4.5,8.75) -- (-4.5,10.75);
\draw [thick] (-11.5,7.25) -- (-4.5,10.75);
\draw [thick] (-4.5,8.75) -- (-.12,8.02);
\draw [thick] (.12,7.98) -- (9.3,6.45);
\draw [line width=2.5,white] (2.3,4.95) -- (-10.9,7.15);
\draw [thick] (2.3,4.95) -- (-11.5,7.25);
\draw [line width=2.5,white] (2.3,4.8) -- (2.3,3.1);
\draw [thick] (2.3,4.95) -- (2.3,2.95);
\draw [line width=2.5,white] (3,5.3) -- (9,8.3);
\draw [thick] (2.3,4.95) -- (9.3,8.45);
\draw [line width=2.5,white] (8.4,6) -- (2.6,3.1);
\draw [thick] (9.3,6.45) -- (2.3,2.95);
\draw [thick] (9.3,6.45) -- (9.3,8.45);
\draw [thick] (-4.5,10.75) -- (-.12,10.02);
\draw [thick] (.12,9.98) -- (9.3,8.45);

\draw [fill=lightgray,lightgray] (1.5,.5) circle [radius=.3];
\draw [fill=lightgray,lightgray] (0,2.5) circle [radius=.3];
\draw [fill=lightgray,lightgray] (-1.5,4.5) circle [radius=.3];
\draw [fill=lightgray,lightgray] (0,7) circle [radius=.3];

\begin{scope}[shift={(15,.0)}]
\draw [->] (0,0) -- (0,10.5);
\node at (0,11.3) {\small $3$};
\draw [->] (0,0) -- (8.5,0);
\node at (9.5,0) {\small $2$};
\draw [help lines] (7,0) -- (0,7);

\draw [fill=lightgray,lightgray] (3.5,0) circle [radius=.3];
\draw [fill=lightgray,lightgray] (5.25,0) circle [radius=.3];
\draw [fill=lightgray,lightgray] (1.75,1.75) circle [radius=.3];
\draw [fill=lightgray,lightgray] (3.5,1.75) circle [radius=.3];
\draw [fill=lightgray,lightgray] (5.25,1.75) circle [radius=.3];
\draw [fill=lightgray,lightgray] (0,3.5) circle [radius=.3];
\draw [fill=lightgray,lightgray] (1.75,3.5) circle [radius=.3];
\draw [fill=lightgray,lightgray] (3.5,3.5) circle [radius=.3];
\draw [fill=lightgray,lightgray] (0,5.25) circle [radius=.3];
\draw [fill=lightgray,lightgray] (1.75,5.25) circle [radius=.3];
\draw [fill=lightgray,lightgray] (0,7) circle [radius=.3];

\path [pattern=north west lines] (-2,-2) rectangle (3.5,0);
\draw [thick] (-2,0) -- (3.5,0) -- (3.5,-2);
\path [pattern=north east lines] (5.25,-2) rectangle (8,0);
\draw [thick] (5.25,-2) -- (5.25,0) -- (8,0);
\draw [thick] (-2,1.75) -- (1.75,1.75);
\draw [fill] (3.5,1.75) circle [radius=.1];
\draw [thick] (5.25,1.75) -- (8,1.75);
\draw [thick] (-2,3.5) -- (0,3.5);
\draw [fill] (1.75,3.5) circle [radius=.1];
\draw [thick] (3.5,3.5) -- (8,3.5);
\draw [thick] (-2,5.25) -- (0,5.25);
\draw [thick] (1.75,5.25) -- (8,5.25);
\path [pattern=north west lines] (-2,7) rectangle (8,9);
\draw [thick] (-2,7) -- (8,7);
\end{scope}

\end{tikzpicture}
\caption{Two views of the same cone decomposition of $\Z^3$. The picture on the right is the projection in the subspace $\textrm{span}(\e_2,\e_3)$ of the cone decomposition of $\Z^3$ shown on the left.}
\label{fig:cones}
\end{figure}
\end{example}


\section{Corank-nullity construction of the polymatroid Tutte polynomial}
\label{sec:order-invariance}

In this section we provide another construction for the polymatroid Tutte polynomial. This alternative construction is clearly invariant under permutations of coordinates, hence it provides a proof of Theorem~\ref{th:order_independence}. In fact, this construction is the analogue of the corank-nullity definition of the Tutte polynomial of matroids given by~\eqref{eq:corank-nullity-mat}.


First, we show that the Manhattan distance $d_1(P,\c)$ between a 
nonempty M-convex set $P\subset\Z^n$ and a lattice point $\c\in\Z^n$ splits canonically into two summands.

\begin{definition}
For $\a,\b\in\R^n$, define
$$
d_1^{>}(\a,\b) := \sum_{i\,:\,a_i>b_i} (a_i-b_i)
\quad \text{and} \quad
d_1^{<}(\a,\b) := \sum_{i\,:\,a_i<b_i} (b_i-a_i).
$$
\end{definition}

The functions $d_1^>$ and $d_1^<$ are non-symmetric (in fact $d_1^{<}(\a,\b) = d_1^{>}(\b,\a)$), but also non-negative and, clearly, $d_1(\a,\b) = d_1^{>}(\a,\b)+d_1^{<}(\a,\b)$.

\begin{definition}
For a nonempty M-convex set $P\subset\Z^n$, and a lattice point 
$\c\in\Z^n$, define 
$$
d_1^{>}(P,\c) := \min_{\a\in P} d_1^{>}(\a,\c)
\quad\text{and}\quad
d_1^{<}(P,\c) := \min_{\a\in P} d_1^{<}(\a,\c).
$$
\end{definition}

\begin{lemma} 
\label{lem:d_geq_leq}
Let $P\subset \Z^n$ be a nonempty M-convex set, and let $\c\in \Z^n$. 
For any point $\a\in P$ such that $d_1(\a,\c)=d_1(P,\c)$, we have
$d_1^{>}(\a,\c) = d_1^{>}(P,\c)$ and $d_1^{<}(\a,\c) = d_1^{<}(P,\c)$. 
Thus
$$
d_1(P,\c)=d_1^{>}(P,\c)+d_1^{<}(P,\c).
$$
\end{lemma}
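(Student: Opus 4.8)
The plan is to use the cone decomposition from Theorem~\ref{th:decomposition_into_cones} together with a ``uniqueness'' argument: I claim that not only does the $\a$ attaining $d_1(\a,\c)=d_1(P,\c)$ simultaneously attain the two one-sided minima, but conversely the minimizing $\a$ is essentially unique on the relevant coordinates. First I would fix $\c\in\Z^n$ and let $\a$ be any point of $P$ with $d_1(\a,\c)=d_1(P,\c)$. By Theorem~\ref{th:decomposition_into_cones}(2), every point of the cone $C_P(\a)$ has $\a$ as a Manhattan-closest point of $P$; and by part~(1), $\c$ lies in some cone $C_P(\b)$, so there is \emph{at least one} $\b\in P$ with $d_1(\b,\c)=d_1(P,\c)$ and $\c\in C_P(\b)$. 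The key is to show that for \emph{this} particular $\b$, one has $d_1^{>}(\b,\c)=d_1^{>}(P,\c)$ and $d_1^{<}(\b,\c)=d_1^{<}(P,\c)$, and then that any other minimizer $\a$ gives the same two values.

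The first half — that the cone representative $\b$ achieves the one-sided minima — I would prove directly from Definition~\ref{def:C_cone}. Since $\c\in C_P(\b)$, write $\c=\b+\x$; then $x_i\le 0$ forces $i\in\Int(\b)\setminus\Ext(\b)$-type behavior and $x_i\ge 0$ forces the $\Ext$-type behavior, and coordinates with $x_i=0$ are irrelevant to the one-sided distances. Now given \emph{any} competitor $\a'\in P$, I want $d_1^{>}(\a',\c)\ge d_1^{>}(\b,\c)=\sum_{i:x_i<0}(-x_i)=\sum_{i:\,b_i>c_i}(b_i-c_i)$. To see this, note $d_1^{>}(\a',\c)\ge d_1(P,\c)-d_1^{<}(\b,\c)$ is not quite enough, so instead I would argue coordinatewise via the path construction in the proof of Theorem~\ref{th:decomposition_into_cones}: starting from $\a'$ and moving toward a cone representative of $\c$, each step decreases some $|a_i-c_i|$ with $i$ a ``bad'' index and increases $|a_j-c_j|$ with $j<i$, and crucially the sign pattern is controlled so that the $d_1^{>}$ and $d_1^{<}$ parts are each nonincreasing along the path. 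Hence $d_1^{>}(\a',\c)\ge d_1^{>}(\b,\c)$ and $d_1^{<}(\a',\c)\ge d_1^{<}(\b,\c)$, giving $d_1^{>}(P,\c)=d_1^{>}(\b,\c)$ and $d_1^{<}(P,\c)=d_1^{<}(\b,\c)$, and summing yields the displayed identity.

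For the remaining half — that \emph{every} minimizer $\a$ (not just the cone representative $\b$) gives the same split — I would use that $d_1^{>}(\a,\c)+d_1^{<}(\a,\c)=d_1(\a,\c)=d_1(P,\c)=d_1^{>}(P,\c)+d_1^{<}(P,\c)$ combined with the two inequalities $d_1^{>}(\a,\c)\ge d_1^{>}(P,\c)$ and $d_1^{<}(\a,\c)\ge d_1^{<}(P,\c)$ that hold for all $\a\in P$ by definition of the one-sided minima; equality in the sum forces equality in each. So once the one-sided minima are known to be attained \emph{somewhere} (by $\b$) and to be compatible with the total minimum, the statement for arbitrary minimizers $\a$ is automatic.

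The main obstacle I anticipate is the coordinatewise monotonicity claim in the second paragraph: showing that along the rectifying path from $\a'$ to a cone representative of $\c$, \emph{both} one-sided distances decrease (or at least neither increases). The step $\a'\mapsto\a''$ changes exactly two coordinates, $a'_i$ (moved one unit toward $c_i$) and $a'_j$ (moved one unit away, with $j<i$); I must check that $j$'s move cannot increase $d_1^{>}$ when $i$'s move decreased it, which should follow because $i$ being the maximal bad index and the definition of ``bad'' pins down the signs of $a'_i-c_i$ and $a'_j-c_j$. Making that sign bookkeeping airtight is the delicate point; everything else is routine given Theorem~\ref{th:decomposition_into_cones}.
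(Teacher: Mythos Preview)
Your approach is correct but differs from the paper's. The paper gives a short, self-contained argument using only the Exchange Axiom, with no reference to Theorem~\ref{th:decomposition_into_cones}: fix a minimizer $\a$, and among all $\b\in P$ achieving $d_1^{>}(\b,\c)=d_1^{>}(P,\c)$, choose one with $d_1(\a,\b)$ minimal; if $\a\ne\b$, take $i$ with $a_i>b_i$ and apply the Exchange Axiom to get $j$ with $a_j<b_j$ and $\a'=\a-\e_i+\e_j,\ \b'=\b+\e_i-\e_j\in P$; a three-case split on the signs of $c_i-a_i$ and $c_j-a_j$ then contradicts either the choice of $\a$ or the choice of $\b$. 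Your route instead reopens the proof of Theorem~\ref{th:decomposition_into_cones} and strengthens its monotonicity observation to show that \emph{each} of $d_1^{>}$ and $d_1^{<}$ is nonincreasing along the rectifying path (your sign bookkeeping is fine: in each step the coordinate $i$ moves one unit toward $c_i$ on a known side, while the coordinate $j$ either moves toward $c_j$ or stays/moves away on the \emph{same} side as $i$'s gain, so neither one-sided distance increases). Your ``second half'' is then a clean sum-of-two-nonnegative-defects argument. The trade-off: the paper's proof is more modular and keeps Lemma~\ref{lem:d_geq_leq} independent of the cone machinery, whereas your argument, while valid, relies on the internals of the proof of Theorem~\ref{th:decomposition_into_cones} rather than its statement as a black box.
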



\begin{proof}
Let us fix a point $\a\in P$ such that $d_1(\a,\c)=d_1(P,\c)$. 
Among all points $\b\in P$ such that 
$d_1^{>}(\b,\c) = d_1^{>}(P,\c)$, let us pick a point $\b$ with minimal possible Manhattan distance $d_1(\a,\b)$. We will show that $\a=\b$. 

Suppose for contradiction that $\a\ne \b$.
Since $\a,\b\in P$
we have $\sum a_i =\level(P)= \sum b_i$. Hence, there is an index $i$ such that $a_i>b_i$.
According to the Exchange Axiom, there is an index $j$ such that 
$a_j<b_j$ and $\a'=\a-\e_i+\e_j$ and $\b'=\b+\e_i-\e_j$ both belong to $P$.
Notice that $d_1(\a,\b')=d_1(\a,\b)-2$.
At this point we split into several cases and get contradictions in each case:
\begin{enumerate}
\item In the case $c_i\geq a_i$, we get $c_i>b_i$ hence $d_1^{>}(\b',\c)\leq d_1^{>}(\b,\c)$. Since $d_1(\a,\b')<d_1(\a,\b)$, this contradicts our choice of~$\b$.
\item In the case $c_j\leq a_j$, we get $c_j<b_j$ hence $d_1^{>}(\b',\c)\leq d_1^{>}(\b,\c)$. Since $d_1(\a,\b')<d_1(\a,\b)$, this contradicts our choice of~$\b$.
\item In the case ($c_i<a_i$ and $c_j>a_j$), we get $d_1(\a',\c)<d_1(\a,\c)$, which contradicts our choice of~$\a$. 
\end{enumerate}
This shows that $\a=\b$. So $d_1^{>}(\a,\c) = d_1^{>}(P,\c)$. Similarly, 
$d_1^{<}(\a,\c) = d_1^{<}(P,\c)$. 

Finally, we get
$d_1(P,\c) = d_1(\a,\c) = d_1^{>}(\a,\c) + d_1^{<}(\a,\c) = 
d_1^{>}(P,\c) + d_1^{<}(P,\c)$.
\end{proof}

\begin{remark}\label{rk:corank-null}
The quantities $d_1^{>}(P,\c)$ and $d_1^{<}(P,\c)$ extend the notions of \emph{corank} and \emph{nullity} from matroid theory. 
Indeed, let $M$ be a matroid with ground set $[n]$, and let $P(M)\subset \{0,1\}^n$ be the corresponding polymatroid (as defined in Remark~\ref{rk:mat-are-polymat}). Let $S\subseteq [n]$, and let $\c=(c_1,\ldots,c_n)\in\{\,0,1\,\}^n$ be the vector corresponding to $S$ (that is, $c_i=1$ if $i\in S$ and $c_i=0$ otherwise). By definition, the corank of $S$ with respect to the matroid $M$ is $\min\{\,|A\setminus S|\mid A\text{ is a basis of }M\,\}$, which is clearly equal to $d_1^{>}(P(M),\c)$. 
Dually, the nullity of $S$ with respect to $M$ is $\min\{\,|S\setminus A|\mid A\text{ is a basis of }M\,\}=d_1^{<}(P(M),\c)$. 
\end{remark}

\begin{definition}
\label{def:Tutte_uv}
Let $P\subset \Z^n$ be a polymatroid. Define the formal power
series $\wti \T_P(u,v)$ in two variables $u$ and $v$ by 
$$
\wti \T_P(u,v):=\sum_{\c\in\Z^n} \wt_P(\c),
$$
where the \emph{weight} $\wt_P(\c)$ of a lattice point $\c\in\Z^n$ is given by 
$$
\wt_P(\c) := u^{d_1^{>}(P,\c)}\, v^{d_1^{<}(P,\c)}.
$$
\end{definition}


As we now prove, the formal power series $\wti \T_P(u,v)$ is 
related to the polymatroid 
Tutte polynomial $\T_P(x,y)$ by a simple change of variables.

\begin{theorem}
\label{th:tilde_Tutte}
For all polymatroids $P\subset \Z^n$, we have
$$
\wti \T_P(u,v) = \T_P\left({1\over 1-u}, {1\over 1-v}\right).
$$
\end{theorem}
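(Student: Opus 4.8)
The strategy is to use the cone decomposition $\Z^n = \biguplus_{\a\in P} C_P(\a)$ from Theorem~\ref{th:decomposition_into_cones}, together with the fact (part (2) of that theorem, refined by Lemma~\ref{lem:d_geq_leq}) that for $\c\in C_P(\a)$ one has $d_1^{>}(P,\c) = d_1^{>}(\a,\c)$ and $d_1^{<}(P,\c) = d_1^{<}(\a,\c)$. This lets me split the sum defining $\wti\T_P(u,v)$ as
$$
\wti\T_P(u,v) = \sum_{\a\in P} \sum_{\c\in C_P(\a)} u^{d_1^{>}(\a,\c)}\, v^{d_1^{<}(\a,\c)}.
$$
Writing $\c = \a + \x$, the inner sum factors as a product over the $n$ coordinates, because on $C_P(\a)$ the coordinates $x_i$ vary independently over the prescribed ranges, and $d_1^{>}(\a,\c) = \sum_{i: x_i<0} (-x_i)$ while $d_1^{<}(\a,\c) = \sum_{i: x_i>0} x_i$.

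Now I compute the per-coordinate factor according to the four cases in Definition~\ref{def:C_cone}. If $i\notin\Int(\a)\cup\Ext(\a)$ then $x_i=0$ and the factor is $1$. If $i\in\Int(\a)\setminus\Ext(\a)$ then $x_i$ ranges over $\Z_{\le 0}$, contributing $\sum_{x_i\le 0} u^{-x_i} = \frac{1}{1-u}$. If $i\in\Ext(\a)\setminus\Int(\a)$ then $x_i$ ranges over $\Z_{\ge 0}$, contributing $\sum_{x_i\ge 0} v^{x_i} = \frac{1}{1-v}$. If $i\in\Int(\a)\cap\Ext(\a)$ then $x_i$ ranges over all of $\Z$, contributing $\sum_{x_i\le 0} u^{-x_i} + \sum_{x_i> 0} v^{x_i} = \frac{1}{1-u} + \frac{v}{1-v} = \frac{1}{1-u} + \frac{1}{1-v} - 1$. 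Therefore the inner sum over $C_P(\a)$ equals
$$
\left(\frac{1}{1-u}\right)^{\oi(\a)} \left(\frac{1}{1-v}\right)^{\oe(\a)} \left(\frac{1}{1-u}+\frac{1}{1-v}-1\right)^{\ie(\a)},
$$
which is exactly the contribution of $\a$ to $\T_P\!\left(\frac{1}{1-u}, \frac{1}{1-v}\right)$ by Definition~\ref{def:polymatroid_Tutte_poly}. Summing over $\a\in P$ gives the claim.

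The only real subtlety is convergence: $\wti\T_P(u,v)$ is a priori a formal power series (or a sum of rational functions), so I should be careful about in what sense the identity holds. The cleanest route is to treat both sides as formal power series in $u$ and $v$: the geometric series $\sum_{k\ge 0} u^k$ etc. are manipulated formally, and the cone decomposition guarantees that each monomial $u^p v^q$ receives only finitely many contributions (since for fixed $(p,q)$, the lattice points $\c$ with $d_1^{>}(P,\c)=p$, $d_1^{<}(P,\c)=q$ are finite in number — they lie within bounded distance of the finite set $P$). Thus the rearrangement of the sum is justified in the ring of formal power series, and the factorization of the inner sum over a product cone is likewise a valid formal identity. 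I would state this convergence remark up front so the displayed manipulations are unambiguous; the rest is the routine computation sketched above.
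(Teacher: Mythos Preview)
Your proposal is correct and follows essentially the same approach as the paper: the cone decomposition of $\Z^n$ from Theorem~\ref{th:decomposition_into_cones}, the identification $d_1^{>}(P,\c)=d_1^{>}(\a,\c)$ and $d_1^{<}(P,\c)=d_1^{<}(\a,\c)$ via part~(2) and Lemma~\ref{lem:d_geq_leq}, and the coordinate-wise factorization yielding the four per-coordinate factors $1$, $\frac{1}{1-u}$, $\frac{1}{1-v}$, $\frac{1}{1-u}+\frac{1}{1-v}-1$. Your explicit convergence remark (finitely many $\c$ contribute to each monomial $u^p v^q$) is a welcome addition that the paper leaves implicit.
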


Theorem~\ref{th:tilde_Tutte} is an identity for formal power series in $u,v$, and the expression ${1\over 1-u}$ stands for $\sum_{n=1}^\infty u^n$ (and similarly ${1\over 1-v}=\sum_{n=1}^\infty v^n$).

\begin{proof} Using the decomposition 
$\Z^n=\biguplus_{\a\in P} C(\a)$
from Theorem~\ref{th:decomposition_into_cones}(1), 
and applying Theorem~\ref{th:decomposition_into_cones}(2)
together with Lemma~\ref{lem:d_geq_leq},
we get
$$
\wti \T_P(u,v) = \sum_{\a\in P} \sum_{\c\in C(\a)} \wt_P(\c) =
\sum_{\a\in P} \prod_{i=1}^n m_i(\a),
$$
where
$$
m_i(\a) = 
\left\{
\begin{array}{cl}
1 & \text{if } i\not\in\Int(\a) \text{ and } i\not\in\Ext(\a),\\
1+u+u^2+ \cdots & \text{if } i\in\Int(\a) \text{ and } i\not\in\Ext(\a),\\
1+v+v^2+ \cdots & \text{if } i\not\in\Int(\a) \text{ and } i\in\Ext(\a),\\
1 +( u + u^2 + \cdots) + (v+ v^2 + \cdots) & \text{if } i\in\Int(\a) \text{ and } i\in\Ext(\a).
\end{array}
\right.
$$
If we let $x=1+u+u^2+\cdots = 1/(1-u)$
and $y=1+v+v^2+\dots = 1/(1-v)$, then
$1+(u+u^2 + \cdots) + (v+ v^2 + \cdots) = x + y - 1$.
Thus, for all $\a\in P$, 
$$
\prod_{i=1}^n m_i(\a) = 
x^{|\Int(\a)\setminus\Ext(\a)|}\,
y^{|\Ext(\a)\setminus\Int(\a)|}\,
(x+y-1)^{|\Int(\a)\cap\Ext(\a)|},
$$
and $\wti \T_P(u,v)=\T_P(x,y)$.
\end{proof}

\begin{proof}[Proof of Theorem~\ref{th:order_independence}]
The order-independence ($S_n$-invariance) of the Tutte polynomial $\T_P(x,y)$ follows
from Theorem~\ref{th:tilde_Tutte}, because the formal power series
$\wti \T_P(u,v)$ is obviously independent of the ordering of the coordinates.
\end{proof}


\section{Polymatroid shadows}
\label{sec:polymatroid_shadows}

In this section, we give two additional expressions for the polymatroid Tutte polynomial, 
which are ``halfway between'' the forms in
Definitions~\ref{def:polymatroid_Tutte_poly} and~\ref{def:Tutte_uv}.


\begin{definition}
Let $P\subset \Z^n$ be a polymatroid.
The \emph{upper shadow} of $P$ is 
the Minkowski sum $\Sup(P):=P + \Z_{\geq 0}^n\subset\Z^n$.
The \emph{lower shadow} of $P$ is $\Slow(P):=P + \Z_{\leq 0}^n\subset\Z^n$.
\end{definition}


\begin{remark}
Lower and upper shadows are related to the matroidal notions of independent sets and spanning sets, respectively.
Precisely, for a matroid $M$ with ground set $[n]$, the independent (resp., spanning) sets of $M$ correspond to elements of $\{0,1\}^n$ lying in the lower (resp., upper) shadow of the polymatroid $P(M)$. 
\end{remark}


Recall that $\e_1,\dots,\e_n$ are the standard coordinate vectors in $\R^n$.
Also let $\e_0 = 0$.

\begin{definition} \label{def:activity-shadows}
Let $S\subset\Z^n$ be an upper or lower polymatroid shadow.
For $\a\in S$, an index $i\in [n]$ is called
\emph{internally active} if there is no $j\in\{0,1,\dots,i-1\}$ 
such that $\a-\e_i+\e_j\in S$.
Let $\tInt(\a)=\tInt_S(\a)\subseteq [n]$ denote the subset of 
internally active indices with respect to $\a\in S$.

For $\a\in S$, an index $i\in [n]$ is called
\emph{externally active} if there is no $j\in\{0,1,\dots,i-1\}$ 
such that $\a+\e_i-\e_j\in S$.
Let $\tExt(\a)=\tExt_S(\a)\subseteq [n]$ 
denote the subset of 
externally active indices with respect to $\a\in S$.
\end{definition}

In fact, elements of an upper polymatroid shadow $S=\Sup(P)$ have no externally active
indices. Indeed, for any $\a\in S$ and any $i\in[n]$, we have $0<i$ and $\a+\e_i=\a+\e_i-\e_0\in S$.
Likewise, elements of a lower polymatroid shadow have no internally 
active indices. Observe also that the index $1\in[n]$ is not necessarily active for an element of a polymatroid shadow.

\begin{remark}[Shadows as projections of M-convex sets]\label{rk:Shadow-are-projections}
Polymatroid shadows are actually the images of certain infinite M-convex sets in $\Z^{n+1}$ under the projection 
$$
\pi\colon \Z^{n+1}\to\Z^n
\quad\text{given by}\quad
\pi\colon (x_0,x_1,\dots,x_n)\mapsto(x_1,\dots,x_n).
$$
Moreover the activities of the shadows correspond, via the projection $\pi$, to the activities of these $M$-convex sets.
Explicitly, $\Sup(P)=\pi(\tSup(P))$, where $\tSup(P)=\wti P+ \wti U$ with 
$\wti P=\{\,(0,a_1,\ldots,a_n)\mid(a_1,\ldots,a_n)\in P\,\}$ and 
$$\wti U=\left\{(x_0,\ldots,x_n)\biggm|\sum_{i=0}^n x_i=0 \text{ and } (x_1,\ldots,x_n)\in \Z_{\geq 0}^n\right\}.$$
Similarly, $\Slow(P)=\pi(\tSlow(P))$, where $\tSlow(P)=\wti P+ \wti L$ and $\wti L=\{\,-\x\mid\x\in \wti U\,\}$. 
It is easy to see that for any $\wti \a$ in $\tSup(P)$ (resp., $\tSlow(P)$), an index $i\in [n]$ is internally/externally active for $\wti \a$ (in the sense of Definition~\ref{def:activity-polymatroid}) if and only if $i$ is internally/externally active for the element $\pi(\wti \a)$ of the shadow $\Sup(P)$ (resp., $\Slow(P)$) in the sense of Definition~\ref{def:activity-shadows}.
\end{remark}

For $\a\in\Z^n$, let 
$$
\level(\a):=a_1+\dots+a_n\in\Z.
$$
Recall that all the elements of a polymatroid $P$ have the same level, and that $\level(P)$ denotes the level of the elements of $P$. 

\begin{theorem}
\label{th:upper_lower_shadow}
Let $P\subset \Z^n$ be a polymatroid. 
Let $\Sup(P):=P+\Z_{\geq 0}^n$ and 
$\Slow(P):=P+\Z_{\leq 0}^n$ be
the upper and lower shadows of $P$.
The polymatroid Tutte polynomial $\T_P(x,y)$ 
satisfies (and is uniquely determined by any of) the following two formulas:
\begin{align*}
\T_P\left(x,\, \frac{1}{1-v}\right) & = \sum_{\a\in \Sup(P)} 
x^{|\tInt(\a)|}\,v^{\level(\a)-\level(P)},\\
\T_P\left(\frac{1}{1-u},\, y \right) & = \sum_{\a\in \Slow(P)} 
u^{\level(P)-\level(\a)}\,
y^{|\tExt(\a)|}.
\end{align*}
\end{theorem}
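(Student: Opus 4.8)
The plan is to deduce both identities from the corank--nullity description of $\T_P$ in Theorem~\ref{th:tilde_Tutte} together with the cone decomposition of Theorem~\ref{th:decomposition_into_cones}, applied not to $P$ itself but to the infinite $M$-convex set $\tSup(P)\subset\Z^{n+1}$ of Remark~\ref{rk:Shadow-are-projections}. Since $\Slow(P)=-\Sup(-P)$, negation swaps the shadow activities (exactly as in the proof of Lemma~\ref{lem:duality}) and changes levels by a sign, the second formula follows from the first applied to $-P$ combined with $\T_{-P}(x,y)=\T_P(y,x)$; so I will only treat the upper shadow. Moreover each formula determines $\T_P(x,y)$, because a polynomial of $y$-degree $\le n$ is recovered from its specialization at $y=1/(1-v)$ (the power series $1,1/(1-v),\dots,1/(1-v)^n$ being linearly independent), so it suffices to prove the two displayed equalities.

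First I would record the dictionary between $\Sup(P)$ and $\tSup(P)$. Every element of $\tSup(P)$ has coordinate sum $\level(P)$, the projection $\pi$ restricts to a bijection $\tSup(P)\to\Sup(P)$, and writing $\wti\a$ for the preimage of $\b\in\Sup(P)$ one has $\wti\a_0=\level(P)-\level(\b)\le 0$. By Remark~\ref{rk:Shadow-are-projections} the activities transport: index $0$ is both internally and externally active for $\wti\a$ (vacuously, being the smallest index), the indices of $[n]$ that are internally active for $\wti\a$ are precisely those of $\tInt_{\Sup(P)}(\b)$, and no index of $[n]$ is externally active for $\wti\a$ (elements of an upper shadow have no externally active index). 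Hence, in the cone $C(\wti\a)=C_{\tSup(P)}(\wti\a)$ of Definition~\ref{def:C_cone}, the offset in coordinate $0$ is free, the offsets in the directions $i\in\tInt_{\Sup(P)}(\b)$ are $\le 0$, and all other offsets vanish.

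Next I would apply Theorem~\ref{th:decomposition_into_cones} to $\tSup(P)$, so that $\Z^{n+1}=\biguplus_{\wti\a\in\tSup(P)}C(\wti\a)$, and evaluate the sum of $\wt_{\tSup(P)}(\wti\c)=u^{d_1^{>}(\tSup(P),\wti\c)}\,v^{d_1^{<}(\tSup(P),\wti\c)}$ (Definition~\ref{def:Tutte_uv}) over the slice $\{\wti\c\in\Z^{n+1}:\wti c_0=0\}$ in two ways. Cone by cone: on $C(\wti\a)\cap\{\wti c_0=0\}$ the coordinate-$0$ offset is forced to equal $-\wti\a_0=\level(\b)-\level(P)\ge0$, and splitting the Manhattan distance coordinatewise via Theorem~\ref{th:decomposition_into_cones}(2) and Lemma~\ref{lem:d_geq_leq} shows that this sliced cone contributes $v^{\level(\b)-\level(P)}\bigl(1/(1-u)\bigr)^{|\tInt_{\Sup(P)}(\b)|}$; summing over $\b$ gives $\sum_{\b\in\Sup(P)}x^{|\tInt(\b)|}v^{\level(\b)-\level(P)}$ with $x=1/(1-u)$, which is the desired right-hand side. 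On the other hand I claim $\wt_{\tSup(P)}\bigl((0,\c)\bigr)=\wt_P(\c)$ for every $\c\in\Z^n$; granting this, the slice sum equals $\sum_{\c\in\Z^n}\wt_P(\c)=\wti\T_P(u,v)=\T_P\bigl(1/(1-u),1/(1-v)\bigr)$ by Theorem~\ref{th:tilde_Tutte}. Comparing the two evaluations and then cancelling the substitution $x=1/(1-u)$ yields $\T_P\bigl(x,1/(1-v)\bigr)=\sum_{\b\in\Sup(P)}x^{|\tInt(\b)|}v^{\level(\b)-\level(P)}$.

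The crux is the identity $\wt_{\tSup(P)}((0,\c))=\wt_P(\c)$, that is, $d_1^{>}(\tSup(P),(0,\c))=d_1^{>}(P,\c)$ and $d_1^{<}(\tSup(P),(0,\c))=d_1^{<}(P,\c)$. Unwinding the definitions and using $\wti\a_0\le0$, I get $d_1^{>}(\tSup(P),(0,\c))=\min_{\b\in\Sup(P)}\sum_{i:\,b_i>c_i}(b_i-c_i)$ and $d_1^{<}(\tSup(P),(0,\c))=\min_{\b\in\Sup(P)}\bigl[(\level(\b)-\level(P))+\sum_{i:\,c_i>b_i}(c_i-b_i)\bigr]$. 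For the first, taking $\b=\max(\a,\c)$ coordinatewise for an $\a\in P$ realizing $d_1^{>}(P,\c)$ gives ``$\le$'', while $a_i\le b_i$ for any $\a\le\b\in\Sup(P)$ gives the reverse inequality termwise; for the second, the same choice of $\b$ gives ``$\le$'', and the reverse follows from the coordinatewise estimate $(b_i-a_i)+\max(c_i-b_i,0)\ge\max(c_i-a_i,0)$. I expect this pair of elementary estimates to be the main obstacle, along with the routine bookkeeping that all series are well defined: on the slice only finitely many $\b\in\Sup(P)$ have a given level, so each coefficient of a power of $v$ is a genuine polynomial in $x$ of degree $\le n$, which is also what legitimizes cancelling the substitution $x=1/(1-u)$.
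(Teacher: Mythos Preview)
Your proof is correct, and it takes a route that is closely related to---but organized differently from---the paper's own argument.

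The paper first develops, in Section~\ref{sec:shadows}, a refined decomposition $\Z^n=\biguplus_{\b\in\Sup(P)}C^{\leq}_P(\b)$ (Theorem~\ref{th:shadow_decomposition}(1)) together with the distance identities $d_1^{<}(P,\c)=\level(\b)-\level(P)$ and $d_1^{>}(P,\c)=d_1^{>}(\b,\c)$ for $\c\in C^{\leq}_P(\b)$ (Theorem~\ref{th:shadow_decomposition}(3)); the formula then drops out by summing $\wt_P$ over these refined cones and invoking Theorem~\ref{th:tilde_Tutte}. You instead work upstairs in $\Z^{n+1}$: you apply the basic cone decomposition of Theorem~\ref{th:decomposition_into_cones} to the infinite $M$-convex lift $\tSup(P)$, slice at $\wti c_0=0$, and replace the paper's Theorem~\ref{th:shadow_decomposition}(3) by the global weight identity $\wt_{\tSup(P)}((0,\c))=\wt_P(\c)$, which you verify by two elementary coordinatewise estimates. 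The two viewpoints are linked by Remark~\ref{rk:Shadowpartitions-are-projections}, which observes that the paper's shadow decomposition is precisely the projection of your cone decomposition of $\Z^{n+1}$; in effect you carry out the argument before projecting, while the paper projects first. Your route is a bit more self-contained (it bypasses Lemma~\ref{lem:partition-shadows} and the construction of $C^{\leq}_P$, $C^{\geq}_P$), whereas the paper's route produces the refined decompositions of $\Z^n$ as objects of independent interest. Your handling of the duality reduction, the finiteness of each $v$-coefficient, and the cancellation of the substitution $x=1/(1-u)$ is all fine.
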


\begin{remark}
Theorem~\ref{th:upper_lower_shadow} gives the polymatroid analogues of the so-called ``independent sets expansion'' and ``spanning sets expansion'' of the matroidal Tutte polynomial; see~\cite{GT}.
\end{remark}

The proof of Theorem~\ref{th:upper_lower_shadow} relies on the 
construction of certain partitions of the lattice $\Z^n$ which are described in the next section.

\section{Shadow decompositions}\label{sec:shadows}
In this section we define, for an M-convex set $P\subset\Z^n$, two partitions of the lattice $\Z^n$ which are refinements of the cone decomposition presented in Section~\ref{sec:cone-decomposition}, and we prove Theorem~\ref{th:upper_lower_shadow}. 

Recall from Theorem~\ref{th:decomposition_into_cones} the cone decomposition associated to $P$:
$$
\Z^n = \biguplus_{\a\in P} C_P(\a).
$$ 
Let $\a\in P$. The cone $C_P(\a)$ can be written as the following Minkowski sum:
$$C_P(\a)=\a+\sum_{i\in \Int_P(\a)\cap \Ext_P(\a)}\Z\,\e_i+\sum_{i\in \Int_P(\a)\setminus \Ext_P(\a)}\Z_{\leq 0}\,\e_i+\sum_{i\in \Ext_P(\a)\setminus \Int_P(\a)}\Z_{\geq 0}\,\e_i.$$
We define two subcones:
$$ \Cup_P(\a):=\a+\sum_{i\in \Ext_P(\a)}\Z_{\geq 0}\,\e_i, ~\text{ and }~ \Clow_P(\a):=\a+\sum_{i\in \Int_P(\a)}\Z_{\leq 0}\,\e_i.$$
\begin{lemma}\label{lem:partition-shadows}
The cones $\Cup_P(\a)$ and $\Clow_P(\a)$ partition the upper and lower shadows of $P$, respectively: 
$$\Sup(P)=\biguplus_{\a\in P} \Cup_P(\a),\quad\text{and}\quad\Slow(P)=\biguplus_{\a\in P} \Clow_P(\a).$$
\end{lemma}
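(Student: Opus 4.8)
The plan is to deduce the two shadow decompositions from the cone decomposition of Theorem~\ref{th:decomposition_into_cones}, by intersecting with the shadow and identifying the intersection of each cone with the shadow. I will treat the upper shadow $\Sup(P)=P+\Z_{\geq 0}^n$; the lower case follows by applying the result to $-P$ and using $\Slow(P)=-\Sup(-P)$ together with the fact (Lemma~\ref{lem:duality}'s underlying observation) that $\Int_{-P}(-\a)=\Ext_P(\a)$, so that $\Clow_P(\a)=-\Cup_{-P}(-\a)$.

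First I would observe that since $\Z^n=\biguplus_{\a\in P}C_P(\a)$ is a partition, intersecting with the set $\Sup(P)$ gives the partition $\Sup(P)=\biguplus_{\a\in P}\big(C_P(\a)\cap \Sup(P)\big)$. So it suffices to prove, for each $\a\in P$, the identity
$$C_P(\a)\cap \Sup(P)=\Cup_P(\a).$$
The inclusion $\Cup_P(\a)\subseteq C_P(\a)$ is immediate from the Minkowski-sum description of $C_P(\a)$ recalled just before the statement (one drops the $\Z\,\e_i$ and $\Z_{\leq 0}\,\e_i$ summands and keeps only $\Z_{\geq 0}\,\e_i$ for $i\in\Ext_P(\a)$, which is a subcone), and $\Cup_P(\a)=\a+\sum_{i\in\Ext_P(\a)}\Z_{\geq 0}\,\e_i\subseteq \a+\Z_{\geq 0}^n\subseteq \Sup(P)$; so $\Cup_P(\a)\subseteq C_P(\a)\cap\Sup(P)$. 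For the reverse inclusion, take $\c=\a+\x\in C_P(\a)\cap\Sup(P)$. From membership in the cone, $x_i=0$ for $i\notin\Int_P(\a)\cup\Ext_P(\a)$ and $x_i\leq 0$ for $i\in\Int_P(\a)\setminus\Ext_P(\a)$; it remains to show $x_i\geq 0$ for every $i$, i.e. that no coordinate of $\x$ is negative. Here I would use that $\c\in\Sup(P)=P+\Z_{\geq 0}^n$, so $\c\geq \b$ coordinatewise for some $\b\in P$, combined with Theorem~\ref{th:decomposition_into_cones}(2): $\a$ is a closest point of $P$ to $\c$ in Manhattan distance, and by Lemma~\ref{lem:d_geq_leq} in fact $d_1^{<}(\a,\c)=d_1^{<}(P,\c)$. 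Since $\c\geq\b$ and $\b\in P$, we have $d_1^{<}(\b,\c)=0$, hence $d_1^{<}(P,\c)=0$, hence $d_1^{<}(\a,\c)=0$, which says exactly that $\a\leq\c$ coordinatewise, i.e. $\x\geq 0$. Therefore $\c\in\a+\sum_i\Z_{\geq 0}\,\e_i$, and since $x_i=0$ whenever $i\notin\Ext_P(\a)$ (this now covers both the ``neither active'' case and, given $x_i\geq 0$ and $x_i\leq 0$, the ``internally active only'' case), we get $\c\in\Cup_P(\a)$.

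The main obstacle is the verification that $x_i=0$ for $i\notin \Ext_P(\a)$, equivalently that a point of the cone $C_P(\a)$ lying in the upper shadow cannot move in the $-\e_i$ direction along an internally-active-only coordinate — and the clean way around it is precisely the distance argument above, which forces all of $\x$ to be nonnegative in one stroke and collapses the internally-active coordinates to $0$. An alternative, more hands-on route (if one prefers to avoid invoking Lemma~\ref{lem:d_geq_leq}) would be to use Remark~\ref{rk:Shadow-are-projections}: realize $\Sup(P)=\pi(\tSup(P))$ for the infinite M-convex set $\tSup(P)\subset\Z^{n+1}$, apply Theorem~\ref{th:decomposition_into_cones} to $\tSup(P)$, note that the extra coordinate $0$ is always internally active (so the cones of $\tSup(P)$ all contain the $\Z\,\e_0$ direction and project faithfully), and match the cones of $\tSup(P)$ indexed by $\wti\a=(0,\a)$ with $\Cup_P(\a)$ using that the activities transfer under $\pi$ as stated in that remark; one must only check that an index $i\in[n]$ which is internally-but-not-externally active for $\a$ in $P$ becomes inactive (or rather that the corresponding cone coordinate is pinned) for $\wti\a$ in $\tSup(P)$, because $\wti\a+\e_i-\e_0\in\tSup(P)$. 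Either way the heart of the matter is the same small comparison; I would present the distance-based argument as the primary proof since it is shortest given the tools already in hand.
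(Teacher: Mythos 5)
Your argument is correct in structure and follows the same route as the paper's proof: restrict the cone decomposition of Theorem~\ref{th:decomposition_into_cones} to the shadow, and use Theorem~\ref{th:decomposition_into_cones}(2) together with Lemma~\ref{lem:d_geq_leq} to show the restricted cone collapses to $\Cup_P(\a)$.

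One consistent notational slip, though: you have $d_1^{<}$ where $d_1^{>}$ is needed. Recall $d_1^{>}(\a,\b)=\sum_{i\,:\,a_i>b_i}(a_i-b_i)$, so $\c\geq\b$ coordinatewise gives $d_1^{>}(\b,\c)=0$, not $d_1^{<}(\b,\c)=0$ (the latter would be $\sum_{i:b_i<c_i}(c_i-b_i)$, which is positive when $\c>\b$). Likewise $d_1^{>}(\a,\c)=0$ is what says $\a\leq\c$, while $d_1^{<}(\a,\c)=0$ would say $\a\geq\c$, the opposite of what you want. Swapping $d_1^{<}$ for $d_1^{>}$ throughout the distance argument fixes the chain: $\c\in\Sup(P)$ gives $d_1^{>}(P,\c)=0$; $\c\in C_P(\a)$ plus Theorem~\ref{th:decomposition_into_cones}(2) and Lemma~\ref{lem:d_geq_leq} give $d_1^{>}(\a,\c)=d_1^{>}(P,\c)=0$; hence $\x\geq 0$, and combined with the cone constraints, $x_i=0$ for $i\notin\Ext_P(\a)$. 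With that correction the argument matches the paper's exactly (the paper even skips the forward inclusion as "clear"). The alternative via Remark~\ref{rk:Shadow-are-projections} that you sketch would also work and is indeed the route suggested by Remark~\ref{rk:Shadowpartitions-are-projections}, but as you say the distance argument is shortest given what is already in hand.
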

\begin{proof}
It is clear that $\biguplus_{\a\in P} \Cup_P(\a)\subseteq \Sup(P)$. 
To prove the converse inclusion, let us consider $\b$ in $\Sup(P)\cap C_P(\a)$. As $\b\in \Sup(P)$, we get $d_1^{>}(P,\b)=0$. Since $\b\in C_P(\a)$, Theorem~\ref{th:decomposition_into_cones}(2) and Lemma~\ref{lem:d_geq_leq} combine to give $d_1^{>}(\a,\b)=0$. Hence $\b\in \Cup_P(\a)$. 
Thus, $\Sup(P)=\biguplus_{\a\in P} \Cup_P(\a)$. Similarly, $\Slow(P)=\biguplus_{\a\in P} \Clow_P(\a)$.
\end{proof}

\begin{figure}[h]
\centering
\begin{tikzpicture}[scale=.2]

\begin{scope}[shift={(0,0)}]
\node at (2,-5) {\small $\Clow_P(\a)$};

\draw [fill] (0,4) circle [radius=.4];
\draw [fill] (1,3) circle [radius=.4];
\draw [fill] (2,2) circle [radius=.4];
\draw [fill] (3,1) circle [radius=.4];
\draw [fill] (4,0) circle [radius=.4];

\draw [thick] (-3,4) -- (0,4);
\draw [thick] (-3,3) -- (1,3);
\draw [thick] (-3,2) -- (2,2);
\draw [thick] (-3,1) -- (3,1);
\draw [thick] (-3,0) -- (4,0) -- (4,-3);

\path [pattern=north west lines] (-3,-3) rectangle (4,0);
\end{scope}

\node at (14,-5) {\small $\Cup_P(\a)$};

\begin{scope}[shift={(16,4)},yscale=-1,xscale=-1]

\draw [fill] (0,4) circle [radius=.4];
\draw [fill] (1,3) circle [radius=.4];
\draw [fill] (2,2) circle [radius=.4];
\draw [fill] (3,1) circle [radius=.4];
\draw [fill] (4,0) circle [radius=.4];

\draw [thick] (-3,4) -- (0,4);
\draw [thick] (-3,3) -- (1,3);
\draw [thick] (-3,2) -- (2,2);
\draw [thick] (-3,1) -- (3,1);
\draw [thick] (-3,0) -- (4,0) -- (4,-3);

\path [pattern=north west lines] (-3,-3) rectangle (4,0);
\end{scope}

\begin{scope}[shift={(24,0)}]
\node at (2,-5) {\small $C_P(\a)$};

\draw [fill] (0,4) circle [radius=.4];
\draw [fill] (1,3) circle [radius=.4];
\draw [fill] (2,2) circle [radius=.4];
\draw [fill] (3,1) circle [radius=.4];
\draw [fill] (4,0) circle [radius=.4];

\draw [thick] (-3,4) -- (7,4);
\draw [thick] (-3,3) -- (7,3);
\draw [thick] (-3,2) -- (7,2);
\draw [thick] (-3,1) -- (7,1);
\draw [thick] (-3,0) -- (7,0);

\path [pattern=north east lines] (-3,-3) rectangle (7,0);
\path [pattern=north east lines] (-3,4) rectangle (7,7);
\end{scope}


\begin{scope}[shift={(36,0)}]
\node at (2,-5) {\small $C^{\leq}_P(\b)$};

\draw [fill] (0,4) circle [radius=.4];
\draw [fill] (1,3) circle [radius=.4];
\draw [fill] (2,2) circle [radius=.4];
\draw [fill] (3,1) circle [radius=.4];
\draw [fill] (4,0) circle [radius=.4];

\draw [dotted] (0,7) -- (0,4) -- (4,0) -- (7,0);
\draw [thick] (-3,7) -- (0,7);
\draw [thick] (-3,6) -- (0,6);
\draw [thick] (-3,5) -- (0,5);
\draw [thick] (-3,4) -- (0,4);
\draw [thick] (-3,3) -- (1,3);
\draw [thick] (-3,2) -- (2,2);
\draw [thick] (-3,1) -- (3,1);
\draw [thick] (-3,0) -- (4,0) -- (4,-3);
\draw [thick] (5,0) -- (5,-3);
\draw [thick] (6,0) -- (6,-3);
\draw [thick] (7,0) -- (7,-3);

\draw [fill] (1,7) circle [radius=.1];
\draw [fill] (2,7) circle [radius=.1];
\draw [fill] (3,7) circle [radius=.1];
\draw [fill] (4,7) circle [radius=.1];
\draw [fill] (5,7) circle [radius=.1];
\draw [fill] (6,7) circle [radius=.1];
\draw [fill] (7,7) circle [radius=.1];
\draw [fill] (1,6) circle [radius=.1];
\draw [fill] (2,6) circle [radius=.1];
\draw [fill] (3,6) circle [radius=.1];
\draw [fill] (4,6) circle [radius=.1];
\draw [fill] (5,6) circle [radius=.1];
\draw [fill] (6,6) circle [radius=.1];
\draw [fill] (7,6) circle [radius=.1];
\draw [fill] (1,5) circle [radius=.1];
\draw [fill] (2,5) circle [radius=.1];
\draw [fill] (3,5) circle [radius=.1];
\draw [fill] (4,5) circle [radius=.1];
\draw [fill] (5,5) circle [radius=.1];
\draw [fill] (6,5) circle [radius=.1];
\draw [fill] (7,5) circle [radius=.1];
\draw [fill] (1,4) circle [radius=.1];
\draw [fill] (2,4) circle [radius=.1];
\draw [fill] (3,4) circle [radius=.1];
\draw [fill] (4,4) circle [radius=.1];
\draw [fill] (5,4) circle [radius=.1];
\draw [fill] (6,4) circle [radius=.1];
\draw [fill] (7,4) circle [radius=.1];
\draw [fill] (2,3) circle [radius=.1];
\draw [fill] (3,3) circle [radius=.1];
\draw [fill] (4,3) circle [radius=.1];
\draw [fill] (5,3) circle [radius=.1];
\draw [fill] (6,3) circle [radius=.1];
\draw [fill] (7,3) circle [radius=.1];
\draw [fill] (3,2) circle [radius=.1];
\draw [fill] (4,2) circle [radius=.1];
\draw [fill] (5,2) circle [radius=.1];
\draw [fill] (6,2) circle [radius=.1];
\draw [fill] (7,2) circle [radius=.1];
\draw [fill] (4,1) circle [radius=.1];
\draw [fill] (5,1) circle [radius=.1];
\draw [fill] (6,1) circle [radius=.1];
\draw [fill] (7,1) circle [radius=.1];

\path [pattern=north west lines] (-3,-3) rectangle (4,0);
\end{scope}

\node at (50,-5) {\small $C^{\geq}_P(\b)$};

\begin{scope}[shift={(52,4)},yscale=-1,xscale=-1]
\draw [fill] (0,4) circle [radius=.4];
\draw [fill] (1,3) circle [radius=.4];
\draw [fill] (2,2) circle [radius=.4];
\draw [fill] (3,1) circle [radius=.4];
\draw [fill] (4,0) circle [radius=.4];

\draw [dotted] (0,7) -- (0,4) -- (4,0) -- (7,0);
\draw [thick] (-3,7) -- (0,7);
\draw [thick] (-3,6) -- (0,6);
\draw [thick] (-3,5) -- (0,5);
\draw [thick] (-3,4) -- (0,4);
\draw [thick] (-3,3) -- (1,3);
\draw [thick] (-3,2) -- (2,2);
\draw [thick] (-3,1) -- (3,1);
\draw [thick] (-3,0) -- (4,0) -- (4,-3);
\draw [thick] (5,0) -- (5,-3);
\draw [thick] (6,0) -- (6,-3);
\draw [thick] (7,0) -- (7,-3);

\draw [fill] (1,7) circle [radius=.1];
\draw [fill] (2,7) circle [radius=.1];
\draw [fill] (3,7) circle [radius=.1];
\draw [fill] (4,7) circle [radius=.1];
\draw [fill] (5,7) circle [radius=.1];
\draw [fill] (6,7) circle [radius=.1];
\draw [fill] (7,7) circle [radius=.1];
\draw [fill] (1,6) circle [radius=.1];
\draw [fill] (2,6) circle [radius=.1];
\draw [fill] (3,6) circle [radius=.1];
\draw [fill] (4,6) circle [radius=.1];
\draw [fill] (5,6) circle [radius=.1];
\draw [fill] (6,6) circle [radius=.1];
\draw [fill] (7,6) circle [radius=.1];
\draw [fill] (1,5) circle [radius=.1];
\draw [fill] (2,5) circle [radius=.1];
\draw [fill] (3,5) circle [radius=.1];
\draw [fill] (4,5) circle [radius=.1];
\draw [fill] (5,5) circle [radius=.1];
\draw [fill] (6,5) circle [radius=.1];
\draw [fill] (7,5) circle [radius=.1];
\draw [fill] (1,4) circle [radius=.1];
\draw [fill] (2,4) circle [radius=.1];
\draw [fill] (3,4) circle [radius=.1];
\draw [fill] (4,4) circle [radius=.1];
\draw [fill] (5,4) circle [radius=.1];
\draw [fill] (6,4) circle [radius=.1];
\draw [fill] (7,4) circle [radius=.1];
\draw [fill] (2,3) circle [radius=.1];
\draw [fill] (3,3) circle [radius=.1];
\draw [fill] (4,3) circle [radius=.1];
\draw [fill] (5,3) circle [radius=.1];
\draw [fill] (6,3) circle [radius=.1];
\draw [fill] (7,3) circle [radius=.1];
\draw [fill] (3,2) circle [radius=.1];
\draw [fill] (4,2) circle [radius=.1];
\draw [fill] (5,2) circle [radius=.1];
\draw [fill] (6,2) circle [radius=.1];
\draw [fill] (7,2) circle [radius=.1];
\draw [fill] (4,1) circle [radius=.1];
\draw [fill] (5,1) circle [radius=.1];
\draw [fill] (6,1) circle [radius=.1];
\draw [fill] (7,1) circle [radius=.1];

\path [pattern=north west lines] (-3,-3) rectangle (4,0);
\end{scope}

\end{tikzpicture}
\caption{Various decompositions into cones for a polymatroid $P\subset \Z^2$. The elements of $P$ are indicated by large dots. 
The first panel shows the cone decomposition $\Slow(P)=\biguplus_{\a\in P}\Clow_P(\a)$ of the lower shadow. The second panel depicts the decomposition $\Sup(P)=\biguplus_{\a\in P}\Cup_P(\a)$ of the upper shadow. The third panel shows the decomposition $\Z^n=\biguplus_{\a\in P} C_P(\a)$ provided by Theorem~\ref{th:decomposition_into_cones}. The fourth (resp., fifth) panel is a depiction of the refined decomposition of $\Z^n$, given by Theorem~\ref{th:shadow_decomposition}(1), in terms of cones indexed by elements $\b$ of $\Sup(P)$ (resp., $\Slow(P)$). 
}
\label{fig:decompositions}
\end{figure}

The partitions of the shadows given by Lemma~\ref{lem:partition-shadows} are illustrated in the first two panels of Figure~\ref{fig:decompositions}.
We now consider some partitions of $\Z^n$ indexed by the points in the shadows.
For $\b\in\Sup(P)$ we define 
$$C^{\leq}_P(\b)= \b+\sum_{i \in \tInt(\b)}\Z_{\leq 0}\, \e_i.$$
Now we observe that for all $\b\in \Cup_P(\a)$, we have
$$\tInt(\b)=\Int_P(\a)\cap\{\,i\in[n]\mid a_i=b_i\,\}.$$ 
This implies 
\begin{equation}\label{eq:partition-cone+}
C_P(\a)=\biguplus_{\b\in \Cup_P(\a)} C^{\leq}_P(\b).
\end{equation}
Similarly, for $\b\in\Slow(P)$, we define $\ds C^{\geq}_P(\b)= \b+\sum_{i\in \tExt(\b)}\Z_{\geq 0}\, \e_i$ and get
\begin{equation}\label{eq:partition-cone-}
C_P(\a)=\biguplus_{\b\in \Clow_P(\a)} C^{\geq}_P(\b).
\end{equation}

We summarize our constructions:

\begin{theorem} \label{th:shadow_decomposition}
Let $P\subset\Z^n$ be an M-convex set. 
\begin{compactenum}
\item The cone decomposition of $\Z^n$ given by Theorem~\ref{th:decomposition_into_cones} has the following two refinements:
$$\ds \Z^n= \biguplus_{\a\in\Sup(P)} C^{\leq}_{P}(\b)=\biguplus_{\b\in\Slow(P)} C^{\geq}_{P}(\b).$$
\item Let $\a\in P$, let $\b\in \Cup_P(\a)$ and let $\b'\in \Clow_P(\a)$. 
If for all $i\in\Int_P(\a)\cap Ext_P(\a)$ either $a_i=b_i$ or $a_i=b'_i$ (or both), then $C^{\leq}_P(\b)\cap C^{\geq}_P(\b')=\{\b+\b'-\a\}$. Otherwise $C^{\leq}_P(\b)\cap C^{\geq}_P(\b')=\varnothing$.
\item For all $\b\in \Sup(P)$ and all $\c\in C^\leq_P(\b)$, one has 
$$d_1^{<}(P,\c)=d_1^{<}(P,\b)=\level(\b)-\level(P)~\text{ and }~d_1^{>}(P,\c)=d_1^{>}(\b,\c).$$
Symmetrically, for all $\b\in \Slow(P)$ and all $\c\in C^\geq_P(\b)$, one has 
$$d_1^{>}(P,\c)=d_1^{>}(P,\b)=\level(P)-\level(\b)~\text{ and }~d_1^{<}(P,\c)=d_1^{<}(\b,\c).$$
\end{compactenum}
\end{theorem}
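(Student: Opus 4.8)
The plan is to read off all three parts from the refinements \eqref{eq:partition-cone+} and \eqref{eq:partition-cone-} of the cone decomposition of Theorem~\ref{th:decomposition_into_cones}, combined with Lemma~\ref{lem:partition-shadows} and Lemma~\ref{lem:d_geq_leq}. For part~(1), I would simply chain the identities already in hand: Theorem~\ref{th:decomposition_into_cones}(1) gives $\Z^n=\biguplus_{\a\in P}C_P(\a)$, equation~\eqref{eq:partition-cone+} refines each $C_P(\a)$ as $\biguplus_{\b\in\Cup_P(\a)}C^{\leq}_P(\b)$, and Lemma~\ref{lem:partition-shadows} gives $\Sup(P)=\biguplus_{\a\in P}\Cup_P(\a)$; composing these three decompositions yields $\Z^n=\biguplus_{\b\in\Sup(P)}C^{\leq}_P(\b)$. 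The statement for $C^{\geq}_P$ follows in the same way from \eqref{eq:partition-cone-} and the lower-shadow half of Lemma~\ref{lem:partition-shadows}.

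For part~(2), I would fix $\a\in P$ and write $\b=\a+\sum_{i\in\Ext_P(\a)}s_i\,\e_i$ and $\b'=\a-\sum_{i\in\Int_P(\a)}t_i\,\e_i$ with $s_i,t_i\geq 0$, so that $b_i\geq a_i\geq b'_i$ for all $i$, recalling that $\tInt(\b)=\Int_P(\a)\cap\{i\mid a_i=b_i\}$ and, dually, $\tExt(\b')=\Ext_P(\a)\cap\{i\mid a_i=b'_i\}$. A point $\c$ lies in $C^{\leq}_P(\b)\cap C^{\geq}_P(\b')$ exactly when, coordinatewise, $c_i\leq b_i$ (with $c_i=b_i$ forced unless $i\in\tInt(\b)$) and $c_i\geq b'_i$ (with $c_i=b'_i$ forced unless $i\in\tExt(\b')$). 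A case analysis on whether $i$ lies in $\Int_P(\a)$, in $\Ext_P(\a)$, in both, or in neither shows that away from $\Int_P(\a)\cap\Ext_P(\a)$ the coordinate $c_i$ is always pinned down and equals $b_i+b'_i-a_i$; for $i\in\Int_P(\a)\cap\Ext_P(\a)$ the left constraint confines $c_i$ to $(-\infty,b_i]$ if $a_i=b_i$ and to $\{b_i\}$ otherwise, while the right constraint confines $c_i$ to $[b'_i,\infty)$ if $a_i=b'_i$ and to $\{b'_i\}$ otherwise, so (using $b_i\geq a_i\geq b'_i$) the two constraints are compatible precisely when $a_i=b_i$ or $a_i=b'_i$, and in that case the common value is again $b_i+b'_i-a_i$. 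This yields the stated dichotomy, with the single intersection point equal to $\b+\b'-\a$.

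For part~(3), I would use \eqref{eq:partition-cone+} to see that $\c\in C^{\leq}_P(\b)$ lies in $C_P(\a)$, where $\a\in P$ is the unique element with $\b\in\Cup_P(\a)$. Then Theorem~\ref{th:decomposition_into_cones}(2) gives $d_1(\a,\c)=d_1(P,\c)$, and Lemma~\ref{lem:d_geq_leq} upgrades this to $d_1^{>}(P,\c)=d_1^{>}(\a,\c)$ and $d_1^{<}(P,\c)=d_1^{<}(\a,\c)$. Writing $\c=\b-\sum_{i\in\tInt(\b)}p_i\,\e_i$ with $p_i\geq 0$, and using $b_i\geq a_i$ for all $i$ together with $b_i=a_i$ whenever $i\in\tInt(\b)$, one computes directly that $d_1^{<}(\a,\c)=\sum_{i}(b_i-a_i)=\level(\b)-\level(P)$ and $d_1^{>}(\a,\c)=\sum_{i\in\tInt(\b)}p_i=d_1^{>}(\b,\c)$. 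Finally $d_1^{<}(P,\b)=\level(\b)-\level(P)$ because any $\a'\in P$ satisfies $d_1^{<}(\a',\b)=\sum_{i\,:\,b_i>a'_i}(b_i-a'_i)\geq\sum_i(b_i-a'_i)=\level(\b)-\level(P)$, with equality at any $\a'$ with $\b\geq\a'$ coordinatewise (such as the above $\a$). The lower-shadow statements follow symmetrically, or by applying this to the dual polymatroid $-P$.

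The only step that is not pure bookkeeping is the coordinatewise case analysis in part~(2) at the indices of $\Int_P(\a)\cap\Ext_P(\a)$, where the constraints from the two cones interact; once those are handled the remainder is a direct chaining of Theorem~\ref{th:decomposition_into_cones}, Lemma~\ref{lem:d_geq_leq}, and the identities already established in this section.
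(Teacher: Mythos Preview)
Your proof is correct and follows essentially the same approach as the paper. For part~(1) you chain Lemma~\ref{lem:partition-shadows} with \eqref{eq:partition-cone+}, \eqref{eq:partition-cone-}; for part~(2) you do the same coordinatewise case analysis (the paper is terser but identical in substance); for part~(3) you, like the paper, pass through the $\a\in P$ with $\b\in\Cup_P(\a)$ and invoke Theorem~\ref{th:decomposition_into_cones}(2) with Lemma~\ref{lem:d_geq_leq}, though you spell out the equality $d_1^{<}(P,\b)=\level(\b)-\level(P)$ more carefully than the paper does.
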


The decompositions of $\Z^n$ given by Theorem~\ref{th:shadow_decomposition}(1) are illustrated in the last two panels of Figure~\ref{fig:decompositions}.

\begin{proof}
Property (1) is obtained by combining Lemma~\ref{lem:partition-shadows} with~\eqref{eq:partition-cone+} and~\eqref{eq:partition-cone-}. 

For Property (2), let us consider $\c\in C^{\leq}_P(\b)\cap C^{\geq}_P(\b')$. We observe that for all $i\notin \Int_P(\a)$ we have $i\notin \tInt(\b)$ whence $c_i=b_i$. Symmetrically, for all $i\notin \Ext_P(\a)$ we have $c_i=b_i'$. Lastly for $i\in \Int_P(\a)\cap \Ext_P(\a)$, if $b_i\neq a_i$ then $c_i=b_i>a_i$, and if $b_i'\neq a_i$ then $c_i=b_i'<a_i$. This easily implies Property (2). 

We now prove Property (3). Let $\b\in \Sup(P)$ and let $\c\in C^\leq_P(\b)$. Let $\a$ be the element of $P$ such that $\b\in C_P(\a)$. Theorem~\ref{th:decomposition_into_cones}(2) together with Lemma~\ref{lem:d_geq_leq} gives $d_1^{<}(P,\c)=d_1^{<}(\a,\c)$ and $d_1^{>}(P,\c)=d_1^{>}(\a,\c)$. Moreover, it is clear from the definitions that $d_1^{<}(\a,\c)=d_1^{<}(\a,\b)=\level(\b)-\level(\a)$ and $d_1^{>}(\a,\c)=d_1^{>}(\b,\c)$. This proves the first identity of Property (3), and the other identity is proved similarly.
\end{proof}

\begin{remark}[Shadow partitions as projections]\label{rk:Shadowpartitions-are-projections}
Recall from Remark~\ref{rk:Shadow-are-projections} that the shadows $\Sup(P)$ and $\Slow(P)$ are the projections of some M-convex sets $\tSup(P)$ and $\tSlow(P)$. It is not hard to check that the decomposition of $\Z^n$ given by Theorem~\ref{th:shadow_decomposition}(1) is the projection of the cone decompositions of $\Z^{n+1}$ obtained by applying Theorem~\ref{th:decomposition_into_cones} to the M-convex sets $\tSup(P)$ and $\tSlow(P)$.
\end{remark}

We now use Theorem~\ref{th:shadow_decomposition} to prove Theorem~\ref{th:upper_lower_shadow}. 

\begin{proof}[Proof of Theorem~\ref{th:upper_lower_shadow}] 
Using Theorem~\ref{th:tilde_Tutte} and Theorem~\ref{th:shadow_decomposition}(1), we obtain 
$$\T_P\left(\frac1{1-u},\frac{1}{1-v}\right) 
= \sum_{\b\in \Sup(P)}~\sum_{\c\in C^{\leq}_{P}(\b)} u^{d_1^{>}(P,\c)}\, v^{d_1^{<}(P,\c)}.$$
Using Theorem~\ref{th:shadow_decomposition}(3) then gives
\begin{eqnarray*}
\T_P\left(\frac1{1-u},\frac{1}{1-v}\right)&=& \sum_{\b\in \Sup(P)} v^{\level(\b)-\level(P)}\sum_{\c\in C^{\leq}_{P}(\b)}u^{d_1^{>}(\b,\c)}\\
&=&\sum_{\b\in \Sup(P)} v^{\level(\b)-\level(P)}\left(\frac{1}{1-u}\right)^{|\tInt(\b)|}.
\end{eqnarray*}
Replacing $\frac{1}{1-u}$ by $x$ gives the first needed formula. The proof of the second formula is similar.
\end{proof}

\section{Cameron-Fink polynomials and polymatroid neighborhoods} 
\label{sec:Cameron-Fink}

In this section we explain the relation between the polymatroid Tutte polynomial and the invariant defined by Cameron and Fink in~\cite{CF}. Let us first recall their polymatroid invariant $\QCF'_P(x,y)$.

\begin{definition}~\cite{CF} \ 
Let $P\subset\Z^n$ be a polymatroid and let $\mP=\conv(P)$ be its convex hull. 
Let $\Delta$ be the coordinate $(n-1)$-dimensional simplex 
$\Delta:=\conv(\e_1,\dots,\e_n)\subset\R^n$, and let $\nabla = -\Delta = 
\conv(-\e_1,\dots,-\e_n)$.
It is shown in~\cite{CF} that there exists a (unique) two-variable polynomial $\QCF_P$ such that 
for any positive integer values of $s$ and $t$, the value $\QCF_P(s,t)$ counts
the number of integer lattice points in the Minkowski sum
$\mP+s\nabla + t\Delta$:
$$
\QCF_P(s,t)= |(\mP+s\nabla + t\Delta)\cap \Z^n|.
$$
There are (unique) integer coefficients $c_{i,j}$ such that 
$$
\QCF_P(s,t) = \sum_{i,j \geq 0} c_{i,j}\,{s \choose i} {t \choose j},
$$
where ${x \choose i}:=\frac{x(x-1)\cdots(x-i+1)}{i!}$, and 
$\QCF'_P(x,y)$ is defined as the polynomial
$$
\QCF'_P(x,y) := \sum_{i,j\geq 0} c_{i,j} (x-1)^i\, (y-1)^j.
$$
\end{definition} 

The relation between $\QCF'_P(x,y)$ and $\T_P(x,y)$ takes the following simple form.

\begin{theorem}
\label{th:CF_Tutte}
The polynomial $\QCF'_P(x,y)$ equals $\T_P(x,y)/(x+y-1)$.
\end{theorem}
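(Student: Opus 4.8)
The plan is to compute the two-variable generating function $\sum_{s,t\ge 0}\QCF_P(s,t)\,u^s v^t$ in two different ways and compare. One way reads the coefficients off the binomial-transform definition of $\QCF'_P$; the other reinterprets $\QCF_P(s,t)$ as a count of lattice points weighted by the functions $d_1^{>}(P,\cdot)$ and $d_1^{<}(P,\cdot)$ and then appeals to Theorem~\ref{th:tilde_Tutte}.

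First I would describe $(\mP+s\nabla+t\Delta)\cap\Z^n$ in terms of $P$. Both $s\nabla$ and $t\Delta$ are integer generalized permutohedra, so Remark~\ref{rk:polymat-Minkowski} (the fact that $P_f+P_g=P_{f+g}$ at the level of polymatroids, applied twice) gives
$$(\mP+s\nabla+t\Delta)\cap\Z^n \;=\; P \;+\; \{\,\p\in\Z_{\le 0}^n:\level(\p)=-s\,\}\;+\;\{\,\q\in\Z_{\ge 0}^n:\level(\q)=t\,\}.$$
Splitting, for $\a\in P$, the vector $\c-\a$ into its non-positive and non-negative coordinate parts, one checks elementarily that $\c\in\Z^n$ lies in $\mP+s\nabla+t\Delta$ if and only if $\level(\c)-\level(P)=t-s$ and $d_1^{<}(P,\c)\le t$; and since $d_1^{<}(\a,\c)-d_1^{>}(\a,\c)=\level(\c)-\level(P)$ for \emph{every} $\a\in P$, the second condition is then equivalent to ``$d_1^{>}(P,\c)\le s$ and $d_1^{<}(P,\c)\le t$''. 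This lattice-point count is a polynomial in $(s,t)$ — it is $E_n$ evaluated at the rank function of $\mP+s\nabla+t\Delta$, which is affine in $s,t$ — so it coincides with $\QCF_P(s,t)$ for all $s,t\in\Z_{\ge 0}$.

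Next I would sum geometric series. For a fixed $\c$ put $a=d_1^{>}(P,\c)$, $b=d_1^{<}(P,\c)$, so $b-a=\level(\c)-\level(P)$; the pairs $(s,t)\in\Z_{\ge0}^2$ with $s\ge a$, $t\ge b$ and $t-s=b-a$ are precisely $(a+k,b+k)$ for $k\ge 0$, contributing $u^a v^b/(1-uv)$. Hence
$$\sum_{s,t\ge 0}\QCF_P(s,t)\,u^s v^t \;=\; \frac{1}{1-uv}\sum_{\c\in\Z^n}u^{d_1^{>}(P,\c)}v^{d_1^{<}(P,\c)}\;=\;\frac{1}{1-uv}\,\T_P\!\left(\frac{1}{1-u},\frac{1}{1-v}\right),$$
the last step being Theorem~\ref{th:tilde_Tutte}. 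On the other hand, from $\QCF_P(s,t)=\sum_{i,j}c_{i,j}\binom{s}{i}\binom{t}{j}$, the identity $\sum_{s\ge 0}\binom{s}{i}u^s=u^i/(1-u)^{i+1}$, and $u/(1-u)=\tfrac1{1-u}-1$, one obtains
$$\sum_{s,t\ge 0}\QCF_P(s,t)\,u^s v^t\;=\;\frac{1}{(1-u)(1-v)}\,\QCF'_P\!\left(\frac{1}{1-u},\frac{1}{1-v}\right).$$
Equating the two formulas and setting $x=\tfrac1{1-u}$, $y=\tfrac1{1-v}$ — so that $(1-u)(1-v)=\tfrac1{xy}$ and $1-uv=\tfrac{x+y-1}{xy}$ — yields $\T_P(x,y)=(x+y-1)\,\QCF'_P(x,y)$, first as an identity of formal power series in $u,v$ and hence as an identity of polynomials in $x,y$. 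Since $\T_P(x,y)$ is divisible by $x+y-1$, this is exactly the claim.

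The one genuinely delicate point is the first step: one must pin down $(\mP+s\nabla+t\Delta)\cap\Z^n$ on the nose, not merely up to boundary effects. This is what the identity $P_f+P_g=P_{f+g}$ buys us — it reduces the description to the trivial ``split the difference'' computation — after which the bookkeeping with $\level$, $d_1^{>}$ and $d_1^{<}$ is routine, and the rest is generating-function manipulation.
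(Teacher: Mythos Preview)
Your proof is correct. The key lemma—that the lattice points of $\mP+s\nabla+t\Delta$ are exactly those $\c$ with $\level(\c)=\level(P)+t-s$, $d_1^{>}(P,\c)\le s$, and $d_1^{<}(P,\c)\le t$—is also the starting point of the paper's proof (stated there as the ``$(s,t)$-neighborhood'' lemma), though you derive it via the polymatroid Minkowski identity $P_f+P_g=P_{f+g}$ whereas the paper argues by hand.

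The route diverges after that. The paper invokes the cone decomposition $\Z^n=\biguplus_{\a\in P}C_P(\a)$ directly: it parametrizes the points of $C_P(\a)\cap N(s,t,P)\cap H_{\level(P)+t-s}$ by pairs of disjoint subsets $I\subset\Int(\a)\setminus\{1\}$, $J\subset\Ext(\a)\setminus\{1\}$, obtains the explicit formula $\QCF_P(s,t)=\sum_{\a\in P}\sum_{(I,J)}\binom{s}{|I|}\binom{t}{|J|}$, and then reads off the binomial transform term by term. You instead pass to the double generating function $\sum_{s,t\ge0}\QCF_P(s,t)\,u^sv^t$, recognize it as $\frac{1}{1-uv}\,\wti\T_P(u,v)$, and appeal to Theorem~\ref{th:tilde_Tutte}, which already encapsulates the cone decomposition. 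Your argument is a bit slicker in that it avoids re-parametrizing the cones; the paper's argument has the minor advantage of producing an explicit closed form for the coefficients $c_{i,j}$ along the way.
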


\begin{remark} In view of Theorem~\ref{th:CF_Tutte}, Proposition~\ref{prop:TP_properties}(h) can be rephrased as follows:
$$\QCF_P(0,-1)= |(\mP-\Delta)\cap \Z^n|\quad\text{and}\quad\QCF_P(-1,0)=|(\mP-\nabla)\cap \Z^n|.$$
\end{remark}
The rest of this section is dedicated to the proof of Theorem~\ref{th:CF_Tutte}.

\begin{definition}
For a polymatroid $P\subset \Z^n$ and non-negative integers $s,t\in\Z_{\geq 0}$,
the \emph{$(s,t)$-neighborhood} of $P$ is the set
$$
N(s,t,P):=\{\,\c\in\Z^n\mid d_1^{>}(P,\c)\leq s\text{ and } d_1^{<}(P,\c)\leq t\,\}.
$$
\end{definition}

Let $H_\ell\subset\Z^n$ be the affine hyperplane 
$
H_\ell:=\{\,\c\in\Z^n\mid \level(\c)=\ell\,\}$.

\begin{lemma} 
For a polymatroid $P\subset \Z^n$ and non-negative integers 
$s,t\in\Z_{\geq 0}$, 
the set of lattice points in the Minkowski sum $\mP+s\nabla + t\Delta$
is a hyperplane section of 
the $(s,t)$-neighborhood of $P$, namely
$$
(\mP+s\nabla + t\Delta)\cap \Z^n = 
N(s,t,P) \cap H_{\level(P)-s+t}.
$$
\end{lemma}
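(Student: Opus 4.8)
My plan is to prove the two inclusions of the claimed equality separately, in each case by producing an explicit decomposition of the lattice point in question.

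For the inclusion $(\mP+s\nabla+t\Delta)\cap\Z^n\subseteq N(s,t,P)\cap H_{\level(P)-s+t}$, I would begin with the observations that $t\Delta=\conv(t\e_1,\dots,t\e_n)$ and $s\nabla=\conv(-s\e_1,\dots,-s\e_n)$ are integer generalized permutohedra with lattice points $t\Delta\cap\Z^n=\{\,\y\in\Z^n\mid\y\geq 0,\ \level(\y)=t\,\}$ and $s\nabla\cap\Z^n=\{\,\x\in\Z^n\mid\x\leq 0,\ \level(\x)=-s\,\}$ (inequalities coordinatewise). Since the class of integer generalized permutohedra is closed under Minkowski sums, $\mP+s\nabla+t\Delta$ is again one, and by Remark~\ref{rk:polymat-Minkowski} applied twice its lattice-point set equals $P+(s\nabla\cap\Z^n)+(t\Delta\cap\Z^n)$. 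So any $\c$ in $(\mP+s\nabla+t\Delta)\cap\Z^n$ can be written $\c=\a+\x+\y$ with $\a\in P$, $\x\in s\nabla\cap\Z^n$, $\y\in t\Delta\cap\Z^n$; summing coordinates gives $\level(\c)=\level(P)-s+t$, so $\c\in H_{\level(P)-s+t}$. For the distance bounds, note $a_i-c_i=-x_i-y_i$, and since $-x_i\geq 0$ while $-y_i\leq 0$ we get $d_1^{>}(\a,\c)=\sum_{i:a_i>c_i}(a_i-c_i)\leq\sum_i(-x_i)=s$, and symmetrically $d_1^{<}(\a,\c)\leq\sum_i y_i=t$. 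Since $\a\in P$ this yields $d_1^{>}(P,\c)\leq s$ and $d_1^{<}(P,\c)\leq t$, i.e.\ $\c\in N(s,t,P)$.

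For the reverse inclusion, take $\c\in N(s,t,P)$ with $\level(\c)=\level(P)-s+t$. Since $P$ is finite and nonempty I would pick $\a\in P$ with $d_1(\a,\c)=d_1(P,\c)$; by Lemma~\ref{lem:d_geq_leq} this single $\a$ realizes both splits, so $s':=d_1^{>}(\a,\c)=d_1^{>}(P,\c)\leq s$ and $t':=d_1^{<}(\a,\c)=d_1^{<}(P,\c)\leq t$. Decomposing $\c-\a$ into positive and negative parts, $\c-\a=\y_0-\x_0$ with $\y_0=\sum_i(c_i-a_i)^{+}\e_i$ and $\x_0=\sum_i(a_i-c_i)^{+}\e_i$ in $\Z_{\geq 0}^n$ of levels $t'$ and $s'$; comparing levels gives $t'-s'=\level(\c)-\level(P)=t-s$, hence $k:=t-t'=s-s'\geq 0$. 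Then $\y:=\y_0+k\e_1\in t\Delta\cap\Z^n$ and $\x:=-\x_0-k\e_1\in s\nabla\cap\Z^n$, while $\a+\x+\y=\a+(\y_0-\x_0)=\c$; so $\c=\a+\x+\y\in\mP+s\nabla+t\Delta$, and $\c\in\Z^n$, as needed.

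The only non-elementary inputs are Remark~\ref{rk:polymat-Minkowski} and Lemma~\ref{lem:d_geq_leq}, and I expect the main point to get right to be the use of the former in the first inclusion: it is what allows one to extract an honest lattice point $\a\in P$ rather than merely a point of $\conv(P)=\mP$ (which would only bound $d_1^{>}$ and $d_1^{<}$ measured against $\mP$, not against the lattice set $P$). Everything else is bookkeeping with levels and with positive and negative parts of $\c-\a$.
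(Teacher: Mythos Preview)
Your proof is correct and follows essentially the same approach as the paper's: both argue the two inclusions by writing $\c=\a+\x+\y$ with $\a\in P$, and for the reverse inclusion both invoke Lemma~\ref{lem:d_geq_leq} to realize both split distances simultaneously, then pad the positive and negative parts by a common amount $k=s-s'=t-t'$ (the paper phrases this as ``pick any $\a'''\in r\Delta$'', you make the specific choice $k\e_1$). If anything, you are more explicit than the paper in invoking Remark~\ref{rk:polymat-Minkowski} for the first inclusion, which is exactly the point needed to ensure $\a$ is a lattice point of $P$ rather than merely a point of $\mP$.
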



\begin{proof}
For any lattice point $\c\in (\mP+s\nabla + t\Delta)\cap \Z^n$,
we have $\level(\c) = \level(P)-s+t$.
We also have 
$\c=\a+\a'+\a''$, where $\a\in P$, $\a'\in s\nabla$, and $\a''\in t\Delta$.
So $d_1^{>}(P,\c) \leq d_1^{>}(\a,\c)\leq \sum_{i=1}^n |a_i'|= s$ and
$d_1^{<}(P,\c)\leq d_1^{<}(\a,\c) \leq \sum_{i=1}^n |a_i''|= t$. 
Thus $\c\in N(s,t,P)$.

On the other hand, for any $\c\in N(s,t,P)$ with $\level(\c)=\level(P)-s+t$, let
$\a\in P$ be any closest point of $P$ to $\c$, i.e., $d_1(P,\c)= d_1(\a,\c)$.
According to Lemma~\ref{lem:d_geq_leq}, we have
$d_1^{>}(\a,\c)=d_1^{>}(P,\c) \leq s$ and 
$d_1^{<}(\a,\c)=d_1^{<}(P,\c) \leq t$.
So $\c =\a + \a'+\a''$, where $\a'\in s'\nabla$ and $\a''\in t'\Delta$ with 
$s':= d_1^{>}(P,\c) \leq s$ and 
$t':= d_1^{<}(P,\c) \leq t$. 
Pick any $\a'''\in r\Delta$, where $r:=s-s'=t-t'$. 
Then we have $\c = \a + (\a'-\a''') + (\a'' + \a''')$, where $\a'-\a'''\in s\nabla$
and $\a''+\a'''\in t\Delta$. So $\c\in\mP+s\nabla + t\Delta$.
\end{proof}

\begin{proof}[Proof of Theorem~\ref{th:CF_Tutte}]
We have 
\begin{multline*}
\QCF_P(s,t)=|(\mP+s\nabla+t\Delta)\cap \Z^n| =
|N(s,t,P)\cap H_{\level(P)-s+t}|\\
=\sum_{\a\in P} |C_P(\a)\cap N(s,t,P)\cap H_{\level(P)-s+t}|.
\end{multline*}
The intersection 
$C_P(\a)\cap N(s,t,P)\cap H_{\level(P)-s+t}$ 
consists of all points $\a+\x\in\Z^n$ such that 
\begin{enumerate}
\item If $x_i<0$ then $i\in \Int_P(\a)$. 
If $x_j>0$ then $j\in \Ext_P(\a)$.
\item $\sum_{i\,:\,x_i <0} (-x_i) \leq s$ and 
$\sum_{i\,:\,x_i >0} x_i \leq t$.
\item $\sum_{i=1}^n x_i = -s + t$.
\end{enumerate}

Since we always have $1\in\Int(\a)\cap\Ext(\a)$, the above set
is in bijection with points $(x_2,\dots,x_n)\in\Z^{n-1}$ that 
satisfy only conditions (1) and (2).

For $\a+\x\in C_P(\a)\cap N(s,t,P)\cap H_{\level(P)-s+t} $, let $I(\x):=\{\,i\in [n]\setminus \{1\} \mid x_i <0\,\}$ and 
$J(\x):=\{\,j\in [n]\setminus \{1\} \mid x_j >0\,\}$. The pair $(I(\x),J(\x))$ belongs to the set $\mK(\a)$ of pairs $(I,J)$ of subsets of $[n]\setminus \{1\}$ such that $I\subset\Int(\a)$, $J\subset\Ext(\a)$, and $I\cap J = \varnothing$. 
Moreover for any $(I,J)\in\mK(\a)$ the number of points $\a+\x\in C_P(\a)\cap N(s,t,P)\cap H_{\level(P)-s+t}$ such that $I(\x)=I$ and $J(\x)=J$ is $ {s\choose |I|}\, {t\choose |J|}$ (because for $k\geq 0$ the number of tuples $(s_1,\ldots,s_k)\in Z_{>0}^k$ such that $\sum_{i=1}^k s_i\leq s$ is ${s\choose k}$). 
This gives
$$
\QCF_P(s,t) = \sum_{\a\in P} ~~\sum_{(I,J)\in\mK(\a)}
{s\choose |I|}\, {t\choose |J|}.
$$
Thus,
$$
\QCF_P'(x+1,y+1) = 
\sum_{\a\in P}~~ \sum_{(I,J)\in \mK(\a)} x^{|I|}\, y^{|J|}
=\sum_{\a\in P} (x+1)^{\oi(\a)}(y+1)^{\oe(\a)}(x+y+1)^{\ie(\a)-1},
$$
which is exactly $\T_P(x+1,y+1)/(x+y+1)$.
\end{proof}

\section{Background: Hypergraphical polymatroids}
\label{sec:hypergraphs}
In this section and the following ones, we discuss the polymatroids associated to hypergraphs \cite{Hel,tuttebook}. Some properties of the associated generalized permutohedra have been studied in~\cite{Pos,Kal, KP}.  
This class of polytopes extends two well known families of ``graphical polytopes'': base polytopes of graphical matroids and graphical zonotopes.


\subsection{Hypergraphs}
Roughly speaking, a \emph{hypergraph} over a set $V$ of ``vertices'' is a multiset of subsets of $V$ called ``hyperedges''. More formally, a \emph{hypergraph} with \emph{vertex set} $V$ and \emph{hyperedge set} $W$ is a function $\vec H$ from the set $W$ to the set $2^V$ of subsets of $V$.
Clearly, undirected graphs without loops identify with hypergraphs such that every hyperedge has cardinality $2$.
Next we recall that hypergraphs can be identified with bipartite graphs.

A \emph{bipartite graph} is a simple graph where every vertex is either called \emph{left vertex} or \emph{right vertex}, and every edge joins a left vertex to a right vertex. We adopt the notation $H = (V\sqcup W, E)$ to indicate that $H$ is a bipartite graph, $V$ is the set of left vertices, $W$ is the set of right vertices, and $E$ is the set of edges (note that our bipartite graphs come with a fixed bipartition of the vertices). To the bipartite graph $H$ we associate the hypergraph $\vec H$ with vertex set $V$ and hyperedge set $W$ defined by setting $\vec H(w)$ to be the set of neighbors of $w$ in $H$. Clearly, this is a bijection between bipartite graphs and hypergraphs.

The bipartite graph setting makes the symmetric role between vertices and hyperedges more apparent. For a bipartite graph $H= (V\sqcup W, E)$, we define the \emph{transpose bipartite graph} to be $H^\mir:=(W\sqcup V, E)$. We also say that the hypergraph $\vec H^\mir$ (associated to $H^\mir$) is the \emph{transpose} of $\vec H$.

\begin{example}
\label{ex:hypergraph}
\begin{figure}[h]
\centering
\begin{tikzpicture}[scale=.35]

\draw [thick] (12,4.5) -- (4,7.5);
\draw [thick] (12,4.5) -- (4,4.5);
\draw [thick] (12,4.5) -- (4,1.5);
\draw [thick] (12,1.5) -- (4,4.5);
\draw [thick] (12,1.5) -- (4,1.5);
\draw [thick] (12,1.5) -- (4,-1.5);
\draw [thick] (12,1.5) -- (4,-4.5);
\draw [thick] (12,-1.5) -- (4,7.5);
\draw [thick] (12,-1.5) -- (4,4.5);
\draw [thick] (12,-1.5) -- (4,1.5);
\draw [thick] (12,-1.5) -- (4,-1.5);
\draw [thick] (12,-1.5) -- (4,-4.5);

\draw [thick,fill=white] (12,4.5) circle [radius=.4];
\draw [thick,fill=white] (12,1.5) circle [radius=.4];
\draw [thick,fill=white] (12,-1.5) circle [radius=.4];

\draw [thick,fill=white] (4,7.5) circle [radius=.4];
\draw [thick,fill=white] (4,4.5) circle [radius=.4];
\draw [thick,fill=white] (4,1.5) circle [radius=.4];
\draw [thick,fill=white] (4,-1.5) circle [radius=.4];
\draw [thick,fill=white] (4,-4.5) circle [radius=.4];
\node at (4,7.5) {\tiny $
a$};
\node at (4,4.5) {\tiny $
b$};
\node at (4,1.5) {\tiny $
c$};
\node at (4,-1.5) {\tiny $
d$};
\node at (4,-4.5) {\tiny $
e$};

\node at (12,4.5) {\tiny $1$};
\node at (12,1.5) {\tiny $2$};
\node at (12,-1.5) {\tiny $3$};
\node at (1,1.5) {\small $V$};
\node at (15,1.5) {\small $W$};

\begin{scope}[shift={(25,0)}]
\draw [->] (0,-1) -- (0,7.2);
\draw [->] (0,-1) -- (-7,-4.5);
\draw [->] (0,-1) -- (7.2,-2.2);
\node at (-7.6,-4.8) {\small $1$};
\node at (8.1,-2.35) {\small $2$};
\node at (0,8) {\small $3$};
\draw [help lines] (0,6) -- (-6,-4) -- (6,-2) -- cycle;
\draw [fill=lightgray,lightgray] (0,-3) circle [radius=.3];
\draw [fill=lightgray,lightgray] (3,-2.5) circle [radius=.3];
\draw [fill=lightgray,lightgray] (-1.5,-1) circle [radius=.3];
\draw [fill=lightgray,lightgray] (1.5,-.5) circle [radius=.3];
\draw [fill=lightgray,lightgray] (4.5,0) circle [radius=.3];
\draw [fill=lightgray,lightgray] (-3,1) circle [radius=.3];
\draw [fill=white,white] (0,1.5) circle [radius=.4];
\draw [fill=lightgray,lightgray] (0,1.5) circle [radius=.3];
\draw [fill=lightgray,lightgray] (3,2) circle [radius=.3];
\draw [fill=lightgray,lightgray] (-1.5,3.5) circle [radius=.3];
\draw [fill=lightgray,lightgray] (1.5,4) circle [radius=.3];
\draw [fill=lightgray,lightgray] (0,6) circle [radius=.3];

\end{scope}

\end{tikzpicture}
\caption{A bipartite graph $H$ and its hypergraphical polymatroid $P_H$ (which is the same polymatroid as the one in Example \ref{ex:polymatroid}).}
\label{fig:hypergraph}
\end{figure}
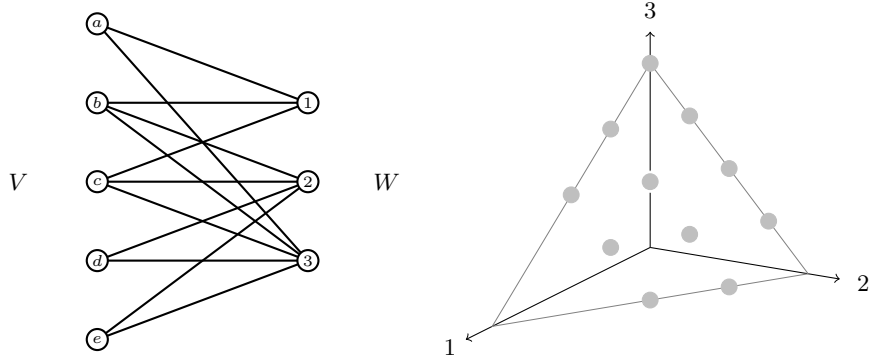

Let $H$ be the bipartite graph shown in Figure~\ref{fig:hypergraph} whose
left and right vertex sets are $V=\{a,b,c,d,e\}$ and $W=\{1,2,3\}$ respectively.
The hypergraph $\vec H$ has vertex set $V$, hyperedge set $W$ and is given by $\vec H(1)=\{a,b,c\}$, $\vec H(2)=\{b,c,d,e\}$ and $\vec H(3)=\{a,b,c,d,e\}$.
The transpose hypergraph $\vec H^\mir$ has the vertex set $W$, hyperedge set $V$ and is given by $\vec H^\mir(a)=\{1,3\}$, $\vec H^\mir(b)=\vec H^\mir(c)=\{1,2,3\}$ and $\vec H^\mir(d)=\vec H^\mir(e)=\{2,3\}$.
\end{example}

In the next subsections we will define the \emph{hypergraphical polymatroid} $P_H\subset \Z^W$ associated to the bipartite graph $H= (V\sqcup W, E)$.
There are two different (but equivalent) constructions of $P_H$: one in terms of the hypergraph $\vec H$ (via spanning hypertrees) and one in terms of the transpose hypergraph $\vec H^\mir$ (via Minkowski sums of simplices).

\subsection{Hypergraphical polymatroid in terms of hypertrees}
\label{sec:PH-hypertrees}

We generalize the notion of spanning trees from graphs to hypergraphs as follows.

\begin{definition}\cite[Section~12]{Pos}, \cite{Kal, KP}.
Let $H = (V\sqcup W, E)$ be a connected bipartite graph. 
A \emph{spanning hypertree} of the hypergraph $\vec H$ is 
a non-negative integer vector $\a\in\Z^W_{\geq 0}$ for which there exists a usual spanning
tree $T$ of the bipartite graph $H$ such that $a_w = \deg_T(w)-1$ for every right vertex $w\in W$.
Here $\deg_T(w)$ denotes the degree of the vertex $w$ in the tree $T$.
\end{definition}

\begin{remark}\label{rk:hypergraph-fromG}
Let $G=(V,E)$ be a graph. Let $\vec H(G)$ be the corresponding hyper\-graph, which is to say, $\vec H(G)$ is the hypergraph associated to the bipartite graph $H(G)=(V\sqcup W,E')$ obtained from $G$ by adding a right vertex $w_e$ in the middle of each edge $e$ of $G$ (see Figure~\ref{fig:hypergraph-fromG}).
Then the spanning hypertrees of $\vec H(G)$ are in bijection with the spanning trees of $G$ (the bijection associates to a spanning tree $T\subseteq E$ of $G$ the hypertree in $\{0,1\}^W$ given by the indicator function of the set $T$).
\end{remark}

\begin{figure}[!ht]
\begin{center}\includegraphics[width=.6\linewidth]{./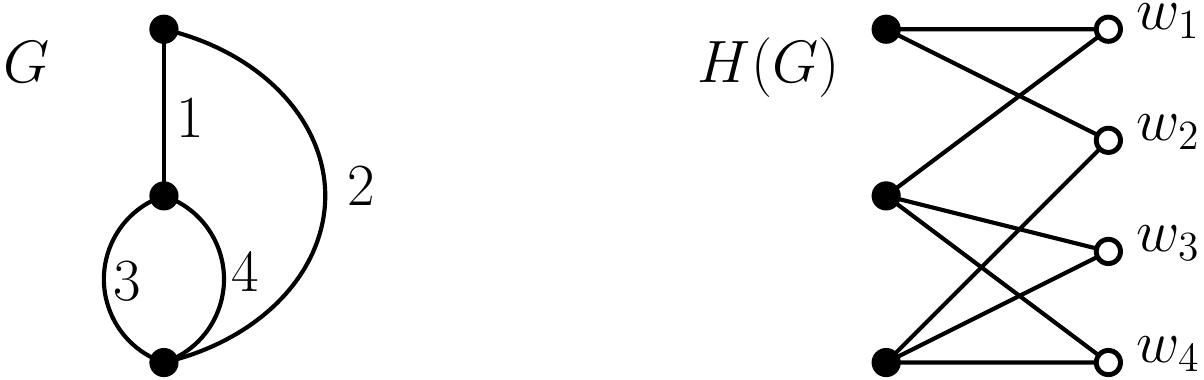}\end{center}
\caption{A graph $G$ and the associated hypergraph $\vec H(G)$. The 
latter has five spanning hypertrees: $(1,1,0,0)$, $(1,0,1,0)$, $(1,0,0,1)$, $(0,1,1,0)$ and $(0,1,0,1)$.}
\label{fig:hypergraph-fromG}
\end{figure}

The notion of spanning hypertree first appeared in~\cite{Pos} under the
name of \emph{right degree vectors} of $H$, and also as \emph{$H$-draconian
sequences}. 
The term \emph{hypertree} was used
in~\cite{Kal, KP}. 
In \cite[Proposition~5.4]{Pos} several other equivalent ways to characterize spanning hypertrees are given. One characterization is in terms of the \emph{dragon marriage problem}~\cite{Pos} --- a variant of Hall's marriage problem that involves brides, grooms
and one dragon. Another characterization from \cite{Pos}  is given below. 

\begin{proposition}~\cite{Pos}
\label{prop:hypertree_ineq}
Let $H = (V\sqcup W, E)$ be a connected bipartite graph.
The vector $\a\in\Z^W$ is a spanning hypertree of the hypergraph $\vec H$ if and only if $\a$ satisfies 
the following conditions:
\begin{compactenum}
\item[(i)] $\sum_{w\in W} a_w = |V|-1$, and
\item[(ii)] for all $S\subset W$ we have $\ds \sum_{w\in S} a_w \leq \left|\bigcup_{w\in S} \vec H(w)\right|-1$. 
\end{compactenum}
\end{proposition}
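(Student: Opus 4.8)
\textbf{Proof plan for Proposition~\ref{prop:hypertree_ineq}.}

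The plan is to prove the equivalence by relating spanning trees of the bipartite graph $H$ to the inequality description directly, using the well-known characterization of spanning trees via the matroid union / Nash-Williams type argument, or more elementarily via a direct combinatorial bijection. First I would verify the ``only if'' direction, which is the easy half: if $\a$ is a spanning hypertree, then by definition there is a spanning tree $T$ of $H$ with $a_w = \deg_T(w) - 1$ for all $w \in W$. Summing over $W$ and using that a spanning tree on the vertex set $V \sqcup W$ has $|V| + |W| - 1$ edges, each edge incident to exactly one vertex of $W$, gives $\sum_{w \in W}\deg_T(w) = |V| + |W| - 1$, hence $\sum_{w\in W} a_w = |V| - 1$, which is (i). For (ii), fix $S \subseteq W$ and let $U = \bigcup_{w\in S}\vec H(w) \subseteq V$. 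The edges of $T$ incident to $S$ all have their other endpoint in $U$, and they form a forest in the bipartite graph on $S \sqcup U$; a forest on this vertex set has at most $|S| + |U| - 1$ edges, so $\sum_{w\in S}\deg_T(w) \le |S| + |U| - 1$, giving $\sum_{w\in S} a_w \le |U| - 1$, which is (ii).

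The harder direction is ``if'': given $\a \in \Z^W$ satisfying (i) and (ii) (and implicitly $a_w \ge 0$, which follows from applying (ii) to singletons or is part of the hypothesis since $\a \in \Z^W_{\ge 0}$ is the ambient set for hypertrees), I must construct a spanning tree $T$ of $H$ with $\deg_T(w) = a_w + 1$. The natural approach is to set up a bipartite-graph / transversal argument: we want to select, for each $w \in W$, a set of $a_w + 1$ edges incident to $w$, such that the union over all $w$ forms a spanning tree of $H$. Equivalently, after picking one ``mandatory'' edge at each $w$ to guarantee connectivity of $W$ into $V$, we must pick $a_w$ further edges at each $w$ so that the total edge set spans $V \sqcup W$ and is acyclic, i.e.\ has exactly $|V| + |W| - 1$ edges forming a connected subgraph. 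The key is to invoke a Hall-type / matroid-theoretic criterion: consider the graphic matroid of $H$ and ask when a degree-constrained spanning tree exists; the condition (ii) is precisely the ``deficiency'' condition that Rado's theorem (or the matroid union theorem of Nash-Williams) converts into existence. Concretely, I would set up the problem as finding a common independent set of size $|V| + |W| - 1$ in the intersection of the graphic matroid of $H$ and the partition matroid on $E$ that caps the number of edges at each $w \in W$ by $a_w + 1$; the Matroid Intersection Theorem then reduces existence to a rank inequality, which after unwinding is exactly conditions (i) and (ii). Alternatively — and this is likely how the original source \cite{Pos} proceeds — one cites the equivalence with the ``dragon marriage'' formulation: add a single extra ``dragon'' vertex that may marry any groom, give each $w$ a quota of $a_w + 1$, and observe that (ii) is the dragon-marriage Hall condition, whose solvability is equivalent to the existence of the desired spanning tree.

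The main obstacle I anticipate is the ``if'' direction's bookkeeping: translating conditions (i)--(ii) into the precise Hall/Rado deficiency inequality and checking that a feasible selection of edges is automatically a tree (acyclicity plus spanning, given that the count is exactly $|V| + |W| - 1$ and the graph is connected). One must be careful that connectivity of $H$ is used — condition (ii) alone without connectivity of $H$ would not suffice — and that the inequality (ii) applied to all of $W$ combined with (i) forces $\left|\bigcup_{w\in W}\vec H(w)\right| = |V|$, i.e.\ the hyperedges cover $V$. I would handle this by induction on $|W|$ or by a direct augmenting-path argument: start from any spanning tree $T_0$ of $H$ (which exists since $H$ is connected), and perform edge swaps that move $\deg_{T}(w)$ toward $a_w + 1$ one unit at a time, using (ii) to guarantee at each step that a swap not creating a cycle is available. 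Since the total target degrees sum correctly by (i), this process terminates at the desired tree. Finally, I would note the symmetric remark that the same proposition applied to $\vec H^\mir$ gives the dual description, consistent with the $S_n$-type symmetry exploited elsewhere in the paper; but that is a corollary, not part of this proof.
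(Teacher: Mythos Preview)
The paper does not actually supply a proof of this proposition: it is stated with the citation~\cite{Pos} and no argument is given in the present paper (the text immediately moves on to Remarks~\ref{rmk:complete_dragon} and~\ref{rk:Pos-to-Kal}). So there is no ``paper's own proof'' to compare against; the result is imported from~\cite{Pos}, where it appears as one of several equivalent characterizations of draconian sequences / right degree vectors (cf.\ \cite[Proposition~5.4]{Pos}), the key one being the \emph{dragon marriage} condition.

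That said, your plan is sound and in fact aligns with the argument in~\cite{Pos}. Your ``only if'' direction is complete and correct as written. For the ``if'' direction, both routes you sketch are viable: the matroid-intersection formulation (graphic matroid of $H$ intersected with the partition matroid bounding $\deg(w)\le a_w+1$) unwinds to exactly conditions (i)--(ii), and the dragon-marriage route is literally what~\cite{Pos} does. One small correction: non-negativity $a_w\ge 0$ does not follow from applying (ii) to the singleton $\{w\}$ (that gives an \emph{upper} bound $a_w\le |\vec H(w)|-1$); rather it follows from combining (i) with (ii) applied to $S=W\setminus\{w\}$, which yields $a_w\ge |V|-\big|\bigcup_{w'\ne w}\vec H(w')\big|\ge 0$. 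Your edge-swap / augmenting-path alternative would also work but is the most laborious of the three to make precise; if you want a self-contained write-up, the matroid-intersection route is cleanest.
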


\begin{remark}
\label{rmk:complete_dragon}
We point out that (by Proposition~\ref{prop:hypertree_ineq}) the \emph{$n$-draconian sequences} of Definition~\ref{def:dragon-poly} coincide with the spanning hypertrees of the \emph{complete hypergraph} $\vec H_n$ having vertex set $[n]$ and one hyperedge for each non-empty subset of $[n]$. 
\end{remark}

\begin{remark}\label{rk:Pos-to-Kal}
It is easy to see that the right-hand side in Condition (ii) of Proposition \ref{prop:hypertree_ineq} can equivalently be replaced with 
\begin{equation}\label{eq:rank-PH}
f(S):=\left|\bigcup_{w\in S} \vec H(w)\right|-\widetilde{c}(S),
\end{equation}
where $\widetilde{c}(S)$ is the number of connected components of the subgraph of $H$ induced by $S\uplus \bigcup_{w\in S} \vec H(w)$. 
Moreover, it is not hard to check that $f\colon2^W\to \Z$ is submodular \cite[Proposition~4.7]{Kal}. 
Thus the set of spanning hypertrees of the hypergraph $\vec H$ is a polymatroid with rank function $f$. 

\OB{I added the following:}
Moreover it is clear that 
\begin{equation}\label{eq:rank-PH2}
f(S)=|V|-c(S),
\end{equation}
where $c(S)$ is the number of connected components of the subgraph of $H$ induced by $S\uplus V$ (because the vertices in $V\setminus \bigcup_{w\in S} \vec H(w)$ are isolated in this induced subgraph). The expression \eqref{eq:rank-PH2} of the rank function $f$ of $P_H$ coincides with the definition given in \cite[Section~1.7]{Hel}.
\end{remark}

\begin{definition}\cite{Pos,Kal,Hel} \label{def:PH-hypertrees}
Let $H = (V\sqcup W, E)$ be a connected bipartite graph.
We define the \emph{hypergraphical polymatroid} $P_H\subset\Z^W$ as the set of all spanning hypertrees of the hypergraph $\vec H$. \OB{I added the following:}
It has rank function given by \eqref{eq:rank-PH}, or equivalently \eqref{eq:rank-PH2}.
\end{definition}


Note that, for notational convenience, our hypergraphical polymatroids are defined to be subsets of $\Z^W$ instead of $\Z^n$ for $n=|W|$. From now on we adopt this relaxed definition of polymatroids. 

\begin{remark} Observe from Remark \ref{rk:hypergraph-fromG} that for the hypergraph $\vec H(G)$ corresponding to a graph $G$, the  polymatroid $P_{H(G)}$ is the base polytope of $G$ (which is the polymatroid associated with the graphical matroid of $G$). 
\end{remark}

\subsection{Hypergraphical polymatroid as a Minkowski sum of simplices}\label{sec:hypergraph-as-Minkowski}

Recall that, for two subsets $A,B\subset\R^W$, their 
\emph{Minkowski sum} is $A+B:=\{\,\a+\b\mid \a\in A,\, \b\in B\,\}$
and their \emph{Minkowski difference} is $A-B:=\{\,\c\in\R^W \mid \{\c\}+B \subseteq A\,\}$.

\begin{definition}
\cite[Definition~11.2]{Pos}
Define the polytope $\mP_H\subset \R^W$ in terms of the hyperedges $\vec H^\mir(v)$ of the transpose hypergraph $\vec H^\mir$ as the following Minkowski sum and difference of polytopes:
$$
\mP_H:= \left(\sum_{v\in V} \Delta_{\vec H^\mir(v)}\right) - \Delta_W \subset \R^W,
$$
where the polytopes $\Delta_I=\conv(\e_i\mid i\in I)\subset \R^W$ are the coordinate simplices associated with subsets $I\subseteq W$.
\end{definition}

The two constructions above are actually equivalent, due to~\cite[Theorem~12.9]{Pos} (which is a consequence of Proposition~\ref{prop:hypertree_ineq}).

\begin{proposition}~\cite{Pos}
Let $H=(V\sqcup W, E)$ be a connected bipartite graph.
Then the set $P_H \subset\Z^W$ of spanning hypertrees of
the hypergraph $\vec H$ is a polymatroid,
and the polytope $\mP_H\subset\R^W$ defined as a Minkowski sum 
over the hyperedges of the transpose hypergraph $\vec H^\mir$ is the generalized permutohedron
associated with 
$P_H$.

In other words, $P_H$ 
is the set of lattice points of $\mP_H$, and $\mP_H = \conv(P_H)$.
\end{proposition}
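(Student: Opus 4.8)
The plan is to identify $P_H$ with the polymatroid $P_f$ of an explicit submodular function $f$, and then to show that the Minkowski-sum-and-difference polytope $\mP_H$ coincides with $\mP_f=\conv(P_f)$; the two halves together yield both assertions of the proposition.

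First I would recall from Remark~\ref{rk:Pos-to-Kal} that the function $f\colon2^W\to\Z$ given by $f(S)=|\bigcup_{w\in S}\vec H(w)|-\widetilde c(S)$, with $\widetilde c(S)$ the number of connected components of the subgraph of $H$ induced on $S\uplus\bigcup_{w\in S}\vec H(w)$, is submodular and that $P_H=P_f$. Concretely, the equality $P_H=P_f$ is what Proposition~\ref{prop:hypertree_ineq} amounts to: the bound $|\bigcup_{w\in S}\vec H(w)|-1$ appearing there equals $f(S)$ when $S$ induces a connected subgraph, and when $S$ has components $S_1,\dots,S_k$ the vertex sets $\bigcup_{w\in S_j}\vec H(w)$ are pairwise disjoint, so $f(S)=\sum_j f(S_j)$ and the bound for $S$ is merely the sum of the (sharper) bounds for the $S_j$; moreover condition (i) reads $\sum_{w\in W}a_w=|V|-1=f(W)$ by connectedness of $H$. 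Hence $P_H$ is a polymatroid, and by the dictionary of Section~\ref{sec:permutohedra} we get $\conv(P_H)=\mP_f$ and $P_H=\mP_f\cap\Z^W$.

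Next I would prove $\mP_H=\mP_f$. Each $\Delta_{\vec H^\mir(v)}\subset\R^W$ is a generalized permutohedron, so by Remark~\ref{rk:polymat-Minkowski} the Minkowski sum $\sum_{v\in V}\Delta_{\vec H^\mir(v)}$ is the generalized permutohedron $\mP_g$ whose rank function is the sum of those of the $\Delta_{\vec H^\mir(v)}$, namely $g(S)=\sum_{v\in V}[\vec H^\mir(v)\cap S\neq\varnothing]=|\bigcup_{w\in S}\vec H(w)|$ (using $w\in\vec H^\mir(v)\iff v\in\vec H(w)$). Forming the Minkowski difference with $\Delta_W$ through support functions --- for $\varnothing\neq S\subseteq W$, the maximum of $\sum_{w\in S}x_w$ is $g(S)$ over $\mP_g$ and $1$ over $\Delta_W$ --- gives
\[\mP_H=\Bigl\{\c\in\R^W:\ \sum_{w\in W}c_w=g(W)-1,\quad \sum_{w\in S}c_w\le g(S)-1\ \text{ for all }\varnothing\neq S\subsetneq W\Bigr\}.\]
Finally I would check this polytope equals $\mP_f$: since $f(S)=g(S)-\widetilde c(S)\le g(S)-1$ and $f(W)=g(W)-1$, every point of $\mP_f$ satisfies the displayed constraints; conversely, if $\c$ satisfies them, then $\sum_{w\in S}c_w\le g(S)-1=f(S)$ for connected $S$, and for general $S$ with components $S_1,\dots,S_k$ one has $\sum_{w\in S}c_w=\sum_j\sum_{w\in S_j}c_w\le\sum_j(g(S_j)-1)=g(S)-k=f(S)$, so $\c\in\mP_f$. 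Together with the previous paragraph this gives $\mP_H=\mP_f=\conv(P_H)$ and $P_H=\mP_H\cap\Z^W$.

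The main obstacle is the last step: since the function $S\mapsto g(S)-[S\neq\varnothing]$ is not submodular, $\mP_g-\Delta_W$ is not literally the generalized permutohedron of that function, and the crux is to see that the inequalities coming from disconnected $S$ are redundant --- precisely the reformulation in Remark~\ref{rk:Pos-to-Kal}, and the one place where connectedness of $H$ and the correction term $\widetilde c$ genuinely enter. I would also dispose of the degenerate case $|V|\le 1$ (where all the polytopes collapse to a point) separately, at the outset. As an alternative to the second half, one may simply invoke \cite[Theorem~12.9]{Pos}, of which the present statement is essentially a restatement, but the computation above keeps the argument self-contained.
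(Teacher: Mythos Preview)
The paper does not give its own proof of this proposition: it is cited from \cite{Pos} (specifically \cite[Theorem~12.9]{Pos}), with only the parenthetical remark that it ``is a consequence of Proposition~\ref{prop:hypertree_ineq}.'' Your argument is correct and fills in precisely the details the paper leaves to that citation: you use the inequality characterization of hypertrees (Proposition~\ref{prop:hypertree_ineq}, rephrased as $P_H=P_f$ in Remark~\ref{rk:Pos-to-Kal}) for the first half, and a direct computation of the halfspace description of the Minkowski difference $\mP_g-\Delta_W$ for the second. Your observation that the constraints $\sum_{w\in S}c_w\le g(S)-1$ for disconnected $S$ are redundant --- since they follow by summing the constraints for the connected components $S_j$, using that the neighbor sets $\bigcup_{w\in S_j}\vec H(w)$ are disjoint --- is exactly the content of the replacement of $g(S)-1$ by $f(S)=g(S)-\widetilde c(S)$ in Remark~\ref{rk:Pos-to-Kal}, and is the key step. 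One small quibble: the phrase ``through support functions'' is a bit loose, since $h_{A-B}\le h_A-h_B$ need not be an equality in general; but your actual derivation (unpacking $\c+\e_w\in\mP_g$ for every $w$) is the correct one and does not rely on that.
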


\begin{example}
For the  bipartite graph $H$ of Example~\ref{ex:hypergraph}, the polymatroid $P_H\subset\Z^3$ has $11$ elements as shown in Figure~\ref{fig:hypergraph}. It coincides with the polymatroid of Example~\ref{ex:polymatroid} and it can be described either as the $11$ spanning hypertrees of the right hypergraph, or, equivalently, as the $11$ lattice points of
the Minkowski sum of simplices
$$
\mP_H = \Delta_{13} + 2 \Delta_{23} + (2-1)\Delta_{123}.
$$
\end{example}


Suppose that the bipartite graph $H=(V\sqcup W, E)$ has a left vertex $v_0\in V$ adjacent to every right vertex. Then the polytope $\mP_H$ can be expressed as
a Minkowski sum of simplices (without using a Minkowski difference): 
$$
\mP_H:= 
\sum_{v\in V,\, v\ne v_0} \Delta_{\vec H^\mir(v)}.
$$
This follows from the identity $(A+B)-B=A$ for Minkowski sums and differences.
(A little care is needed when using Minkowski differences, 
because it is not true in general that $(A-B)+B = A$.) 
The Tutte polynomial of this type of polymatroid will be studied in Section~\ref{sec:Tutte-hypergraph}.
As we now explain, graphical zonotopes (studied in Section~\ref{sec:zonotopes}) are an even more special case.
 
\OB{Added the following.}
\begin{remark}\label{rk:zonotope-is-Hmirror}
Let $G=(W,E)$ be a graph and let $H_G^+=(V^+\sqcup W,E')$ be the bipartite graph obtained from $G$ by adding a left vertex $v_e\in V$ in the middle of each edge $e\in E$ and adding an additional left vertex $v_0$ adjacent to every right vertex. This is represented in Figure \ref{fig:zonotope}.  Then, $\mP_{H_G^+}$ is exactly the \emph{graphical zonotope} $\mZ(G)$ of the graph $G$:
$$ \mP_{H_G^+} = \mZ(G) = \sum_{\{i,j\} \text{  edge of } G} [\e_i, \e_j].$$
\end{remark}

\begin{figure}[!ht]
\begin{center}\includegraphics[width=.9\linewidth]{./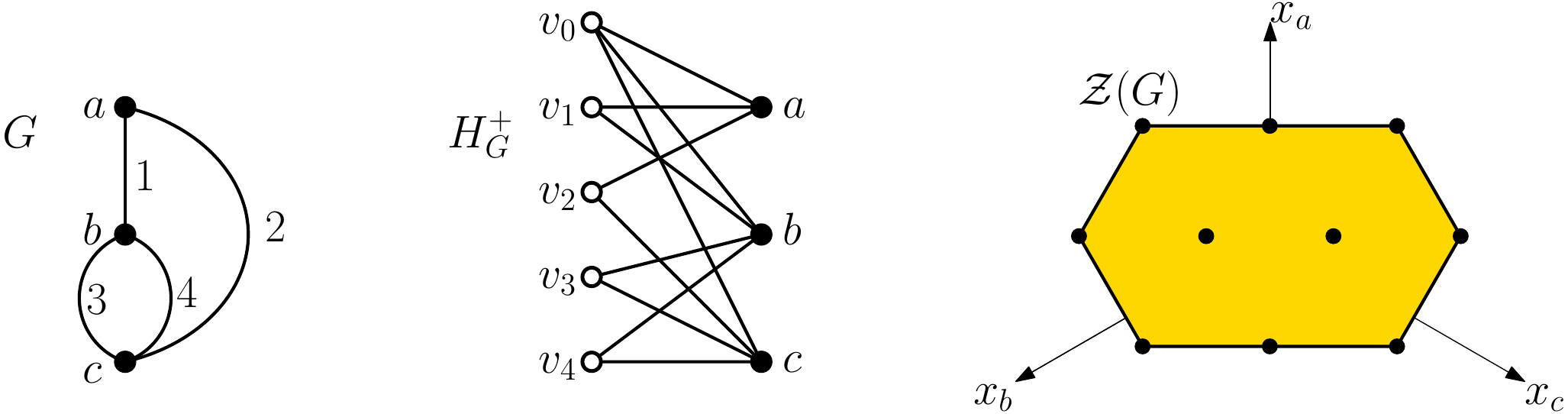}\end{center}
\caption{Left: a graph $G$. Middle: the bipartite graph $H_G^+$. Right: the zonotope $\mZ(G)=P_{H_G^+}=[\e_a,\e_b]+[\e_a,\e_c]+2[\e_b,\e_c]$.}
\label{fig:zonotope}
\end{figure}

\subsection{Direct and aggregated sums of hypergraphs}

The notion of hypergraphical polymatroid can be extended to \emph{non-connected} bipartite graphs $H = (V\sqcup W, E)$.
For this, in the definition of $P_H$ in terms of spanning hypertrees, one just needs to replace the notion of spanning tree used in Definition~\ref{def:PH-hypertrees} by the notion of maximal spanning forest of $H$. Equivalently, one can consider the connected components $H_1$,\ldots, $H_k$ of $H$ and define $P_H$ as the direct sum of polymatroids $P_{H_1}\oplus \cdots \oplus P_{H_k}$. Note that by Proposition~\ref{prop:TP_properties}(e), this gives
$$\T_{P_H}(x,y)=\T_{P_{H_1}}(x,y)\times \cdots \times \T_{P_{H_k}}(x,y).$$

We now describe a construction on hypergraphs corresponding to the \emph{aggregated sums} of polymatroids (as defined just before Proposition~\ref{prop:TP_properties}). Let $H_1=(V_1\sqcup W_1,E_1)$ and $H_2=(V_2\sqcup W_2,E_2)$ be (disjoint) bipartite graphs. Let us form $H=(V\sqcup W,E)$ by choosing arbitrary edges $e_1=(v_1,w_1)\in E_1$ and $e_2=(v_2,w_2)\in E_2$, and identifying $v_1$ and $v_2$ to a single vertex $v\in V$, also $w_1$ and $w_2$ to a single vertex $w\in W$, as well as identifying $e_1$ and $e_2$ to a single edge $e\in E$. It is not hard to see that the polymatroid $P_H$ is equal to the aggregated sum of $P_{H_1}$ and $P_{H_2}$. Precisely, for $\a\in P_{H_1}$ and $\b\in P_{H_2}$, let us define the vector $\c:=\a\boxplus\b\in \Z^W$ by $c_u=a_u$ for $u\in W_1\setminus \{w_1\}$, $c_u=b_u$ for $u\in W_2\setminus \{w_2\}$, and $c_w=a_{w_1}+b_{w_2}$. Then it is not hard to see that $\c$ is in $P_H$ and that the above gives a bijection between $P_{H_1}\boxplus P_{H_2}$ and $P_H$ (see~\cite[Theorem 6.7]{Kal} for details).

Thus by Proposition~\ref{prop:TP_properties}(e), we get 
\begin{equation}
\T_{P_H}(x,y)=\frac{\T_{P_{H_1}}(x,y)\, \T_{P_{H_2}}(x,y)}{x+y-1}.
\end{equation}

\begin{example}
Let $H$ be the bipartite graph which is the 4-cycle. Then $\vec H=\vec H(G)$ is the hypergraph associated to the graph $G$ having two vertices and two edges between them. One has $T_G(x,y)=x+y$, hence by Theorem~\ref{thm:oldandnew},  $\mT_{P_H}=(x+y-1)(x+y)$. Therefore the polymatroid Tutte polynomial of both (isomorphic) hypergraphs derived from the bipartite graph 
\begin{tikzpicture}[baseline=2pt,scale=.2]
\draw [fill] (0,0) circle [radius=.4];
\draw [fill] (2,0) circle [radius=.4];
\draw [fill] (4,0) circle [radius=.4];
\draw [fill] (0,2) circle [radius=.4];
\draw [fill] (2,2) circle [radius=.4];
\draw [fill] (4,2) circle [radius=.4];
\draw [thick] (0,0) rectangle (4,2);
\draw [thick] (2,0) -- (2,2);
\end{tikzpicture}
is $(x+y)^2(x+y-1)$.
\end{example}

\subsection{Polymatroid duality for planar hypergraphs}

We now recall the connection between duality of plane bipartite graphs 
and 
their polymatroids. 
Let $H=(V\sqcup W,E)$ be a 
connected bipartite graph properly embedded in the plane, where this time we do allow multiple edges between elements of $V$ and $W$. 
Then there is a notion of a dual bipartite graph $H^*=(V'\sqcup W,E')$, obtained as follows: 
\begin{compactitem}
\item Place a vertex in each face of $H$; this gives the vertex set $V'$.
\item For every $v'\in V'$, create an edge from $v'$ to each vertex $w$ of $W$ incident to the face of $H$ containing $v'$ (more precisely, an edge is created between $v'$ and $w$ for each incidence of $w$ with the face containing $v'$); 
this gives the edge set $E'$. 
\end{compactitem}
This construction is illustrated in Figure \ref{fig:duality}. It is easy to see that $(H^*)^*=H$ for all~$H$.

We stress again that the order of $V$ and $W$ is part of the structure of $H$, which makes $H$ equivalent to the hypergraph $\vec H$. 
 By iterating the operations of transposition and duality, a total of six hypergraphs can be generated from any connected plane bipartite graph. (This idea goes back to Cori and Penaud \cite{cp}.) They form a structure called a \emph{trinity} 
that was first studied by Tutte \cite{TTT}.

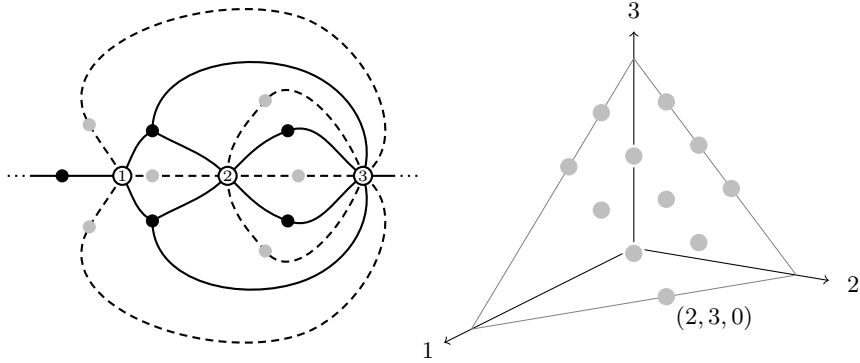
\begin{figure}[h]
\centering
\begin{tikzpicture}[scale=.20]


\draw [thick] (2,3) -- (0,3);
\draw [thick,dotted] (0,3) -- (-1.8,3);
\draw [thick] (22,3) -- (24,3);
\draw [thick,dotted] (24,3) -- (25.8,3);
\draw [thick] (2,3) -- (6,3);
\draw [thick] (6,3) to [out=60,in=-150] (8,6);
\draw [thick] (6,3) to [out=-60,in=150] (8,0);
\draw [thick] (8,6) to [out=-30,in=135] (13,3);
\draw [thick] (8,0) to [out=30,in=-135] (13,3);
\draw [thick] (13,3) to [out=45,in=-160] (17,6);
\draw [thick] (13,3) to [out=-45,in=160] (17,0);
\draw [thick] (17,6) to [out=20,in=144] (22,3);
\draw [thick] (17,0) to [out=-20,in=-144] (22,3);
\draw [thick] (8,6) to [out=90,in=160] (18,10) to [out=-20,in=72] (22,3);
\draw [thick] (8,0) to [out=-90,in=-160] (18,-4) to [out=20,in=-72] (22,3);

\draw [thick,densely dashed] (6,3) -- (13,3);
\draw [thick,densely dashed] (13,3) -- (22,3);
\draw [thick,densely dashed] (13,3) to [out=90,in=-130] (15.5,8) to [out=50,in=108] (22,3);
\draw [thick,densely dashed] (13,3) to [out=-90,in=130] (15.5,-2) to [out=-50,in=-108] (22,3);
\draw [thick,densely dashed] (6,3) to [out=120,in=-60] (3.8,6.4) to [out=120,in=160] (18,13.5) to [out=-20,in=36] (22,3);
\draw [thick,densely dashed] (6,3) to [out=-120,in=60] (3.8,-.4) to [out=-120,in=-160] (18,-7.5) to [out=20,in=-36] (22,3);


\draw [fill] (2,3) circle [radius=.4];
\draw [fill] (8,0) circle [radius=.4];
\draw [fill] (8,6) circle [radius=.4];
\draw [fill] (17,0) circle [radius=.4];
\draw [fill] (17,6) circle [radius=.4];
\draw [thick,fill=white] (6,3) circle [radius=.6];
\draw [thick,fill=white] (13,3) circle [radius=.6];
\draw [thick,fill=white] (22,3) circle [radius=.6];
\draw [lightgray,fill=lightgray] (8,3) circle [radius=.4];
\draw [lightgray,fill=lightgray] (17.7,3) circle [radius=.4];
\draw [lightgray,fill=lightgray] (3.8,6.4) circle [radius=.4];
\draw [lightgray,fill=lightgray] (3.8,-.4) circle [radius=.4];
\draw [lightgray,fill=lightgray] (15.5,-2) circle [radius=.4];
\draw [lightgray,fill=lightgray] (15.5,8) circle [radius=.4];

\node at (6,3) {\tiny $1$};
\node at (13,3) {\tiny $2$};
\node at (22,3) {\tiny $3$};

\begin{scope}[shift={(40,0)},scale=1.8]
\draw [->] (0,-1) -- (0,7);
\draw [->] (0,-1) -- (-7,-4.5);
\draw [->] (0,-1) -- (7.2,-2.2);
\node at (-7.6,-4.8) {\small $1$};
\node at (8.1,-2.35) {\small $2$};
\node at (0,7.8) {\small $3$};
\draw [help lines] (0,6) -- (-6,-4) -- (6,-2) -- cycle;
\draw [fill=lightgray,lightgray] (1.2,-2.8) circle [radius=.3];
\draw [fill=white,white] (0,-1.2) circle [radius=.4];
\draw [fill=lightgray,lightgray] (0,-1.2) circle [radius=.3];
\draw [fill=lightgray,lightgray] (2.4,-.8) circle [radius=.3];
\draw [fill=lightgray,lightgray] (-1.2,.4) circle [radius=.3];
\draw [fill=lightgray,lightgray] (1.2,.8) circle [radius=.3];
\draw [fill=lightgray,lightgray] (3.6,1.2) circle [radius=.3];
\draw [fill=lightgray,lightgray] (-2.4,2) circle [radius=.3];
\draw [fill=white,white] (0,2.4) circle [radius=.4];
\draw [fill=lightgray,lightgray] (0,2.4) circle [radius=.3];
\draw [fill=lightgray,lightgray] (2.4,2.8) circle [radius=.3];
\draw [fill=lightgray,lightgray] (-1.2,4) circle [radius=.3];
\draw [fill=lightgray,lightgray] (1.2,4.4) circle [radius=.3];

\node [below right] at (1.2,-2.8) {\small $(2,3,0)$};
\end{scope}

\end{tikzpicture}
\caption{A pair of dual hypergraphs $\vec H$, $\vec H^*$ with hyperedge set $W=[3]$. The bipartite graph $H$, drawn solid, is isomorphic to that of Figure \ref{fig:hypergraph}. The bipartite graph $H^*$ is drawn dashed, and the associated polymatroid $P_{H^*}$ is depicted on the right.}
\label{fig:duality}
\end{figure}

It is known~\cite[Thm.\ 8.3]{Kal} that the spanning hypertrees of $\vec H$ and $\vec H^*$ are in a one-to-one correspondence. The canonical bijection associates to a spanning hypertree $\mathbf t=(t_w)_{w\in W}$ of $\vec H$ the spanning hypertree $\mathbf t^*=(\deg(w)-1-t_w)_{w\in W}$, where $\deg(w)$ denotes the degree of the vertex $w$ in $H$. This shows that the polymatroids $P_H$ and $P_{H^*}$ are dual (in the sense of Section~\ref{sec:duality}) to each other modulo a linear translation. More precisely, $P_{H^*}=-P_H+(\deg(w)-1)_{w\in W}$. Via Lemma~\ref{lem:duality}, this implies the following.

\begin{proposition}
For any pair of planar dual hypergraphs $\vec H$ and $\vec H^*$, we have \[\T_{P_H}(x,y)=\T_{P_{H^*}}(y,x).\]
\end{proposition}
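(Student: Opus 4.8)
The plan is to reduce the statement to the combination of two already-established facts: the translation-invariance of the polymatroid Tutte polynomial (Lemma~\ref{lem:shifts}) and the duality relation $\T_P(x,y)=\T_{-P}(y,x)$ (Lemma~\ref{lem:duality}). The key input is the explicit description, due to K\'alm\'an \cite[Thm.\ 8.3]{Kal} and recalled in the paragraph preceding the statement, of the canonical bijection between the spanning hypertrees of $\vec H$ and those of $\vec H^*$: it sends $\mathbf t=(t_w)_{w\in W}$ to $\mathbf t^*=(\deg(w)-1-t_w)_{w\in W}$, where $\deg(w)$ is the degree of $w$ in $H$.

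First I would record what this bijection says at the level of polymatroids. Writing $\mathbf c=(\deg(w)-1)_{w\in W}\in\Z^W$, the map $\mathbf t\mapsto\mathbf t^*$ is precisely $\mathbf t\mapsto \mathbf c-\mathbf t$, so that
$$P_{H^*}=\{\mathbf c-\mathbf t\mid \mathbf t\in P_H\}=\mathbf c+(-P_H).$$
In other words $P_{H^*}$ is a translate of the dual polymatroid $-P_H$.

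Next I would chain the two lemmas. By Lemma~\ref{lem:shifts} the translation by $\mathbf c$ does not change the polymatroid Tutte polynomial, hence $\T_{P_{H^*}}(x,y)=\T_{-P_H}(x,y)$; and by Lemma~\ref{lem:duality}, $\T_{-P_H}(x,y)=\T_{P_H}(y,x)$. Combining, $\T_{P_{H^*}}(x,y)=\T_{P_H}(y,x)$, and swapping the roles of $x$ and $y$ gives $\T_{P_H}(x,y)=\T_{P_{H^*}}(y,x)$, as claimed.

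Since every ingredient is already available, there is essentially no genuine obstacle here; the only point requiring care is the verification that the stated hypertree bijection really has the affine form $\mathbf t\mapsto\mathbf c-\mathbf t$ — i.e.\ that $P_{H^*}$ is literally $-P_H$ shifted by the degree-minus-one vector — which is immediate from the formula for $\mathbf t^*$ but relies on the correctness of the cited result of \cite{Kal}. (One should also note that, because $\vec H$ and $\vec H^*$ share the same hyperedge set $W$, the permutation-ordering issue does not arise: no use of Theorem~\ref{th:order_independence} is needed.)
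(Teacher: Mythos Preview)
Your proposal is correct and follows essentially the same argument as the paper: the paper also invokes the bijection from \cite[Thm.~8.3]{Kal} to obtain $P_{H^*}=-P_H+(\deg(w)-1)_{w\in W}$, and then deduces the result from translation invariance (Lemma~\ref{lem:shifts}) together with the duality relation of Lemma~\ref{lem:duality}.
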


\begin{example}
\label{ex:duality}
Figure~\ref{fig:duality} shows two dual hypergraphs $\vec H$, $\vec H^*$ (with $H$ as in Figure \ref{fig:hypergraph}). The reader can check that the polymatroid $P_{H^*}$ is the one represented at the right of Figure\ref{fig:duality}, and that   $P_{H^*}=-P_H+(2,3,4)$.
\end{example}

\section{Tutte polynomials of hypergraphs}
\label{sec:Tutte-hypergraph}

In this section we give a formula for the 
Tutte polynomial of a large family of hypergraphs. More precisely, given a simple bipartite graph $H=(V\sqcup W,E)$, we give a formula for the Tutte polynomial $\T_{P_{H^+}}(x,y)$, where $H^+=(V^+\sqcup W,E^+)$ is the bipartite graph obtained from $H$ by adding a 
vertex $v^+$ to $V$ that is adjacent to every vertex in $W$. 


We first need some notation and definitions.
For $v\in V$ and $F\subseteq E$, we let
$$N_F(v)=\{\,w\in W\mid\{v,w\}\in F\,\},$$
and we also let $N(v)=N_E(v)$ (which is also the set we denoted by $\vec H^\mir(v)$ earlier). 

\begin{definition}\label{def:draconian-set}
We say that a subset of edges $F\subseteq E$ is a \emph{draconian set} if there is no vertex $v\in V$ such that $N_F(v)=W$, and for any collection $\{I_1,\ldots,I_k\}$ of non-empty subsets of $W$, 
\begin{equation}\label{eq:draconian-set}
\#\{\,v\in V\mid N_F(v)\in \{I_1,\ldots,I_k\}\,\}<|I_1\cup \cdots \cup I_k|.
\end{equation}
\end{definition}
\TK{Can we add an example and/or some motivation of these definitions?} \OB{The motivation is simply to write the theorem. I am not sure we need an example.} \TK{The ``draconian'' terminology is certainly out of the blue...} \OB{OK:I added the following remark.}
\begin{remark} The term ``draconian'' in Definition \ref{def:draconian-set}, comes from the relation with Definition \ref{def:dragon-poly} of $n$-draconian sequences. 
Indeed, consider a subset of edges $F\subseteq E$  such that  there is no vertex $v\in V$ with $N_F(v)=W$. We associate to $F$ the function $g_F: 2^W\setminus \{\varnothing\}\to \Z^{\geq 0}$ defined by 
 $$\forall \varnothing \neq I\subsetneq W,~ g_F(I)=\#\{v\in V~|~N_F(v)=I\},$$
and $g_F(W)=|W|-1-\sum_{ \varnothing \neq I\subsetneq W} g_F(I)$. 
Then, $F$ is a draconian set if and only if $g_F$ is an $n$-draconian sequence (when identifying $W$ with $[n]$).  
\end{remark}

Lastly, for $v\in V$, and an ordered set partition $B=(B_1,\ldots,B_\ell)$ of $W$, we denote by $B(v)\subseteq W$ the block $B_m$, where $m=\max\{\,k\in [\ell]\mid N(v)\cap B_k\neq\varnothing\,\}$.

\begin{theorem} \label{thm:tutte-hypergraphs}
Let $H=(V\sqcup W,E)$ be a simple bipartite graph without isolated vertex in $V$. Let $k\geq 0$ be the number of vertices in $V$ that are adjacent to every vertex of $W$. 
Let $\preceq$ be a total order on $W$. Then, with the above notation,
$$\frac{\T_{P_{H^+}}(x,y)}{(x+y-1)}=\sum_{\substack{\text{draconian}\\ \text{sets }F\subseteq E}}\,\sum_{\substack{B\in\mB_W\\ F\text{-compatible}}}\!\!{k\one_{\ell(B)=1}-1\choose |W|-|V(F)|-1}(-1)^{\ell(B)-1}x^{\lr(B)-1}y^{\rl(B)-1},$$
where 
\begin{compactitem}
\item $V(F)=\{\,v\in V\mid N_F(v)\neq \varnothing\,\}$;  
\item $\mB_W$ is the set of ordered set partitions of $W$;
\item $B\in\mB_W$ is \emph{$F$-compatible} if for all $v\in V(F)$, we have $N_F(v)=N(v)\cap B(v)$;
\item for $B\in\mB_W$, the value $\ell(B)$ is the number of blocks of $B$, and $\lr(B)$ (resp., $\rl(B)$) is the number of left-to-right (resp., right-to-left) minima of $B$ when elements of $W$ are compared using the fixed linear order $\preceq$. (The definition of these minima was given in Section~\ref{sec:formula_universal_Tutte}.)
\end{compactitem}
\end{theorem}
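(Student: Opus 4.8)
The plan is to reduce the statement to the already-established formula for the universal Tutte polynomial (Theorem~\ref{thm:formulaTn}), via the description of $\mP_{H^+}$ as a Minkowski sum of simplices. First I would record that, by Remark~\ref{rk:zonotope-is-Hmirror}-type reasoning (the paragraph on bipartite graphs with a universal left vertex), the presence of $v^+$ adjacent to all of $W$ gives
$$\mP_{H^+}=\sum_{v\in V}\Delta_{N(v)}\subset\R^W,$$
so $P_{H^+}=Q_{\m}$ for the tuple $\m=(m_I)_{\varnothing\ne I\subseteq W}$ with $m_I=\#\{\,v\in V\mid N(v)=I\,\}$; note $m_W=k$. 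Hence Remark~\ref{cor:formulaTn} applies and gives
$$\frac{\T_{P_{H^+}}(x,y)}{x+y-1}=\sum_{B\in\mB_W}(-1)^{\ell(B)-1}\,D_{|W|}\big((m^B_I)\big)\,x^{\lr(B)-1}\,y^{\rl(B)-1},$$
where $m^B_I=\sum_{J\subseteq\bigcup_{i<k}B_i}m_{I\cup J}$ if $I\subseteq B_k$ and $m^B_I=0$ otherwise (here $[n]$ is identified with $W$ via $\preceq$, so that left-to-right and right-to-left minima are taken with respect to $\preceq$).

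The core of the argument is then to expand the dragon polynomial $D_{|W|}((m^B_I))$ combinatorially. By Definition~\ref{def:dragon-poly},
$$D_{|W|}((m^B_I))=\sum_{g\ \text{draconian}}\binom{m^B_W-1}{g(W)}\prod_{\varnothing\ne I\subsetneq W}\binom{m^B_I}{g(I)}.$$
I would interpret each binomial coefficient $\binom{m^B_I}{g(I)}$ (for $I\subsetneq W$) as the number of ways to choose a $g(I)$-element subset of the set of vertices $v$ contributing to $m^B_I$, i.e.\ those $v$ with $N(v)\cap B(v)$ (or the appropriate ``relativized neighborhood'') equal to $I$; the chosen vertices across all $I$ assemble into a subset $F\subseteq E$ by taking $N_F(v)=N(v)\cap B(v)$ for selected $v$ and $N_F(v)=\varnothing$ otherwise. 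The draconian condition on $g$ translates exactly (via Proposition~\ref{prop:hypertree_ineq}/Remark~\ref{rmk:complete_dragon} and the remark following Definition~\ref{def:draconian-set}) into $F$ being a draconian set in the sense of Definition~\ref{def:draconian-set}; the condition $N_F(v)=N(v)\cap B(v)$ for every selected vertex is precisely $F$-compatibility of $B$. The leftover binomial factor $\binom{m^B_W-1}{g(W)}$, after substituting $g(W)=|W|-1-\sum_{I\subsetneq W}g(I)=|W|-1-|V(F)|$ and $m^B_W=k\,\one_{\ell(B)=1}$ (since $m_W=k$ appears in $m^B_W$ only when $B$ has a single block — in which case $B(v)=W$ for all $v$ and no $J$ is subtracted — and is otherwise killed because $W$ meets an earlier block), becomes exactly $\binom{k\one_{\ell(B)=1}-1}{|W|-|V(F)|-1}$. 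Summing over the choices of $F$ and $F$-compatible $B$ yields the stated formula.

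The main obstacle is the bookkeeping in the previous step: matching the relativized neighborhoods $N(v)\cap B(v)$ with the blocks indexing $m^B_I$, and checking that a draconian function $g$ with $g(I)=\#\{v\in V(F)\mid N(v)\cap B(v)=I\}$ corresponds to a genuine draconian set $F$ for the full graph $H$ (not merely for the ``contracted'' structure seen by $B$). Concretely one must verify that $N_F(v)=N(v)\cap B(v)$ for selected $v$ determines $F$ uniquely, that distinct $(F,g)$ pairs give distinct terms, and that the inequality~\eqref{eq:draconian-set} for $F$ is equivalent to the draconian inequalities for $g$ after accounting for the block structure of $B$ — this last equivalence is where the condition ``no $v$ with $N_F(v)=W$'' and the $F$-compatibility interact, and it needs the observation that if $\ell(B)>1$ then automatically $N(v)\cap B(v)\subsetneq W$ for every $v$, consistent with $m^B_W=0$. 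Once this dictionary is in place, the identity is a term-by-term comparison; I would also double-check the edge case $V(F)=\varnothing$ (only possible when $F=\varnothing$), which forces $g\equiv 0$ hence $|W|=1$, so that the formula degenerates correctly to $\T_{P_{H^+}}/(x+y-1)=1$.
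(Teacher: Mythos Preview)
Your proposal is correct and follows essentially the same route as the paper: apply Remark~\ref{cor:formulaTn} to $\mP_{H^+}=\sum_{v\in V}\Delta_{N(v)}$, expand $D_{|W|}((m^B_I))$ via Definition~\ref{def:dragon-poly}, use the key identity $m^B_I=\#\{v\in V\mid N(v)\cap B(v)=I\}$ to interpret the product of binomials as a choice of vertices assembling into an $F\subseteq E$, and invoke the remark after Definition~\ref{def:draconian-set} to identify the draconian condition on $g$ with that on $F$. One small correction to your closing sanity check: $V(F)=\varnothing$ does not force $|W|=1$, since the draconian function with $g(W)=|W|-1$ and $g(I)=0$ for $I\subsetneq W$ is valid for every $|W|$; the term $F=\varnothing$ genuinely contributes for all $B$, with binomial $\binom{k\one_{\ell(B)=1}-1}{|W|-1}$.
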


Note that the binomial coefficient in Theorem~\ref{thm:tutte-hypergraphs} is equal to $(-1)^{|W|-|V(F)|-1}$ except for the partition $B^\circ=(W)$ with a single block. Observe all that the total contribution of the partition $B^\circ$ is $-\T_{P_{H^+}}(0,0)$, which (according to Proposition~\ref{prop:TP_properties}(g)) is equal to the number of interior lattice points of $\mP_{H^+}$. 

\begin{remark}
In Theorem~\ref{thm:tutte-hypergraphs} we do not assume that $H$ is connected. Indeed $H^+$ is always connected since $H$ has no isolated vertices in $V$.
As explained in Section~\ref{sec:hypergraph-as-Minkowski}, we have
$$\mP_{H^+}=\sum_{\varnothing \neq I\subseteq W}m_I\Delta_I,$$
where $m_I=\#\{\,v\in V\mid N(v)=I\,\}$ and $\Delta_I=\conv(\e_i\mid i\in W)$. Hence Theorem~\ref{thm:tutte-hypergraphs} can equivalently be stated as a result for polymatroids which are Minkowski sums of simplices.
\end{remark}

\OB{A question for us: I don't know of a similar formula for $\T_{P_H}(x,y)$ when there is no vertex $v\in V$ adjacent to all $W$. In particular I cannot use Theorem~\ref{thm:tutte-hypergraphs} to give a new formula for the Tutte polynomial of a graph. It would be interesting however to see what it gives for $\T_{P_{H^+}}(x,y)$ when $H$ corresponds to a graph (i.e. every right vertex $w\in W$ has degree 2). The case of zonotopes in the next section corresponds to computing $\T_{P_{H^+}}(x,y)$ when $H^\mir$ corresponds to a graph (i.e. every left vertex $v\in V$ has degree 2).}

\begin{proof}
Let us identify the ordered set $(W,\preceq)$ with $[n]$, where $n=|W|$.
As $\mP_{H^+}=\sum_{I\subseteq [n]}m_I\Delta_I$, where $m_I=\#\{\,v\in V\mid N(v)=I\,\}$, 
Theorem~\ref{thm:formulaTn} (together with Remark~\ref{cor:formulaTn}) gives 
\begin{eqnarray}
\frac{\T_{P_{H^+}}(x,y)}{x+y-1}&=&\sum_{B\in \mB_n} D_n((m_I^B))\,(-1)^{\ell(B)-1}\,x^{\lr(B)-1}\,y^{\rl(B)-1}\nonumber \\
&=& \sum_{B\in \mB_n}~\sum_{g\,:\,n\text{-draconian}} {m_{[n]}^B-1\choose g([n])}\prod_{\varnothing \neq I\subsetneq [n]}{m_I^B \choose g(I)}\, \weight(B),\label{eq:Tn2}
\end{eqnarray}
where $\m=(m_I^B)$ is given by~\eqref{eq:mB} and $\weight(B)=(-1)^{\ell(B)-1}\,x^{\lr(B)-1}\,y^{\rl(B)-1}$. 
Observe that for all $\varnothing \neq I\subseteq [n]$
$$m_I^B=\#\{\,v\in V\mid N(v)\cap B(v)=I\,\}.$$
We now interpret the product of binomial coefficients in \eqref{eq:Tn2} combinatorially. 
For $B\in \mB_n$, we denote by $\Omega(B)$ the set of tuples $(V_I)_{\varnothing \neq I\subsetneq [n]}$ such that $V_I\subseteq\{\,v\in V\mid N(v)\cap B(v)=I\,\}$.
For an $n$-draconian function $g$, we say that the tuple $(V_I)\in \Omega(B)$ is $g$\emph{-draconian} if $|V_I|=g(I)$ for all non-empty subsets $I\subsetneq[n]$.
Then the product 
$$\prod_{\varnothing \neq I\subsetneq [n]}{m_I^B \choose g(I)}$$
clearly equals the number of $g$-draconian tuples $(V_I)$ in $\Omega(B)$.
For any tuple $\V=(V_I)\in \Om(B)$, the subsets $V_I$ are necessarily all disjoint. We can therefore associate to $\V$ a subset of edges 
$$F_\V:=\biguplus_{I\subsetneq [n]}\{\,\{v,w\}\mid v\in V_I,~w\in I\,\}\subseteq E.$$
It is easy to see that $F_\V$ is a draconian set if and only if $\V$ is  \emph{draconian}, that is,  $g$-draconian for some $n$-draconian function $g$. Furthermore the mapping $\V\mapsto F_V$ is clearly a bijection between the set of draconian tuples $\V$ in $\Om(B)$, and the set of draconian sets $F\subseteq E$ such that $B$ is $F$-compatible. 
Thus, 
\begin{eqnarray*}
\frac{\T_{P_{H^+}}(x,y)}{x+y-1}&=& \sum_{B\in \mB_n}\,\sum_{g\,:\,n\text{-draconian}}\,\sum_{\substack{\V\in \Om(B)\\ g\text{-draconian} }}{m_{[n]}^B-1\choose g([n])}\,\weight(B)\\
&=& \sum_{B\in \mB_n}\,\sum_{\substack{F\subseteq E\,:\,\text{draconian set}\\ \text{such that\ }B\text{ is }F\text{-compatible}}}{m_{[n]}^B-1\choose n-|V(F)|-1}\,\weight(B),
\end{eqnarray*}
where the second equality comes from the fact that a $g$-draconian tuple $\V$ satisfies
$$g([n])=n-1-\sum_{\varnothing \neq I\subsetneq [n]}g(I)=n-|V(F_\V)|-1.$$
Reordering the sums, and observing $m_{[n]}^B=k\one_{\ell(B)=1}$, gives the theorem.
\end{proof}


\section{Tutte polynomials of graphical zonotopes}\label{sec:zonotopes}

In this section we use the formula of the universal Tutte polynomial (precisely, Theorem \ref{thm:tutte-hypergraphs}) in order to compute the polymatroid Tutte polynomials of graphical zonotopes. 
It turns out that for any graph $G$, the polymatroid Tutte polynomial of the zonotope of $G$ is a specialization of the classical Tutte polynomial of $G$.

Recall that for a loopless graph $G=(W,E)$ (with at least one edge), the zonotope of $G$ is the polytope $\mZ(G)\subset \R^W$ obtained as the following Minkowski sum of line segments:
$$\mZ(G)=\sum_{\{i,j\}\subseteq W}m_{\{i,j\}} \,[\e_i,\e_j],$$
where $m_{\{i,j\}}$ is the number of edges of $G$ with endpoints $\{i,j\}$. 
Recall from Remark~\ref{rk:zonotope-is-Hmirror} that $Z(G):=\mZ(G)\cap \Z^W$ is a hypergraphical matroid.


\begin{theorem}\label{thm:Tuttezonotope}
Let $G=(W,E)$ be a loopless graph with $k$ connected components, and at least one edge. Let $\mZ(G)$ be the zonotope associated to $G$.
The polymatroid Tutte polynomial of $Z(G):=\mZ(G)\cap \Z^W$ is given in terms of the classical Tutte polynomial of the graph $G$ by
$$\T_{Z(G)}(x,y)=(x+y-1)^{k}~T_G(x+y,1).$$
Equivalently,
$$\T_{Z(G)}(x,y)=\sum_{F\,:\,\text{spanning forest of }G}(x+y-1)^{\#\text{connected components of }F},$$
where the sum is over the subset of edges $F\subseteq E$ such that the subgraph $G_F=(W,F)$ has no cycle.
\end{theorem}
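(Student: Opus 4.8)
The plan is to apply Theorem~\ref{thm:tutte-hypergraphs} to the bipartite graph $H$ that encodes the graphical zonotope. Concretely, by Remark~\ref{rk:zonotope-is-Hmirror} we have $\mZ(G)=\mP_{H_G^+}$, where $H_G=(V\sqcup W,E')$ is the bipartite graph obtained from $G$ by subdividing each edge with a left vertex $v_e$ (so every left vertex of $H_G$ has degree exactly $2$, with $N(v_e)$ equal to the edge of $G$ it subdivides), and $H_G^+$ adds one further left vertex $v^+$ adjacent to all of $W$. So here $k_{\text{Thm}}=1$ (only $v^+$ is adjacent to all of $W$), and $V=\{v_e \mid e\in E\}$. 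First I would unwind Definition~\ref{def:draconian-set} in this special case: a draconian set $F\subseteq E'$ is a subset of these subdivision edges, which amounts to choosing, for each $v_e$, either both, one, or none of the two endpoints of $e$. Using that every $N(v_e)$ has size $2$, the draconian condition~\eqref{eq:draconian-set} says precisely that the subgraph of $G$ obtained by keeping edge $e$ if and only if $v_e$ is "fully selected" (both endpoints in $F$) has no cycle, i.e.\ is a spanning forest — and moreover that the "half-selected" vertices $v_e$ (exactly one endpoint in $F$) and the associated data are controlled. I would need to carefully check that the count $\#\{v\in V \mid N_F(v)\in\{I_1,\ldots,I_k\}\} < |I_1\cup\cdots\cup I_k|$ reduces, for a family of $2$-element sets, exactly to acyclicity plus a marginal condition on singletons; this is the analogue of the classical "$\le$ edges than vertices in every induced subgraph" characterization of forests.

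Next I would analyze, for a fixed draconian $F$, which ordered set partitions $B\in\mB_W$ are $F$-compatible. The condition is: for every $v_e\in V(F)$, $N_F(v_e)=N(v_e)\cap B(v_e)$, where $B(v_e)$ is the last block of $B$ meeting $N(v_e)=\{i,j\}$. If $v_e$ is fully selected ($N_F(v_e)=\{i,j\}$) this forces $i,j$ to lie in the same block of $B$; if $v_e$ is half-selected, say $N_F(v_e)=\{i\}$ with $j$ the other endpoint, then $i$ must lie in a strictly later block of $B$ than $j$. Combined with the draconian structure, I expect the fully-selected edges to determine a partition of $W$ into "lumps" (the components of the spanning forest $G_F$ together with isolated vertices), and then $B$ must be a refinement-compatible linear arrangement of these lumps subject to the half-selected ordering constraints. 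The key computational step is then to sum, over all such $B$ with the lump structure fixed, the quantity $(-1)^{\ell(B)-1} x^{\lr(B)-1} y^{\rl(B)-1}$ weighted by the binomial $\binom{\one_{\ell(B)=1}-1}{|W|-|V(F)|-1}$. The binomial is $(-1)^{|W|-|V(F)|-1}$ unless $\ell(B)=1$.

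I would then execute a telescoping/sign-cancellation argument: for each draconian set $F$ with more than one lump, the alternating sum over the compatible $B$ of $(-1)^{\ell(B)-1}x^{\lr(B)-1}y^{\rl(B)-1}$ should collapse. The natural device is the identity that, for a fixed set of $m$ "lumps" with no extra constraints, $\sum_{B}(-1)^{\ell(B)-1}x^{\lr(B)-1}y^{\rl(B)-1}=(x+y-1)^{?}$-type expressions do \emph{not} directly telescope; rather one wants to group the $B$'s so that adding or merging a block flips sign. The cleanest route may be to bypass the brute-force sum and instead use Proposition~\ref{prop:TP_properties}(e) together with the following structural reduction: $Z(G)=Z(G_1)\oplus\cdots\oplus Z(G_k)$ where $G_i$ are the connected components of $G$ (since the zonotope splits as a direct product over components, each line segment $[\e_i,\e_j]$ lying in one component's coordinate subspace). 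This reduces the claim to the case $k=1$, i.e.\ $G$ connected, where it suffices to prove $\T_{Z(G)}(x,y)=(x+y-1)\,T_G(x+y,1)$. For connected $G$, I would then prove the equivalent statement $\T_{Z(G)}(x,y)=\sum_{F\text{ spanning tree}}(x+y-1)=(x+y-1)\cdot(\#\text{spanning trees of }G)$ plus lower-order forest terms — wait, that is false; rather $T_G(x+y,1)=\sum_{F\text{ forest}}(x+y-1)^{c(F)-1}$ where $c(F)$ is the number of components, using the subgraph expansion $T_G(x,1)=\sum_{F\text{ forest}}(x-1)^{c(F)-k(G)}$ at $x\mapsto x+y$. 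So the target becomes $\T_{Z(G)}(x,y)=\sum_{F\text{ forest of }G}(x+y-1)^{c(F)}$, which is the "equivalently" line of the theorem; the two forms match because each component of $F$ contributes one factor $(x+y-1)$ and $T_G(x,1)$ counts forests by $(x-1)^{c(F)-k(G)}$.

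The main obstacle I anticipate is the sign-cancellation bookkeeping in Theorem~\ref{thm:tutte-hypergraphs}: showing that the sum over $F$-compatible ordered set partitions $B$ of the alternating weighted monomials produces exactly $(x+y-1)^{c(F)}$ for each spanning forest $F$ and vanishes for all non-forest draconian configurations (or more precisely, that the draconian sets $F$ in this zonotope setting are in bijection with pairs (spanning forest, half-selected edge data) and that the half-selected data cancels out). I would handle this by first disposing of the cancellation via the direct-sum reduction above (which is elementary and avoids Theorem~\ref{thm:tutte-hypergraphs} entirely for the multiplicativity in $k$), and then for connected $G$ either (a) push through the combinatorial sum in Theorem~\ref{thm:tutte-hypergraphs}, carefully matching $\lr(B)$, $\rl(B)$ to the structure, or (b) give an independent induction on edges using the deletion–contraction-type identity Proposition~\ref{prop:TP_properties}(f): if $e$ is not a bridge of $G$ then adding/removing the segment $[\e_i,\e_j]$ corresponds to a coordinate with $r_i\in\{0,1\}$ in the aggregated/summed polymatroid, and $T_G(x+y,1)$ satisfies the matching recurrence $T_G=T_{G\setminus e}+T_{G/e}$ with the $y$-variable specialized to $1$, which collapses to $T_G(x+y,1)=T_{G\setminus e}(x+y,1)+T_{G/e}(x+y,1)$ for non-bridges and $T_G(x+y,1)=(x+y)\,T_{G/e}(x+y,1)$ for bridges; comparing with~\eqref{eq:deletion-coloop}–\eqref{eq:deletion-contraction} applied to the zonotope polymatroid gives the result by induction on $|E|$, with base case a single edge (where $\mZ=[\e_i,\e_j]$ is a segment and $\T_{Z}(x,y)=(x+y-1)(x+y)$, matching $(x+y-1)^1\cdot T_{K_2}(x+y,1)=(x+y-1)(x+y)$). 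Route (b) is likely the cleaner one to write out in full, so I would make that the primary argument and mention Theorem~\ref{thm:tutte-hypergraphs} as the conceptual source.
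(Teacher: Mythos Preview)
Your proposal has two substantive gaps.

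\textbf{Route (b) does not work.} The ground set of the polymatroid $Z(G)\subset\Z^W$ is the vertex set $W$ of $G$, not its edge set. For a coordinate $i\in W$ one computes $f(\{i\})=\deg_G(i)$, $f(W)=|E|$, $f(W\setminus\{i\})=|E|$ (since $G$ is loopless), hence $r_i=\deg_G(i)$. So Proposition~\ref{prop:TP_properties}(f) applies only at vertices of degree $0$ or $1$, and there is no mechanism here to ``delete or contract an edge of $G$'' via that proposition. Your proposed induction on $|E|$ therefore cannot be carried out this way.

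\textbf{Route (a) is on the right track but you are missing the key simplification.} In Theorem~\ref{thm:tutte-hypergraphs} the parameter $k$ counts vertices of $V$ (not of $V^+$) adjacent to all of $W$; since each $v_e$ has degree $2$, this gives $k=0$ whenever $|W|>2$ (not $k=1$). More importantly, there are \emph{no} ``half-selected'' configurations: if $N_{F'}(v_e)=\{w\}$ were a singleton, then taking $I_1=\{w\}$ in~\eqref{eq:draconian-set} forces $\#\{v\mid N_{F'}(v)=\{w\}\}<1$, a contradiction. Hence every draconian $F'\subseteq E'$ has $|N_{F'}(v_e)|\in\{0,2\}$ for all $e$, and draconian sets are in clean bijection with spanning forests $F\subseteq E$ of $G$ (acyclicity is exactly the remaining content of~\eqref{eq:draconian-set}). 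The $F'$-compatible ordered set partitions $B$ are then precisely the coarsenings of the component partition $C_F$; there are no ordering constraints from half-edges to worry about. What remains is the purely combinatorial identity
\[
\sum_{B\in\mB_m}(-1)^{\ell(B)}(x+y-1)\,x^{\lr(B)-1}y^{\rl(B)-1}=(1-x-y)^m,
\]
applied with $m$ equal to the number of components of $F$; this is what the paper proves via the factorization $\sum_{w\in S_\ell}(x+y-1)x^{\lr(w)-1}y^{\rl(w)-1}=(x+y-1)^{(\ell)}$ and the Stirling-type identity $\sum_{C\in\mC_m}(-u)_{(\ell(C))}=(-u)^m$. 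Your direct-sum reduction to connected $G$ is fine but unnecessary once the above is in place.
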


\begin{example} The classical permutohedron $\Pi_n=\conv((w(1),\ldots,w(n))\mid w\in S_n)$ is a graphical zonotope. More precisely, $\Pi_n=(1,1,\ldots,1)+\mZ(K_n)$, where $K_n$ is the complete graph on $n$ vertices. Hence, by Theorem~\ref{thm:Tuttezonotope},
$$\T_{\Pi_n\cap \Z^n}(x,y)=\sum_{F}(x+y-1)^{\#\text{ connected components of }F},$$
where the sum is over forests (i.e. acyclic graphs) with vertex set $[n]$. 
For instance, $\T_{\Pi_3\cap \Z^3}(x,y)=(x+y-1)^3+3(x+y-1)^2+3(x+y-1)$, and 
$$\T_{\Pi_4\cap \Z^4}(x,y)=(x+y-1)^4+6(x+y-1)^3+15(x+y-1)^2+16(x+y-1).$$
\end{example}

\begin{remark}
Specializing Theorem~\ref{thm:Tuttezonotope} at $x=y=1$ gives $|Z(G)|=T_G(2,1)$. 
The reader will recognize that this is a well known specialization of the Tutte polynomial~\cite{Sta} upon remembering that the points of $Z(G)$ are in clear bijection with the outdegree sequences of the orientations of $G$. 
The specialization $x=0, y=1$ also gives a classical result via Proposition~\ref{prop:TP_properties}(h), namely that the specialization $T_G(1,1)$ counts root-connected outdegree sequences of $G$. 
As we explain in Section~\ref{sec:mirror-symmetry}, the case $y=1$ of Theorem~\ref{thm:Tuttezonotope} can in fact be deduced from a relation proven in~\cite{KP} between the interior polynomial of a hypergraph and its transpose.
\end{remark}

\begin{proof} 
The case $|W|=2$ can be treated by hand, and we now assume $|W|>2$.
Let $H_G=(V\sqcup W,E')$ be the bipartite graph obtained from $G=(W,E)$ by inserting a vertex $v_e\in V$ in the middle of each edge $e\in E$ (in other words, each edge $e\in E$ is replaced by a path of length 2). By Remark \ref{rk:zonotope-is-Hmirror}, $Z(G)=P_{H_G^+}$. Hence Theorem~\ref{thm:tutte-hypergraphs} gives
$$\frac{\T_{Z(G)}(x,y)}{(x+y-1)}=\!\sum_{\substack{F'\subseteq E'\\ \text{draconian set }}}\sum_{\substack{B\in\mB_W\\ F'\text{-compatible}}}\!\!\!\!(-1)^{|W|-|V(F')|-1}\,(-1)^{\ell(B)-1}\,x^{\lr(B)-1}\,y^{\rl(B)-1},$$
where we have used $k=0$ (which holds because $|W|>2$).

Next we observe that the draconian sets $F'\subseteq E'$ are in bijection with the spanning forests of $G$. 
Indeed, observe that if $F'\subseteq E'$ is a draconian set, then any vertex $v\in V$ is incident to either $0$ or $2$ edges from $F'$ (because if we had $N_{F'}(v)=\{w\}$, then the draconian condition \eqref{eq:draconian-set} would be violated for $I_1=\{w\}$). Hence, to any draconian set $F'\subseteq E'$, we can bijectively associate the subset of edges $F=\{\,e\in E\mid N_{F'}(v_e)=2\,\}$. Furthermore, it is not hard to see that $F'\subset E'$ is draconian if and only $F$ is a forest (because if $F$ had a cycle then $F'$ would violate the draconian condition \eqref{eq:draconian-set}). 

We also observe that a partition $B\in\mB_W$ is $F'$-compatible if and only if for all $e\in F$, the endpoints of $e$ are in the same block of $B$. In other words, denoting by $C_F$ the unordered partition of $W$ whose blocks correspond to the connected components of $F$, we see that $B\in\mB_W$ is $F'$-compatible if and only if $B$ is a coarsening of $C_F$. 

Hence we get
$$\T_{Z(G)}(x,y)= \sum_{\substack{F\subseteq E\\ \text{forest }}}\!\!(-1)^{k(F)}\sum_{\substack{B\in\mB_W\\ \text{coarsening of } C_F}}(-1)^{\ell(B)}(x+y-1)\,x^{\lr(B)-1}\,y^{\rl(B)-1},$$
where $k(F)=|W|-|V(F')|$ is the number of connected components of $F$. 
Moreover it is clear that the inner sum only depends on $k(F)$, and is equal to 
$$\sum_{B\in\mB_{k(F)}}(-1)^{\ell(B)}(x+y-1)\,x^{\lr(B)-1}\,y^{\rl(B)-1}.$$
We now show that this sum is equal to $(1-x-y)^{k(F)}$ using some well-known combinatorial tricks (see for instance~\cite[Chapter 1]{Sta2}). Let $\mC_k$ be the set of unordered set partitions of $[k]$. First, let us compute the contribution of the ordered set partitions $B\in\mB_k$ corresponding to a given unordered partition $C\in \mC_k$ with $\ell$ blocks.
For a permutation $w\in S_\ell$, we denote by $\lr(w)$ (resp., by $\rl(w)$) the number of left-to-right (resp., right-to-left) minima of $w$:
$$\lr(w)=\#\{\,i\in[\ell]\mid\forall j<i,w(i)<w(j)\,\}\text{ and }\rl(w)=\#\{\,i\in[\ell]\mid\forall j>i,w(i)<w(j)\,\}.$$ 
It is easy to see that 
$$\sum_{\substack{B\in\mB_k \\ \text{ ordering of }C}}(x+y-1) x^{\lr(B)-1}y^{\rl(B)-1}=\sum_{w \in S_\ell} (x+y-1)x^{\lr(w)-1}y^{\rl(w)-1}=(x+y-1)^{(\ell)},$$
where $u^{(\ell)}:=u(u+1)\cdots(u+\ell-1)$ (the second equality is justified by considering the insertion process producing the permutation $w$ seen as a word of $\ell$ letters). 
Moreover we have
$$\sum_{C\in\mC_k}(-1)^{\ell(C)}u^{(\ell(C))}=\sum_{C\in\mC_k}(-u)_{(\ell(C))}=(-u)^{k},$$
where $u_{(\ell)}:=u(u-1)\cdots(u-\ell+1)$ (the second equality is justified by recognizing that for $u\in \Z_{<0}$, both sides count the colorings of the set $[-u]$ by $k$ colors). 
Hence 
$$\sum_{B\in\mB_k}\!(-1)^{\ell(B)}(x+y-1)\,x^{\lr(B)-1}\,y^{\rl(B)-1}=\sum_{C\in\mC_k}\!(-1)^{\ell(C)}(x+y-1)^{(\ell(C))}=(1-x-y)^{k},$$
as claimed. This completes the proof.
\end{proof}

\section{Boxed Tutte polynomials of hypergraphs and boxed polymatroids}\label{sec:boxed}

Most polymatroids that one encounters (for instance, usual matroids and hypergraphical
polymatroids) satisfy natural lower and upper bounds on the components of their elements (e.g.,
non-negativity). We now incorporate this in our theory by defining boxed
polymatroids and their Tutte polynomials.

\subsection{Boxed polymatroids and boxed Tutte polynomials}
A \emph{box} for a polymatroid $P\subset \Z^n$ is a pair of tuples $\bal=(\al_1,\ldots,\al_n)$ and $\bbe=(\be_1,\ldots,\be_n)\in \Z^n$ such that every point $\a$ in $P$ satisfies $\bal\leq \a\leq \bbe$, where inequalities are taken componentwise. In other words, $P$ is contained in the product of intervals $\prod_{i}[\al_i,\be_i]$.
A \emph{boxed polymatroid} is a triple $(P,\bal,\bbe)$, where $P$ is a polymatroid and $(\bal,\bbe)$ is a box for $P$. 

We now define the boxed Tutte polynomial of the boxed polymatroid $(P,\bal,\bbe)$. Recall from Section~\ref{sec:order-invariance} the notation $\wt(\c)=u^{d_1^{>}(P,\c)}\, v^{d_1^{<}(P,\c)}$ for $\c\in \Z^n$. The \emph{boxed Tutte polynomial} of $(P,\bal,\bbe)$ is then defined as 
\begin{equation}\label{eq:def-Tutte-boxed}
\wti \T_{(P,\bal,\bbe)}(u,v)=\sum_{\c\in \Z^n,~\bal\leq \c\leq \bbe}\wt(\c).
\end{equation}
observe that the invariant $\wti \T_{P}(u,v)=\sum_{\c\in \Z^n}\wt(\c)$ of Section~\ref{sec:order-invariance} is the limit (in the sense of formal power series in $u,v$) of $\wti \T_{(P,\bal,\bbe)}(u,v)$ as $\al_1,\ldots,\al_n\to -\infty$ and $\be_1,\ldots,\be_n\to +\infty$. 

It is clear that the boxed Tutte polynomial satisfies the \emph{translation invariance}:
$$\text{for all }\c\in\Z^n,~~ \wti \T_{(P+\c,\bal+\c,\bbe+\c)}(u,v)=\wti \T_{(P,\bal,\bbe)}(u,v),$$
 the \emph{$S_n$-invariance}: 
$$\text{for all }w\in S_n,~~ \wti \T_{(w(P),w(\bal),w(\bbe))}(u,v)=\wti \T_{(P,\bal,\bbe)}(u,v),$$
and the \emph{duality relation}:
$$\wti \T_{(-P,-\bbe,-\bal)}(u,v)=\wti \T_{(P,\bal,\bbe)}(v,u).$$

The boxed Tutte polynomial is closely related to the classical Tutte polynomial of matroids.
By Remark~\ref{rk:mat-are-polymat}, matroids on the ground set $[n]$ can be identified with boxed polymatroids of the form $(P,0^n,1^n)$, where $0^n:=(0,\ldots,0)$ and $1^n:=(1,\ldots,1)$. Moreover, it follows from Remark~\ref{rk:corank-null} (together with the corank-nullity formula~\eqref{eq:corank-nullity-mat} for the Tutte polynomial of $M$) that 
$$T_M(x,y)=\wti \T_{(P(M),0^n,1^n)}(x-1,y-1).$$

Next, we express the boxed Tutte polynomial $\wti \T_{(P,\bal,\bbe)}$ in terms of internal and external activities.

\begin{proposition}
For any boxed polymatroid $(P,\bal,\bbe)$ in $\Z^n$, 
\begin{multline*}
\wti \T_{(P,\bal,\bbe)}(u,v)
=\sum_{\a\in P}~\prod_{i=1}^n\left(1+\one_{i\in \Int(\a)}\frac{u(1-u^{a_i-\al_i})}{(1-u)}+\one_{i\in \Ext(\a)}\frac{v(1-v^{\be_i-a_i})}{(1-v)}\right).
\end{multline*}
where $\one_{condition}$ denotes the characteristic function, which is equal to 1 if the condition is true and 0 otherwise.
\end{proposition}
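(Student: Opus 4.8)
The plan is to exploit the cone decomposition $\Z^n=\biguplus_{\a\in P}C_P(\a)$ from Theorem~\ref{th:decomposition_into_cones}, exactly as in the proof of Theorem~\ref{th:tilde_Tutte}, but now restricting the summation to the box $\prod_i[\al_i,\be_i]$. Starting from the definition~\eqref{eq:def-Tutte-boxed}, I would write
$$\wti\T_{(P,\bal,\bbe)}(u,v)=\sum_{\a\in P}\ \sum_{\substack{\c\in C_P(\a)\\ \bal\leq\c\leq\bbe}}u^{d_1^{>}(P,\c)}v^{d_1^{<}(P,\c)}.$$
By Theorem~\ref{th:decomposition_into_cones}(2) together with Lemma~\ref{lem:d_geq_leq}, for $\c=\a+\x\in C_P(\a)$ we have $d_1^{>}(P,\c)=d_1^{>}(\a,\c)=\sum_{i:x_i>0}x_i$ and $d_1^{<}(P,\c)=\sum_{i:x_i<0}(-x_i)$, so the summand factors as $\prod_i w_i(x_i)$ with $w_i(x_i)=u^{x_i}$ if $x_i>0$, $v^{-x_i}$ if $x_i<0$, and $1$ if $x_i=0$. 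Hence the inner sum factors over coordinates, and everything reduces to computing, for each $i$, the one-dimensional sum $m_i(\a):=\sum w_i(x_i)$ over the allowed values of $x_i$.

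Next I would determine the range of $x_i$. Recall from Definition~\ref{def:C_cone} that $x_i=0$ unless $i\in\Int(\a)\cup\Ext(\a)$; that $x_i\leq 0$ if $i\in\Int(\a)\setminus\Ext(\a)$; $x_i\geq 0$ if $i\in\Ext(\a)\setminus\Int(\a)$; and $x_i$ arbitrary if $i\in\Int(\a)\cap\Ext(\a)$. Intersecting with $\al_i\leq a_i+x_i\leq\be_i$, i.e.\ $\al_i-a_i\leq x_i\leq\be_i-a_i$, gives in each case a finite geometric-type sum. When $i\in\Int(\a)$ the negative part contributes $\sum_{k=0}^{a_i-\al_i}v^{?}$ --- wait, more precisely the $x_i<0$ part contributes $u^{0}+u^{1}+\cdots$; I should be careful: for $i\in\Int(\a)$ the allowed $x_i\le 0$ range down to $\al_i-a_i$, contributing $1+u+\cdots+u^{a_i-\al_i}=\frac{1-u^{a_i-\al_i+1}}{1-u}$, and similarly for $i\in\Ext(\a)$ the $x_i\ge0$ part contributes $1+v+\cdots+v^{\be_i-a_i}$. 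Since the value $x_i=0$ is counted once, combining the ``internal'' and ``external'' directions for $i\in\Int(\a)\cap\Ext(\a)$ gives $1$ plus the strictly-positive-$x_i$ geometric piece plus the strictly-negative-$x_i$ geometric piece. This yields exactly
$$m_i(\a)=1+\one_{i\in\Int(\a)}\bigl(u+u^2+\cdots+u^{a_i-\al_i}\bigr)+\one_{i\in\Ext(\a)}\bigl(v+v^2+\cdots+v^{\be_i-a_i}\bigr),$$
and $u+\cdots+u^{a_i-\al_i}=\frac{u(1-u^{a_i-\al_i})}{1-u}$, with the analogous identity for $v$. (One should double-check the off-by-one: the number of strictly positive integers $x_i$ with $a_i+x_i\le\be_i$ is $\be_i-a_i$, matching the exponent in the statement; similarly $a_i-\al_i$ on the internal side.)

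The bookkeeping subtlety --- and the only real point requiring care --- is the case $i\in\Int(\a)\cap\Ext(\a)$, where one must make sure the term $x_i=0$ is not double-counted when summing the ``$u$-direction'' and the ``$v$-direction'' contributions; writing $m_i(\a)=1+(\text{sum over }x_i>0)+(\text{sum over }x_i<0)$ from the start avoids this. After that, multiplying the $m_i(\a)$ over $i\in[n]$ and summing over $\a\in P$ gives precisely the claimed formula, since for $i\notin\Int(\a)\cup\Ext(\a)$ we get $m_i(\a)=1$, so those indicator-free factors are automatically $1$. I expect no essential obstacle beyond this indexing check; the structural input (the cone decomposition and the distance splitting) is already available from Sections~\ref{sec:cone-decomposition} and~\ref{sec:order-invariance}.
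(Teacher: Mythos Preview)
Your approach is exactly the one in the paper: restrict the cone decomposition $\Z^n=\biguplus_{\a\in P}C_P(\a)$ to the box, use Theorem~\ref{th:decomposition_into_cones}(2) and Lemma~\ref{lem:d_geq_leq} to identify the weight of $\c=\a+\x$, and then factor the inner sum coordinate-by-coordinate into the finite geometric sums $\tilde m_i(\a)$. The only slip is that you momentarily swap the roles of $u$ and $v$ when describing $w_i(x_i)$ (it is $d_1^{>}(\a,\c)=\sum_{x_i<0}(-x_i)$, so negative $x_i$ carry powers of $u$ and positive $x_i$ carry powers of $v$), but you self-correct when you actually compute the $\Int$/$\Ext$ contributions and your final $m_i(\a)$ is right.
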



\begin{proof}
This is analogous to the proof of Theorem~\ref{th:tilde_Tutte}.
The decomposition $\Z^n=\biguplus_{\a\in P} C_P(\a)$ from Theorem~\ref{th:decomposition_into_cones} gives 
$$
\wti \T_{(P,\bal,\bbe)}(u,v) = \sum_{\a\in P} ~~\sum_{\c\in C_P(\a),~\bal\leq \c\leq \bbe} \wt_P(\c).
$$
Moreover, Theorem~\ref{th:decomposition_into_cones}(2)
together with Lemma~\ref{lem:d_geq_leq} yield
$$
\sum_{\c\in C_P(\a),\,\bal\leq \c\leq \bbe} \wt_P(\c) = 
\prod_{i=1}^n \tilde m_i(\a),
$$
where 
$$
\tilde m_i(\a) = 
\left\{
\begin{array}{cl}
1 & \text{if } i\not\in\Int(\a)\cup \Ext(\a),\\
\sum_{k=0}^{a_i-\al_i}u^k & \text{if } i\in\Int(\a)\setminus \Ext(\a),\\
\sum_{k=0}^{\be_i-a_i}v^k & \text{if } i\in\Ext(\a)\setminus \Int(\a),\\
1+\sum_{k=1}^{a_i-\al_i}u^k+\sum_{k=1}^{\be_i-a_i}v^k & \text{if } i\in\Int(\a)\cap \Ext(\a).
\end{array}
\right.
$$
Finally, observing $\ds\sum_{k=0}^{a_i-\al_i}u^k=1+\frac{u(1-u^{a_i-\al_i})}{1-u}$ and $\ds \sum_{k=0}^{\be_i-a_i}v^k=1+\frac{v(1-v^{\be_i-a_i})}{1-v}$ proves the stated relation.
\end{proof}
 
We can also derive expressions for the boxed Tutte polynomial in the spirit of Theorem~\ref{th:upper_lower_shadow}. Indeed, using the shadow decomposition of Theorem~\ref{th:shadow_decomposition} and proceeding as in the proof of Theorem~\ref{th:upper_lower_shadow}, one gets: 

\begin{proposition}\label{prop:shadows-Tutte-hyper}
For any boxed polymatroid $(P,\bal,\bbe)$ in $\Z^n$, 
$$
\wti \T_{(P,\bal,\bbe)}(u,v)= \sum_{\a\in S^{\textrm{up}}(P),\,\bal\leq \a\leq \bbe~} 
v^{\level(\a)-\level(P)}\prod_{i\in [n]}\left(1+\one_{i\in\tInt(\a)}\frac{u(1-u^{a_i-\al_i})}{1-u}\right)
$$
and
$$
\wti \T_{(P,\bal,\bbe)}(u,v)= \sum_{\a\in S^{\textrm{low}}(P),\,\bal\leq \a\leq \bbe~} 
u^{\level(P)-\level(\a)}\prod_{i\in [n]}\left(1+\one_{i\in\tExt(\a)}\frac{v(1-v^{\be_i-a_i})}{1-v}\right).
$$
\end{proposition}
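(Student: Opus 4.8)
The plan is to mimic the proof of Theorem~\ref{th:upper_lower_shadow}, carrying the box constraint $\bal\leq\c\leq\bbe$ through every step. Starting from the definition~\eqref{eq:def-Tutte-boxed} and applying the refined shadow decomposition $\Z^n=\biguplus_{\b\in\Sup(P)}C^{\leq}_P(\b)$ of Theorem~\ref{th:shadow_decomposition}(1), one gets
$$\wti\T_{(P,\bal,\bbe)}(u,v)=\sum_{\b\in\Sup(P)}\ \sum_{\c\in C^{\leq}_P(\b),\ \bal\leq\c\leq\bbe}u^{d_1^{>}(P,\c)}\,v^{d_1^{<}(P,\c)}.$$
By Theorem~\ref{th:shadow_decomposition}(3), every $\c\in C^{\leq}_P(\b)$ satisfies $d_1^{<}(P,\c)=\level(\b)-\level(P)$ and $d_1^{>}(P,\c)=d_1^{>}(\b,\c)$, so the $v$-exponent is constant on each block and factors out. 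It then remains to evaluate the inner sum of $u^{d_1^{>}(\b,\c)}$ over the points of $C^{\leq}_P(\b)=\b+\sum_{i\in\tInt(\b)}\Z_{\leq 0}\,\e_i$ lying in the box.

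Writing $\c=\b+\x$ with $x_i=0$ for $i\notin\tInt(\b)$ and $x_i\leq 0$ for $i\in\tInt(\b)$, we have $d_1^{>}(\b,\c)=\sum_{i\in\tInt(\b)}(-x_i)$. The box condition on coordinates $i\notin\tInt(\b)$ forces $\al_i\leq b_i\leq\be_i$, and for $i\in\tInt(\b)$ it reads $\al_i-b_i\leq x_i\leq\min(0,\be_i-b_i)$. The one place where the hypothesis that $(\bal,\bbe)$ is a box \emph{for $P$} is used is to show that the upper truncation never occurs: if $\b\in\Sup(P)$ and $i\in\tInt(\b)$, then $b_i\leq\be_i$. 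Indeed, $\b\geq\a$ for some $\a\in P$; were $b_i>\be_i$, then $b_i>\be_i\geq a_i$ (as $\a$ lies in the box), so $b_i-1\geq a_i$, whence $\b-\e_i\geq\a$ and $\b-\e_i\in\Sup(P)$, contradicting $i\in\tInt(\b)$ (take $j=0$ in Definition~\ref{def:activity-shadows}). Hence the inner sum equals $\one_{\bal\leq\b\leq\bbe}\prod_{i\in\tInt(\b)}\sum_{k=0}^{b_i-\al_i}u^{k}$; using $\sum_{k=0}^{b_i-\al_i}u^{k}=1+\frac{u(1-u^{b_i-\al_i})}{1-u}$ and extending the product harmlessly to all $i\in[n]$ (the factor is $1$ when $i\notin\tInt(\b)$) gives, after renaming $\b$ as $\a$, the first displayed identity.

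For the second identity I would either repeat the argument verbatim with $\Slow(P)$, $C^{\geq}_P(\b)$, $\tExt$, the roles of $u$ and $v$ and of $\bal$ and $\bbe$ swapped (this time the symmetric statement is that $i\in\tExt(\b)$ for $\b\in\Slow(P)$ forces $b_i\geq\al_i$), or simply invoke the stated duality relation $\wti\T_{(-P,-\bbe,-\bal)}(u,v)=\wti\T_{(P,\bal,\bbe)}(v,u)$ together with $\Sup(-P)=-\Slow(P)$ and the observation (as in the proof of Lemma~\ref{lem:duality}) that $i\in\tInt_{\Sup(-P)}(-\b)\iff i\in\tExt_{\Slow(P)}(\b)$, which transports the first formula to the second. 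I do not anticipate a genuine obstacle: the argument is a routine specialization of the proof of Theorem~\ref{th:upper_lower_shadow}, and the only real subtlety is the box bookkeeping above — in particular the fact that internal (resp.\ external) activity in the upper (resp.\ lower) shadow keeps a coordinate from exceeding (resp.\ falling below) its box bound, so each geometric sum is truncated on one side only.
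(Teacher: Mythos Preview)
Your proposal is correct and follows precisely the approach the paper indicates (the paper gives no detailed proof, only the sentence ``using the shadow decomposition of Theorem~\ref{th:shadow_decomposition} and proceeding as in the proof of Theorem~\ref{th:upper_lower_shadow}''). You have filled in the details faithfully, and your observation that $i\in\tInt(\b)$ forces $b_i\leq\be_i$ (so the geometric sum is truncated only at $\al_i$) is exactly the extra bookkeeping the boxed version requires beyond the unboxed proof.
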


\subsection{Boxed Tutte polynomials of hypergraphs}
\label{sec:boxed-hypergraphs}
Let $H=(V\sqcup W,E)$ be a connected bipartite graph, and let $\vec{H}$ be the corresponding hypergraph. 
We first recall and generalize the notion of spanning hypertrees of $\vec{H}$.

Let $\mS$ be the set of subgraphs of $H$ which contain every vertex of $H$ and at least one edge incident to each right vertex $w\in W$.
For a subgraph $S\in\mS$ we denote by $\c_S\in \Z^W$ the tuple $(c_w)_{w\in W}$ where for all right vertex $w\in W$, $c_w$ is equal to the number of edges of $S$ incident to the vertex $w$ \emph{minus 1}. 
Recall from Section~\ref{sec:hypergraphs} that a \emph{spanning hypertree} of $\vec H$ is a point of the form $\c_T\in \Z^W$ for a spanning tree $T\in \mS$ of $H$. 
Similarly, we call \emph{spanning hypergraph} (resp. \emph{spanning hyperforest}, \emph{spanning hyperconnex}) of $\vec H$ a point of the form $\c_S\in \Z^W$ for a subgraph (resp. acyclic subgraph, connected subgraph) $S\in \mS$.

\begin{remark}
\label{rk:hypergraph-ga}
Suppose that the bipartite graph $H$ is such that every right vertex $w\in W$ has degree 2. 
In this case we can identify the hypergraph $\vec H$ with the graph $G=(V,E')$ obtained by replacing every right vertex $w\in W$ and the incident edges by a single edge joining the vertices adjacent to $w$. 
Observe that in this context, the set of spanning hypersubgraphs (resp.\ hyperforests, hyperconnexes, hypertrees) of $\vec{H}$ is in obvious bijection with the set of spanning subgraphs (resp. forests, connected subgraphs, trees) of $G$. 
\end{remark}
 
As in Section~\ref{sec:hypergraphs}, we denote by $P_H$ the polymatroid whose elements are the spanning hypertrees of $\vec {H}$. In symbols,
$$P_H:=\{\,\c_T\mid T\in \mT\,\},$$
where $\mT\subset \mS$ is the set of spanning trees of $H$.
Clearly the pair $(0^W,\c_H)$ is a box for the polymatroid $P_H$. In fact, the set 
$$\hypbox_H:=\{\c\in\Z^W~\mid~0^W\leq \c\leq \c_H\}$$
is clearly equal to the set of spanning hypersubgraphs of $\vec H$. Moreover, $\Slow(P_H)\cap \hypbox_H$ is the set of spanning hyperforests of $\vec {H}$, and $\Sup(P_H)\cap \hypbox_H$ is the set of spanning hyperconnexes.
In particular, we obtain the following evaluations of the boxed Tutte polynomial:
\begin{eqnarray*}
\wti \T_{(P_H,0^W,\c_H)}(0,0)&=&\# \text{spanning hypertrees},\\
\wti \T_{(P_H,0^W,\c_H)}(0,1)&=&\# \text{spanning hyperconnexes},\\
\wti \T_{(P_H,0^W,\c_H)}(1,0)&=&\# \text{spanning hyperforests},\\
\wti \T_{(P_H,0^W,\c_H)}(1,1)&=&\# \text{spanning hypersubgraphs}= \prod_{w\in W}\deg(w). 
\end{eqnarray*}
In fact, Proposition~\ref{prop:shadows-Tutte-hyper} further gives
$$\wti \T_{(P_H,0^W,\c_H)}(0,v)=\sum_{\c\,\text{spanning hyperconnex}}\!\!v^{\level(\c)-\level(P)},$$
and 
$$\wti \T_{(P_H,0^W,\c_H)}(u,0)=\sum_{\c\,\text{spanning hyperforest}}\!\!u^{\level(P)-\level(\c)}.$$

We now relate the weight $\wt_P(\c)$ to the corank and nullity of the subgraphs of $H$. 
Consider the matroid $M_H$ (with ground set $E$) associated to the graph $H$: the bases of $M_H$ are the spanning trees of $H$ (considered as subsets of $E$). Every spanning subgraph $S\in\mS$ can be identified with a subset of $E$, whose corank and nullity in the matroid $M_H$ are given by
$$\cork(S)=\min\{\,|T\setminus S|\mid T\in\mT\,\}\quad\text{and}\quad\nul(S)=\min\{\,|S\setminus T|\mid T\in\mT\,\},$$
respectively.


\begin{lemma}\label{lem:cork-null-hyper}
For all $\c\in\hypbox_H$ we have
$$d_1^>(P,\c)=\min\{\,\cork(S)\mid S\in \mS,~\c_S=\c\,\},$$ 
and dually
$$d_1^<(P,\c)=\min\{\,\nul(S)\mid S\in \mS,~\c_S=\c\,\}.$$
\end{lemma}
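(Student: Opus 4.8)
The plan is to prove both identities by relating, for a fixed target vector $\c\in\hypbox_H$, the Manhattan ``distances'' $d_1^>(P,\c)$ and $d_1^<(P,\c)$ to the combinatorics of the graphic matroid $M_H$ with ground set $E$. Since $P=P_H=\{\c_T\mid T\in\mT\}$, by definition $d_1^>(P,\c)=\min_{T\in\mT}d_1^>(\c_T,\c)$ and $d_1^<(P,\c)=\min_{T\in\mT}d_1^<(\c_T,\c)$. I will prove the first formula; the second follows by the same argument with the roles of $d_1^>$ and $d_1^<$ (and of corank and nullity) exchanged, using that $d_1^<(\a,\b)=d_1^>(\b,\a)$, or alternatively by invoking the duality statement of Lemma~\ref{lem:duality} together with the fact that $P_{H^*}=-P_H+(\deg(w)-1)_{w\in W}$ for the plane dual (as discussed in the excerpt) — but the direct symmetric argument is cleaner and avoids planarity hypotheses, so I would use that.

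The key combinatorial lemma I would establish first is the following translation between vectors $\c_S$ and edge sets. For a spanning subgraph $S\in\mS$ (identified with its edge set $S\subseteq E$) and a spanning tree $T\in\mT$ (identified with $T\subseteq E$), I claim
$$d_1^>(\c_S,\c_T)\;=\;\#\{\,w\in W\mid \deg_T(w)>\deg_S(w)\text{ counted with multiplicity}\,\}\;=\;\sum_{w\in W}\max(\deg_T(w)-\deg_S(w),0),$$
and symmetrically $d_1^<(\c_S,\c_T)=\sum_{w\in W}\max(\deg_S(w)-\deg_T(w),0)$; these are immediate from $(\c_S)_w=\deg_S(w)-1$. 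Now fix $\c$ with $0^W\le\c\le\c_H$ and let $\mS_\c=\{S\in\mS\mid \c_S=\c\}$, i.e.\ the spanning subgraphs with prescribed right-degree sequence $d_w:=c_w+1$ at each $w\in W$. For such an $S$ and any $T\in\mT$, the previous display gives $d_1^>(\c_T,\c)=\sum_w\max(\deg_T(w)-d_w,0)$. The crucial step is then to show that minimizing this quantity over all pairs $(S\in\mS_\c,\,T\in\mT)$ — equivalently, $\min_{T\in\mT}d_1^>(\c_T,\c)$, which is exactly $d_1^>(P,\c)$ — equals $\min_{S\in\mS_\c}\cork(S)$, where $\cork(S)=\min_{T\in\mT}|T\setminus S|$.

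For one inequality ($d_1^>(P,\c)\ge \min_S\cork(S)$): given a tree $T$ achieving $d_1^>(P,\c)$, I would build a spanning subgraph $S$ with $\c_S=\c$ by, for each right vertex $w$, keeping the $\min(\deg_T(w),d_w)$ edges of $T$ incident to $w$ and adding further edges of $E$ at $w$ (possible since $d_w\le\deg_H(w)$) until $\deg_S(w)=d_w$; one checks $S\in\mS$, $\c_S=\c$, and $|T\setminus S|\le\sum_w\max(\deg_T(w)-d_w,0)=d_1^>(\c_T,\c)$, so $\cork(S)\le d_1^>(P,\c)$. For the reverse ($d_1^>(P,\c)\le\min_S\cork(S)$): given $S\in\mS_\c$ and a tree $T$ realizing $\cork(S)=|T\setminus S|$, I would use that $T\setminus S$ edges are ``missing from $S$'' and bound $d_1^>(\c_T,\c)=\sum_w\max(\deg_T(w)-\deg_S(w),0)\le |T\setminus S|$, since each edge of $T\setminus S$ contributes to the degree excess at only its (at most two, but exactly one in $W$ in the bipartite setting) right endpoint. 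The main obstacle I anticipate is checking carefully that these matroid-theoretic exchange/augmentation steps respect the constraint $S\in\mS$ (every right vertex must retain positive degree) and the prescribed degree sequence simultaneously; handling multiplicities of edges at a right vertex and the edge cases where $d_w$ equals $1$ or $\deg_H(w)$ requires attention, but is routine. Once the lemma is in place, both displayed identities follow by taking minima, and I would remark that this recovers the classical corank/nullity interpretation (Remark~\ref{rk:corank-null}) in the graphical case of Remark~\ref{rk:hypergraph-ga}.
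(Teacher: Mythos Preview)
Your proposal is correct and follows essentially the same route as the paper's proof: both directions are obtained by the two inequalities $d_1^{>}(\c_T,\c_S)\le |T\setminus S|$ (which gives $d_1^{>}(P,\c)\le\min_S\cork(S)$) and a construction of an $S\in\mS_\c$ from an optimal tree $T$ with $|T\setminus S|\le d_1^{>}(\c_T,\c)$ (which gives the reverse), with the nullity statement handled by symmetry. One small slip: in your displayed ``key combinatorial lemma'' the arguments of $d_1^{>}$ are swapped (you wrote $d_1^{>}(\c_S,\c_T)$ where you mean $d_1^{>}(\c_T,\c_S)$), but you use the correct orientation afterwards; also note that in your construction of $S$ the edges at distinct right vertices are disjoint (bipartiteness), so the degree adjustments can indeed be made independently and the ``obstacle'' you flag does not arise.
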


\begin{proof}
We prove the identity for $d_1^>(P,\c)$, the other one being symmetric. 
Let $m=\min\{\,\cork(S)\mid S\in \mS,~\c_S=\c\,\}$. 
We first show $d_1^>(P,\c)\leq m$. 
Let $S\in \mS$ be such that $\c_S=\c$ and let $T\in\mT$.
Clearly $d_1^>(P,\c)\leq d_1^>(\c_T,\c_S)\leq |T\setminus S|$.
Since by definition, $m=\min\{\,|T\setminus S|\mid T\in \mT,~S\in \mS,~\c_S=\c\,\}$, this gives $d_1^>(P,\c)\leq m$. 

We now show $d_1^>(P,\c)\geq m$.
Let $T\in\mT$ be such that $d_1^>(P,\c)=d_1^>(\c_T,\c)$. It is clearly possible to remove $d_1^>(\c_T,\c)$ edges from $T$ and then add $d_1^>(\c_T,\c)$ edges 
so as to obtain a subgraph $S\in\mS$ such that $\c=\c_{S}$. Moreover $m\leq \cork(S)\leq |T\setminus S|=d_1^>(\c_T,\c_S)=d_1^>(P,\c)$.
\end{proof}

Lemma~\ref{lem:cork-null-hyper} allows us to rewrite~\eqref{eq:def-Tutte-boxed} in the context of hypergraphs:
\begin{equation}\label{eq:Tutte-hyper}
\wti \T_{(P_H,0^W,\c_H)}(u,v)=\sum_{\substack{\c\text{ spanning }\\\textrm{hypersubgraph of }H}}\!\!\!\!\!\!\! u^{\min\{\cork(S)\mid S\in\mS,\,\c_S=\c\}}\,v^{\min\{\nul(S)\mid S\in\mS,\,\c_S=\c\}}.
\end{equation}

\begin{remark}
\label{rk:hypergraph-ga2}
In the case when the hypergraph $H$ identifies with a graph $G=(V,E')$ in the sense of Remark~\ref{rk:hypergraph-ga}, both exponents in~\eqref{eq:Tutte-hyper} are minima of singletons and~\eqref{eq:Tutte-hyper} reduces to a familiar expression for the Tutte polynomial of $G$:
\begin{equation*}
\wti \T_{(P_H,0^W,\c_H)}(u,v)=\sum_{S'\subseteq E'}u^{\cork(S')}\,v^{\nul(S')}=T_G(u+1,v+1).
\end{equation*}
\end{remark}

Remember now the definition of duality of planar hypergraphs as defined in Section~\ref{sec:hypergraphs}. Let $H$ be a plane hypergraph and let $H^*$ be its dual. Since $P_{H^*}=-P_H+\c_H$, we know that
\begin{equation*}
\wti \T_{(P_{H^*},0^W,\c_H)}(u,v)=\wti \T_{(P_{H},0^W,\c_H)}(v,u).
\end{equation*}
In fact, the mapping $\phi\colon \hypbox_H\to \hypbox_H$ defined by $\phi(\c)=\c_H-\c$ is a bijection between the hypersubgraphs of $H$ and those of $H^*$ exchanging $d_1^<$ and $d_1^>$. Denoting by $\mS^*$, $\cork^*$, and $\nul^*$, respectively, the analogues of $\mS$, $\cork$, and $\nul$ for the hypergraph $H^*$, Lemma~\ref{lem:cork-null-hyper} gives, for all $\c\in\hypbox_H$,
$$\min\{\,\cork(S)\mid S\in\mS,\,\c_S=\c\,\}=\min\{\,\nul^*(S)\mid S\in\mS^*,\,\c_S=\c_H-\c\,\},$$ 
and dually
$$
\min\{\,\nul(S)\mid S\in\mS,\,\c_S=\c\,\}=\min\{\,\cork^*(S)\mid S\in\mS^*,\,\c_S=\c_H-\c\,\}.$$


\section{Concluding remarks and questions}\label{sec:conclusion}
We conclude with some open questions about the polymatroid Tutte polynomial and some directions for future research.

\subsection{Trivariate Tutte polynomial and mixed Tutte polynomials}

One consequence of the existence of the universal Tutte polynomial $\T_n$ is that for every polymatroid $P$, there exists a three-variable polynomial $\T_P(x,y;u)$ such that for every non-negative integer $k$ we have
$$T_P(x,y;k)=T_{k P}(x,y),$$
where $k P=\{\,(k a_1,\ldots,k a_n)\mid\a\in P\,\}$ denotes the $k$-dilation of $P$. Indeed, if $P$ has rank function $f$, then $k P$ has rank function $k f$. Hence
the polynomial $\T_P(x,y;u)$ is obtained from $\T_n(x,y,(z_I))$ by substituting each variable $z_I$ by $uf(I)$. Note that this trivariate invariant inherits from $\T_n$ the translation invariance property $\T_{\c+P}(x,y;u)=\T_P(x,y;u)$, the $S_n$-invariance property $\T_{w(P)}(x,y;u)=\T_P(x,y;u)$, and the duality relation $\T_{-P}(x,y;u)=\T_P(y,x;u)$. One might wonder about other properties of this invariant. For instance, are there nice combinatorial-reciprocity properties when evaluating the variable $u$ at negative integers?
We can also ask what information is captured by the trivariate polynomial $\T_P(x,y;u)$, compared to $\T_P(x,y)=\T_P(x,y;1)$ (in particular when $P=P(M)$ is associated to a matroid $M$). 

One can generalize the preceding idea to tuples of polymatroids. For polymatroids $P_1,\ldots,P_d$, there exists a unique $(k+2)$-variate \emph{mixed Tutte polynomial} $\T_{P_1,\ldots,P_d}(x,y;u_1,\ldots,u_d)$ such that for any tuple $(k_1,\ldots,k_d)$ of positive integers, 
$$\T_{P_1,\ldots,P_d}(x,y;k_1,\ldots,k_d)=\T_{k_1P_1+\cdots+k_d P_d}(x,y).$$
Indeed, remembering the identity $P_{f}+P_{g}=P_{f+g}$, we see that the above polynomial is given by
$$\T_{P_1,\ldots,P_d}(x,y;u_1,\ldots,u_d)=\T_n(x,y,(u_1f_1(I)+\cdots + u_d f_d(I))),$$
where $f_1,\ldots,f_d$ are the rank functions of $P_1,\ldots,P_d$, respectively. Note that the Cameron--Fink polynomial $\QCF_P(x,y)$ is equal to $\T_{P,\nabla,\Delta}(1,1;1,x,y)$. Hence by Theorem~\ref{th:CF_Tutte}, the specializations $\T_{P,\nabla,\Delta}(x,y;1,0,0)$ and $\T_{P,\nabla,\Delta}(1,1;1,u,v)$ capture the same information about $P$. 
Again it would be interesting to study the properties of the mixed Tutte polynomials in general, or in the restricted context of matroids.


\subsection{Polymatroid analogues for properties of the classical Tutte polynomial}
\label{subsec:deletion-contraction}
The classical Tutte polynomial is known to enjoy many remarkable properties, and it is natural to wonder if these properties generalize to the polymatroid setting. For instance, does the convolution formula of \cite{KRS} admit a generalization to the polymatroid setting? 
Also, can we consider more general notions of activities for the bases of a polymatroid in the spirit of \cite{GM}?
 
Let us now focus on the deletion-contraction formula. Recall that for a matroid $M$ on the ground set $[n]$, and an element $i\in[n]$, one has
$T_M(x,y)=y T_{M\setminus i}(x,y)$ if $i$ is a loop, 
$T_M(x,y)=y T_{M/ i}(x,y)$ if $i$ is a coloop, 
and 
$$T_M(x,y)=T_{M\setminus i}(x,y)+T_{M/i}(x,y)$$ 
if $i$ is neither a loop nor a coloop of $M$. These relations clearly determine $T_M(x,y)$, and they play an important role in the theory of the Tutte polynomial because the Tutte polynomial is in fact ``universal'' among matroid invariants satisfying linear deletion-contraction relations (see, e.g., \cite{Bollobas}). 
We also note that the ``multivariate'' Tutte polynomials of \cite{BR,Zas,ET,Sok} are defined by adding more parameters to the deletion-contraction recurrence.

As we now explain, Property~\ref{prop:TP_properties}(f) is a partial generalization of the deletion-contraction formula to the polymatroid setting. 
Let $f$ let the rank function of the polymatroid $P(M)\subset \Z^n$. For every element $i\in[n]$, the quantity $r_i:=f(\{i\})+f([n]\setminus \{i\})-f([n])$ is equal to $0$ if $i$ is a loop or coloop of $M$, and equal to 1 otherwise. Furthermore it is easy to check, via Theorem~\ref{thm:oldandnew}, that the relations \eqref{eq:deletion-coloop} and \eqref{eq:deletion-contraction} for $\T_{P(M)}$ specialize to the deletion-contraction formulas for $T_M$.

The main question is whether there exists a more general deletion-contraction relation (true for arbitrary values of $r_i$) at the polymatroid level. 
Let $P$ be a polymatroid on the ground set $[n]$ and let $i\in[n]$. The $i$'th coordinate of a base of $P$ can take any value between $\al_i=f([n])-f([n]\setminus \{i\})$ and $\be_i=f(\{i\})$. Both the ``bottom face'' $\underline P$ consisting of the bases with $i$'th coordinate equal to $\al_i$ and the ``top face'' $\overline P$ consisting of the bases with $i$'th coordinate equal to $\be_i$ are polymatroids. However the other ``slices'' of $P$ are not necessarily polymatroids, and it is not clear how to express $P\setminus(\underline P\cup\overline{P})$ as a union of polymatroids when $r_i=\be_i-\al_i>1$. Still, when, say, $P\setminus \underline P$ is a polymatroid, is it possible to express $\T_P$ in terms of $\T_{\underline P}$ and $\T_{P\setminus \underline P}$? If not, which specializations of $\T_P$ can be expressed in this manner? (Note that $\T_P(1,1)$ always works.) 

In a different direction, suppose that $I\subset [n]$ is such that $r_I:=f(I)+f([n]\setminus I)-f([n])=1$. Can we express $\T_P$ in terms of $\T_{\underline P}$ and $\T_{\overline P}$, where now $\underline P=\{\,\a\in P\mid\sum_{i\in I}a_I=f(I)-1\,\}$ and $\overline P=\{\,\a\in P\mid\sum_{i\in I}a_I=f(I)\,\}$?

Lastly, focusing on hypergraphs, can we find a generalization of the notions of deletion and contraction of edges which would lead to a recurrence for the polymatroid Tutte polynomial of hypergraphs? For instance, for a hypergraph $H$ and a hyperedge~$e$, can we express $\T_{P_H}$ in terms of the polynomials $\T_{P_{H(B)}}$ for all the hypergraphs $H(B)$ obtained from $H$ by choosing a set partition $B$ of $e$, identifying the vertices in each block of $B$, and deleting $e$? Or, if this is not the right approach, then can we extend Kato's formula \cite[Corollary 1.3]{kato} that provides a recurrence relation for the interior polynomial?

\subsection{Relation to other hypergraph and polymatroid invariants}\label{sec:rel-other-invariants}
\OB{I rewrote this subsection.\\}
Many invariants of polymatroids related to the Tutte polynomial have been considered in the literature. We wonder if the polymatroid Tutte polynomial introduced in the present paper is related to those invariants. For the reader's convenience, we now do a quick review of those invariants.

In  \cite{Hel}, Helgason defines the following \emph{Poincar\'e polynomial} for a polymatroid $P\subset\Z^n$ having rank function $f$:
\begin{equation}\label{eq:Poincare-poly}
\scP_P(x,y)=\sum_{I\subseteq [n]}(y-1)^{|I|}x^{f([n])-f(I)}.
\end{equation}
In words, the Poincar\'e polynomial counts subsets of $[n]$ according to their cardinality and rank. When specialized to matroid, this invariant is clearly equivalent (i.e. equal up to a change of variables) to the Tutte polynomial as defined by \eqref{eq:corank-nullity-mat}. The specialization $\scP_P(x,0)$ is called the \emph{characteristic polynomial} of $P$.

Although both $\scP_P$ and $\T_P$ are generalizations of the Tutte polynomial of matroids to the polymatroid setting, it seems unlikely they are equivalent. However, we wonder if they have non-trivial common specializations. For instance, can the characteristic polynomial be obtained as a specialization of $\T_P$. In the other direction, can the number of bases of $P$ (given by $\T_P(1,1)$) be obtained as a specialization of $\scP_P(x,y)$?

Let us now turn to hypergraph invariants. For a bipartite graph $H=(V\sqcup W,E)$ one can consider the hypergraph $\vec H$, and the polymatroid $P_H$ whose rank function is given by \eqref{eq:rank-PH2}.
The Poincar\'e polynomial of $P_H$ is then
\begin{equation}\label{eq:Poincare-PH}
\scP_{P_H}(x,y)=\sum_{I\subseteq W}(y-1)^{|I|}x^{c(I)-c(W)},
\end{equation}
where $c(I)$ is the number of connected components of the subgraph of $H$ induced by the set $V\sqcup I$ of vertices.
Now recall that the \emph{chromatic polynomial}  $\chi_H(q)$ of $\vec{H}$ (as defined in e.g.  \cite{Ber}) is the unique polynomial such that for every positive integer $k$, the evaluation $\chi_H(k)$ counts the number of $k$-colorings of the vertices (i.e. functions $V\to [k]$) without monochromatic hyperedge.
It was shown in \cite{Hel} (see also \cite{Whi}), that the chromatic polynomial $\vec{H}$ is related to the characteristic polynomial of $P_H$ by
$$ \chi_H(q)=q^{c(W)}\scP_{P_H}(q,0)= \sum_{I\subseteq W}(-1)^{|I|}q^{c(I)}.$$
More generally, it is shown in \cite[Section 3.3]{Sta3} that  $\chi_H(q,y)=q^{c(W)}\scP_{P_H}(q,y)$ counts the colorings of $H$ according to the number of monochromatic hyperedges. Precisely, for every positive integer $k$,
$$k^{c(W)}\scP_{P_H}(k,y)=\sum_{k-\textrm{coloring of vertices}}y^{\#\textrm{ monochromatic hyperedges}}.$$
This is a generalization to hypergraphs of the classical relation between the Tutte polynomial and Potts model.  
We also mention another invariant of hypergraphs, which was defined in \cite{Ath} as 
$$\scA_{H}(x,y)=\sum_{I\subseteq W}(y-1)^{|I|}x^{d(I)},$$
where $d(I)=|V|-\min\{|J|~|~J\subseteq W \textrm{ equivalent to } I\}$, where two sets of hyperedges $I,J$ are called \emph{equivalent} if the subgraphs of $H$ induced by $V\sqcup I$ and $V\sqcup J$ have the same connected components. 
The invariants $\chi_{H}(x,y)$ and $\scA_H(x,y)$ coincide for hypergraphs $\vec H$ which are graphs (every hyperedge of cardinality 2). Both are generalizations of the Tutte polynomial of graphs, in the sense that when restricted to graphs, these invariants are equal to the Tutte polynomial up to a change of variables.

More recently, Aval et al. \cite{AKT} studied an invariant of hypergraph $\widetilde{\chi}_H(q)$, which is a different generalization of the chromatic polynomial of (loopless) graphs. Namely,  $\widetilde{\chi}_H(q)$ is the unique polynomial such that for every positive integer $k$, the evaluation $\widetilde{\chi}_H(k)$ counts the number of $k$-colorings of the vertices such that for every hyperedge the maximum color appearing in the hyperedge only appears once in the hyperedge.  When restricted to loopless graphs the invariant $\widetilde{\chi}_H(q)$ is equal to the chromatic polynomial  $\chi_H(q)$.  One could also consider the two-variable extension  $\widetilde{\chi}_H(q,y)$ counting the colorings of $\vec{H}$ according to the number of hyperedges violating the above coloring condition -- and this would again give a  generalization of the Tutte polynomial of graphs.

We wonder again whether  $\widetilde{\chi}_H(q)$ can be obtained as a specialization of  $\T_{P_H}$.
The invariant  $\widetilde{\chi}_H(q)$ can be defined using the Hopf monoid framework of Aguiar and Ardila \cite{AA}. 
Can the polymatroid Tutte polynomial be also described in this manner (using a character of the Hopf monoid of hypergraphs from \cite{AA})? 

Lastly a generalization of the Tutte polynomial from the graph setting to the directed graph setting is defined in \cite{AB}. Since hypergraphs can be seen as a subclass of directed graphs (namely the class of directed graphs such that every vertex is either a source or a sink), it is natural to ask whether there is any relation with the invariant defined in \cite{AB} (which is actually a trivariate polynomial). While we do not know a satisfying answer to this question, we note that the invariant obtained from \cite{AB} is \emph{mirror invariant} (that is, it takes the same value for $H$ and $H^\mir$). This leads us to our next topic.

\subsection{Mirror symmetry} 
\label{sec:mirror-symmetry}

Let us recall the relation established in~\cite{KP} between the interior polynomial of a hypergraph and its transpose. 
Let $H = (V\sqcup W, E)$ be a connected bipartite graph. Then it is proved in~\cite{KP} that 
\begin{equation}\label{eq:mirror}
x^{-|W|}\T_{P_H}(x,1)=x^{-|V|}\T_{P_{H^\mir}}(x,1).
\end{equation}
However the proof of \eqref{eq:mirror} in \cite{KP} is quite complicated. Therefore it would be nice if such a formula could be derived from the results of the present paper, for instance using the explicit expression of $\T_n$ established in Section~\ref{sec:formula_universal_Tutte}.

Another question concerns a possible extension of \eqref{eq:mirror}. As we now explain, the Tutte polynomial of a hypergraph does not determine the Tutte polynomial of its transpose. This can be seen from the case of graphical zonotopes treated in Section~\ref{sec:zonotopes}. 
Let $G=(W,E)$ be a connected graph, and let $Z(G)\subset \Z^W$ be the corresponding zonotope (seen as a polymatroid). Recall from Remark~\ref{rk:zonotope-is-Hmirror} that $Z(G)$ is a graphical polymatroid. Precisely, let $H=(V^+\sqcup W,E')$ be the bipartite graph obtained from $G=(W,E)$ by inserting a vertex $v_e\in V^+$ in the middle of each edge $e\in E$, and adding a vertex $v_0\in V^+$ joined to every vertex in $W$. Then $Z(G)=P_{H}$, and Theorem~\ref{thm:Tuttezonotope} reads
\begin{equation*}
\T_{P_{H}}(x,y)=(x+y-1)\,T_G(x+y,1).
\end{equation*}
Now let us consider the transpose hypergraph $\vec H^\mir:=(\vec H_G^+)^\mir$. It can be shown that 
\begin{equation}\label{eq:transpose-zonotope}
\T_{P_{H^\mir}}(x,y)=x^{|E|-|W|+1}y^{|W|-1}(x+y-1)\,T_G\left(\frac{x+y}{y},\frac{x+y-1}{x}\right).
\end{equation}
Note that $\T_{P_{H}}(x,1)=x T_G(x,1)=x^{|W|-|E|-1}\T_{P_{H^\mir}}(x,1)$, in accordance with \eqref{eq:mirror}.
The proof of \eqref{eq:transpose-zonotope} is left to the reader\footnote{Hint: the spanning hypertrees of $H^\mir$ are in bijection with the spanning forests of $G$ and if we take an order on the set $V^+$ such that $v_0$ is the minimal element, then the external activities of the spanning hypertrees (in the sense of the present paper) can be related to the external activities of the corresponding forests (in the sense of \cite{GT}).}. 
Hence the Tutte polynomial of $\vec H$ is equivalent to $T_G(x,1)$ while the Tutte polynomial of the transpose hypergraph $\vec H^\mir$ is equivalent to the full Tutte polynomial $T_G(x,y)$. While this rules out a direct two-variable extension of \eqref{eq:mirror}, it raises the question of finding a three-variable invariant $\mR_{\vec H}(x,y,y')$ of hypergraphs such that for any hypergraph $\vec H$, we have $\mR_{\vec H}(x,y,y')=\mR_{\vec H^\mir}(x,y',y)$.



\subsection{Zonotopes and $x,y$ symmetry}
In Theorem~\ref{thm:Tuttezonotope}, we give an expression for the polymatroid Tutte polynomial $\T_{Z(G)}(x,y)$ of a graphical zonotope $Z(G)$. Could this expression be obtained directly without using the explicit formula for the universal Tutte polynomial $\T_n$? (the formula of $\T_n$ secretly captures the fact that a hypergraph and its mirror have the same number of spanning hypertrees).

Observe that the polymatroid Tutte polynomial $\T_{Z(G)}(x,y)$ of any graphical zonotope $Z(G)$ is a polynomial in $x+y$. Is it true that $\T_P(x,y)$ is a polynomial in $x+y$ if and only if $P$ is a zonotope? 

Observe more generally that whenever $-P$ is a translate of $P$, we get $T_P(x,y)=T_{-P}(y,x)=T_P(y,x)$. Hence in this case $T_P(x,y)$ is a symmetric polynomial in $x$ and $y$. Are there nice interpretations of the coefficients of $\T_P$ in some basis of the symmetric functions?

\subsection{Miscellaneous}

We leave the reader with a few additional questions.

In Section~\ref{sec:formula_universal_Tutte} we give an explicit formula for the universal Tutte polynomial $\T_n$ based on a formula from \cite{Pos} for the number of points in polymatroids. There are other point-counting formulas in \cite{Pos}, one of which is based on a result of Brion \cite{Brion}. Could these alternative formulas be used to give alternative expressions of $\T_n$, which reveal other properties of this polynomial?

In Proposition~\ref{prop:TP_properties}, we provide interpretations of some evaluations of $\T_P(x,y)$. Is it possible to give combinatorial interpretations of some other specializations (possibly restricting our attention to hypergraphical polymatroids)? We wonder, in particular, if the evaluations $\T_P(1,2)$, $\T_P(2,1)$, and $\frac{\T_P(x,y)}{x+y-1}\bigg|_{x=1/2,~y=1/2}$ can be given a combinatorial meaning.

Lastly, let us recall that the original motivation for introducing the interior and exterior polynomials in \cite{Kal} was to give a combinatorial interpretation for some of the coefficients of the HOMFLY polynomials of knots associated to plane bipartite graphs. Can the polymatroid Tutte polynomial be interpreted as a knot invariant, or can it at least be used to further our understanding of (quantum) knot invariants?\\

\medskip

\noindent {\bf Acknowledgments.} We thank Spencer Backman for stimulating discussions about generalizing Crapo’s interval decomposition to the polymatroid setting. We thank  Gleb Koshevoy, Klas Markstr\"om, Richard Stanley, and Lorenzo Traldi for pointing out relevant references.

\medskip

\end{document}